\documentclass[final, 3p, times]{elsarticle}
\biboptions{sort&compress}
\usepackage{graphicx}
\usepackage{amstext, amsmath, amssymb, amsthm}
\usepackage[shadow]{todonotes}
\usepackage{booktabs}
\usepackage{url}
\usepackage{import}
\usepackage{amsfonts}
\usepackage{amsbsy}
\usepackage{fancyvrb}
\usepackage{stmaryrd}
\usepackage{pdfsync}
\usepackage{graphicx}
\usepackage{algorithm}
\usepackage[shadow]{todonotes}
\usepackage[numbers]{natbib}
\usepackage{enumerate}
\usepackage[utf8]{inputenc}
\usepackage[T1]{fontenc}
\usepackage[style=english]{csquotes}
\usepackage{textcomp}	
\usepackage{enumerate}
\usepackage[english]{babel}
\usepackage{amsmath, amssymb, amstext, amsfonts, mathrsfs, mathtools}
\usepackage{verbatim}
\usepackage{xcolor}
\usepackage{graphicx}
\usepackage{epstopdf}
\usepackage[font=footnotesize]{subfig}

\pagestyle{plain}
\usepackage[plainpages=false]{hyperref}
\hypersetup{
		bookmarksopen=true,
		bookmarksnumbered=true,
		pdfborder={0 0 0},
		pdftitle={A stabilized Nitsche cut finite element method for the Oseen problem},
		pdfauthor={A. Massing, B. Schott, W.A. Wall},
		pdfcreator={A. Massing, B. Schott, W.A. Wall},
		colorlinks=true,
		linkcolor=blue,
		urlcolor=blue	
	}

\newcommand{\Figref}[1]{Fig.\,\ref{#1}}

\newtheorem{theorem}{Theorem}

\newtheorem{lemma}[theorem]{Lemma}
\newtheorem{corollary}[theorem]{Corollary}
\newdefinition{remark}[theorem]{Remark}

\renewenvironment{proof}{\noindent \newline \textbf{Proof.}}{\hfill \mbox{\fbox{} } \newline}

\numberwithin{equation}{section}
\numberwithin{figure}{section}
\numberwithin{table}{section}
\numberwithin{theorem}{section}


\newcommand{\RR}{\mathbb{R}}
\newcommand{\NN}{\mathbb{N}}
\newcommand{\foralls}{\forall\,}
\newcommand{\dx}{\,\mathrm{d}x}


\DeclareMathOperator{\diam}{diam}

\DeclareMathOperator{\RE}{Re}

\DeclareMathOperator{\CFL}{CFL}

\renewcommand{\div}{\nabla \cdot}
\newcommand{\grad}{\nabla}



\newcommand{\restr}[2]{ \left. #1 \right|_{#2}}
\newcommand{\tnorm}[1]{{\vert\kern-0.25ex\vert\kern-0.25ex\vert #1 
    \vert\kern-0.25ex\vert\kern-0.25ex\vert}} 
\newcommand{\jump}[1]{\llbracket #1 \rrbracket}
\newcommand{\norm}[1]{\| #1 \|}

\newcommand{\normLtwo}[2]{\| #1 \|_{#2}}

\newcommand{\onehalf}{\frac{1}{2}}

\newcommand{\apriori}{\emph{a~priori }}


\DefineVerbatimEnvironment{code}{Verbatim}{frame=single,rulecolor=\color{blue}}



\newcommand{\bfepsilon}{{\pmb\epsilon}}
\newcommand{\bfeps}{{\pmb\epsilon}}
\newcommand{\bfu}{\boldsymbol{u}}
\newcommand{\bff}{\boldsymbol{f}}
\newcommand{\bfg}{\boldsymbol{g}}

\newcommand{\bfv}{\boldsymbol{v}}

\newcommand{\bfn}{\boldsymbol{n}}
\newcommand{\bft}{\boldsymbol{t}}
\newcommand{\bfx}{\boldsymbol{x}}
\newcommand{\bfP}{\boldsymbol{P}}
\newcommand{\bfzero}{\boldsymbol{0}}
\newcommand{\bfbeta}{\boldsymbol{\beta}}
\newcommand{\bfI}{\boldsymbol{I}}

\newcommand{\GammaIn}{\Gamma_{\mathrm{in}}}

\newcommand{\Oast}{\Omega^{\ast}}
\newcommand{\Oasth}{\Omega^{\ast}_h}

\newcommand{\pO}{\Gamma}

\newcommand{\Fast}{\mathcal{F}_{\Gamma}}
\newcommand{\piast}{\pi^{\ast}_h}
\newcommand{\bfpi}{\boldsymbol{\pi}_h}
\newcommand{\bfpiast}{\boldsymbol{\pi}^{\ast}_h}
\newcommand{\Piast}{\Pi^{\ast}_h}
\newcommand{\tnast}{\tn_{\ast}}
\newcommand{\mcF}{\mathcal{F}}
\newcommand{\mcT}{\mathcal{T}}
\newcommand{\mcV}{\mathcal{V}}
\newcommand{\mcW}{\mathcal{W}}
\newcommand{\mcX}{\mathcal{X}}
\newcommand{\mcQ}{\mathcal{Q}}

\newcommand{\mcO}{\mathcal{O}}

\newcommand{\mcP}{\mathcal{P}}

\newcommand{\tn}{|\mspace{-1mu}|\mspace{-1mu}|}

\newcommand{\bfw}{\boldsymbol{w}}

\newcommand{\nablan}{\partial_{\bfn}}

\newcommand{\mean}[1]{\langle #1 \rangle}

\newcommand{\nuhalf}{\nu^\frac{1}{2}}

\newcommand{\sigmahalf}{\sigma^{\frac{1}{2}}}

\journal{\phantom{journal}}

\begin{document}

\begin{frontmatter}

\title{A stabilized Nitsche cut finite element method for the Oseen problem}

\author[umit]{A.~Massing}
\ead{andre.massing@umu.se}
\ead[url]{www.andremassing.com}

\author[lnm]{B.~Schott\corref{cor1}}
\ead{schott@lnm.mw.tum.de}
\ead[url]{www.lnm.mw.tum.de/staff/benedikt-schott}
\cortext[cor1]{Corresponding author}

\author[lnm]{W.A.~Wall}
\ead{wall@lnm.mw.tum.de}
\ead[url]{www.lnm.mw.tum.de/staff/wall}

\address[umit]{Department of Mathematics and Mathematical Statistics, Ume\aa{} University, SE-901 87 Ume\aa{}, Sweden}
\address[lnm]{Institute for Computational Mechanics, Technical University of Munich, Boltzmannstraße 15, 85747 Garching, Germany}

\begin{abstract}
We propose a stabilized Nitsche-based cut finite element formulation for the
Oseen problem in which the boundary of the domain is allowed to cut through
the elements of an easy-to-generate background mesh.  Our formulation is
based on the continuous interior penalty (CIP) method of
\citet{BurmanFernandezHansbo2006} which penalizes jumps of velocity and
pressure gradients over inter-element faces to counteract instabilities
arising for high local Reynolds numbers and the use of equal order
interpolation spaces for the velocity and pressure.  Since the mesh does not
fit the boundary, Dirichlet boundary conditions are imposed weakly by a
stabilized Nitsche-type approach.  The addition of CIP-like ghost-penalties
in the boundary zone allows to prove that our method is inf-sup stable and to
derive optimal order \apriori~error estimates in an energy-type norm,
irrespective of how the boundary cuts the underlying mesh.  All applied
stabilization techniques are developed with particular emphasis on low and
high Reynolds numbers.  Two- and three-dimensional numerical examples
corroborate the theoretical findings. Finally, the proposed method is applied
to solve the transient incompressible Navier-Stokes equations on a complex
geometry.
\end{abstract}

\begin{keyword}
Oseen problem \sep fictitious domain method \sep 
cut finite elements \sep Nitsche's method \sep
continuous interior penalty stabilization \sep Navier-Stokes equations
\end{keyword}

\end{frontmatter}

\section{Introduction}
\label{sec:introduction}
Many important phenomena in science and engineering are modeled by a system of
partial differential equations (PDEs) posed on complicated, three-dimensional 
domains.
The numerical solution of PDEs
based on the finite element method
requires the 
generation of high quality meshes
to ensure both a proper geometric resolution
of the domain features and good approximation properties of the numerical scheme.
But even today, the generation of such meshes 
can be a time-consuming and challenging task
that can easily account for large portions of the
time and human resources in the overall simulation work flow. 
For instance, the simulation of many industrial application problems
requires a series of highly non-trivial preprocessing steps to transform 
CAD data into conforming domain discretizations which respect
complicated features of the geometric model.
The problem is even more pronounced if the geometry
of the model domain changes substantially in the course of the
simulation, e.g., in the simulation of multiphase flows, where the interface between
different fluid phases can undergo large and even topological changes
when bubbles merge or break up or drops pinch off. 
Then even modern Arbitrary-Lagrangian-Eulerian based mesh moving algorithms 
break down and a costly remeshing is the only resort.

A potential remedy to these challenges are flexible, so-called
unfitted finite element schemes which allow to embed complex or
changing domain parts freely into a static and easy-to-generate
computational domain.
For instance, to cope with large interface motion in
incompressible two-phase flows,
the discretization schemes in
\cite{ChessaBelytschko2003,GrossReusken2007,HansboLarsonZahedi2014,SchottRasthoferGravemeierWall2015}
combined an implicit, level set based
description of the fluid phase interface with an extended finite
element approach.
For fluid-structure interaction problems where
the structure might undergo large deformations,
numerical methods which combine 
fixed-grid Eulerian approach for the
fluid with a Lagrangian description for the structural body have been
developed in, e.g., \cite{Gerstenberger2008, Legay2006, CourtFournieLozinski2014, CourtFournie2015},
including three-dimensional fluid-structure-contact interactions~\cite{Mayer2010}.  
Furthermore, in several numerical schemes aiming at complex
fluid-structure interactions the idea of using finite element method on composite grids,
originally proposed in \cite{Hansbo2003}, has been picked
up.  In \cite{Shahmiri2011,MassingLarsonLoggEtAl2015,SchottShahmiriKruseWall2015}, an
additional fluid patch covering the boundary layer region around
structural bodies is embedded into a fixed-grid background fluid mesh
to possibly capture high velocity gradients near the structure when simulating
high-Reynolds-number incompressible flows.

As the computational mesh is not fitted to the domain boundary,
a common theme of the aforementioned unfitted finite element approaches
is the weak imposition of boundary or interface conditions posed on parts of the
embedded domain by means of Lagrange multipliers or Nitsche-type methods, see, e.g.,~\cite{Gerstenberger2009, CourtFournieLozinski2014,Shahmiri2011,BurmanHansbo2010,BurmanHansbo2012}.
Building upon and extending these ideas,
the cut finite element method (CutFEM) as a
particular unfitted finite element framework has gained rapidly
increasing attention in science and engineering,
see~\cite{BurmanClausHansboEtAl2014} for a recent overview.
A distinctive feature of the CutFEM approach is that it provides a
general, theoretically founded stabilization framework which, roughly speaking,
transfers stability and approximation properties from a finite element
scheme posed on a standard mesh to its cut finite element counterpart.
As a result, a wide range of problem classes, 
ranging from two-phase and fluid-structure
interaction
problems~\cite{SchottRasthoferGravemeierWall2015,MassingLarsonLoggEtAl2015}
to surface and surface-bulk 
PDEs~\cite{BurmanHansboLarson2015,BurmanHansboLarsonEtAl2016a,BurmanHansboLarsonEtAl2016c,HansboLarsonZahedi2016}, 
and embedding methods such as overlapping 
meshes~
\cite{Hansbo2003,Massing2012,MassingLarsonLogg2013,MassingLarsonLoggEtAl2013,SchottShahmiriKruseWall2015}
or
implicitly defined surfaces~\cite{BurmanHansboLarson2015,BurmanClausHansboEtAl2014},
has been treated by CutFEM based discretization schemes
in a transparent and unified way.
However, for fluid related problems,
stability and a priori error analysis of CutFEM type approaches 
has only been performed for simplified prototype problems,
such as the Poisson problem~\cite{BurmanHansbo2010,BurmanHansbo2012}, the Stokes problem~\cite{Burman2014b,MassingLarsonLoggEtAl2013,HansboLarsonZahedi2014,BurmanClausMassing2015},
and recently, for a low Reynolds-number
fluid-structure interaction problem governed by Stokes' equations in \cite{Burman2014a}.
For more complex fluid problems governed by the transient incompressible
Navier-Stokes equations at high Reynolds numbers, there is a lack of
numerical analysis.

In the present work we propose and analyze a cut finite element
method for the Oseen model problem. 
The Oseen problem comprises a set of linear equations which naturally arise in many
linearization and time-stepping methods for the transient, non-linear
incompressible Navier-Stokes equations.
Our formulation is based on the continuous interior penalty (CIP)
method of \citet{BurmanFernandezHansbo2006} which penalizes jumps of
velocity and pressure gradients over inter-element faces to counteract
instabilities arising for high local Reynolds numbers and the use of
equal order interpolation spaces for the velocity and pressure.
Since the mesh does not fit the boundary, Dirichlet boundary
conditions are imposed weakly by a stabilized Nitsche-type approach.
In contrast to \cite{BurmanFernandezHansbo2006},
additional measures are necessary 
to prove that the proposed scheme is inf-sup stable 
and satisfies optimal order \apriori~error estimates
irrespective of how the boundary cuts the underlying mesh.
Extending the approach taken in
\cite{Burman2010,BurmanHansbo2012,Burman2014b,MassingLarsonLoggEtAl2013} 
to provide geometrically robust \apriori~error and condition number estimates
for the Poisson and Stokes problem,
our method uses different face-based ghost-penalty
stabilizations for the velocity and pressure fields which are defined 
in the vicinity of the embedded boundary.
In \cite{SchottWall2014}, it was shown how these interface-zone
stabilization techniques can be naturally combined with the continuous interior penalty
method from~\cite{BurmanFernandezHansbo2006} to solve transient convection-dominant
incompressible Navier-Stokes equations on cut meshes.  
However, so far a numerical analysis of that method was still
outstanding.
The present paper now provides the theoretical corroboration of the
fluid formulation introduced in \cite{SchottWall2014} with focus
on the different ghost-penalty stabilizations in
the high-Reynolds-number regime.

A main challenge in the presented numerical analysis is to compensate
the lack of a suitable CutFEM variant of the $L^2$ projection operator
which in the fitted mesh case is an instrumental tool in the
theoretical analysis of CIP stabilized methods.
As a remedy, we derive stability and approximation results for norms
which are more natural in residual-based stabilization methods for the
Oseen problem. This approach allows to employ alternative
interpolation operators such as the Cl\'ement operator for which
proper CutFEM counterparts can be defined.
By adding suitable ghost-penalty stabilizations
we gain sufficient control over the advective derivative and the
incompressibility constraint with respect to the entire active part of
the computational mesh.
Consequently, we are able to prove that our scheme obeys an inf-sup
condition in a ghost-penalty enhanced energy-type norm and thus
satisfies the corresponding \apriori~energy norm error estimate.
All estimates are optimal independent of the positioning of the
boundary within the non-boundary fitted background mesh.
For the first time, we present a numerical analysis for
ghost-penalty operators scaled with non-constant coefficients
accounting for different flow regimes, covering the
treatment of instabilities arising from the convective term
and from the incompressibility constraint on cut meshes.
As a by-product of our numerical analysis, we
show how the continuous interior penalty stabilization terms
give control over slightly stronger norm contributions as they typically arise
in residual-based stabilization method~\cite{LubeRapin2006,BraackBurmanJohnEtAl2007}.

The paper is organized as follows: 
We conclude this section by summarizing our basic notation.
Then the Oseen model problem is briefly reviewed in Section~\ref{sec:oseen-problem}.
In Section~\ref{sec:cut-finite-element}, we 
formulate the 
stabilized Nitsche-type cut finite element method for the Oseen problem,
starting with the introduction of the proper cut finite element
spaces, followed by a review of the classical 
continuous interior penalty (CIP) method.
We explain how to extend the CIP scheme to the case of unfitted meshes,
discuss the need for additional ghost-penalty stabilization
techniques in the vicinity of the boundary zone for low and high
Reynolds numbers, and conclude this section
by stating the main \apriori~estimate for our cut finite element method.
Next, two- and three-dimensional test cases 
in Section~\ref{sec:numexamples} confirm the main theoretical result.
The applicability of our method to solve transient
incompressible Navier-Stokes equations is demonstrated by means of a
challenging complex three-dimensional helical pipe flow.
Afterwards, we present the numerical analysis of our proposed formulation.
In Section~\ref{sec:interpolation-est}, basic approximation properties,
interpolation operators and norms
are introduced and the importance of the different ghost-penalty terms
is elaborated.
Sections~\ref{sec:stability-properties} and \ref{sec:apriori-analysis} are devoted to the stability and
\apriori~error analysis of the proposed method. Therein, main focus is
directed to inf-sup stability and optimality of the error estimates in all flow regimes. 
Summarizing comments and an outlook to potential application fields for our
numerical scheme in Section~\ref{sec:conclusions} conclude this work.

\subsection{Basic Notation}
\label{ssec:notation}
Throughout this work, $\Omega \subset \RR^d$, $d = 2,3$ denotes an
open and bounded domain with Lipschitz boundary 
$\Gamma = \partial \Omega$. For $U \in \{\Omega, \Gamma \}$ and $ 0
\leqslant m < \infty$, $1 \leqslant q \leqslant \infty$, let $W^{m,q}(U)$ be the
standard Sobolev spaces
consisting of those $\RR$-valued functions defined on $U$ which possess $L^q$-integrable 
weak derivatives up to order $m$. Their associated norms
are denoted by $\|\cdot \|_{m,q,U}$. 
As usual, we write $H^m(U) = W^{m,2}$
and $(\cdot,\cdot)_{m,U}$ and $\|\cdot\|_{m,U}$ for the associated inner product
and norm. If unmistakable, we
occasionally write $(\cdot,\cdot)_{U}$ and $\|\cdot \|_{U}$ for the
inner products and norms associated with $L^2(U)$, with $U$ being a
measurable subset of $\RR^d$. 
For $s > 1/2$, we use the notation $[H_{\bfg}^s(\Omega)]^d$
to denote the set of all $\RR^d$-valued functions in $[H^s(\Omega)]^d$ whose
boundary traces are equal to $\bfg$.  
Moreover, $H_0(\div;\Omega) \subset [L^2(U)]^d$ denotes the space of divergence-free
functions, and $L^2_0(\Omega)$ denotes the
function space consisting of functions $u \in L^2(\Omega)$ with
zero average.
Finally, any norm $\|\cdot\|_{\mcP_h}$ used in this work which
involves a collection of geometric entities $\mcP_h$ should be
understood as broken norm defined by $\|\cdot\|_{\mcP_h}^2 =
\sum_{P\in\mcP_h} \|\cdot\|_P^2$ whenever $\|\cdot\|_P$ is well-defined,
with a similar convention for scalar products $(\cdot,\cdot)_{\mcP_h}$.

\section{The Oseen Problem}
\label{sec:oseen-problem}

After applying a time discretization method
and a linearization step, 
many solution algorithms for the non-linear Navier-Stokes equations
can be reduced to solving a sequence of auxiliary problems of Oseen type
for the velocity field $\bfu$ and the pressure field $p$:
\begin{alignat}{2}
  \label{eq:oseen-problem-momentum}
  \sigma \bfu + \bfbeta\cdot\nabla\bfu - \nabla\cdot(2\mu\bfepsilon(\bfu)) + \nabla p 
&= \bff \quad &&\text{ in } \Omega,
  \\
  \div\bfu &= 0 \quad &&\text{ in } \Omega,
  \label{eq:oseen-problem-compressible}
  \\
  \bfu &= \bfg
  \quad &&\text{ on } \Gamma.
  \label{eq:oseen-problem-boundary}
\end{alignat}
Here, $\bfepsilon(\bfu) = 1/2(\nabla \bfu + (\nabla \bfu)^T)$
denotes the rate-of-deformation tensor,
$\bfbeta \in [W^{1,\infty}(\Omega)]^d \cap H_0(\div;\Omega)$
the given divergence-free advective velocity field,
$\bff\in [L^2(\Omega)]^d$ the body force 
and $\bfg\in [H^{1/2}(\Gamma)]^d$ the given boundary data.
The reaction coefficient $\sigma$ and the viscosity $\mu$ are assumed
to be positive real-valued constants.
The corresponding weak formulation of the Oseen
problem~\eqref{eq:oseen-problem-momentum}--\eqref{eq:oseen-problem-boundary} is to find the velocity and the pressure field
 $(\bfu,p) \in
\mcV_{\bfg} \times \mcQ = [H_{\bfg}^1(\Omega)]^d \times L^2_0(\Omega)$ 
such that
\begin{align}
  a(\bfu, \bfv) + b(p, \bfv) - b(q, \bfu) = l(\bfv) \quad \foralls
  (\bfv,q) \in \mcV_{\bfzero} \times \mcQ,
  \label{eq:oseen_weak}
\end{align}
where
\begin{align}
  \label{eq:a-form-def}
  a(\bfu, \bfv) &:= (\sigma \bfu, \bfv)_{\Omega} +
  (\bfbeta\cdot\nabla\bfu, \bfv)_{\Omega} + (2\mu\bfepsilon(\bfu),
  \bfepsilon(\bfv))_{\Omega},    \\
  \label{eq:b-form-def}
  b(p, \bfv) &:= - (p, \div\bfv)_{\Omega}, 
  \\
  l(\bfv) &:= (\bff, \bfv)_{\Omega}.
  \label{eq:l-form-def}
\end{align}
The well-posedness and solvability of the continuous
problem~\eqref{eq:oseen_weak} is well-known, see for instance the
textbook by Girault and Raviart~\cite{RaviartGirault1986}.

\section{A Cut Finite Element Method for the Oseen Problem}
\label{sec:cut-finite-element}

\subsection{Computational Meshes and Cut Finite Element Spaces}
\label{ssec:cutfem-spaces}
Let $\widehat{\mcT}_h = \{T\}$ be a quasi-uniform mesh consisting of
shape-regular simplices $T$ with mesh size parameter~$h$ which covers
the physical domain~$\Omega$.
For the background mesh $\widehat{\mcT}_h$ we define the
\emph{active} (background) mesh
\begin{align}
  \mcT_h := \{ T \in \widehat{\mcT}_h : T \cap \Omega \neq \emptyset \},
\end{align}
consisting of all elements in $\widehat{\mcT}_h$ which intersect
$\Omega$.
Denoting the union of all elements $T \in \mcT_h$ by $\Oasth$,
we call $\mcT_h$ a \emph{fitted mesh} if
$\overline{\Omega} = \overline{\Oast_h}$
and an \emph{unfitted mesh} if
$\overline{\Omega} \subsetneq \overline{\Oast_h}$. 
To each active mesh, we associate the subset of elements
that intersect the boundary $\pO$
\begin{equation}
  \label{eq:define-cutting-cell-mesh}
  \mcT_{\pO} := \{T \in \mcT_h: T \cap \pO \neq \emptyset \}.
\end{equation}
\begin{figure}[tb]
  \begin{center}
    \includegraphics[width=0.45\textwidth]{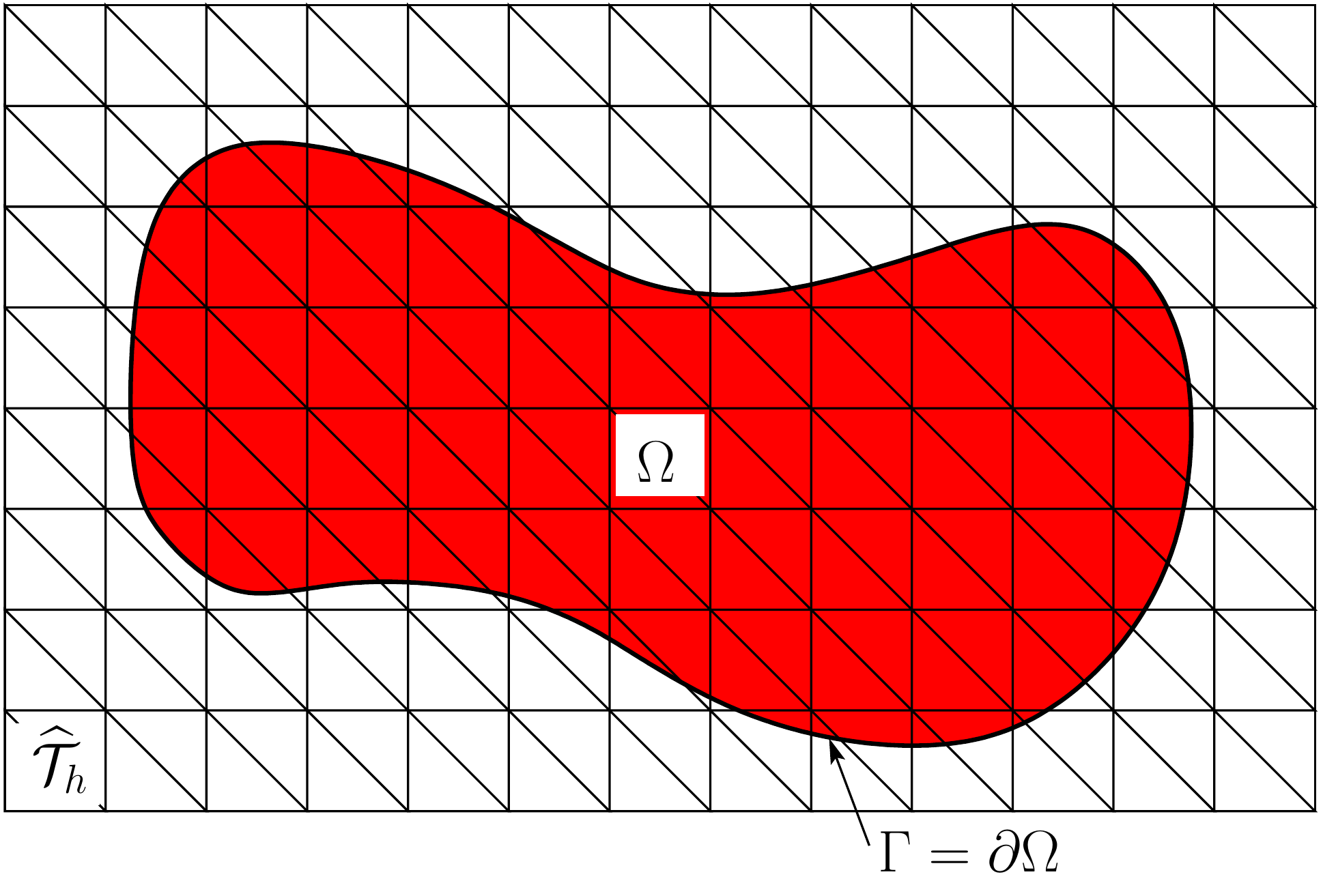} 
    \qquad
    \includegraphics[width=0.45\textwidth]{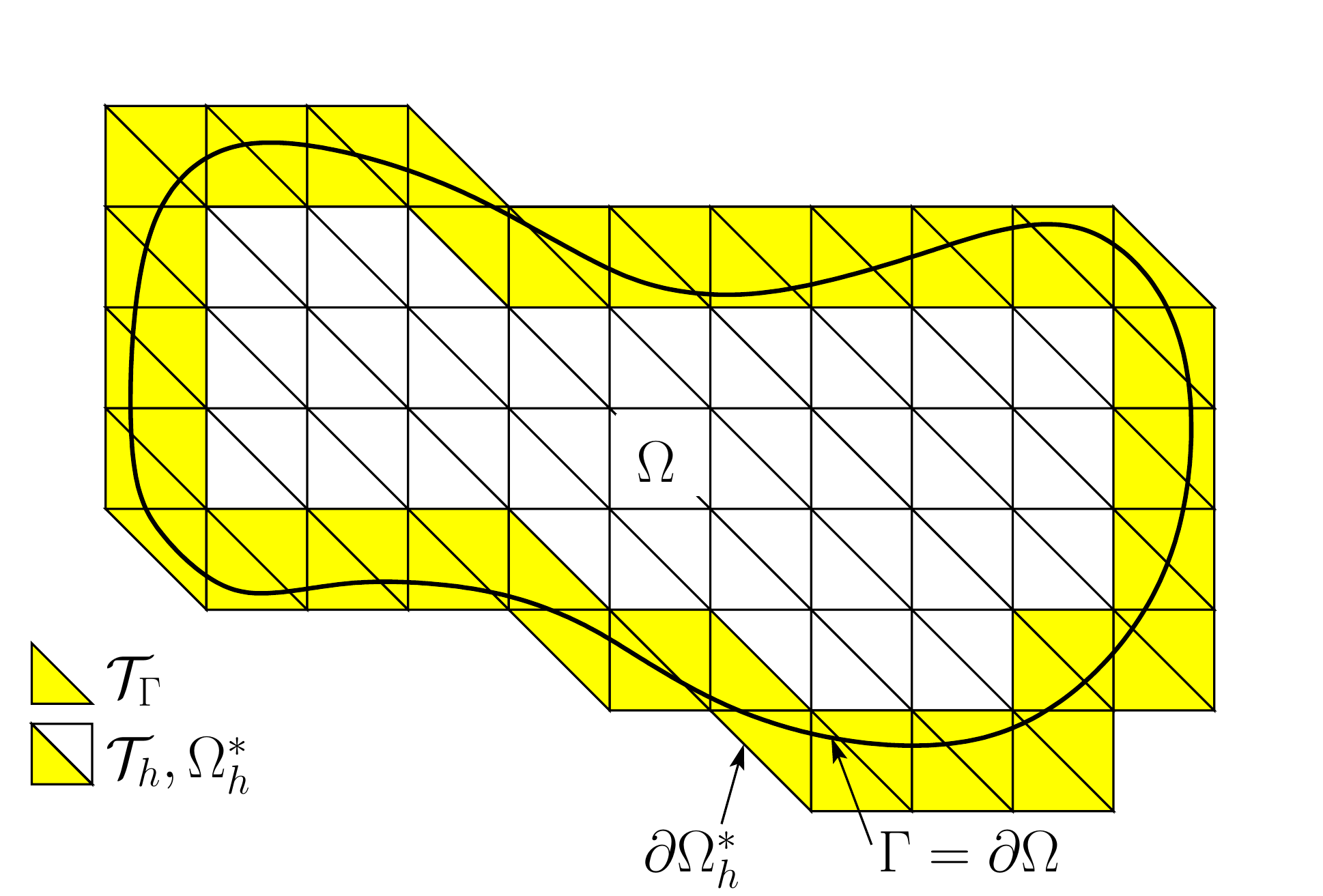}
  \end{center}
  \caption{Left: The physical domain $\Omega$ is defined as the
    inside of a given boundary $\pO$ embedded into a fixed
    background mesh $\widehat{\mcT}_h$. Right: The fictitious
    domain $\Oast_h$ is the union of the minimal subset $\mcT_h
    \subset \widehat{\mcT}_h$ covering $\Omega$.}
  \label{fig:computational-domain}
\end{figure}
The set of all facets,
i.e.~edges of elements in two dimensions and faces of elements in three
dimensions, are denoted by $\mcF_h$.
We let $\mcF_i$ be the set of all \emph{interior facets} $F$ which are
shared by exactly two elements, denoted by $T^+_F$ and $T^-_F$.
Further, we introduce the notation $\mcF_{\Gamma}$ for the set of all interior facets
belonging to elements intersected by the boundary $\pO$,
\begin{equation}
  \label{eq:ghost-penalty-facets}
  \mcF_{\Gamma} := \{ F \in \mcF_i :\;
  T^+_F \cap \pO \neq  \emptyset
  \vee
  T^-_F \cap \pO \neq  \emptyset
  \}.
\end{equation}
To ensure that $\Gamma$ is reasonably resolved by $\mcT_h$,
we require that the quasi-uniform $\mcT_h$ and the boundary $\pO$ satisfy the following
geometric conditions from \cite{HansboHansbo2002,BurmanHansbo2012,MassingLarsonLoggEtAl2014,BurmanClausMassing2015}:
\begin{itemize}
\item G1: The intersection between $\Gamma$ and a facet $F \in
  \mcF_i$ is simply connected; that is, $\Gamma$ does not
  cross an interior facet multiple times.
\item G2: For each element $T$ intersected by $\Gamma$, there exists a
  plane $S_T$ and a piecewise smooth parametrization $\Phi: S_T \cap T
  \rightarrow \Gamma \cap T$.
\item G3: We assume that there is an integer $N>0$ such that for each
  element $T \in \mcT_{\pO}$, there exists an element $T' \in
  \mcT_h \setminus \mcT_{\pO}$ and at most $N$ elements
  $\{T\}_{j=1}^N$ such that $T_1 = T,\,T_N = T'$ and $T_j \cap T_{j+1}
  \in \mcF_i,\; j = 1,\ldots, N-1$.  In other words, the
  number of facets to be crossed in order to ``walk'' from a cut
  element $T$ to a non-cut element $T' \subset \Omega$ is bounded.
\end{itemize}
\begin{figure}[tb]
  \begin{center}
    \includegraphics[width=0.40\textwidth]{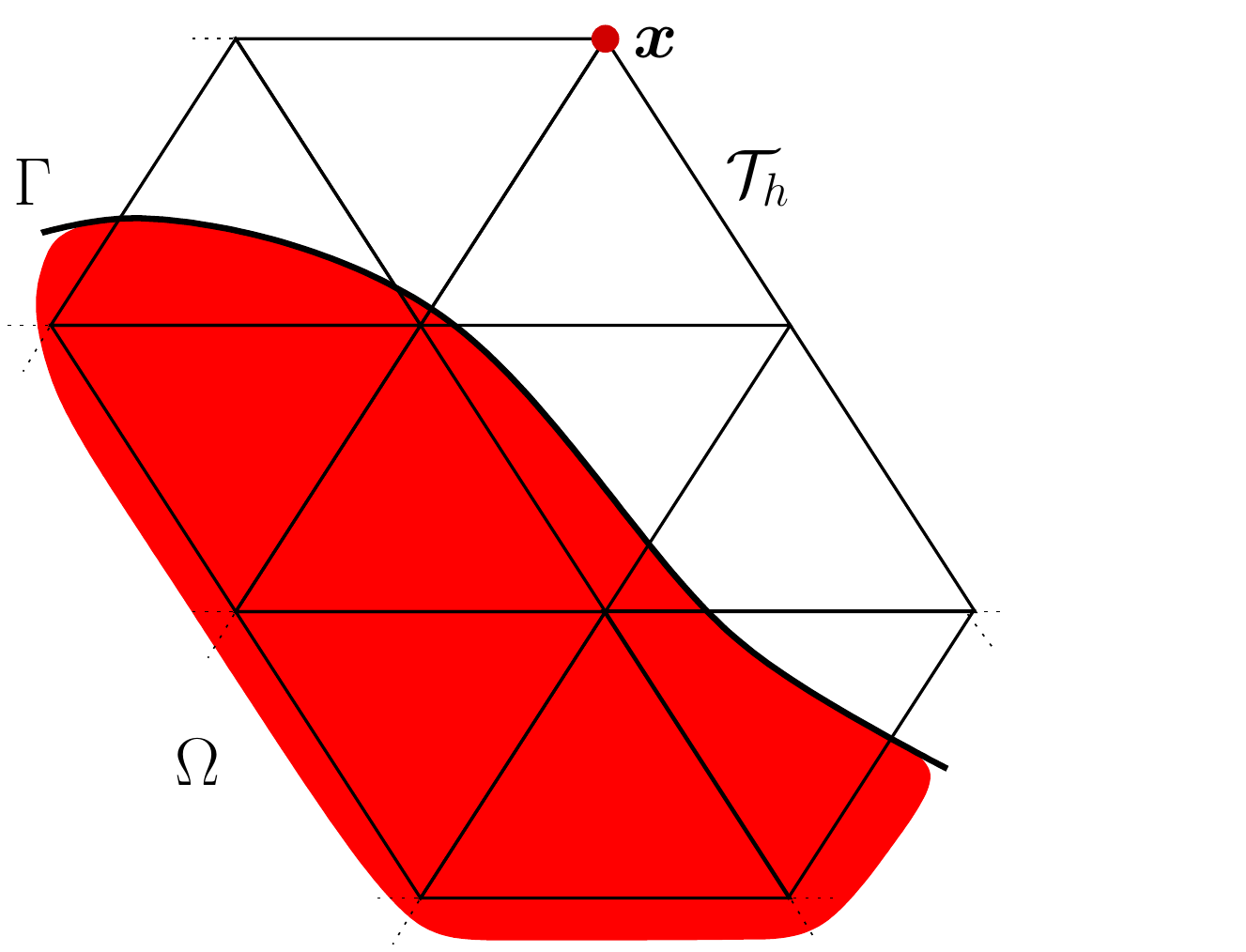} \qquad
    \includegraphics[width=0.40\textwidth]{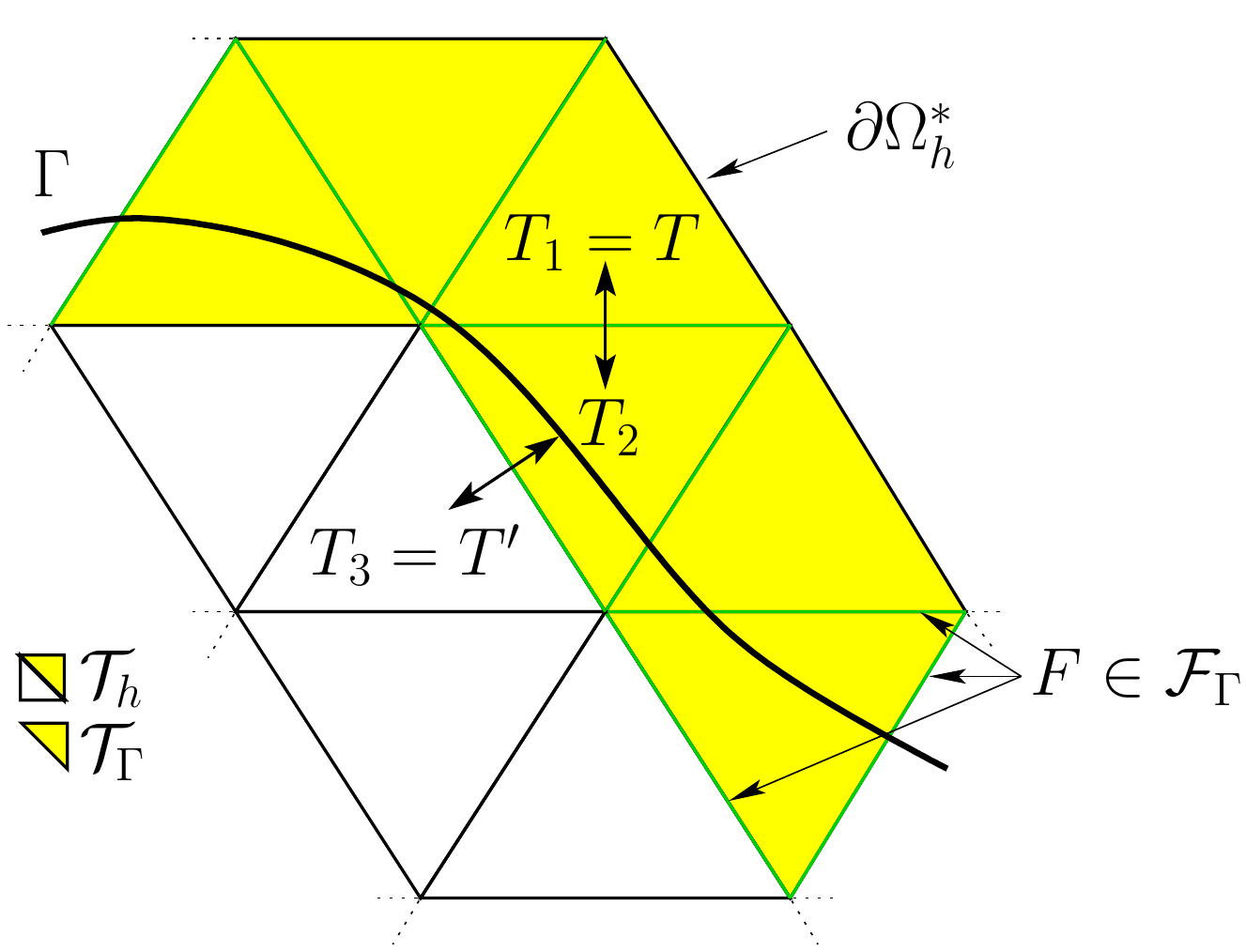}
    \caption{The boundary zone of the fictitious domain. Left: The
      background mesh $\mcT_h$ and the physical domain~$\Omega$. Observe that for the elements associated with the node
      $\bfx$, only a small fraction resides inside the domain
      $\Omega$. Right: Elements colored in yellow are intersected by the
      boundary and are therefore part of the mesh
      $\mcT_{\pO}$. Interior facets belonging to elements intersected by the
      boundary ($\Fast$) are marked in green. Arrows indicate the shortest ``walk'' from cut element~$T$ to an uncut element~$T'$.}
    \label{fig:boundary-zone}
  \end{center}
\end{figure}
Figures~\ref{fig:computational-domain} and \ref{fig:boundary-zone} summarize the notation.
Next, for a given mesh $\mcT_h$, we denote by $\mcX_h$
the finite element spaces consisting of continuous 
piecewise polynomials of order $k$
\begin{align}
  \mcX_h &= \left\{ v_h \in C^0(\Oast_h): \restr{v_h}{T} \in \mcP^k(T)\, \foralls T \in \mcT_h \right\}.
\end{align}
Using equal order interpolation spaces,
the discrete velocity space $\mcV_h$, the discrete pressure space $\mcQ_h$
and the total approximation space $\mcW_h$
are then defined by
\begin{align}
  \mcV_h = [\mcX_h]^d, \quad \mcQ_h = \mcX_h, \quad \mcW_h = \mcV_h
  \times \mcQ_h.
  \label{eq:approximation_spaces}
\end{align}

\subsection{A Short Review of the Continuous Interior Penalty Method for the Oseen Problem}
\label{ssec:stabilized-oseen}
Assuming for the moment that $\mcT_h$ is a fitted tessellation of $\Omega$,
it is well-known that a direct discretization of the weak
formulation~(\ref{eq:oseen_weak}) using equal-order interpolation
spaces $\mcW_h = \mcV_h \times \mcQ_h$
suffers from two problems.
First, the resulting scheme does not satisfy an inf-sup
condition and consequently, is not stable in the sense of
Babu\v{s}ka--Brezzi~\cite{BrezziFortin1991}.
Second, it leads to
spurious oscillations in the numerical
solution and sub-optimal error estimates
in the case of convection-dominant flow.
To counteract both effects, the discrete
form~\eqref{eq:oseen_weak} typically needs
to be stabilized, see~\cite{BraackBurmanJohnEtAl2007} for an overview
over various stabilization techniques for finite element based
discretizations of the Oseen problem.

In this work, we employ the continuous interior penalty (CIP) method proposed
by \citet{BurmanFernandezHansbo2006}, which is a
symmetric stabilization technique penalizing the jump of the velocity and pressure 
gradients over element facets.
More precisely, the stabilization operators are defined by
\begin{align}
  \label{eq:cip-s_beta}
  s_{\beta}(\bfu_h, \bfv_h)
  &:=
  \gamma_{\beta}
  \sum_{F\in\mathcal{F}_i}
  \mean{\phi_{\beta}}|_F h
  (\jump{\bfbeta \cdot\grad \bfu_h},\jump{\bfbeta \cdot\grad\bfv_h})_F,
  \\
  \label{eq:cip-s_u}
  s_{u}(\bfu_h, \bfv_h)
  &:=
  \gamma_u
  \sum_{F\in\mathcal{F}_i}
  \mean{\phi_{u}}|_F h
  (\jump{\div\bfu_h},\jump{\div\bfv_h})_F,
  \\
  \label{eq:cip-s_p}
  s_p(p_h, q_h) &:=
  \gamma_p 
  \sum_{F\in\mathcal{F}_i}
  \mean{\phi_{p}}|_F h
  ( \jump{\bfn_F \cdot \nabla p_h},\jump{\bfn_F \cdot \nabla q_h})_F,
\end{align}
where for any, possibly vector-valued, piecewise discontinuous function $\phi$
on $\mcT_h$,
the jump $\jump{\phi}$ and average $\mean{\phi}$
over an interior facet $F\in\mcF_i$ 
is given by
\begin{align}
  \jump{\phi}|_F 
= (\phi_{F}^+ - \phi_{F}^-),
  \qquad
  \mean{\phi}|_F 
= \tfrac{1}{2}(\phi_{F}^+ + \phi_{F}^-),
  \label{eq:jump-definition}
\end{align}
with $\phi^{\pm}(\bfx) = \lim_{t\to0^+} \phi(\bfx \pm t \bfn_F)$
for some chosen  normal unit vector $\bfn_F$ on $F$.
The element-wise constant stabilization parameters $\phi_{\beta}$, $\phi_u$ and $\phi_p$
are chosen as
\begin{align}
  \label{eq:cip-s_scalings}
  \phi_{u,T}
  = \mu +\|\bfbeta\|_{0,\infty,T} h + \sigma h^2,
\qquad
  \phi_{\beta,T} = \phi_{p,T} =  \dfrac{h^2}{\mu
+\|\bfbeta\|_{0,\infty,T} h + \sigma h^2 }.
\end{align}
To ease the notation we often simply write $\phi_{\beta,F}$,
$\phi_{u,F}$, and $\phi_{p,F}$ for their respective face averages.
Note that since $\bfbeta \in
[W^{1,\infty}(\Omega)]^d \subseteq [C^{0,1}(\Omega)]^d$
it holds that
$\bfbeta$ is (Lipschitz)-continuous by assumption and therefore
$\bfbeta\cdot\bfn_F$ is single valued on facets $F\in\mcF_i$.
Now the CIP augmented finite element formulation for 
the Oseen problem is to find $U_h=(\bfu_h, p_h) \in \mcW_h$ such that
for all $V_h=(\bfv_h, q_h) \in \mcW_h$
\begin{align}
 A_h(U_h,V_h) + S_h(U_h, V_h)
&= L_h(V_h),
 \label{eq:oseen-discrete-fitted-stabilized}
\end{align}
where
\begin{align}
A_h(U_h,V_h)
  &:= a_h(\bfu_h, \bfv_h)+b_h(p_h, \bfv_h)-b_h(q_h,\bfu_h),
 \label{eq:Ah-form-def}
\\
S_h(U_h,V_h) 
&:= 
s_{\beta}(\bfu_h, \bfv_h)
+ s_{u}(\bfu_h, \bfv_h)
+ s_{p}(p_h, q_h)
 \label{eq:Sh-form-def}
\end{align}
with
\begin{align}
  \label{eq:ah-form-def}
  a_h(\bfu_h, \bfv_h)
  &:= a(\bfu_h, \bfv_h)
  - ( (\bfbeta\cdot\bfn)\bfu_h, \bfv_h )_{\GammaIn}
  +
 (
  \gamma(\phi_u/h) \bfu_h\cdot\bfn,
  \bfv_h\cdot\bfn
  )_{\Gamma}  \nonumber\\
  &\quad\quad
   - ( 2\mu\bfepsilon(\bfu_h)\bfn, \bfv_h )_{\Gamma}
   - ( \bfu_h, 2\mu\bfepsilon(\bfv_h)\bfn )_{\Gamma}
  + (
  \gamma(\mu/h)\bfu_h,\bfv_h
  )_{\Gamma},
  \\
  \label{eq:bh-form-def}
  b_h(p_h, \bfv_h) &:= b(p_h, \bfv_h)+
  ( p_h,
  \bfv_h\cdot\bfn
  )_{\Gamma},
  \\
  L_h(V_h)
  &:=
  l(\bfv_h)
  - ( (\bfbeta\cdot\bfn)\bfg, \bfv_h )_{\GammaIn}
  +( \gamma (\phi_u/h) \bfg\cdot\bfn, \bfv_h\cdot\bfn )_\Gamma \nonumber \\
&\quad\quad
  - ( \bfg, 2\mu\bfepsilon(\bfv_h)\bfn )_{\Gamma}
  + ( \gamma(\mu/h)\bfg,\bfv_h )_{\Gamma}
  - ( \bfg \cdot \bfn, q_h )_{\Gamma}.
  \label{eq:Lh-form-def}
\end{align}

\begin{remark}
The boundary condition~\eqref{eq:oseen-problem-boundary} is
imposed weakly using Nitsche's method,
which was originally introduced in~\cite{Nitsche1971}
and then extended to the Oseen problem in, e.g., \cite{BurmanFernandezHansbo2006,Bazilevs2007}.
This technique results in additional boundary terms in~\eqref{eq:ah-form-def}--\eqref{eq:Lh-form-def}.
For viscous-dominant flow, the boundary condition is imposed in all
spatial directions using a symmetric Nitsche formulation; 
see viscosity-scaled boundary terms, which are consistently added to
enforce \mbox{$\bfu-\bfg=\bfzero$}.
On the contrary, convection-dominant
flows only require particular control of mass conservation in
wall-normal direction $(\bfu-\bfg)\cdot\bfn=0$, whereas full control
over the boundary conditions in wall-normal and wall-tangential
directions has to be ensured only on convective-dominant inflow
boundaries $\GammaIn =\{\bfx \in \Gamma : (\bfbeta\cdot\bfn)(\bfx) <
0\}$.
\end{remark}
\begin{remark}
  \label{rem:stabilization-param}
  We point out that the stabilization
  parameters~\eqref{eq:cip-s_scalings}
  are scaled differently in
  \citet{BurmanFernandezHansbo2006}. Our choice corresponds to the
  scaling proposed by \citet{Codina2008} for the orthogonal subscale
  method and \citet{KnoblochTobiska2013} for the local projection
  stabilization.
  Compared to \cite{BurmanFernandezHansbo2006}, a
  reactive scaling is added to the stabilization parameters
  which has two effects. First, it allows
  us to establish stability and approximation properties
  using norms with contributions which are
  more typical for residual-based stabilization methods,
  see~(\ref{eq:oseen-norm-up}).
  Second, for $\bfbeta, \mu \to 0$,
  inf-sup condition~(\ref{eq:inf-sup_condition_Ah_weak_norm_I})
  and the \apriori estimates~(\ref{eq:apriori-estimate-cutfem}) do not degenerate
  as they formally would do in \cite{BurmanFernandezHansbo2006}.
\end{remark}
\begin{remark}
  \label{rem:transient-stokes-symmetric}
  For the transient Stokes equations, \citet{Burman2009} showed that
  fully discretized schemes employing symmetric pressure stabilizations
  are unconditionally stable when the initial data is properly
  preprocessed. Thus, for CIP stabilized methods, no
  time step related stabilization is needed in the small time-step
  limit and from this perspective,
  the incorporation of $\sigma$ in the stabilization
  parameter seems to be a theoretically unsatisfactory artifact
  of our theoretical analysis.
  The extension and improvement of the presented numerical analysis
  of our cut finite element method to cover fully space and time
  discretized flow problems in the small time-step limit
  is subject of future research.
\end{remark}

\subsection{A Stabilized Nitsche-type Cut Finite Element Method for the Oseen Problem}
\label{ssec:cutfem-oseen}
A major challenge in translating a fitted finite element formulation into its cut finite element
counterpart is to maintain the stability and approximation properties
of the underlying scheme irrespective of how the boundary of the domain
cuts the background mesh.
To extend the stability properties of the CIP method into the
fictitious domain $\Oast_h$ defined by the active background mesh,
we add so-called
ghost-penalties~\cite{Burman2010,BurmanHansbo2012,Burman2015a,MassingLarsonLoggEtAl2014} 
consisting of CIP-type jump penalties of order $k$:
\begin{align}
  \label{eq:ghost-penalty-beta}
  g_{\beta}(\bfu_h, \bfv_h)
  :=&
  \gamma_{\beta}
  \sum_{F\in\Fast}
  \sum_{0\leqslant j \leqslant k-1}
   \phi_{\beta,F}
      h^{2j+1}
  (
  \jump{\bfbeta\cdot \nabla\nablan^j\bfu_h}
  ,\jump{\bfbeta\cdot \nabla\nablan^j\bfv_h}
      )_F,
  \\
  \label{eq:ghost-penalty-u}
  g_{u}(\bfu_h, \bfv_h)
  :=&
  \gamma_{u}
  \sum_{F\in\Fast}
  \sum_{0\leqslant j \leqslant k-1}
 \phi_{u,F} h^{2j+1}
  (
  \jump{\div \nablan^j \bfu_h},
  \jump{\div \nablan^j \bfv_h}
  )_F,
  \\
  \label{eq:ghost-penalty-p}
  g_p(p_h, q_h) :=&
  \gamma_{p}
  \sum_{F\in\Fast}
  \sum_{1\leqslant j \leqslant k}
 \phi_{p,F} h^{2j-1}
(\jump{\nablan^j p_h}, \jump{\nablan^j q_h})_F,
\end{align}
where the $j$-th normal derivative $\nablan^j v$ is given by 
$\nablan^j v = \sum_{| \alpha | = j}D^{\alpha} v(\bfx)
  \bfn^{\alpha}$ for multi-index \mbox{$\alpha = (\alpha_1, \ldots,
  \alpha_d)$}, $|\alpha| = \sum_{i} \alpha_i$ and $\bfn^{\alpha} =
  n_1^{\alpha_1} n_2^{\alpha_2} \cdots n_d^{\alpha_d}$.
Note that if the finite element base space $\mcX_h$ consists of piecewise polynomials
of order $k=1$, these ghost-penalties reduce precisely to the CIP stabilization~(\ref{eq:cip-s_beta})--(\ref{eq:cip-s_p}), but only considered on $\mcF_{\Gamma}$.
In addition, we will need ghost-penalties to stabilize the viscous and
reactive parts of the bilinear form $a_h$ defined in (\ref{eq:ah-form-def}):
  \begin{align}
  \label{eq:ghost-penalty-sigma}
  g_{\sigma}(\bfu_h, \bfv_h)
  :=&
  \gamma_{\sigma}
  \sum_{F\in\Fast}
  \sum_{1\leqslant j \leqslant k}
  \sigma
  h^{2j+1}(\jump{\nablan^j \bfu_h},\jump{\nablan^j \bfv_h})_F,
  \\
  \label{eq:ghost-penalty-nu}
  g_{\mu}(\bfu_h, \bfv_h)
  :=&
  \gamma_{\mu}
  \sum_{F\in\Fast}
  \sum_{1\leqslant j \leqslant k}
  \mu
  h^{2j-1}(\jump{\nablan^j \bfu_h},\jump{\nablan^j \bfv_h})_F.
  \end{align}
Note that by the definition of $\mcF_{\Gamma}$, see~\eqref{eq:ghost-penalty-facets},
ghost-penalties are only evaluated
on facets in the vicinity of the boundary.
We are now in the position to formulate a
ghost-penalty enhanced continuous interior penalty method 
for the Oseen problem: find
$U_h=(\bfu_h, p_h) \in \mcW_h $ such that $\foralls V_h=(\bfv_h, q_h)\in \mcW_h$
\begin{align}
 A_h(U_h,V_h) + S_h(U_h,V_h) + G_h(U_h,V_h) = L_h(V_h),
 \label{eq:oseen-discrete-unfitted}
\end{align}
where $G_h(\cdot, \cdot)$ denotes the sum of all ghost-penalty
operators~\eqref{eq:ghost-penalty-beta}--\eqref{eq:ghost-penalty-nu}
and $A_h,S_h,L_h$ are defined as in~\eqref{eq:Ah-form-def}--\eqref{eq:Lh-form-def}.
  \begin{remark}
  \label{rem:gbeta-simple}  
  Following the discussion in~\cite{BraackBurmanJohnEtAl2007,Burman2005},
  it is possible to replace the convection and incompressibility related
  stabilization forms (\eqref{eq:cip-s_beta}, \eqref{eq:ghost-penalty-beta} and \eqref{eq:cip-s_u}, \eqref{eq:ghost-penalty-u})
  by a single stabilization and ghost penalty operator of the form
  \begin{align}
    \overline{s}_{\beta}(\bfu_h, \bfv_h)
    &:= \gamma_{\beta} \sum_{F \in \mcF_i} 
      \overline{\phi}_{\beta} h (\jump{\nablan \bfu_h}, \jump{\nablan \bfv_h})_{F},
      \label{eq:sbeta-simple-def}
    \\
    \overline{g}_{\beta}(\bfu_h, \bfv_h)
    &:= \gamma_{\beta} \sum_{F \in \Fast} \sum_{1\leqslant j\leqslant k}
    \overline{\phi}_{\beta} h^{2j -1} (\jump{\nablan^j \bfu_h}, \jump{\nablan^j \bfv_h})_{F},
  \label{eq:gbeta-simple-def}
  \end{align}
  with
  $\overline{\phi}_{\beta} = \|\bfbeta \|_{0,\infty, F}^2\phi_{\beta}$.
  We refer to Lemma~\ref{lem:gbeta-simple} for the details.
  Note that employing $\|\bfbeta \|_{0,\infty,T}$ in  $\overline{\phi}_{\beta}$
  introduces some additional (order preserving) cross-wind diffusion.
  The use of $\overline{s}_{\beta}$ and $\overline{g}_{\beta}$ greatly
  simplifies the implementation of the purposed method as
  each employed stabilization is then the sum of properly scaled face contributions
  of the form $(\jump{\nablan^j \bfu_h}, \jump{\nablan^j \bfv_h})_{F}$.
\end{remark}
\begin{remark}
  Note that the classical CIP method was introduced on fitted meshes and
  that only the gradient and no higher-order derivatives are penalized.
\end{remark}
\begin{remark}
We like to comment on the use of $\bfbeta$ in the unfitted mesh case.
From a practical point of view, $\bfbeta$ will be either given as
analytical expression 
or as the finite element approximation of $\bfu$ from 
a previous time or iteration step when solving the incompressible Navier-Stokes equations.
From a theoretical point of view, it is well know that
for any fixed Lipschitz-domain $\Oast$ satisfying 
$\Oast_h \subset \Oast \;\foralls h \lesssim 1$,
there exists an extension $\bfbeta^\ast \in [W^{1,\infty}(\Oast)]^d$
from $\Omega$ to $\Oast$ satisfying
  $\| \bfbeta^{\ast} \|_{1,\infty,\Oast}
\lesssim
  \| \bfbeta \|_{1,\infty,\Omega}
$.
To simplify the notation, we will always write
$\bfbeta$, even for its extension $\bfbeta^{\ast}$. 
\end{remark}

\subsection{Summary of Stability and A Priori Error Estimates for the Proposed Cut Finite Element Method}
We conclude this section by summarizing the main theoretical results
for the cut finite element formulation~(\ref{eq:oseen-discrete-unfitted})
and postpone the detailed numerical analysis 
to Section~\ref{sec:interpolation-est}--\ref{sec:apriori-analysis}.
The numerical analysis will utilize
the natural energy-norm for the velocity
given by
\begin{align}
\label{eq:oseen-norm-unfitted-u}
\tn \bfu_h \tn_h^2 &:= \tn \bfu_h \tn^2
+ g_{\sigma}(\bfu_h, \bfu_h)
+ g_{\mu}(\bfu_h, \bfu_h)
+ g_{\beta}(\bfu_h, \bfu_h)
+ g_u(\bfu_h, \bfu_h),
\intertext{where}
  \label{eq:oseen-norm-u}
  \tn \bfu_h  \tn^2 &:=
 \| \sigma^{1/2} \bfu_h \|_{\Omega}^2
 + \| \mu^{1/2} \grad \bfu_h\|_{\Omega}^2
+ \| (\gamma \mu/h)^{1/2} \bfu_h \|^2_{\Gamma} + s_u(\bfu_h, \bfu_h)
\nonumber
 \\
   & \quad
+ \| |\bfbeta \cdot \bfn |^{1/2} \bfu_h \|_{\Gamma}^2
+ \| (\gamma \phi_u/h )^{1/2} \bfu_h \cdot \bfn \|^2_{\Gamma}
+  s_{\beta}(\bfu_h, \bfu_h).
\end{align}
Adding the pressure related stabilization terms $s_p$ and $g_p$ we obtain the semi-norm
\begin{align}
  |U_h|_h^2 := |(\bfu_h, p_h)|_h^2 = \tn \bfu_h \tn_h^2 + |p_h|^2_h
  \qquad \text{ with } \qquad | p_h |^2_h := s_p(p_h, p_h) + g_p(p_h,p_h).
  \label{eq:Ah-semi-norm}
\end{align}
Finally, the main analytical results will be stated using the ghost-penalty
augmented energy norm
\begin{align}
  \tn U_h \tn_h^2 
  := | U_h |_h^2 + \| \phi_u^{\onehalf} \nabla \cdot \bfu_h \|_{\Omega}^2
  + \dfrac{1}{1 + \omega_h} \|\phi_{\beta}^{\onehalf}(\bfbeta\cdot \nabla \bfu_h + \nabla p_h) \|_{\Omega}^2
  + \Phi_p \| p_h\|_{\Omega}^2,
  \label{eq:oseen-norm-up}
\end{align}
where
\begin{align}
  \label{eq:phi_p_definition}
  \Phi_p^{-1} 
  := \sigma C_P^2 + \norm{\bfbeta}_{0,\infty,\Omega}C_P 
  + \mu 
  + \left(\frac{\norm{\bfbeta}_{0,\infty,\Omega}C_P}{\sqrt{\mu + \sigma C_P^2}}\right)^2,
  \qquad
  \omega_h 
  := \dfrac{h^2 |\bfbeta|_{1,\infty,\Omega}}{\mu + \sigma h^2}.
\end{align}
Therein, $C_P$ denotes the so-called Poincar\'e constant as defined in \eqref{eq:Poincare-I} in Section~\ref{sec:interpolation-est}.
\begin{remark}
The concept of ghost-penalties was first introduced by
\citet{Burman2010} and \citet{BurmanHansbo2012} to formulate optimally convergent fictitious
domain methods for the Poisson problem.
As for instance shown in \cite{Burman2010}, using norms which are formulated only
in terms of the actual physical domain $\Omega$ leads to suboptimal
and non robust a priori error and condition number estimates due the
possible appearance of small cut elements $|T \cap \Omega| \ll |T|,\;
T\in \mcT_h$ in the vicinity of the boundary $\pO$.
Augmenting the original bilinear form
with the ghost-penalty stabilization
extends, roughly speaking, the naturally induced norms
from the physical domain~$\Omega$ to the entire fictitious domain
$\Oast_h$ defined by the active background mesh $\mcT_h$.
A more detailed mathematical explanation will be given in Section~\ref{ssec:norms}.
\end{remark}
In Sections~\ref{sec:stability-properties} and~\ref{sec:apriori-analysis}
we will prove the following inf-sup condition and a priori error estimates
with the hidden constant being independent of the mesh size $h$, and the
relative position of the boundary with respect to the active background mesh:
  \begin{itemize}
  \item For $U_h \in V_h$ it holds
 \begin{equation}
\label{eq:inf-sup_condition_Ah_weak_norm_I}
\tn U_h \tn_h
\lesssim
\sup_{V_h\in \mcW_h \setminus \{0\}}
\dfrac{
A_h(U_h,V_h) + S_h(U_h,V_h)
+ G_h(U_h,V_h)
}
{\tn V_h \tn_h }.
\end{equation}
\item 
Let $U = (\bfu, p) \in [H^r(\Omega)]^d \times H^s(\Omega)$
be the weak solution of the Oseen problem~\eqref{eq:oseen_weak} and let
$U_h = (\bfu_h,p_h) \in \mcV_h \times \mcQ_h$ be the discrete solution
of problem~\eqref{eq:oseen-discrete-unfitted}. Then
\begin{align}
    \tn \bfu -  \bfu_h \tn
  + \Phi_p^{\onehalf}\| p - p_h \|_{\Omega}
    &\lesssim
    (1+\omega_h)^{\onehalf}
    \bigl(
    \mu + \|\bfbeta \|_{0,\infty,\Omega} h + \sigma h^2
    \bigr)^{\onehalf}
    h^{r_{u} - 1}
    \| \bfu \|_{r_{u},\Omega}
      \nonumber
    \\
    &\phantom{\lesssim}\quad
      +
    \left(
    \Phi_p +
   \max_{T\in \mcT_h}
   \left\{\dfrac{1}{\mu + \|\bfbeta \|_{0,\infty, T} h + \sigma h^2}
   \right\}
   \right)^{\onehalf}
   h^{ s_p}
   \| p \|_{s_p, \Omega},
    \label{eq:apriori-estimate-cutfem}
  \end{align} 
where $r_{u} := \min\{r, k+1\}$ and $s_p := \min\{s, k+1\}$ with $k$
being the polynomial order of the discrete velocity and pressure
spaces.
  \end{itemize}

\section{Numerical Examples}
\label{sec:numexamples}

To validate our proposed stabilized cut finite element method,
different numerical examples are investigated.  Theoretical results
for the Oseen equations obtained from the \apriori~error analysis, see \eqref{eq:apriori-estimate-cutfem}
and Theorem~\ref{thm:apriori-estimate}, will
be confirmed by several basic test examples: the Taylor problem in two
dimensions and the Beltrami-flow problem in three dimensions.
Thereby, convergence properties are examined for the low- and the
high-Reynolds-number regime.  Finally, we demonstrate the
applicability of our stabilized method for solving the time-dependent
Navier-Stokes equations on complex three-dimensional geometries.  For
this purpose we show results of a flow through a helical pipe.

At this point, we would like to refer to a preceding
work~by \citet{SchottWall2014} in which our formulation has been
investigated by means of a number of various flow scenarios.
The numerical examples provided therein include detailed investigations of the different stabilization operators
and compare the numerical approach to other methods by means of computed lift and drag values for the flow around a cylinder.
For the applicability of our cut finite element method to more complex flow scenarios,
the interested reader is referred to some further publications, which are based on the present flow formulation:
\citet{SchottShahmiriKruseWall2015} extended the present formulation to an overlapping mesh approach in which, for instance,
statistical measures of the turbulent flow in a lid-driven cavity at $\RE=10000$ have been compared to a boundary-fitted mesh approach.
Application of our method to low- and high-Reynolds-number incompressible two-phase flows
has been provided by \citet{SchottRasthoferGravemeierWall2015}.

Moreover, the publication \cite{SchottWall2014} includes several studies on the choice of
stabilization parameters involved in our formulation, which provides
the basis for all examples proposed in this work.
Following~\cite{SchottWall2014}, for the CIP-stabilization terms
\eqref{eq:cip-s_beta}--\eqref{eq:cip-s_p} we take $\gamma_\beta =
\gamma_p=0.05$ and set $\gamma_u=0.05\gamma_\beta$, as suggested in
\cite{Burman2007}.  Same parameters are used for the related
ghost-penalty terms
\eqref{eq:ghost-penalty-beta}--\eqref{eq:ghost-penalty-p}.  As studied
in \cite{SchottWall2014}, we choose $\gamma = 30.0$ for the
Nitsche-penalty terms and $\gamma_\mu=0.05$ for the viscous
ghost-penalty term \eqref{eq:ghost-penalty-nu}.  The parameter for the
(pseudo-)reactive ghost-penalty term \eqref{eq:ghost-penalty-sigma},
however, is set to a considerably smaller value $\gamma_\sigma = 0.001$.
Moreover, the different flow regimes appearing in $\phi_u,\phi_{\beta},\phi_p$ are
weighted as $\mu + c_u (\|\bfbeta\|_{0,\infty,T}h) + c_{\sigma} (\sigma h^2)$ with
$c_u = 1/6$ and $c_\sigma= 1/12$ as suggested in \cite{SchottRasthoferGravemeierWall2015}.

All simulations presented in this publication have been performed
using the parallel finite element software environment “Bavarian
Advanced Computational Initiative” (BACI), see \cite{WallGeeBaci2012}.

\subsection{Convergence Study - 2D Taylor Problem}
\label{ssec:numexamples:taylor_problem}

To confirm the optimal order \apriori~error estimate stated in \eqref{eq:apriori-estimate-cutfem} and 
Theorem~\ref{thm:apriori-estimate}, we study error convergence for the
two-dimensional Taylor problem, see also~\cite{Kim1985,Chorin1968,Pearson1965}.
Periodic steady velocity and pressure fields $(\bfu,p)$ are given as
\begin{align}
 u_{1}(x_1,x_2) &= -\cos(2\pi x_1)\sin(2\pi x_2) \label{eq:kimmoin_function_u_x},\\
 u_{2}(x_1,x_2) &= \sin(2\pi x_1)\cos(2\pi x_2)  \label{eq:kimmoin_function_u_y},\\
 p(x_1,x_2)     &= -0.25(\cos(4\pi x_1)+\cos(4\pi x_2)), \label{eq:kimmoin_function_p}
\end{align}
such that $\div \bfu$ = 0.  We compute the numerical solution on a
circular fluid domain
\begin{equation} \Omega=\left\{ \bfx =(x_1,x_2)\in
\mathbb{R}^2~|~\phi(x_1,x_2) =
\sqrt{(x_1-0.5)^2+(x_2-0.5)^2}-0.45<0\right\},
\end{equation} where the boundary $\Gamma$ is represented implicitly
by the zero-level set of the function $\phi$.  This level-set field is
defined on a background square domain $[0,1]^2$ and approximated on a
background mesh $\widehat{\mcT}_h$ consisting of linear right-angled
triangular elements
$\mcP^k(T),~k\in\{1,2\}$.
The right-hand side $\boldsymbol{f}$
and the boundary condition $\bfg$ are adapted such that
\eqref{eq:kimmoin_function_u_x}--\eqref{eq:kimmoin_function_p} are
solution to the Oseen problem
\eqref{eq:oseen-problem-momentum}--\eqref{eq:oseen-problem-boundary}.
Boundary conditions on $\Gamma$ are imposed using our unfitted
Nitsche-type formulation, as introduced in
Section~\ref{ssec:cutfem-oseen}.  The constant pressure mode is
filtered out in the iterative solver, such that
$\int_{\Omega}{p_h-p}\dx = 0$.  The resulting Oseen system can be
interpreted as one time step of a backward Euler time-discretization
scheme for the linearized Navier-Stokes equations, where $\sigma =
1/\Delta t$ is the inverse of the time-step length.  The advective
velocity is given by the exact solution $\bfbeta = \bfu$ and its
discrete counterpart $\bfbeta_h$ by its nodal interpolation.

For a series of mesh sizes $h = 1/N$ with $N\in[10; 240]$, each
generated background mesh $\widehat{\mcT}_h$ consists of equal-sized triangles.
It has to be noted that the set of active elements
$\mcT_h$ used for approximating $\bfu_h$ and $p_h$ varies with mesh
refinement due to the unfitted boundary within the background mesh.
In the following, linear
and quadratic
equal-order approximations,
i.e. $\mcV_h^k\times\mcQ_h^k,~k\in\{1,2\}$,
for velocity and pressure are investigated.
We would like to point out that for all simulations with higher-order approximations, i.e. $k>1$,
the convective and divergence ghost penalty terms $g_\beta,g_u$ (see \eqref{eq:ghost-penalty-beta} and \eqref{eq:ghost-penalty-u})
and the related continuous interior penalty stabilizations $s_\beta,s_u$ (see \eqref{eq:cip-s_beta} and \eqref{eq:cip-s_u})
are replaced by the easier to implement (order-preserving) term $\overline{g}_{\beta}$ \eqref{eq:gbeta-simple-def};
see also Remark~\ref{rem:gbeta-simple} and Lemma~\ref{lem:gbeta-simple}.

Related to the triple norm $\tnorm{\cdot}$ defined in
\eqref{eq:oseen-norm-u}--\eqref{eq:oseen-norm-up}, we compute $L^2$-
and $H^1$-semi-norms to measure velocity and pressure approximation
errors $(\bfu_h-\bfu)$ and $(p_h-p)$ in the bulk $\Omega$ and on the
boundary $\Gamma$, respectively.  To examine convergence rates for
different Reynolds-number regimes, all errors are computed for two
different viscosities of $\mu = 0.1$ and $\mu=0.0001$.  Furthermore,
to investigate the effect of possibly dominating $\sigma$-scalings in
fluid-stabilization and boundary mass conservation terms, but also to
demonstrate the importance of the (pseudo-)reactive ghost-penalty
term, all studies are carried out for varying $\sigma$.

\subsubsection{Viscous dominant flow}

In the viscous case with $\mu=0.1$ the element
Reynolds numbers are low for all meshes, i.e., $\RE_T = \norm{\bfbeta}_{0,\infty,T} h/\mu
\leqslant 1$ since $\|\bfbeta\|_{0,\infty,\Omega}\leqslant 1$.  While the viscous
scalings in the Nitsche boundary terms and the pressure stabilization
terms are highly important to guarantee stability near the boundary as
well as to ensure inf-sup stability, all advective contributions to
the scalings are not required in this case.  Furthermore, the CIP
terms $s_\beta, s_u$ as well as related ghost-penalty terms $g_\beta,
g_u$ are not essential to guarantee stability.  However, the applied
scalings~\eqref{eq:cip-s_scalings} ensure sufficiently small
stabilization contributions from these terms to not deteriorate
convergence rates or to not lead to significantly increased error levels.
In \Figref{fig:kimmoin_oseen:spatial_convergence_visc}
and \Figref{fig:kimmoin_oseen:spatial_convergence_visc_quadratic},
errors computed
for our stabilized unfitted method~\eqref{eq:oseen-discrete-unfitted}
are presented
for $k=1,2$,
respectively.
As desired, optimal convergence is obtained for all
considered velocity norms, while for the pressure a superconvergent
rate of order
$k+1/2$
can be observed in the asymptotic range; this is
due to the high regularity of the solution as frequently
reported in literature before, see, e.g., in \cite{BurmanFernandezHansbo2006}.
Moreover, the optimality
$\mcO(h^{k+1})$
for the velocity $L^2$-norm error in the low
Reynolds number regime
(see Remark~\ref{rem:oseen:low-order-L2-velocity-optimality})
could be confirmed.
To further investigate the effect of
large values of $\sigma \gg 1$, which corresponds to the choice of
small time steps when $\sigma$ results from temporal discretizations, we
show the error behavior for different
$\sigma\in\{1,100,10000\}$.
While the velocity errors are robust when $\sigma$ becomes large,
the pressure $L^2$ error shows deteriorating convergence behavior.
This is most likely due to the effect of not properly chosen initial data.
Even though the right hand side is adapted being solution to the strong form of the Oseen problem,
the right hand side contains a discrete initial velocity field which is not discrete divergence free
due to the presence of the symmetric pressure stabilization terms.
The effect of a polluted incompressibility rendering in an unstable problem for the pressure has been analyzed in \cite{Burman2009}
for the transient Stokes problem.
The numerical results presented in the latter work are quite similar to the behavior observed in
\Figref{fig:kimmoin_oseen:spatial_convergence_visc}--\Figref{fig:kimmoin_oseen:spatial_convergence_conv_quadratic}.
Note that for practical flow problems, for which
the transient incompressible Navier-Stokes equations are solved and the simulation usually starts from a quiescent flow, i.e. $\bfu = \bfzero$,
it is expected that this effect does not occur.
\begin{figure}[t]
  \centering
  \subfloat{\includegraphics[trim=0 0 0 0, clip, width=0.33\textwidth]{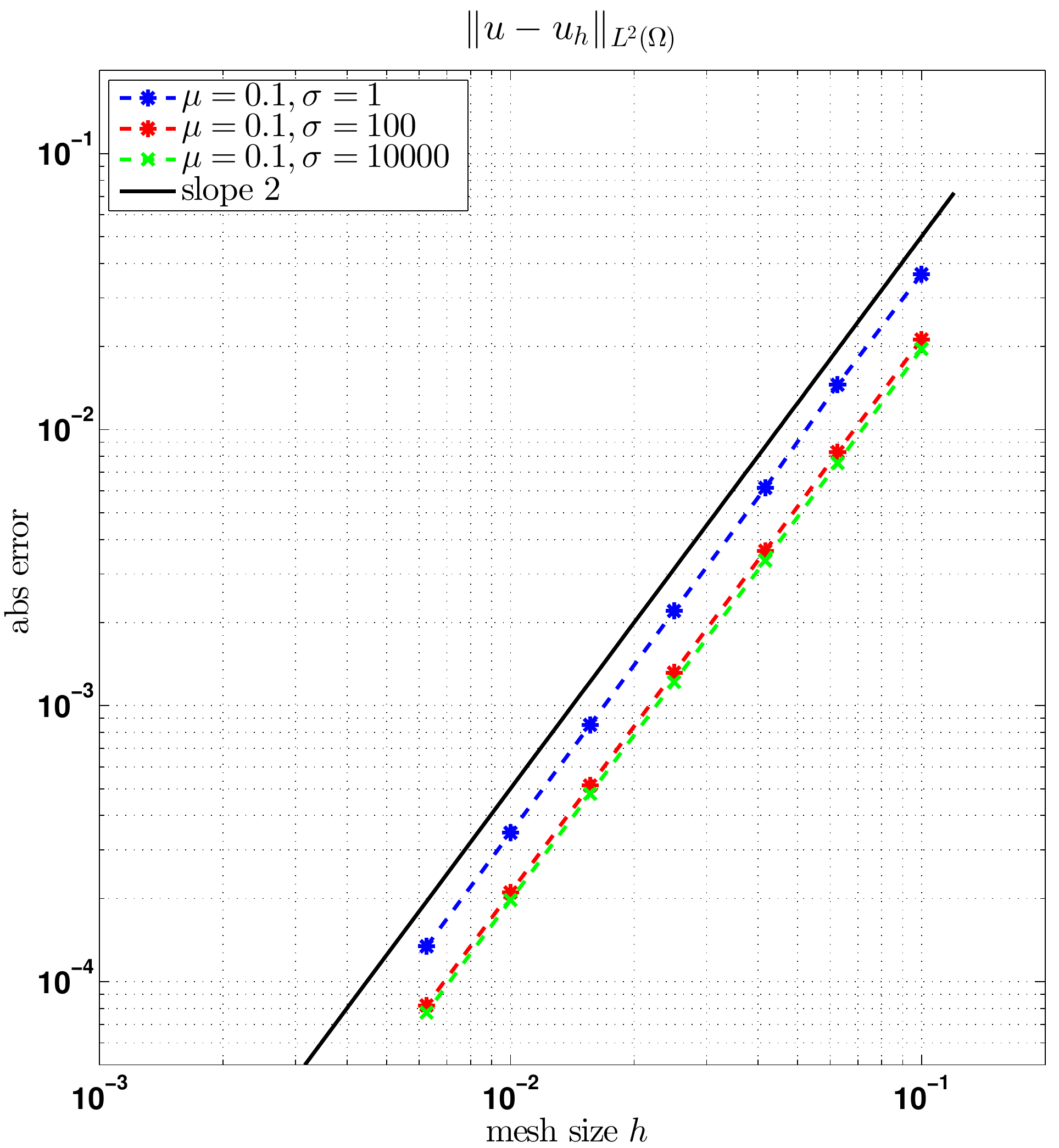}}
  \subfloat{\includegraphics[trim=0 0 0 0, clip, width=0.33\textwidth]{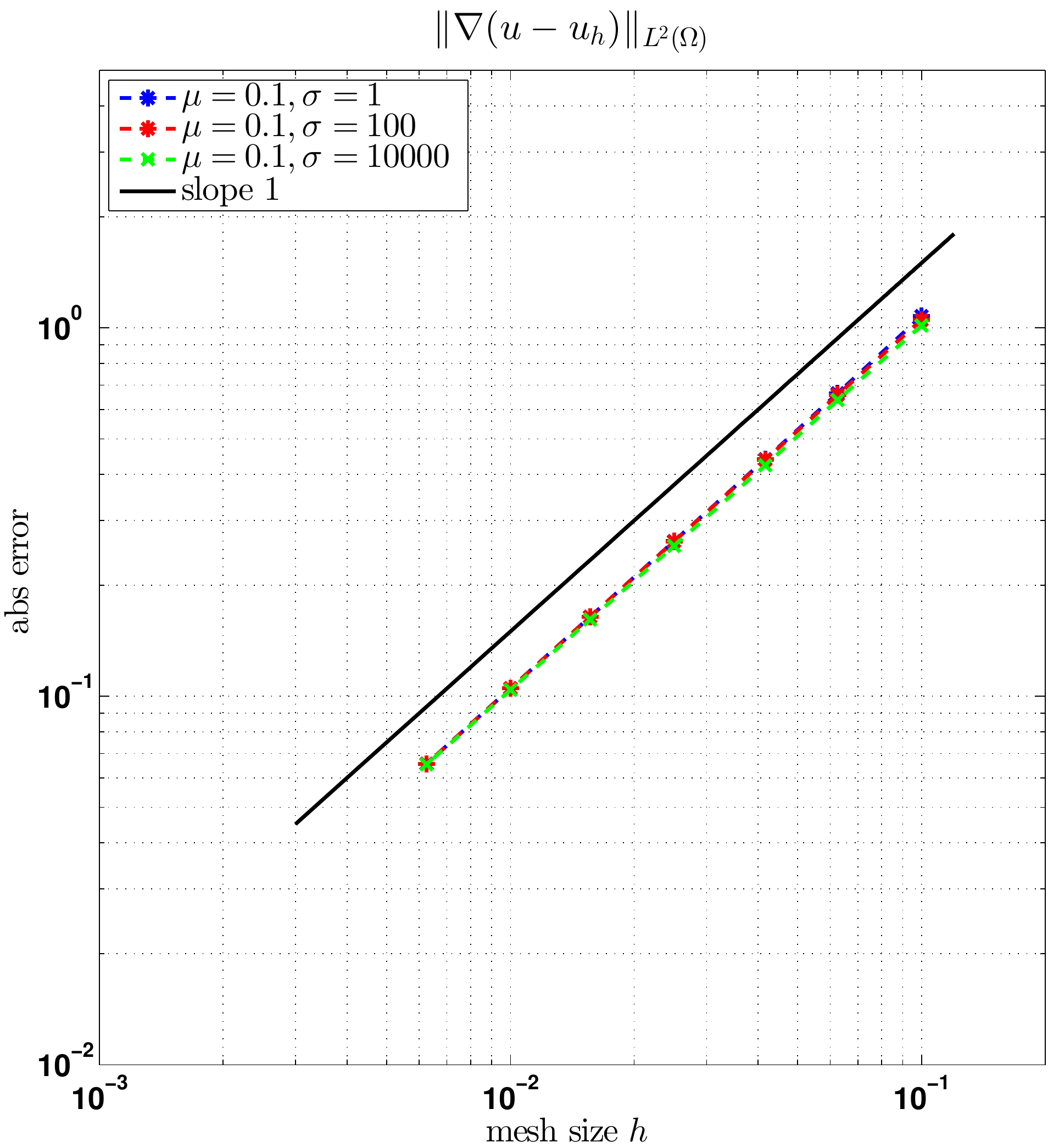}}
  \subfloat{\includegraphics[trim=0 0 0 0, clip, width=0.33\textwidth]{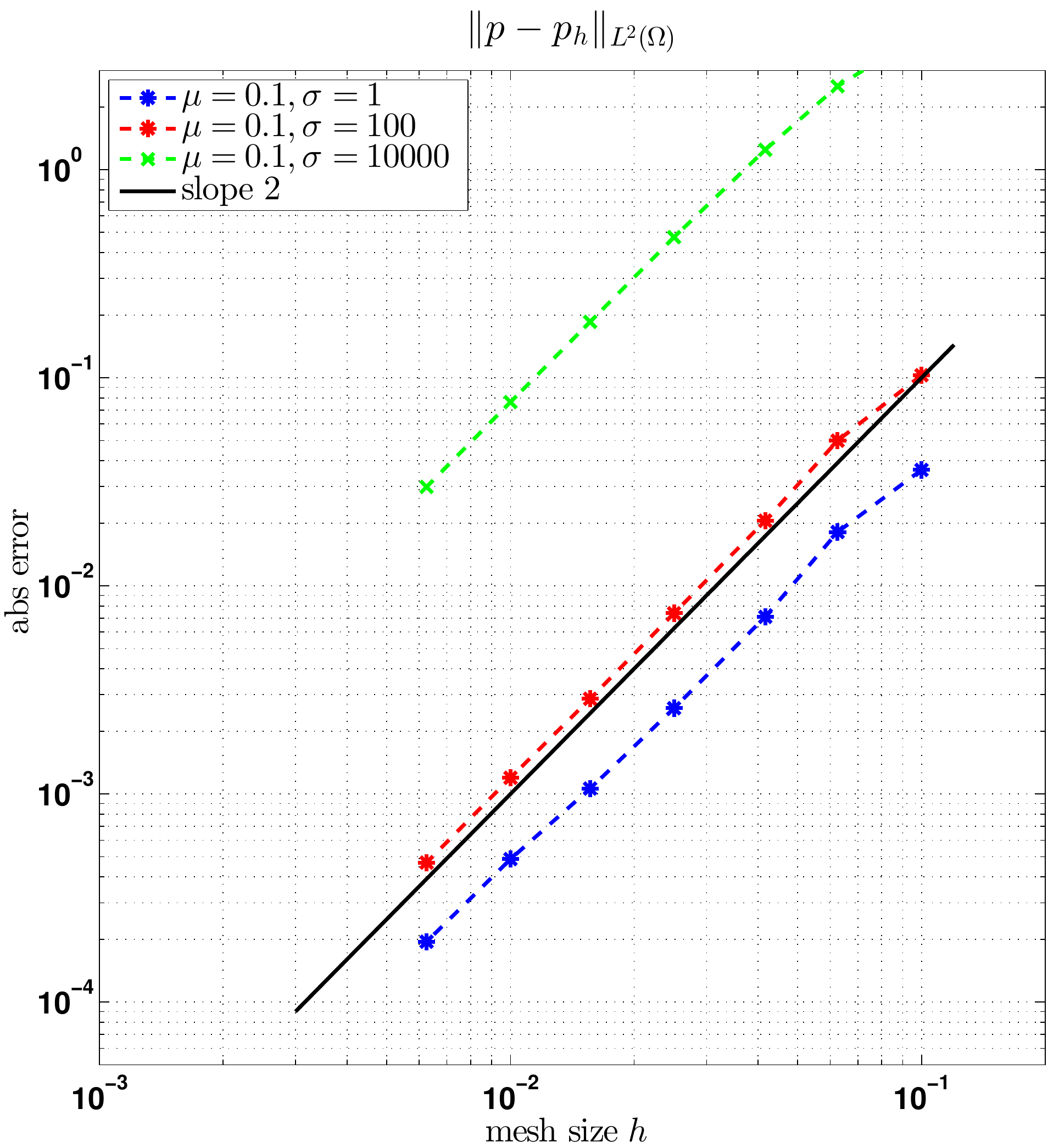}}\\
  \subfloat{\includegraphics[trim=0 0 0 0, clip, width=0.33\textwidth]{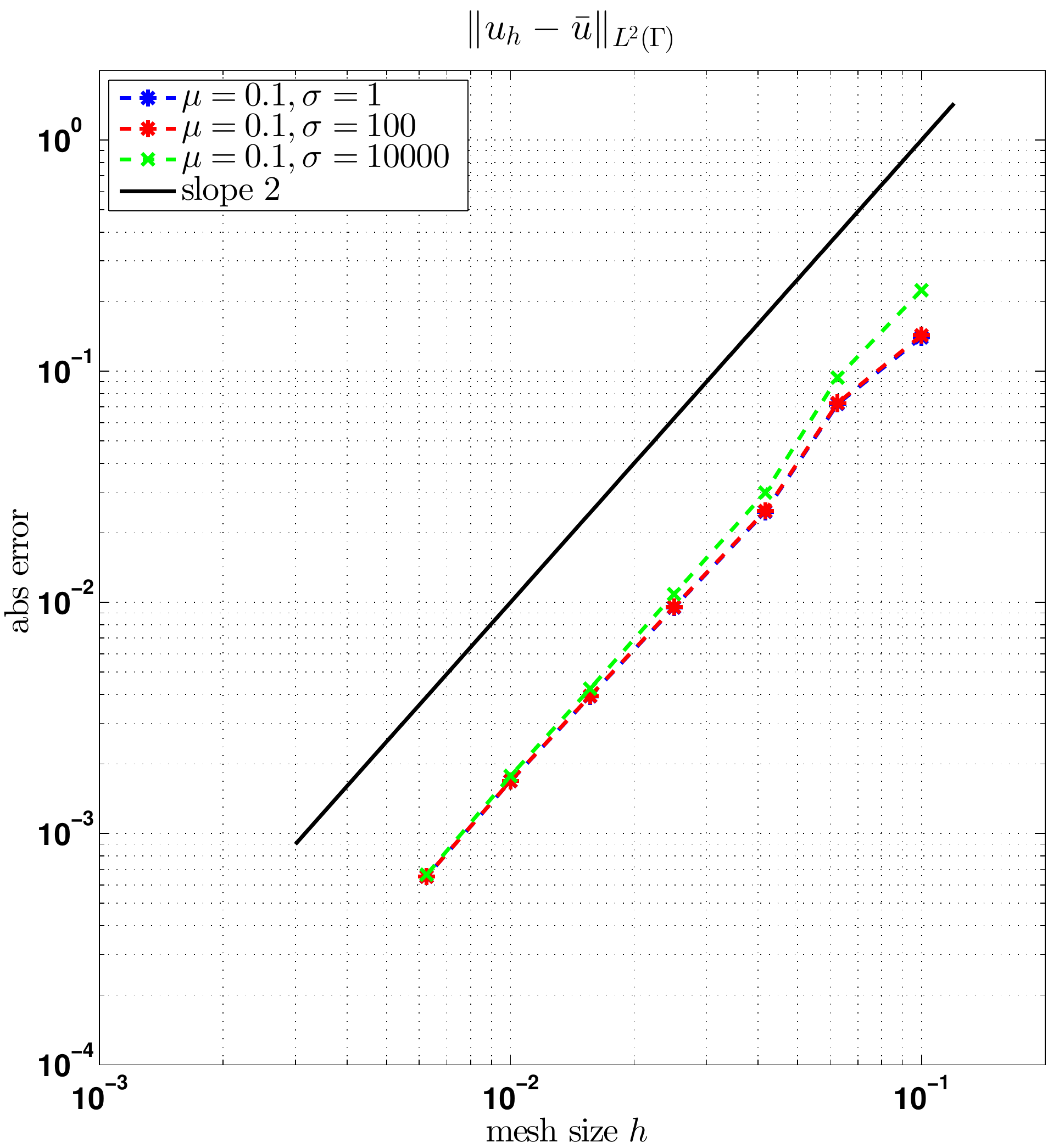}}
  \subfloat{\includegraphics[trim=0 0 0 0, clip, width=0.33\textwidth]{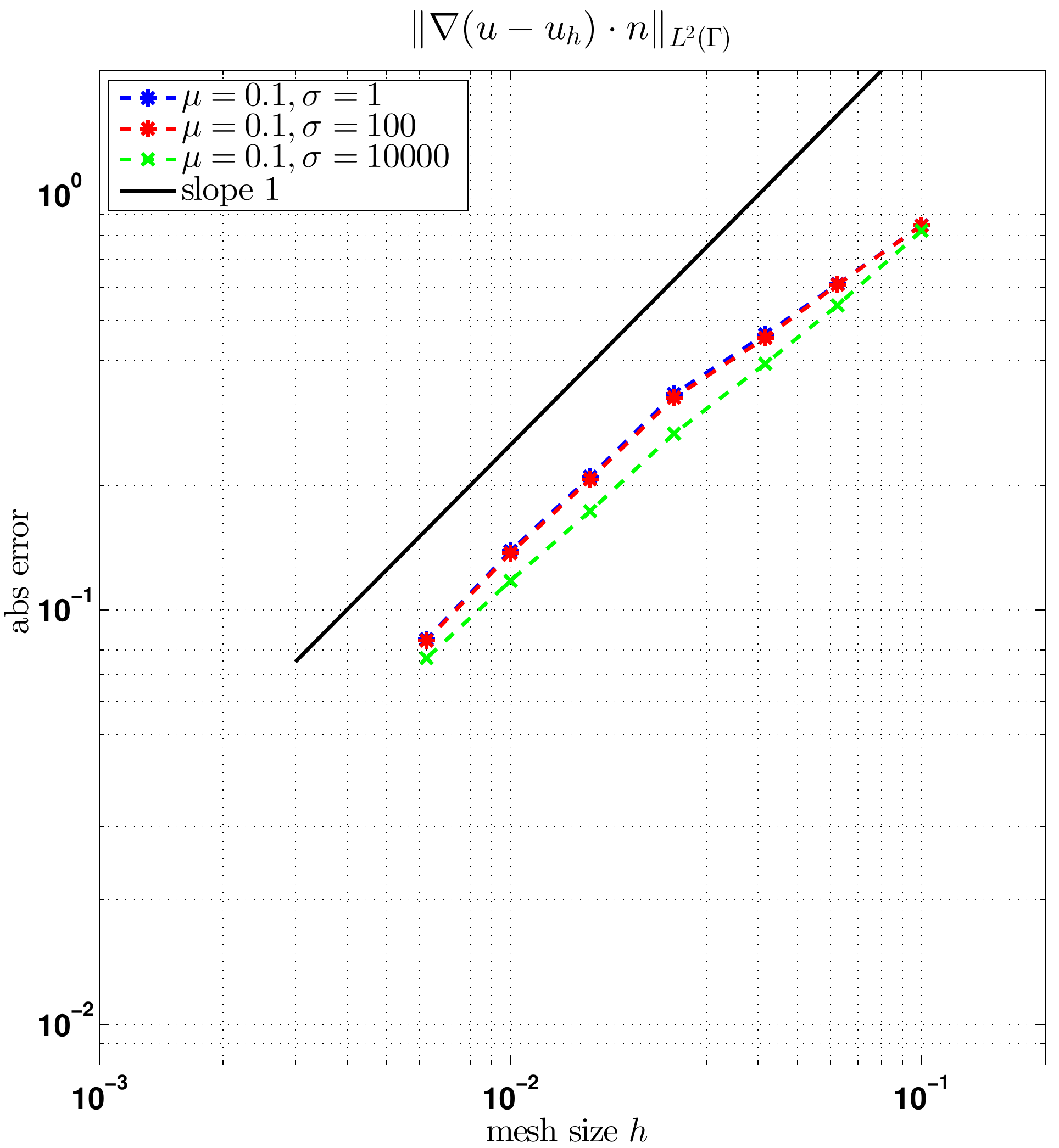}}
  \subfloat{\includegraphics[trim=0 0 0 0, clip, width=0.33\textwidth]{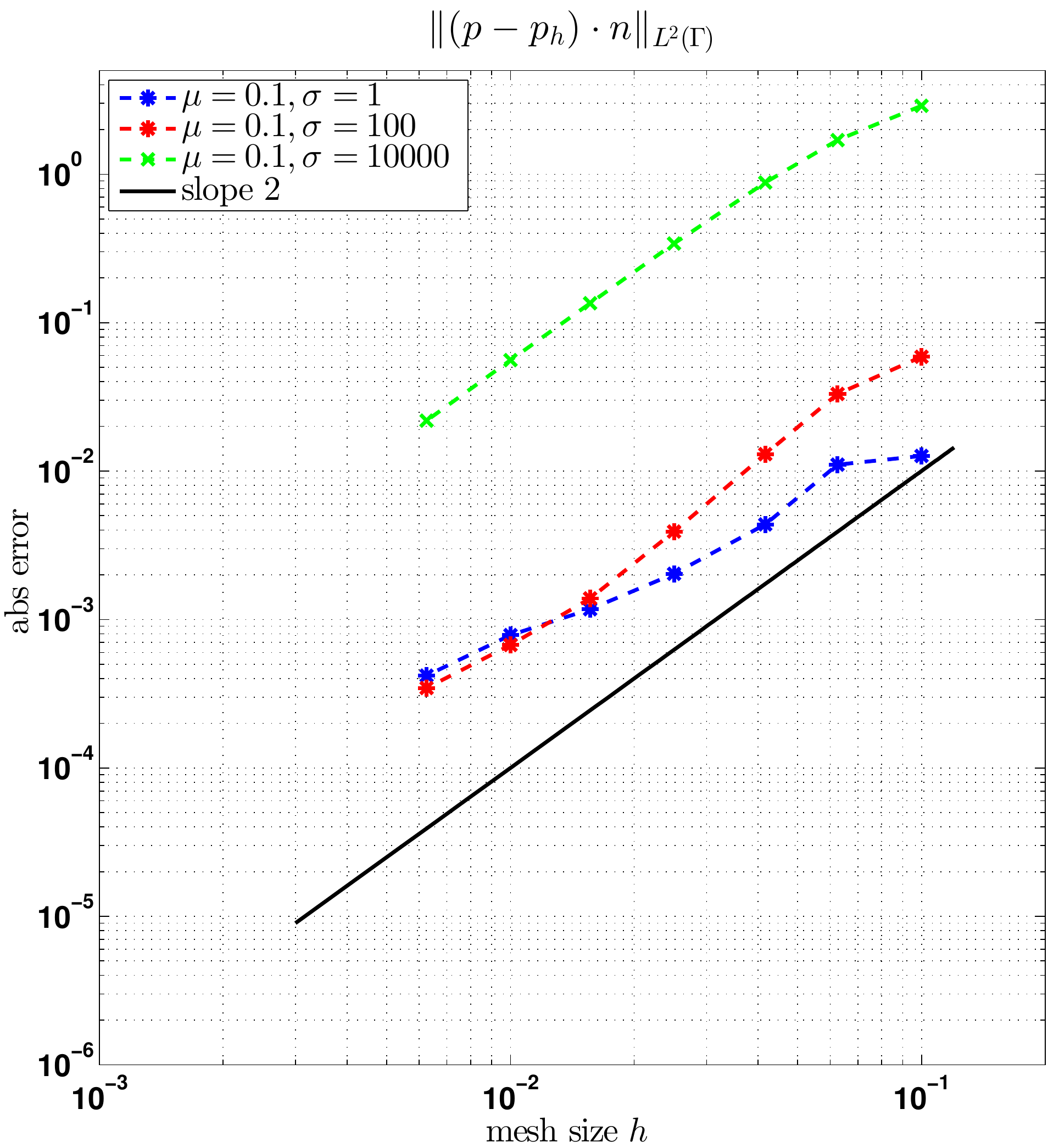}}
  \caption{Low-Reynolds-number 2D Taylor Problem with $\mu=0.1$ and $\mcP^1$ approximations: Convergence rates in $L^2$-norms for velocity, velocity gradient and pressure in the domain (top row) and on the boundary (bottom row).}
  \label{fig:kimmoin_oseen:spatial_convergence_visc}
\end{figure}

\begin{figure}[t]
  \centering
  \subfloat{\includegraphics[trim=0 0 0 0, clip, width=0.33\textwidth]{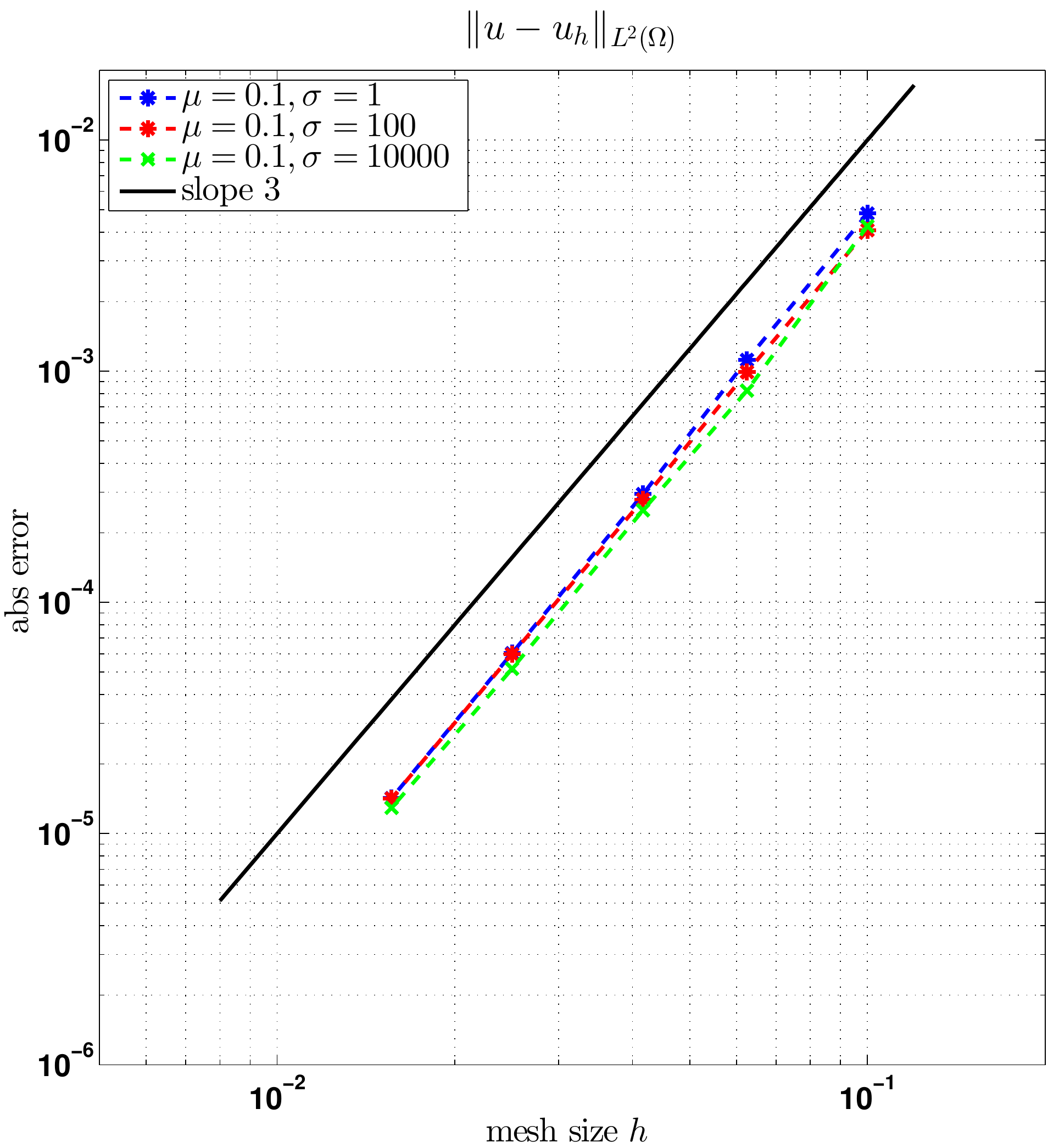}}
  \subfloat{\includegraphics[trim=0 0 0 0, clip, width=0.33\textwidth]{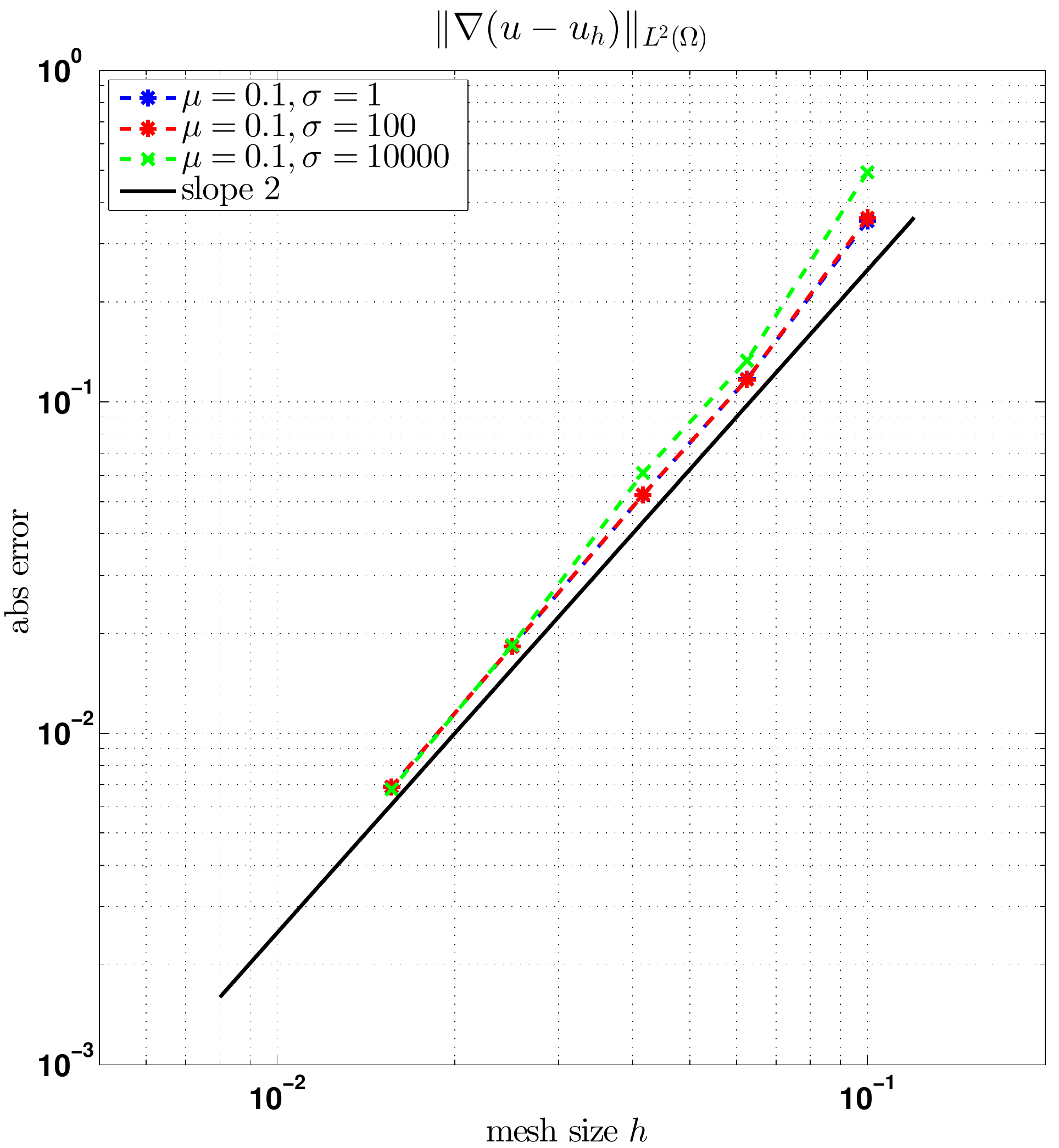}}
  \subfloat{\includegraphics[trim=0 0 0 0, clip, width=0.33\textwidth]{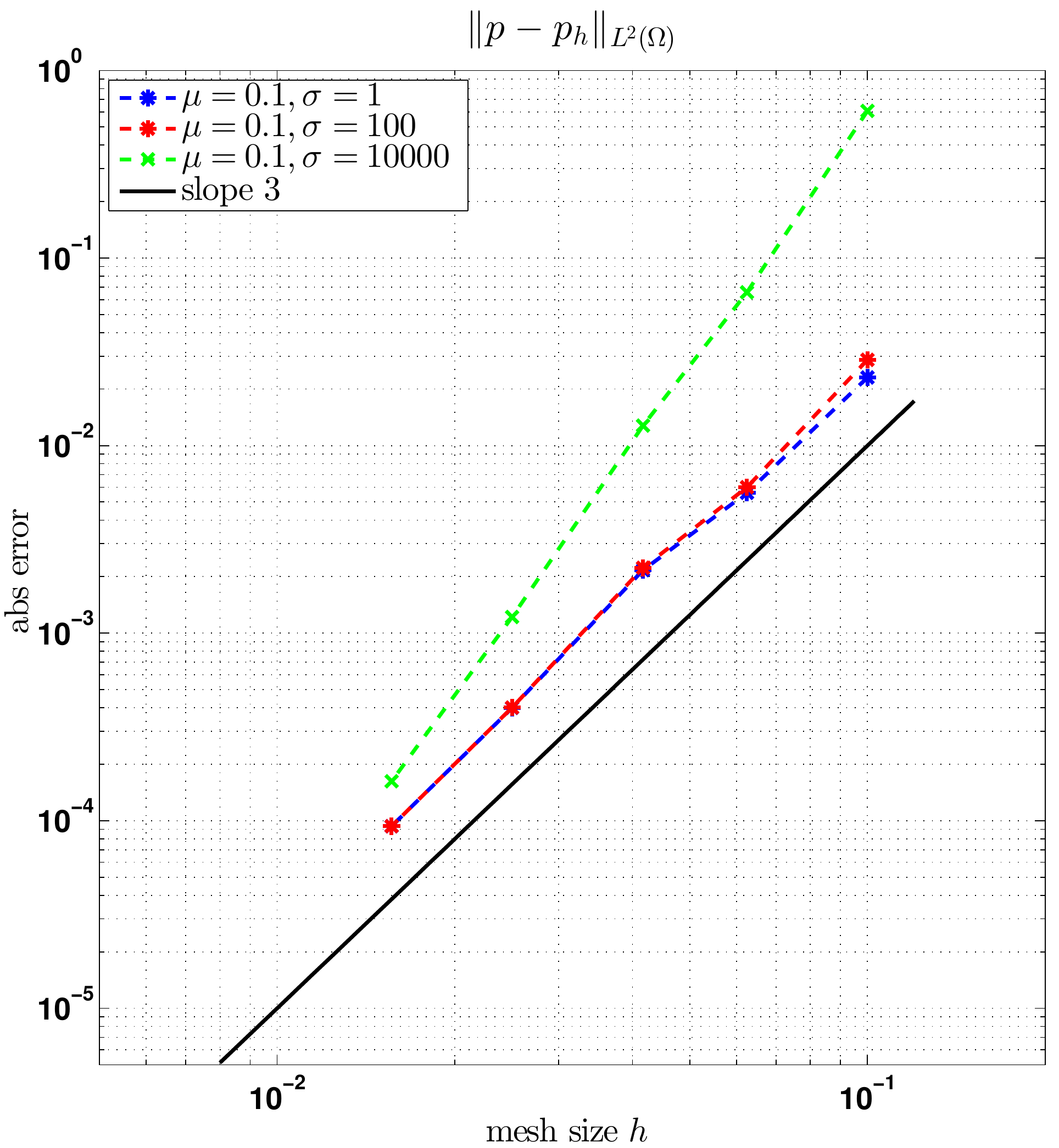}}\\
  \subfloat{\includegraphics[trim=0 0 0 0, clip, width=0.33\textwidth]{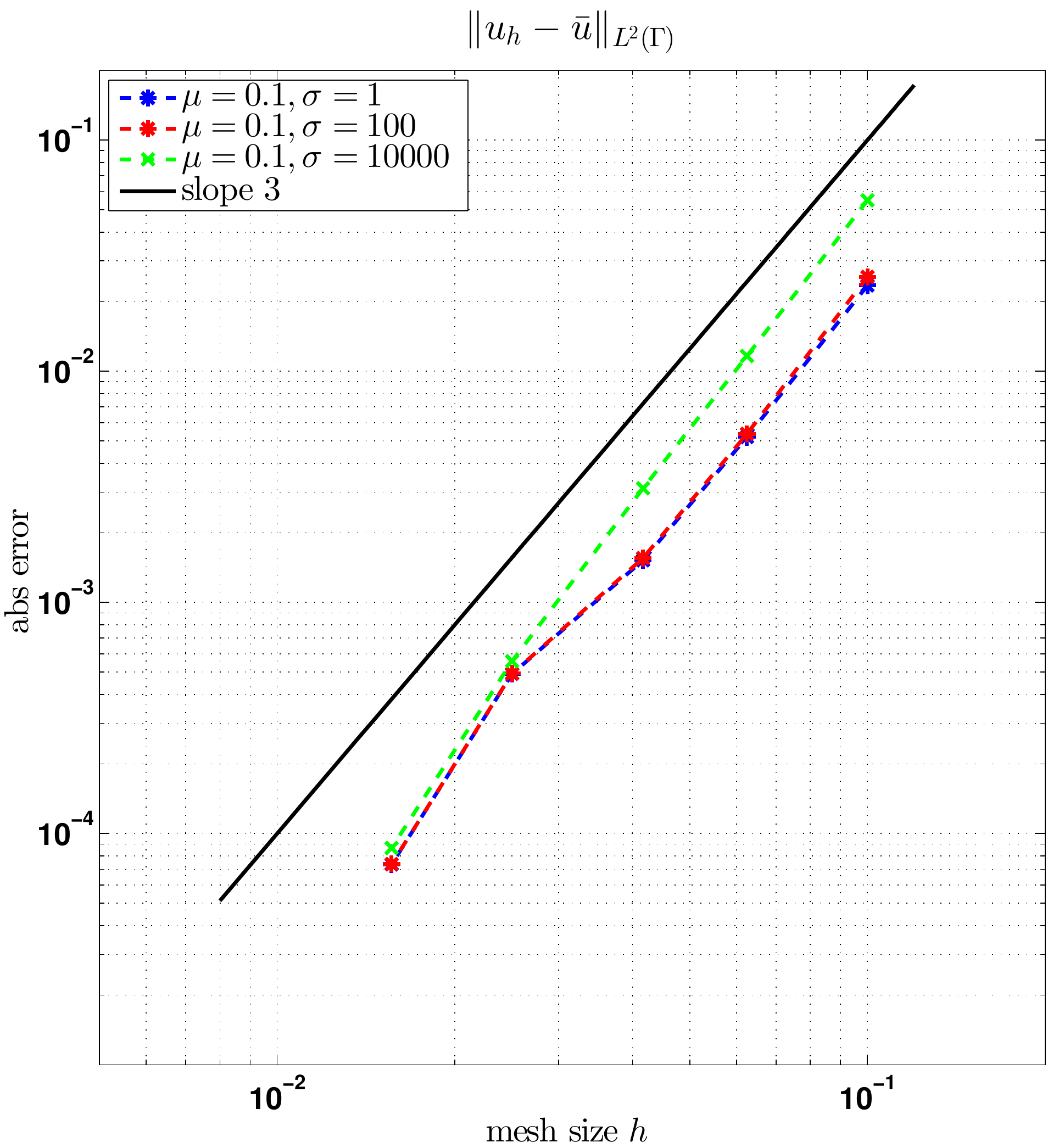}}
  \subfloat{\includegraphics[trim=0 0 0 0, clip, width=0.33\textwidth]{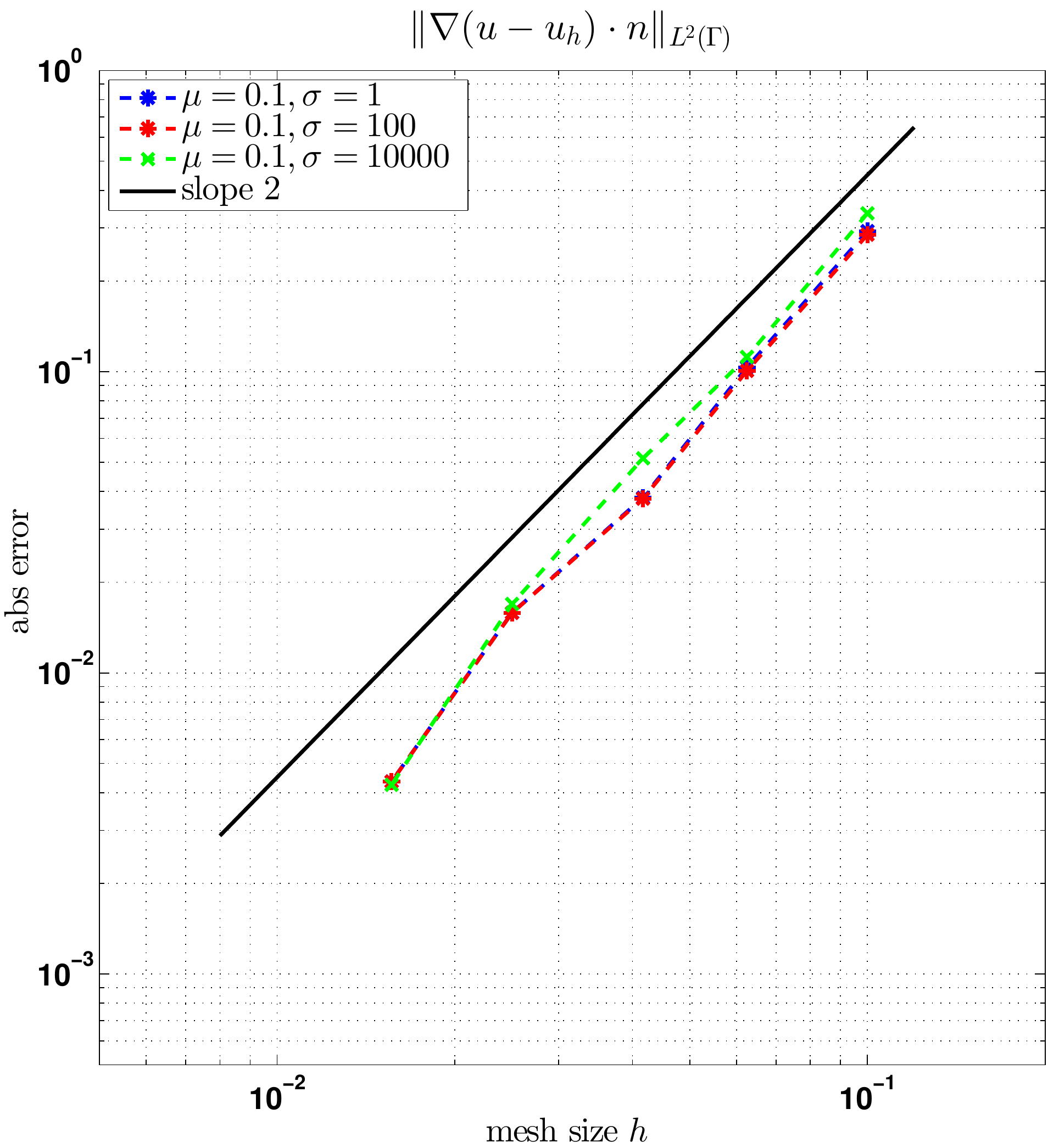}}
  \subfloat{\includegraphics[trim=0 0 0 0, clip, width=0.33\textwidth]{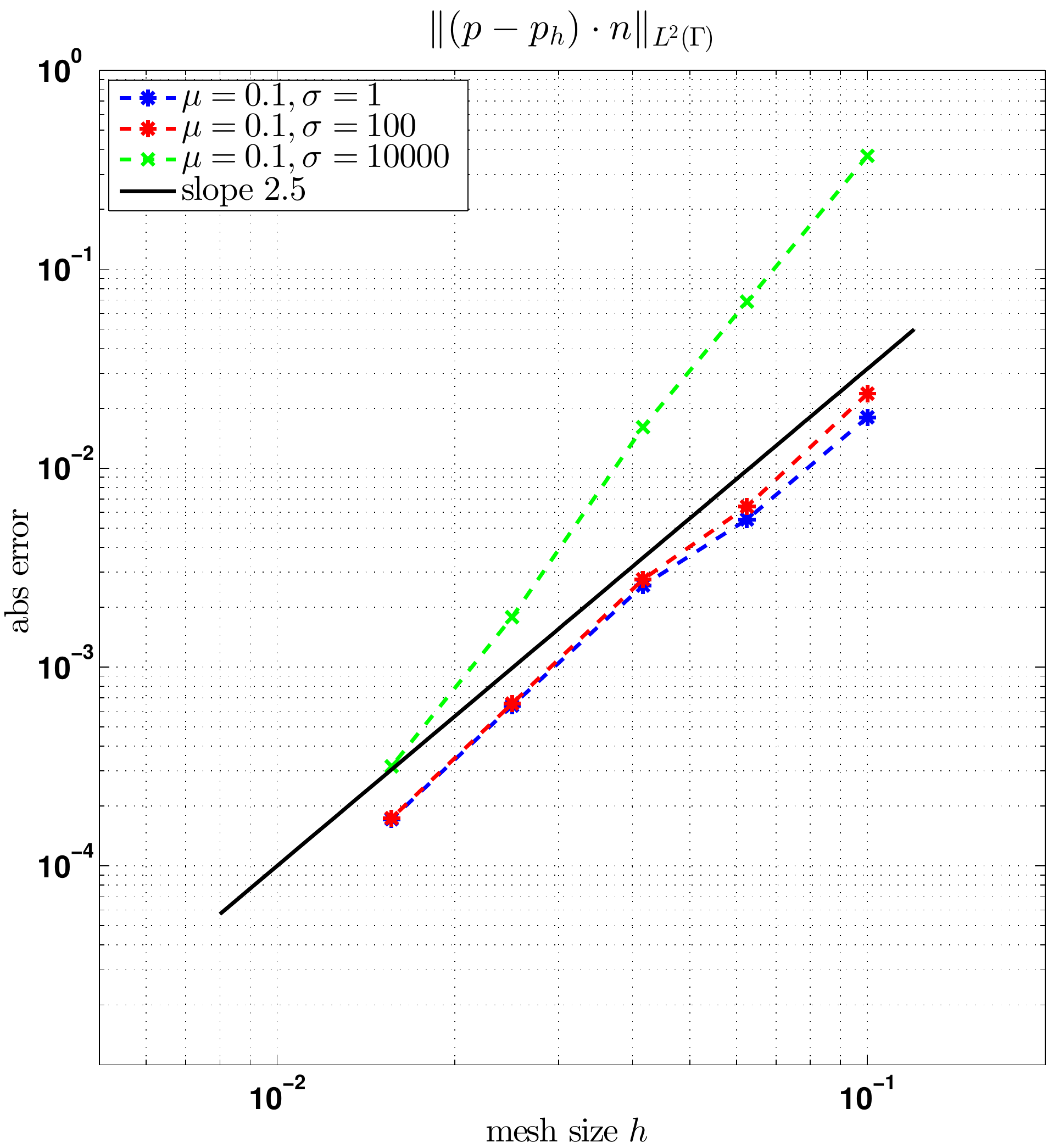}}
  \caption{
Low-Reynolds-number 2D Taylor Problem with $\mu=0.1$ and $\mcP^2$ approximations: Convergence rates in $L^2$-norms for velocity, velocity gradient and pressure in the domain (top row) and on the boundary (bottom row).
}
  \label{fig:kimmoin_oseen:spatial_convergence_visc_quadratic}
\end{figure}

\subsubsection{Convection dominant flow}

The same studies are carried out for convection dominant flow with $\mu=0.0001$.
In this setting, the resulting Oseen system exhibits highly varying element Reynolds numbers $\RE_T$ due to the locally dominating advective term $\bfbeta \cdot\nabla \bfu$.
In contrast to the previous studies, the full stabilization parameter scalings in $\phi_u,\phi_\beta,\phi_p$
including advective and reactive contributions, see \eqref{eq:cip-s_scalings},
are now required for all continuous interior penalty and related ghost-penalty stabilizations as well as for the mass conservation boundary term
to ensure inf-sup stability and optimality of the error convergence.
In~\Figref{fig:kimmoin_oseen:spatial_convergence_conv}
and \Figref{fig:kimmoin_oseen:spatial_convergence_conv_quadratic}
errors are reported for the same family of triangulations as for the viscous setting.
While the velocity approximations again show optimality in the domain and on the boundary, for the pressure we obtain convergence
of order $k+1$
, which
confirms the potential gain of half an order compared to the viscous flow regime,
similar to observations made in \cite{BurmanFernandezHansbo2006}.
\begin{figure}[t]
  \centering
  \subfloat{\includegraphics[trim=0 0 0 0, clip, width=0.33\textwidth]{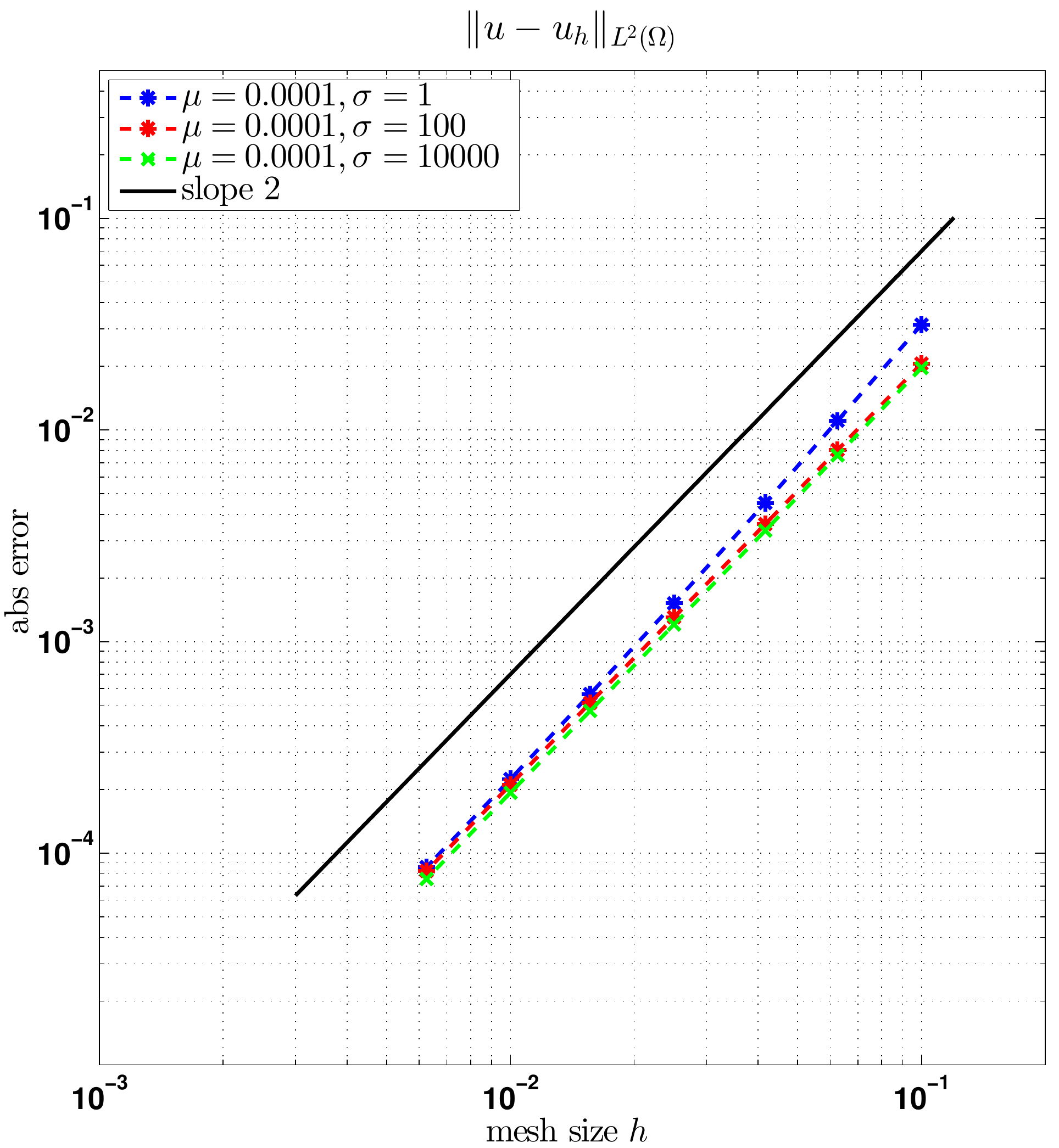}}
  \subfloat{\includegraphics[trim=0 0 0 0, clip, width=0.33\textwidth]{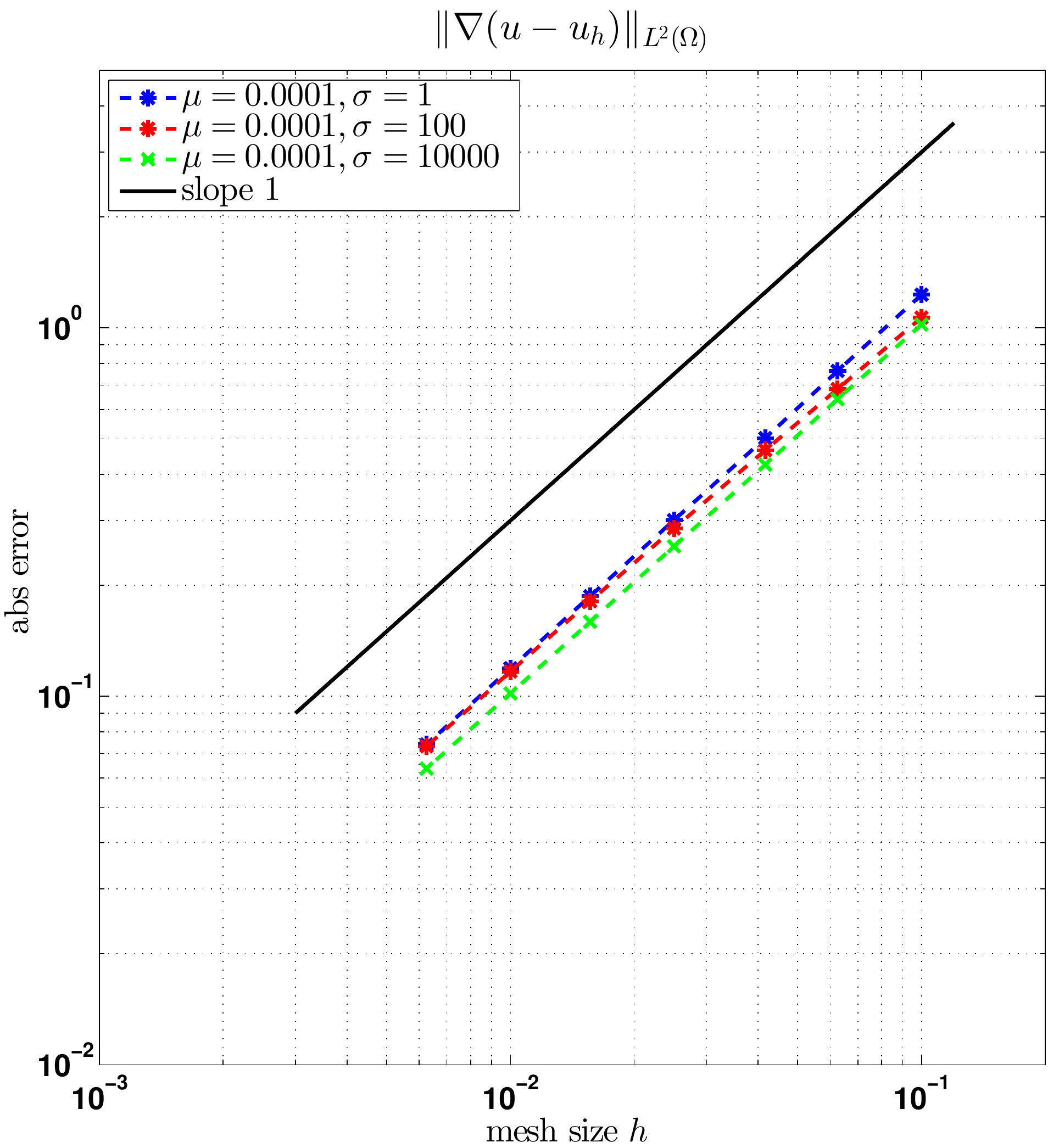}}
  \subfloat{\includegraphics[trim=0 0 0 0, clip, width=0.33\textwidth]{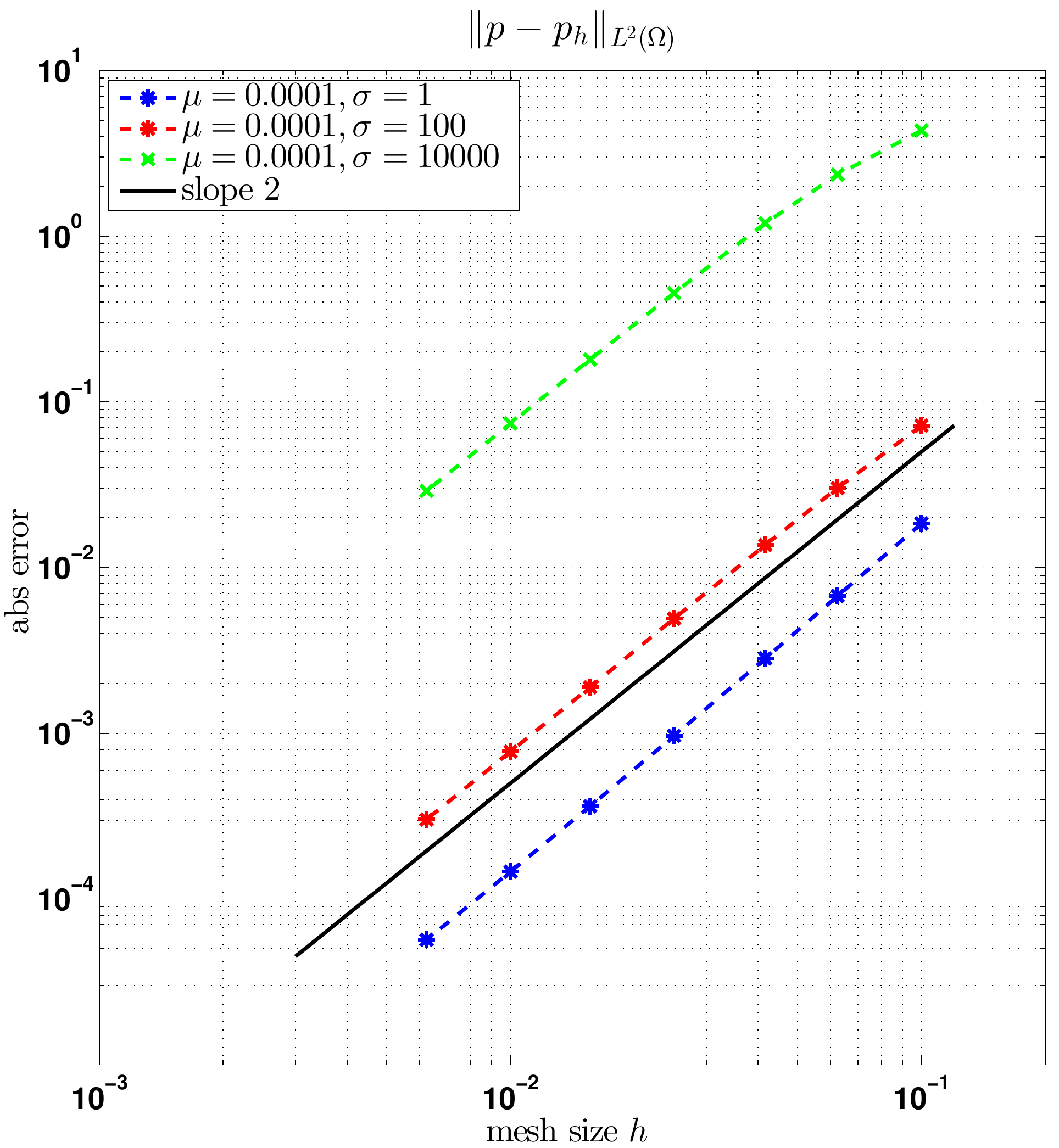}}\\
  \subfloat{\includegraphics[trim=0 0 0 0, clip, width=0.33\textwidth]{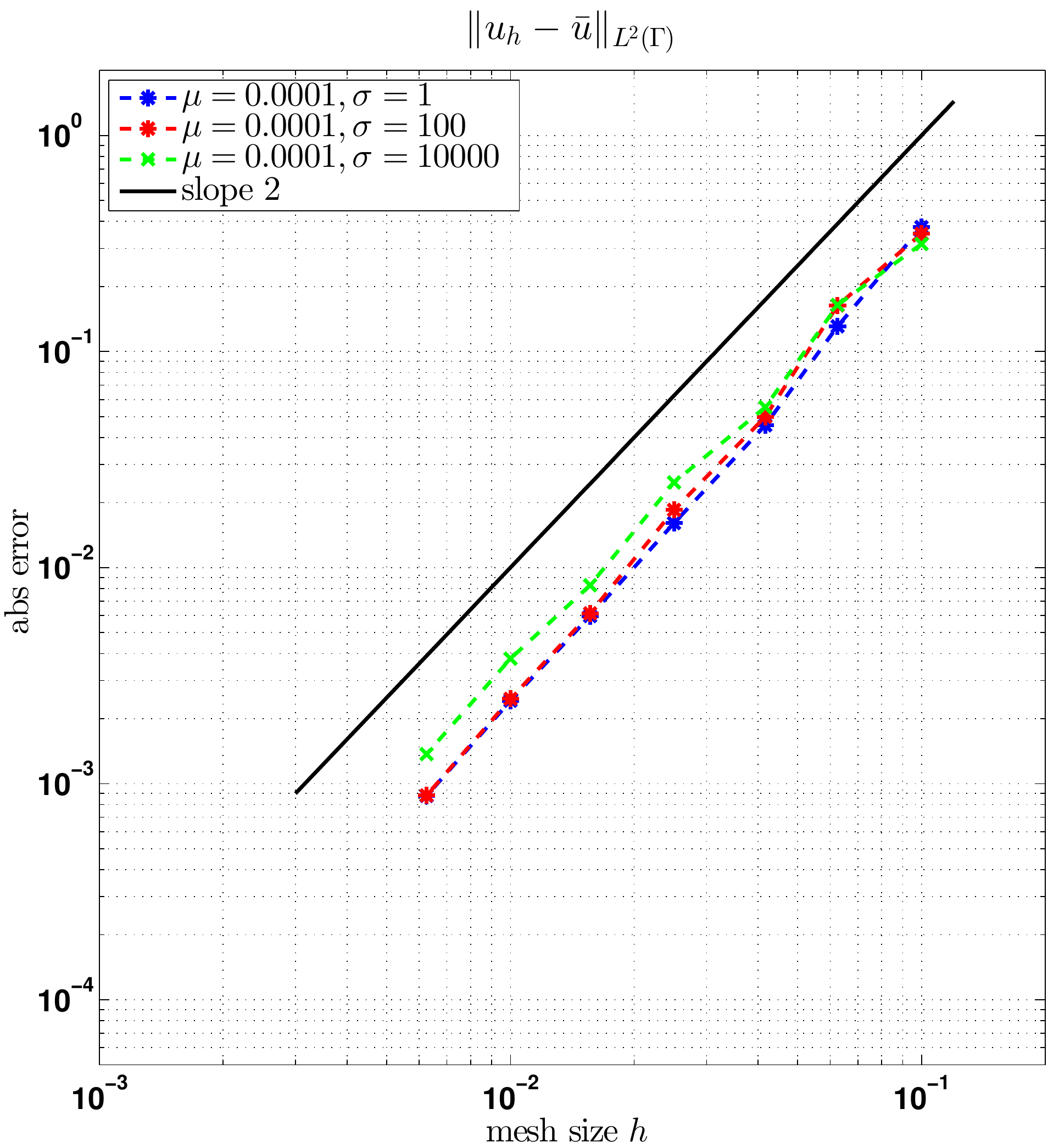}}
  \subfloat{\includegraphics[trim=0 0 0 0, clip, width=0.33\textwidth]{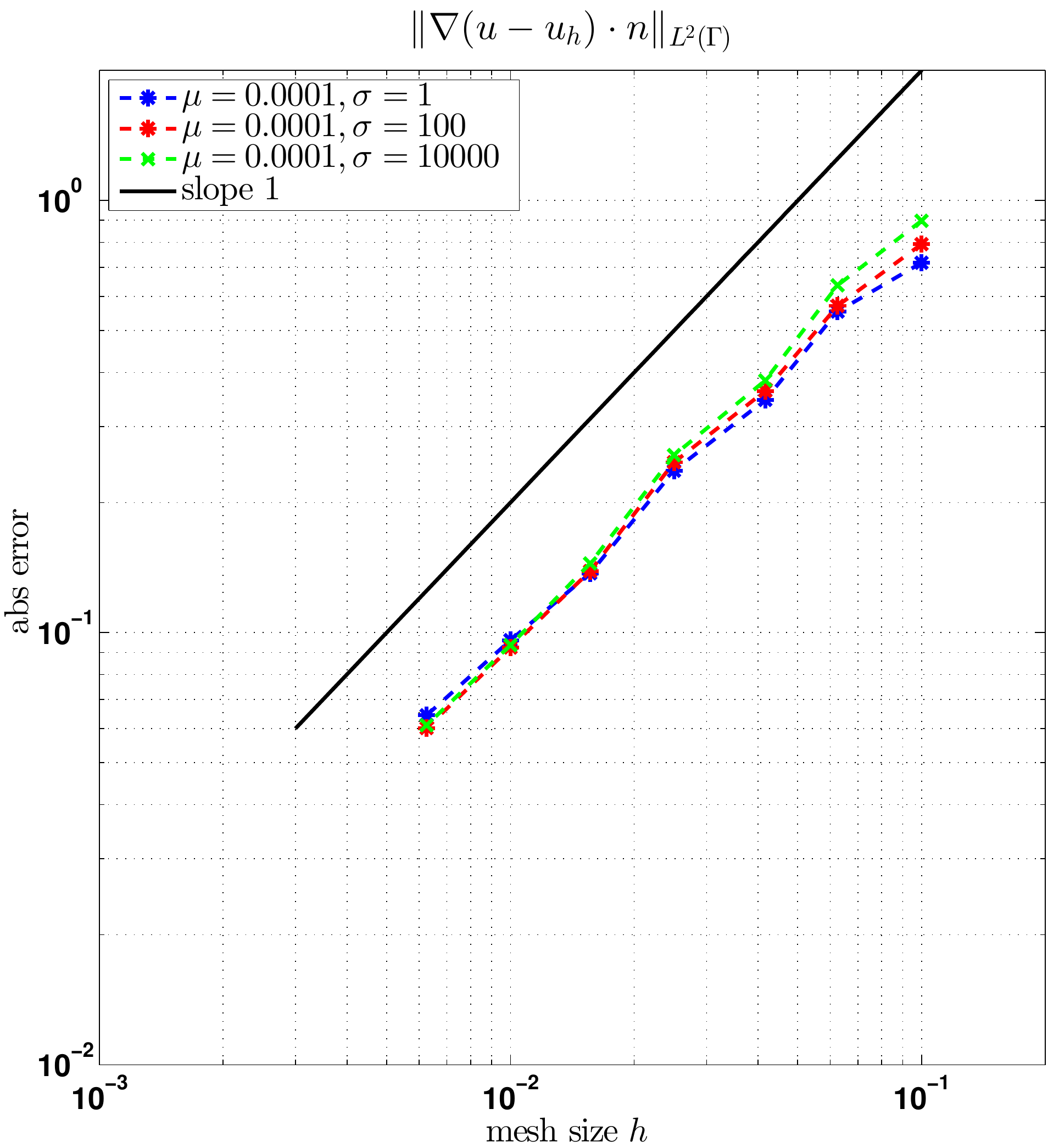}}
  \subfloat{\includegraphics[trim=0 0 0 0, clip, width=0.33\textwidth]{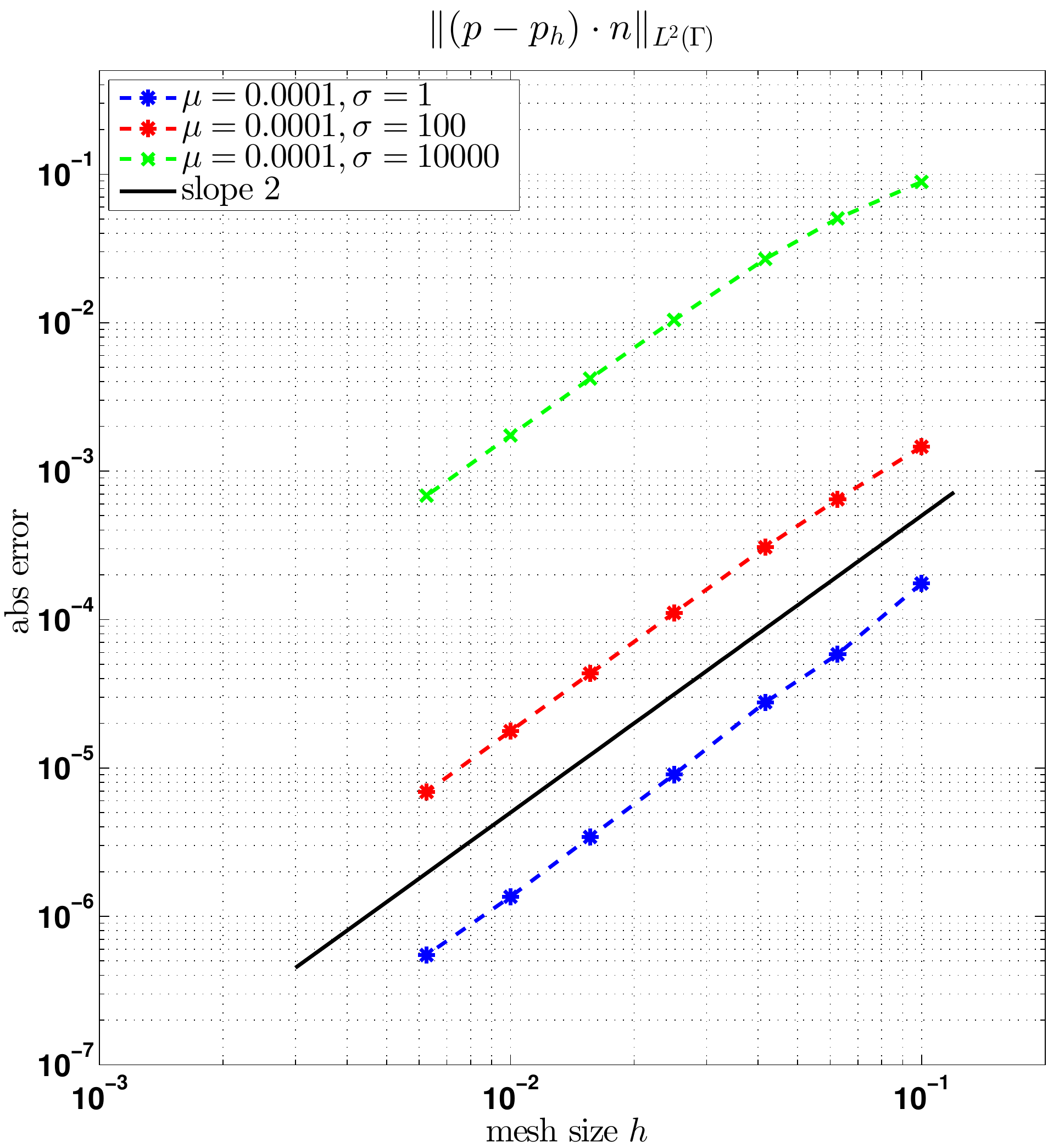}}
  \caption{High-Reynolds-number 2D Taylor Problem with $\mu=0.0001$ and $\mcP^1$ approximations: Convergence rates in $L^2$-norms for velocity, velocity gradient and pressure in the domain (top row) and on the boundary (bottom row).}
  \label{fig:kimmoin_oseen:spatial_convergence_conv}
\end{figure}

\begin{figure}[t]
  \centering
  \subfloat{\includegraphics[trim=0 0 0 0, clip, width=0.33\textwidth]{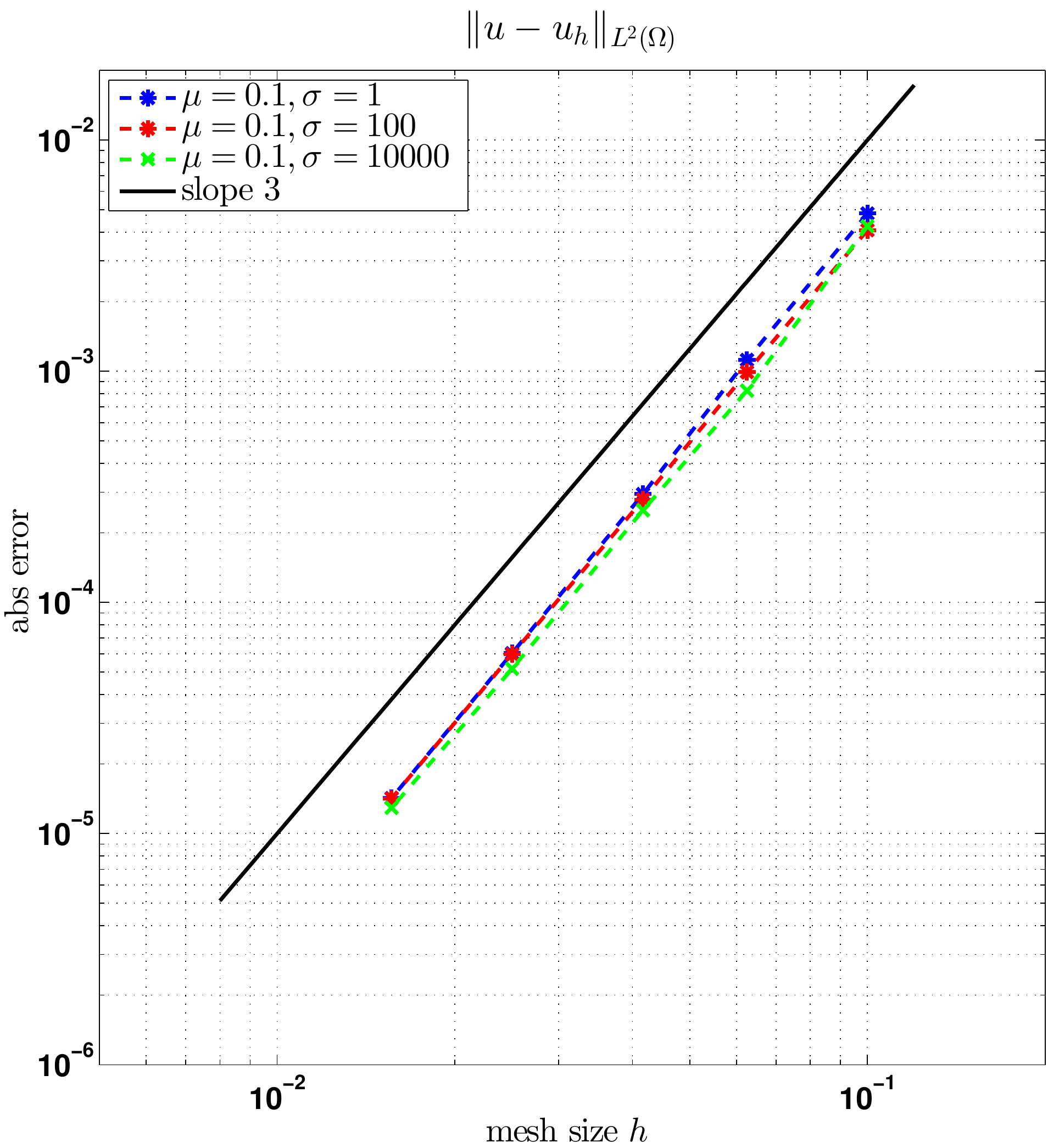}}
  \subfloat{\includegraphics[trim=0 0 0 0, clip, width=0.33\textwidth]{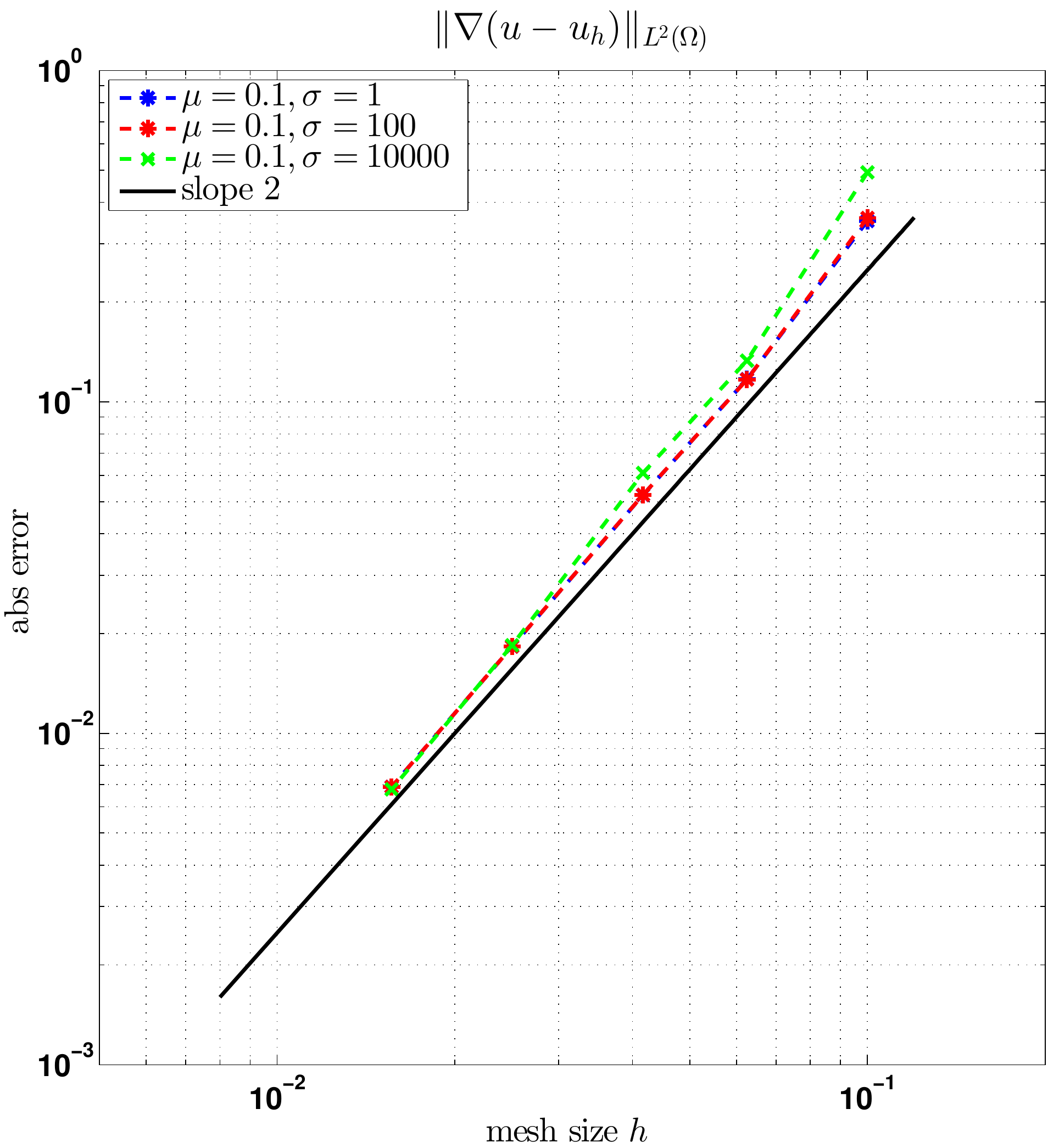}}
  \subfloat{\includegraphics[trim=0 0 0 0, clip, width=0.33\textwidth]{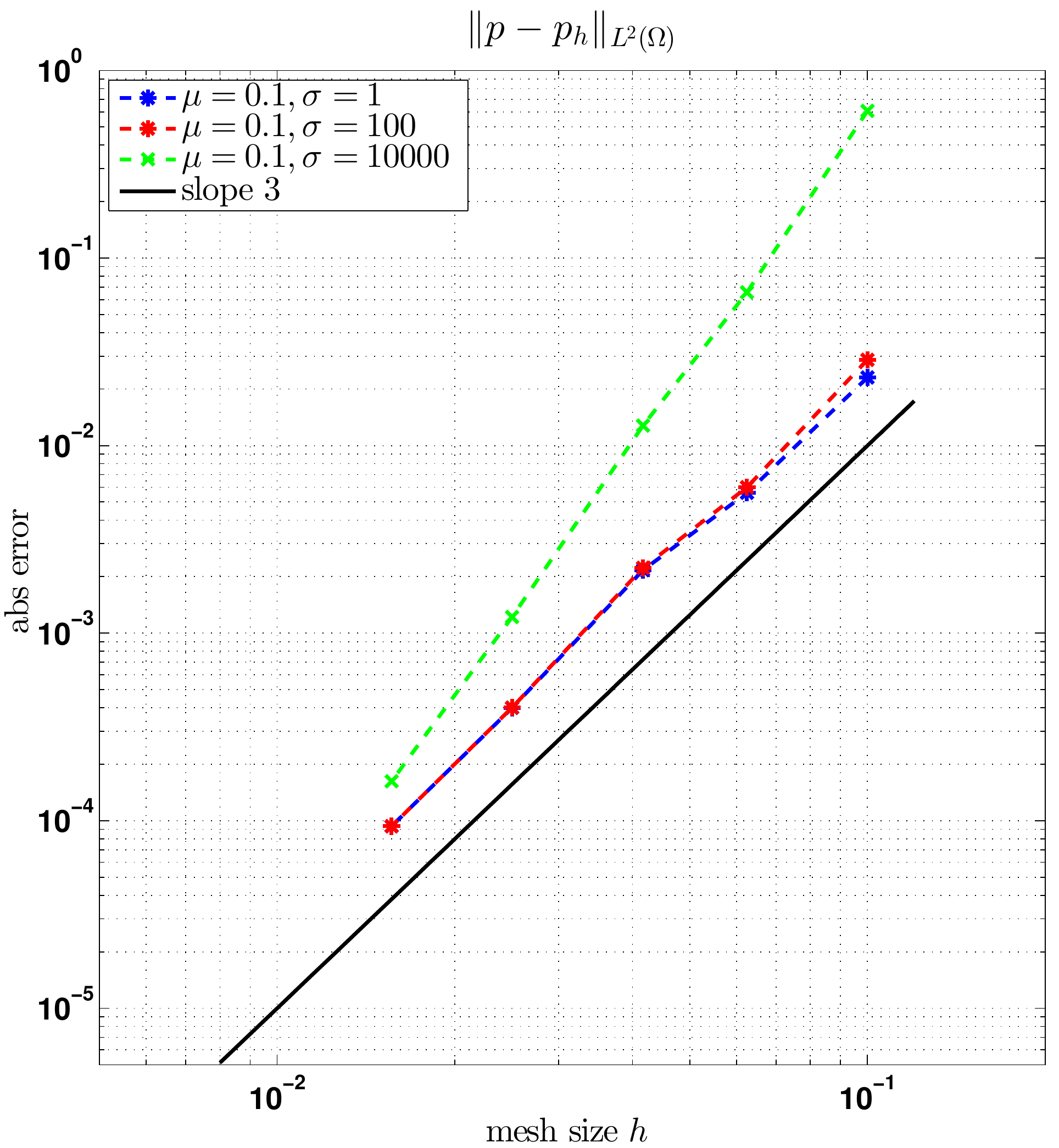}}\\
  \subfloat{\includegraphics[trim=0 0 0 0, clip, width=0.33\textwidth]{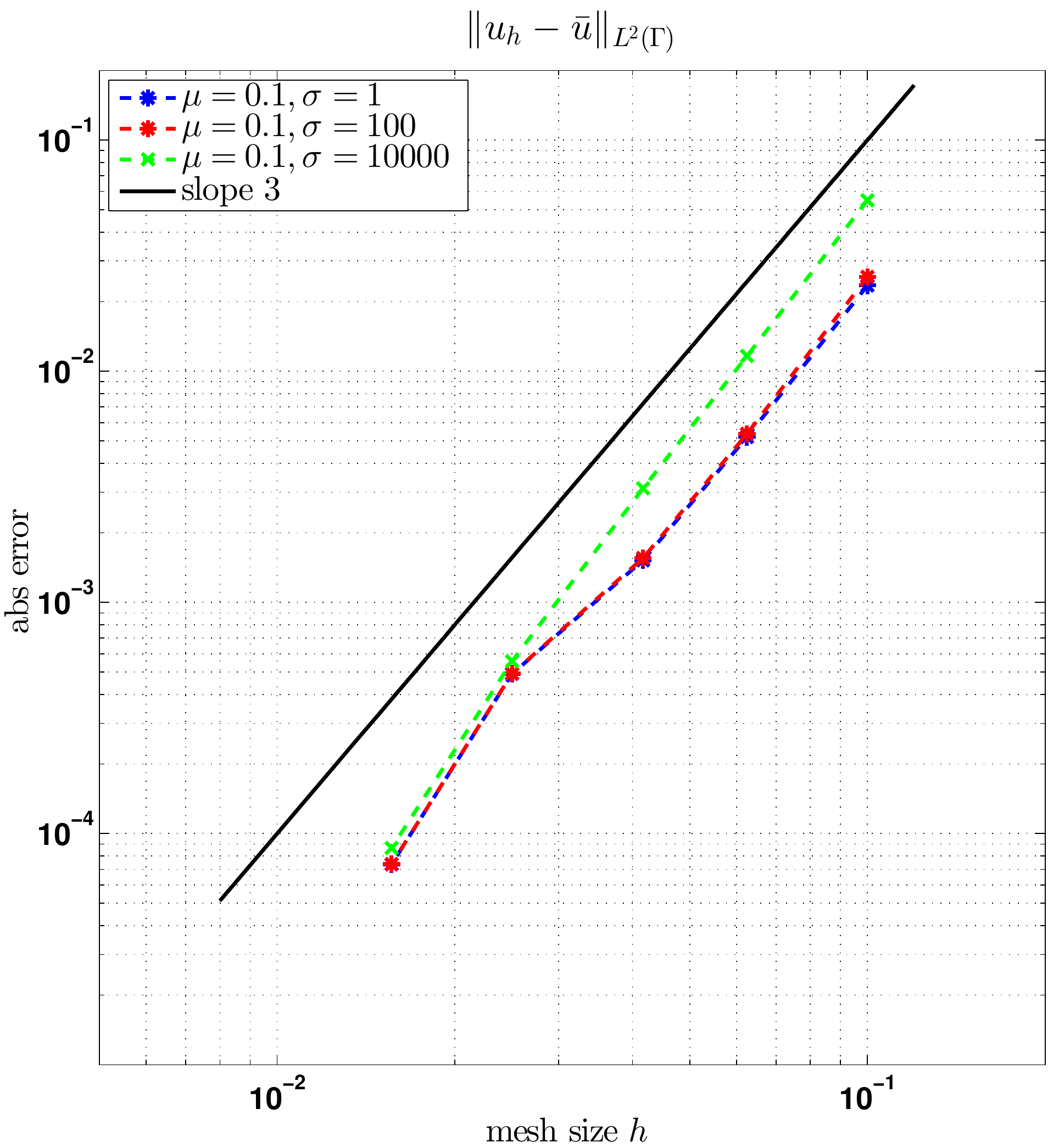}}
  \subfloat{\includegraphics[trim=0 0 0 0, clip, width=0.33\textwidth]{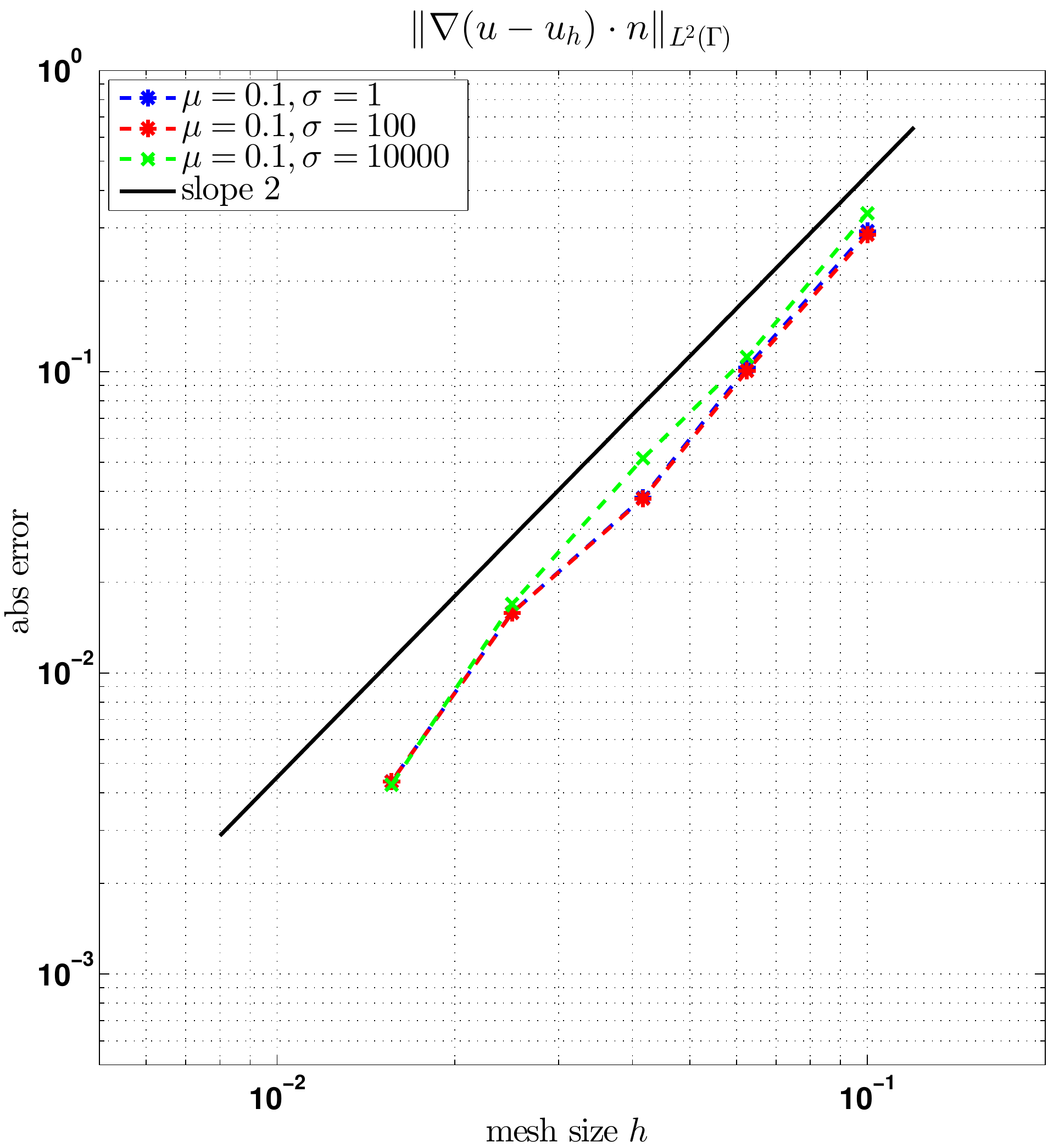}}
  \subfloat{\includegraphics[trim=0 0 0 0, clip, width=0.33\textwidth]{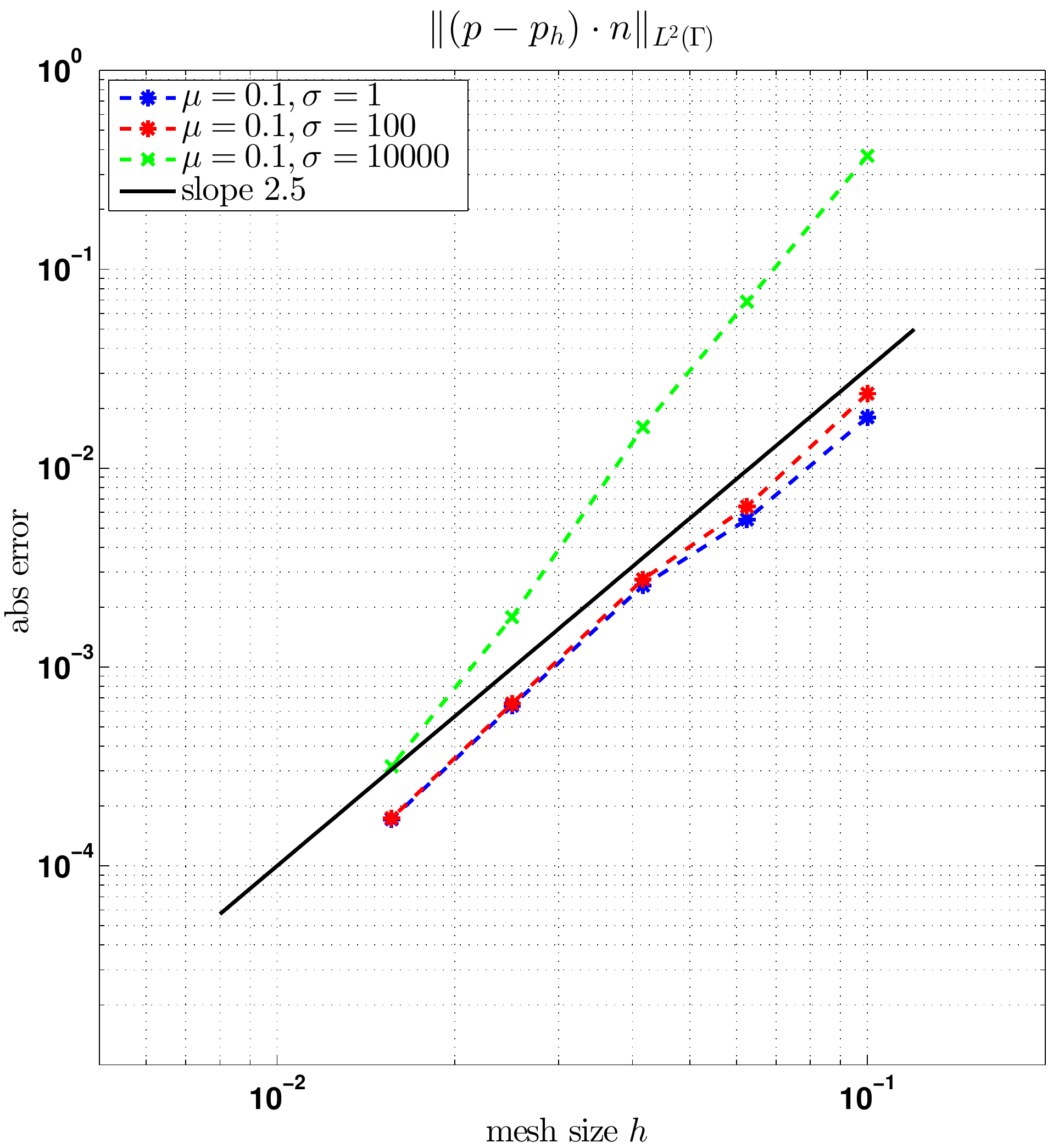}}
  \caption{
High-Reynolds-number 2D Taylor Problem with $\mu=0.0001$ and $\mcP^2$ approximations: Convergence rates in $L^2$-norms for velocity, velocity gradient and pressure in the domain (top row) and on the boundary (bottom row).
}
  \label{fig:kimmoin_oseen:spatial_convergence_conv_quadratic}
\end{figure}

\subsection{Convergence Study - 3D Beltrami Flow}
\label{ssec:numexamples:beltrami_problem}

To support the theoretical results also in three spatial dimensions, we consider the well-studied Beltrami-flow example, see, e.g, in \cite{EthierSteinman1994, BurmanFernandezHansbo2006}.
The steady Beltrami flow is analytically given as
\begin{align}
 u_{1}(x_1,x_2,x_3) &= b e^{a(x_1-x_3) + b(x_2-x_3)} - a e^{a(x_3-x_2) + b(x_1-x_2)} \label{eq:beltrami_function_u_x},\\
 u_{2}(x_1,x_2,x_3) &= b e^{a(x_2-x_1) + b(x_3-x_1)} - a e^{a(x_1-x_3) + b(x_2-x_3)} \label{eq:beltrami_function_u_y},\\
 u_{3}(x_1,x_2,x_3) &= b e^{a(x_3-x_2) + b(x_1-x_2)} - a e^{a(x_2-x_1) + b(x_3-x_1)} \label{eq:beltrami_function_u_z},\\
 p(x_1,x_2,x_3)     &= (a^2+b^2+ab) [ e^{a(x_1-x_2)+b(x_1-x_3)} + e^{a(x_2-x_3)+b(x_2-x_1)} + e^{a(x_3-x_1)+b(x_3-x_2)}]  \label{eq:beltrami_function_p}
\end{align}
with $a=b=\pi/4$.
The velocity field $\bfu$ is solenoidal by construction.
The right-hand side $\bff$ and the boundary data $\bfg$ are adapted to the Oseen problem \eqref{eq:oseen-problem-momentum}--\eqref{eq:oseen-problem-boundary} accordingly.

Numerical solutions are computed on a spherical fluid domain with radius $r=0.45$,
given implicitly as
\begin{equation}
  \Omega=\left\{ \bfx \in \mathbb{R}^3~|~\phi(x_1,x_2,x_3) = \sqrt{(x_1-1.0)^2+(x_2-0.5)^2+(x_3-0.5)^2}-0.45<0\right\},
\end{equation}
where its center is located at $(1.0,0.5,0.5)$.
The level-set field $\phi$ and the solutions are approximated on the active parts of a family of background meshes $\widehat{\mcT}_h$ covering a background cube $[0.5,1.5]\times[0,1]^2$ .
Thereby, each mesh is constructed of $N^3$~cubes, where each cube is subdivided into six tetrahedra $\mcP^1(T)$.
Here, $N$ denotes the number of cubes in each coordinate direction and $h=1/N$ is the short length of each tetrahedra~$T$.
Similar to the two-dimensional setting, a low- and a high-Reynolds-number setting is considered,
characterized by two different viscosities $\mu = 0.1$ and $\mu=0.0001$.
Computed velocity and pressure approximation errors $(\bfu_h-\bfu)$ and $(p_h-p)$ are shown in~\Figref{fig:beltrami_oseen:spatial_convergence_visc}
for the low-Reynolds-number case and in \Figref{fig:beltrami_oseen:spatial_convergence_conv} for the high-Reynolds-number case.
The same optimal rates for velocity errors as well as super-convergence for the pressure can be observed similar to the two-dimensional example.
\begin{figure}[ht]
  \centering
  \subfloat{\includegraphics[trim=0 0 0 0, clip, width=0.33\textwidth]{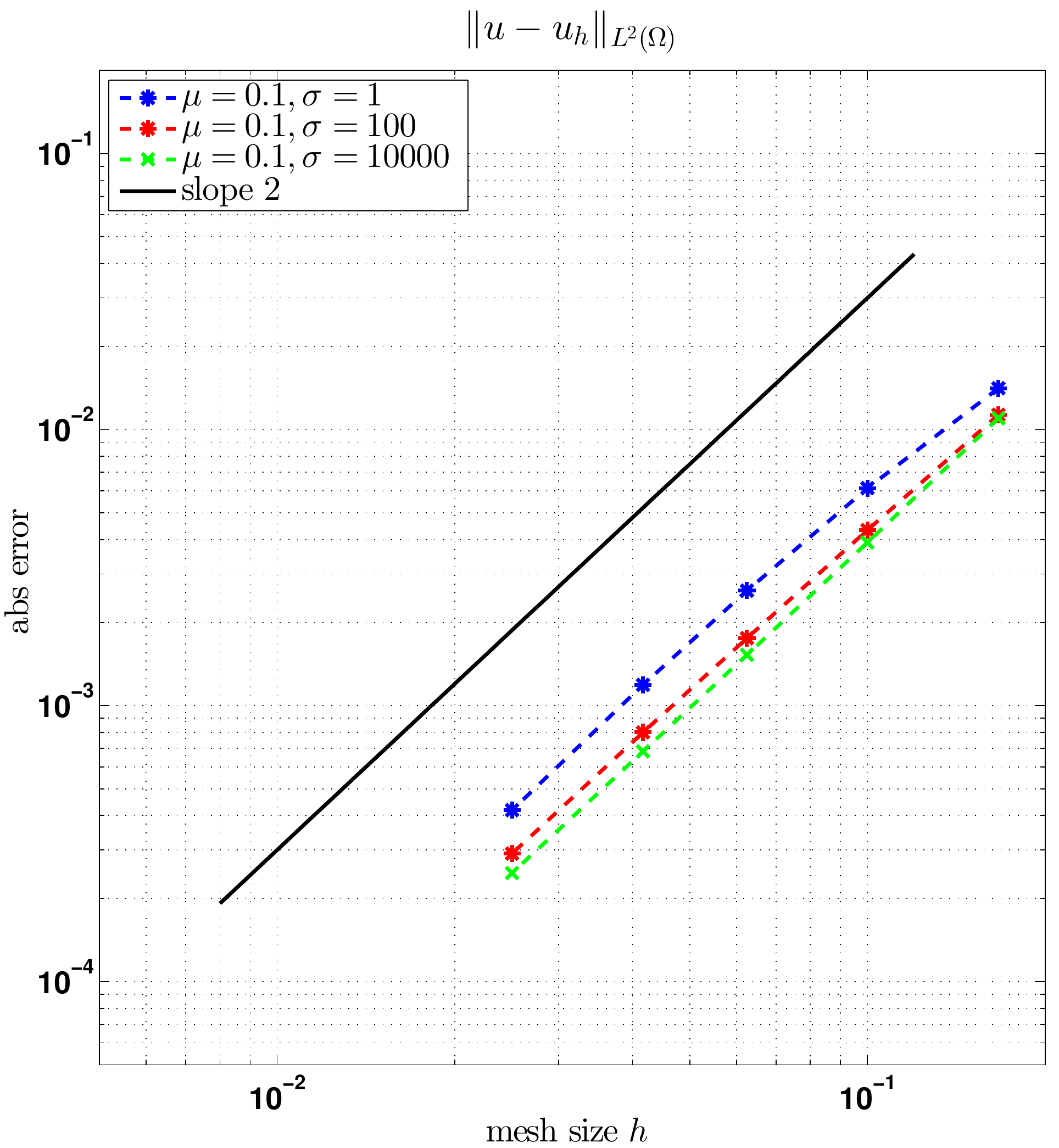}}
  \subfloat{\includegraphics[trim=0 0 0 0, clip, width=0.33\textwidth]{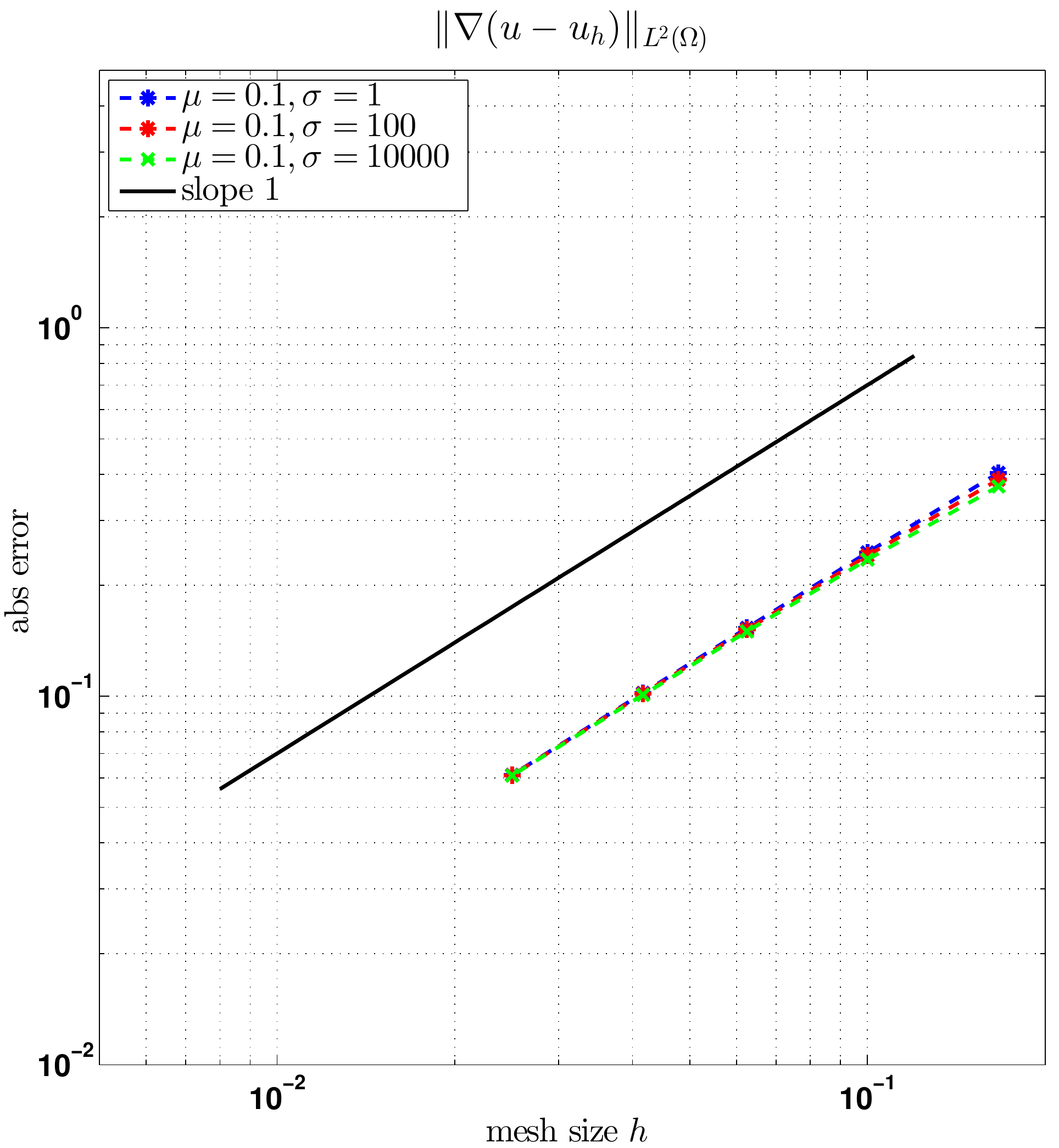}}
  \subfloat{\includegraphics[trim=0 0 0 0, clip, width=0.33\textwidth]{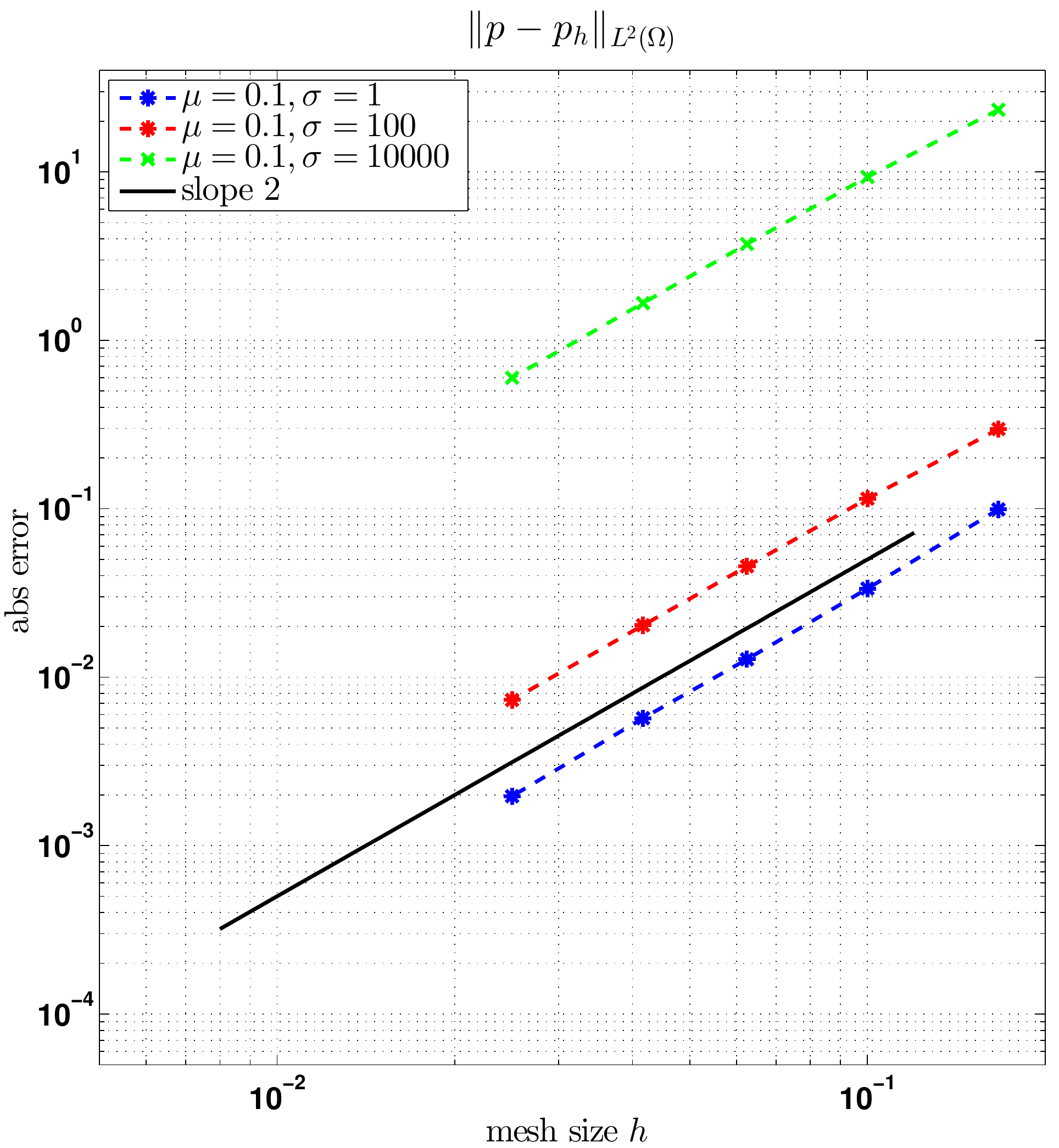}}\\
  \subfloat{\includegraphics[trim=0 0 0 0, clip, width=0.33\textwidth]{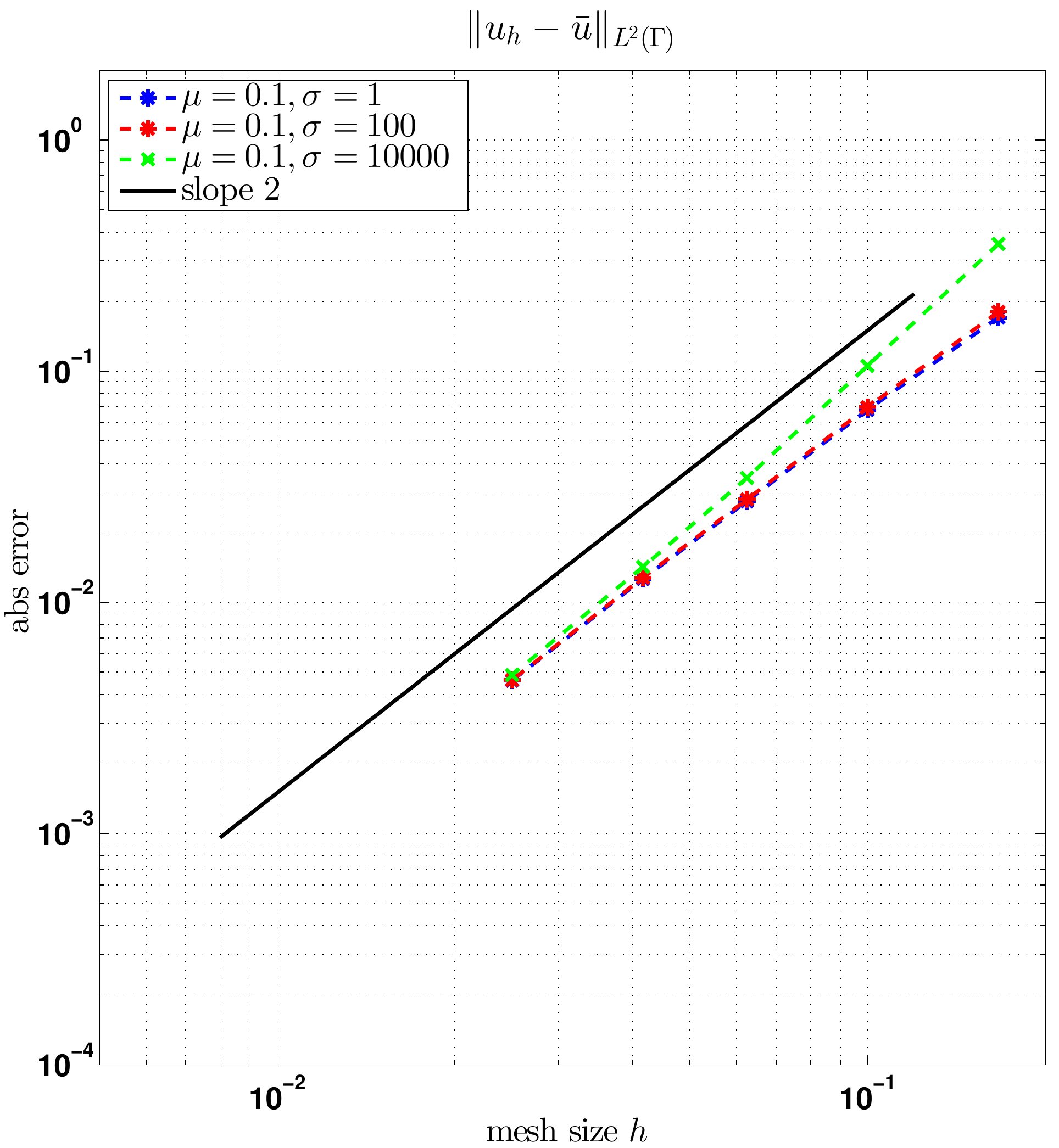}}
  \subfloat{\includegraphics[trim=0 0 0 0, clip, width=0.33\textwidth]{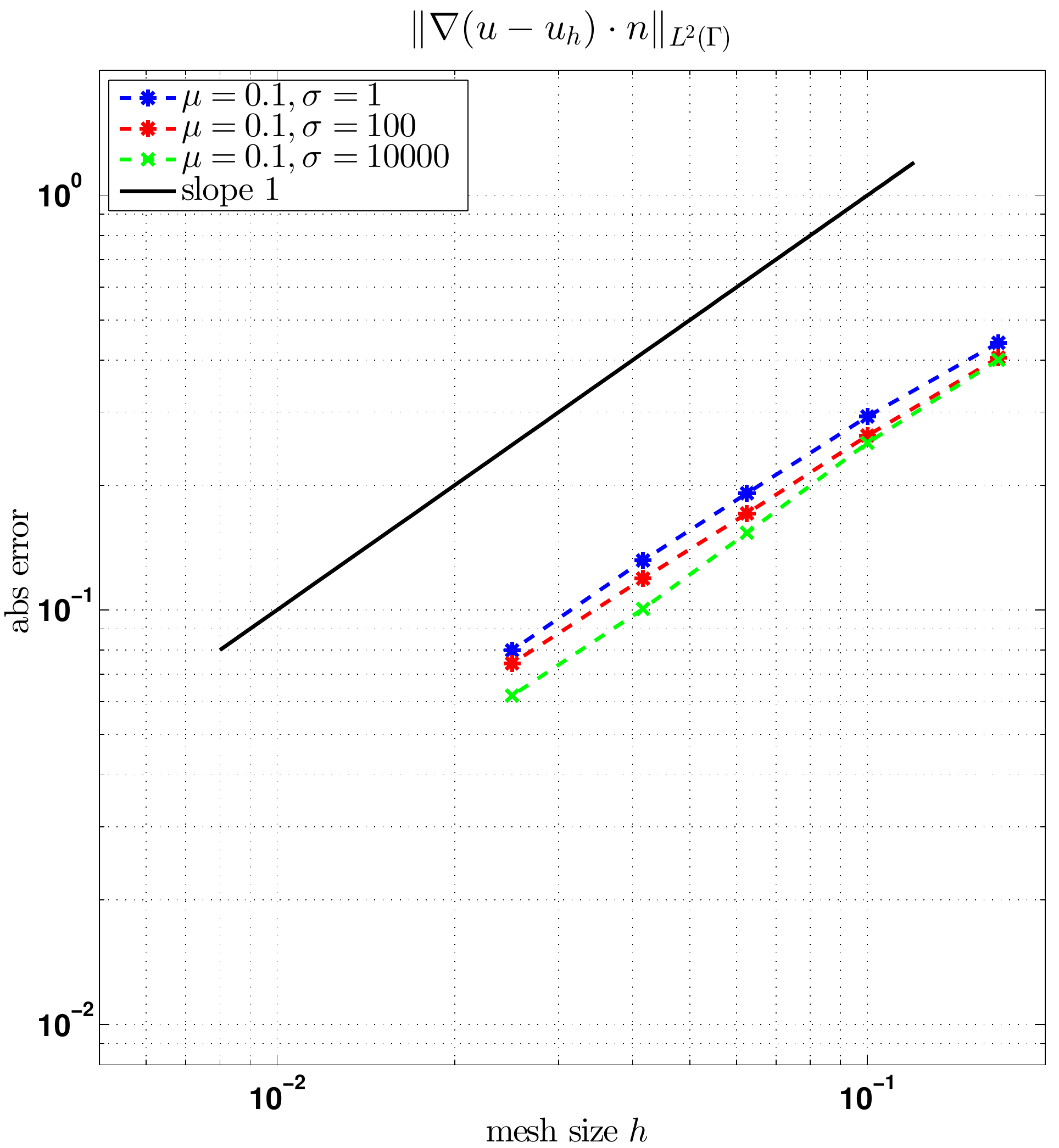}}
  \subfloat{\includegraphics[trim=0 0 0 0, clip, width=0.33\textwidth]{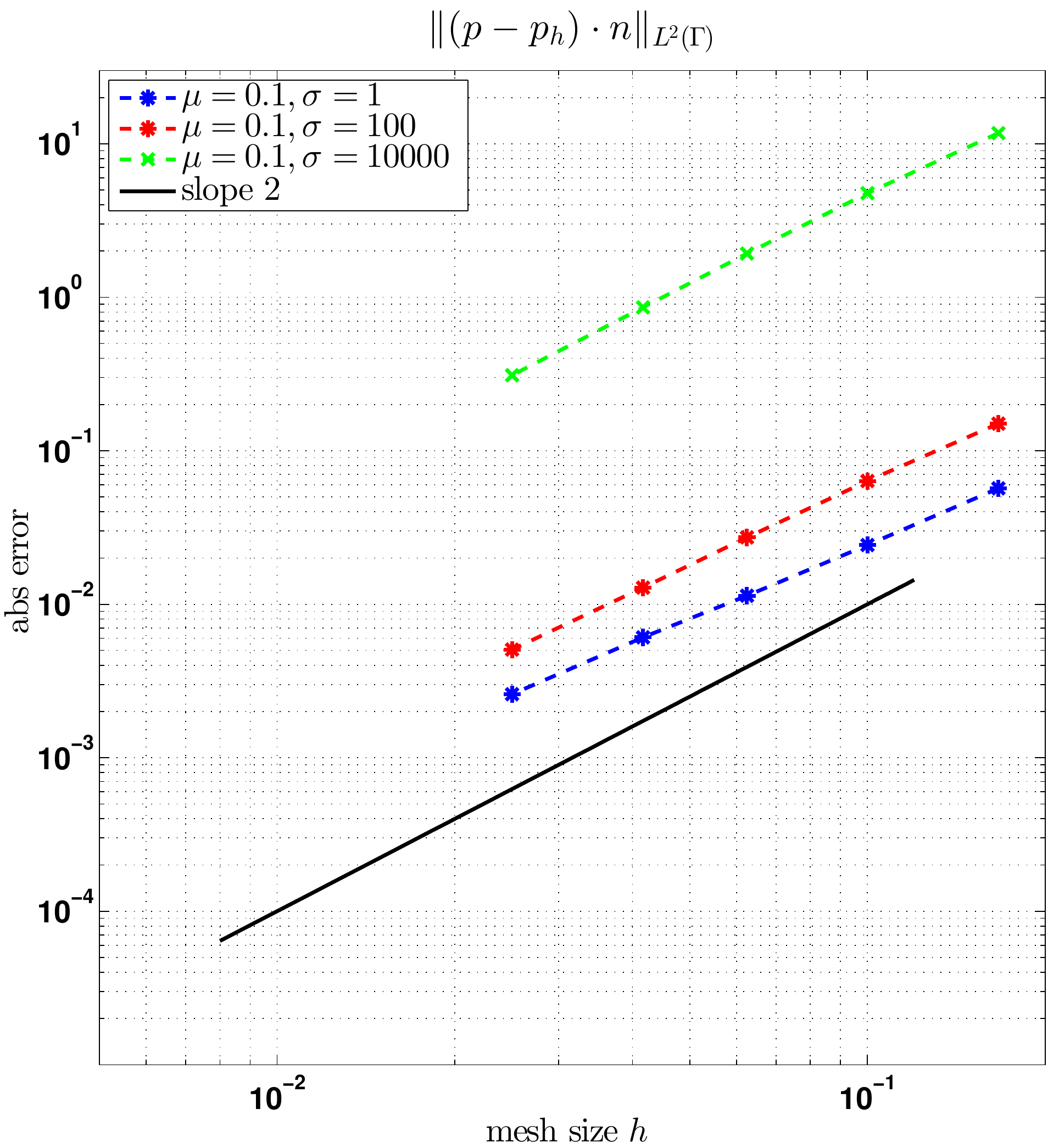}}
  \caption{Low-Reynolds-number 3D Beltrami Flow with $\mu=0.1$: Convergence rates in $L^2$-norms for velocity, velocity gradient and pressure in the domain (top row) and on the boundary (bottom row).}
  \label{fig:beltrami_oseen:spatial_convergence_visc}
\end{figure}

\begin{figure}[ht]
  \centering
  \subfloat{\includegraphics[trim=0 0 0 0, clip, width=0.33\textwidth]{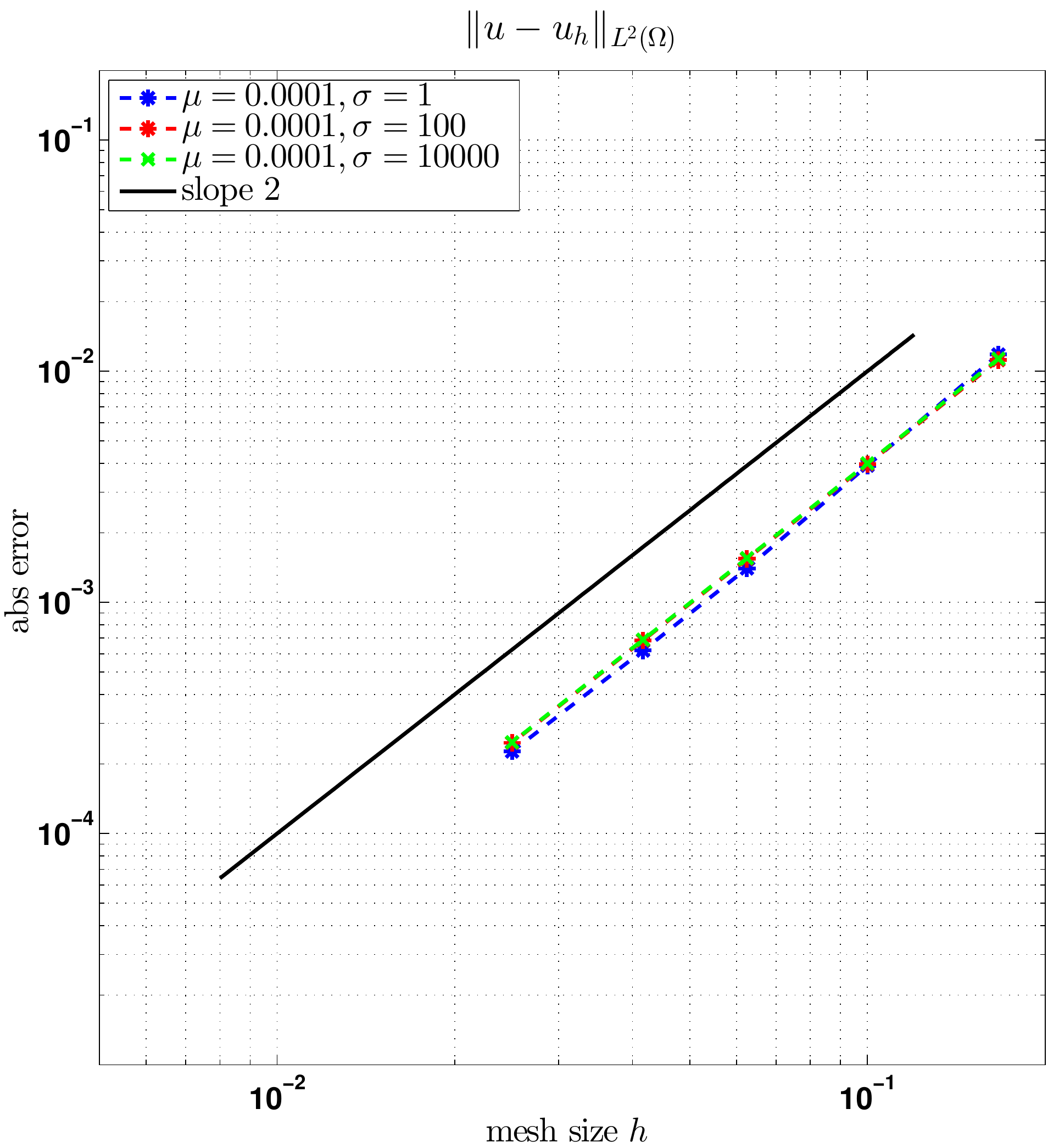}}
  \subfloat{\includegraphics[trim=0 0 0 0, clip, width=0.33\textwidth]{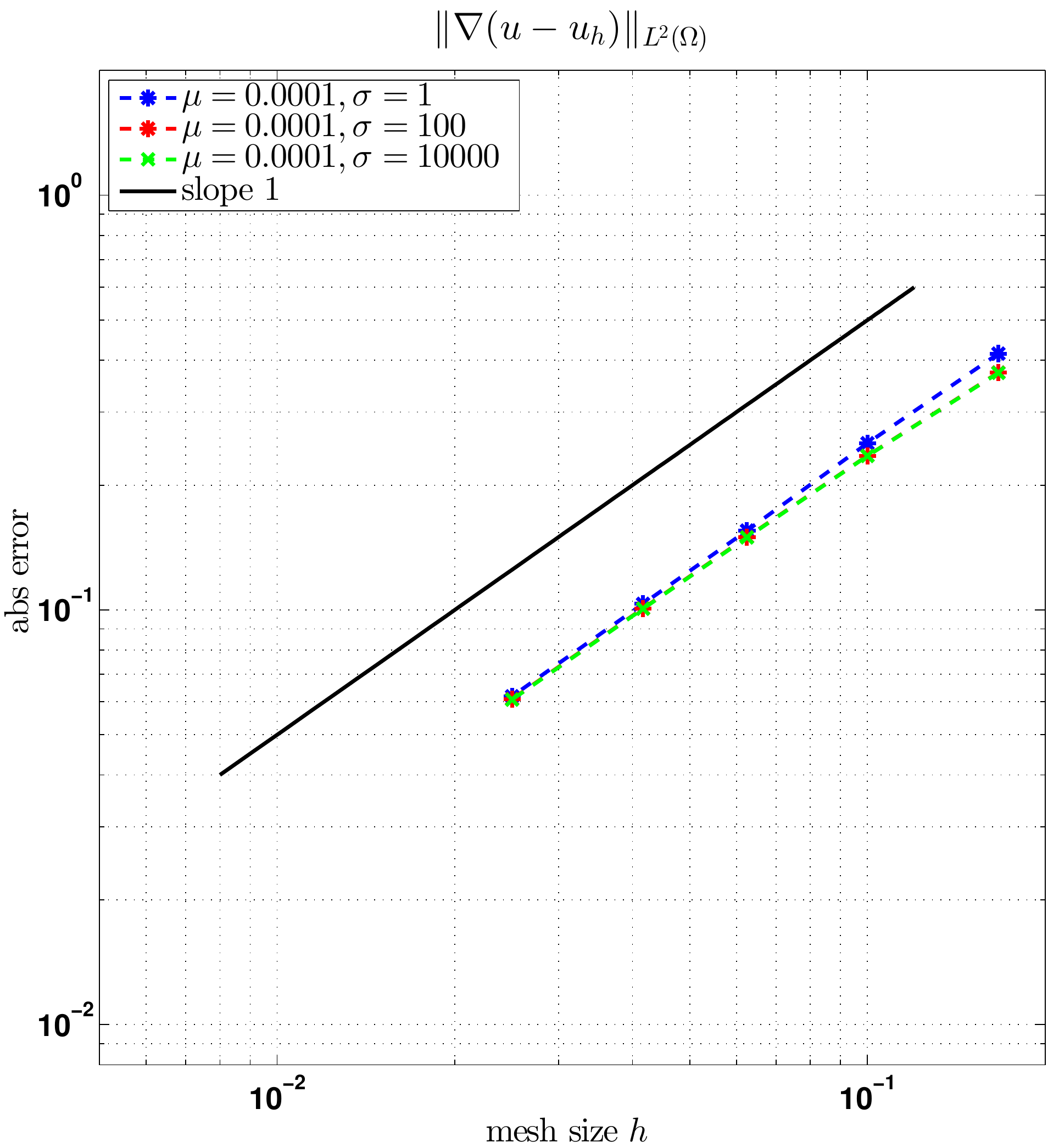}}
  \subfloat{\includegraphics[trim=0 0 0 0, clip, width=0.33\textwidth]{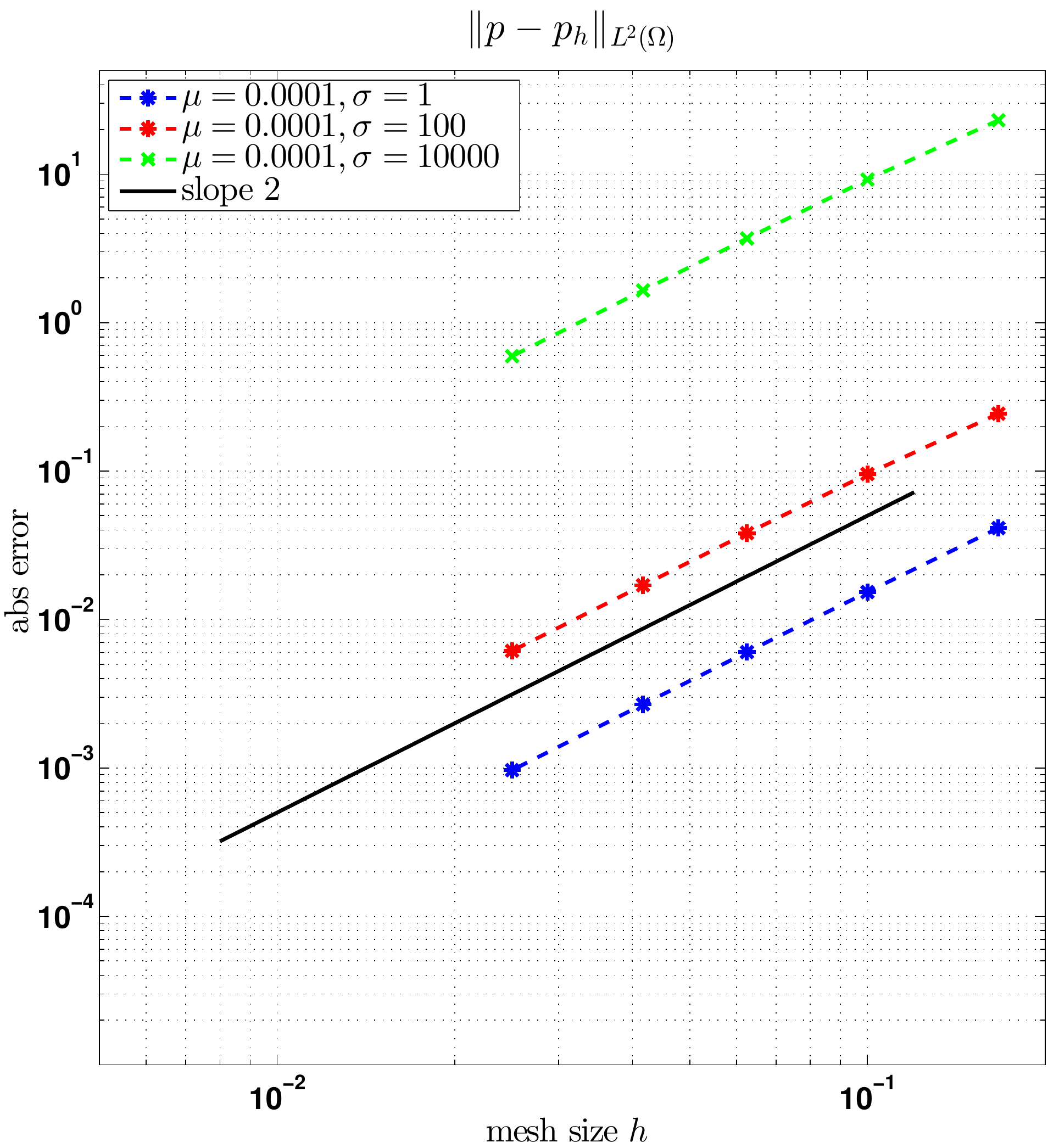}}\\
  \subfloat{\includegraphics[trim=0 0 0 0, clip, width=0.33\textwidth]{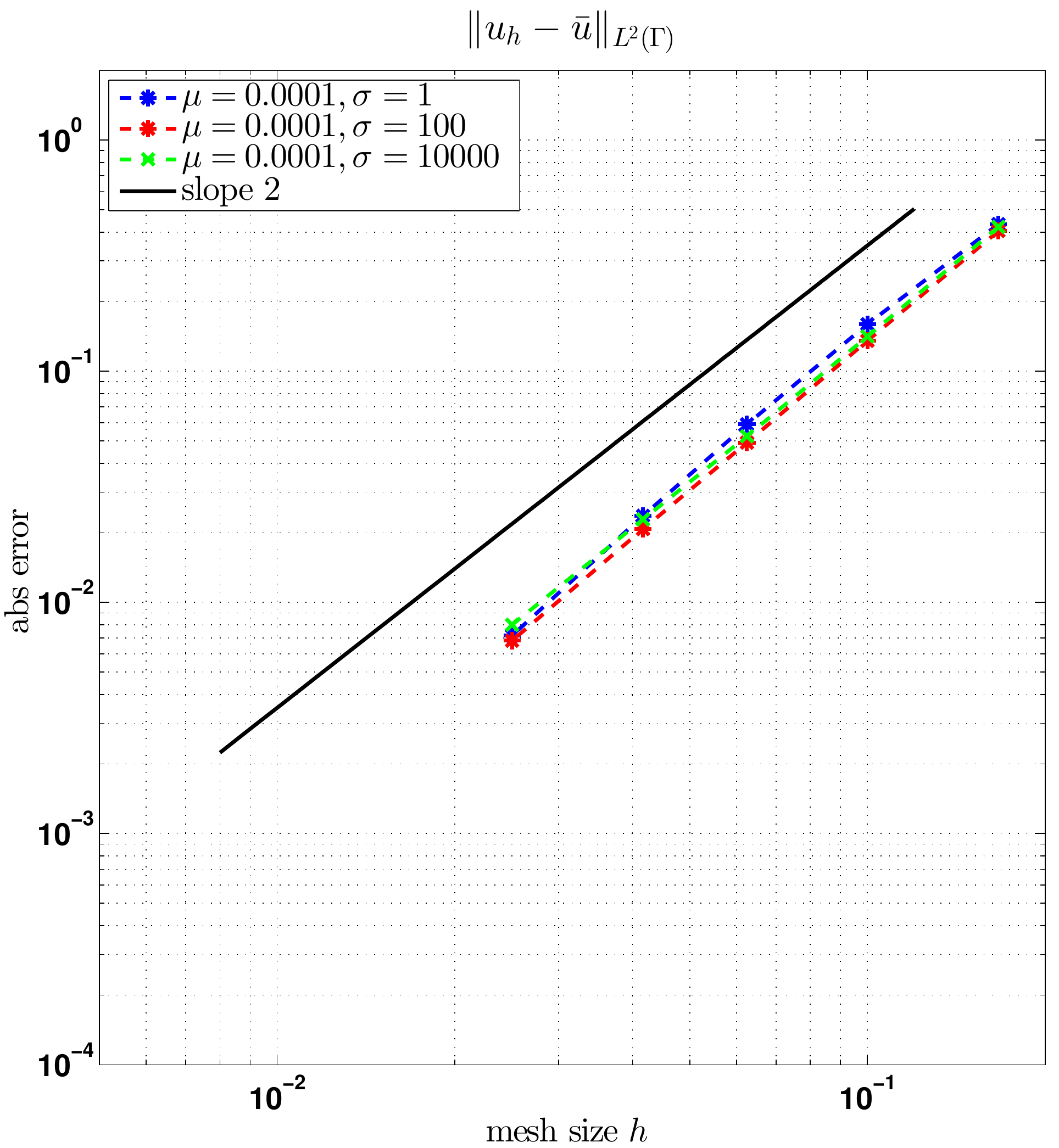}}
  \subfloat{\includegraphics[trim=0 0 0 0, clip, width=0.33\textwidth]{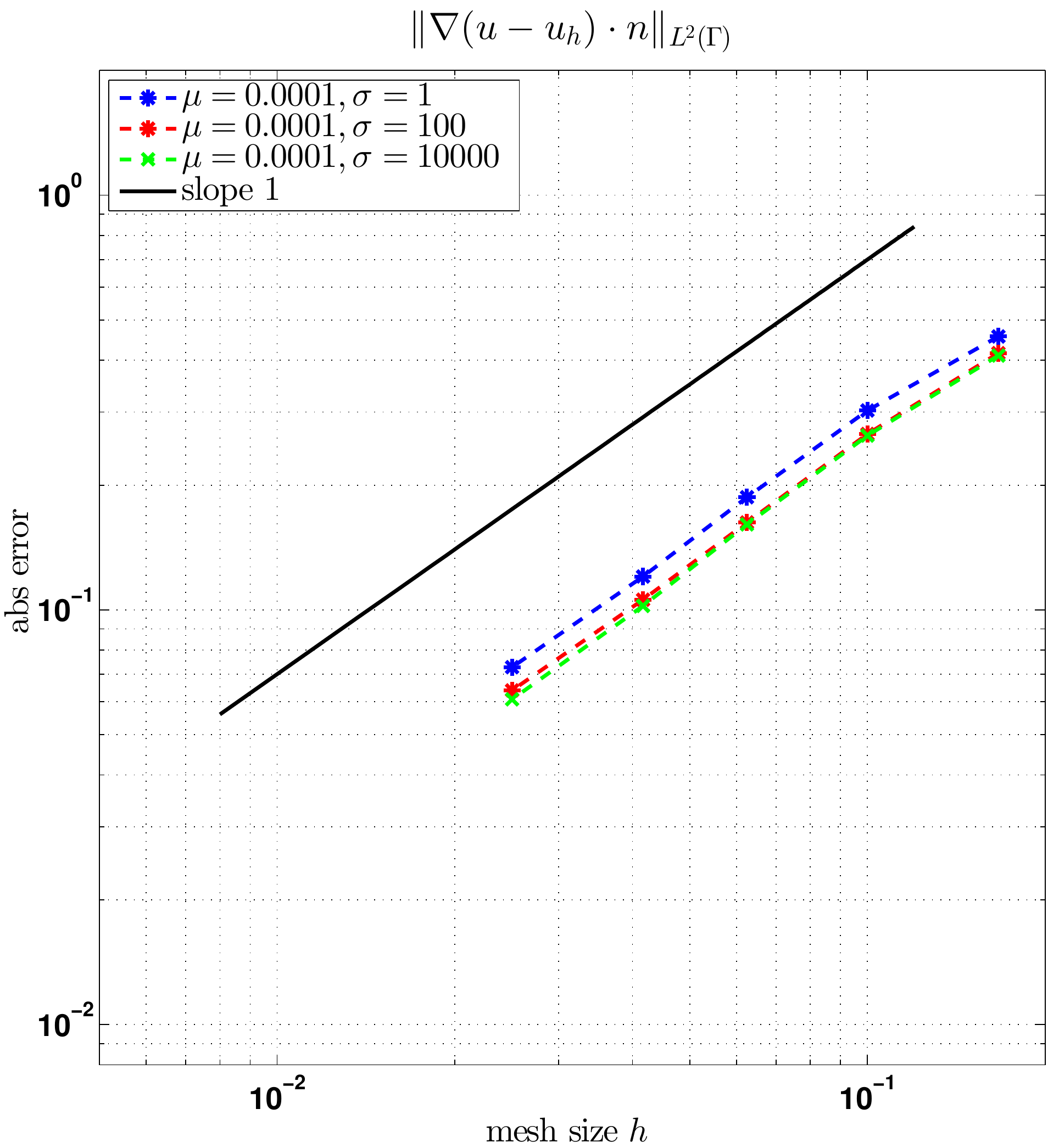}}
  \subfloat{\includegraphics[trim=0 0 0 0, clip, width=0.33\textwidth]{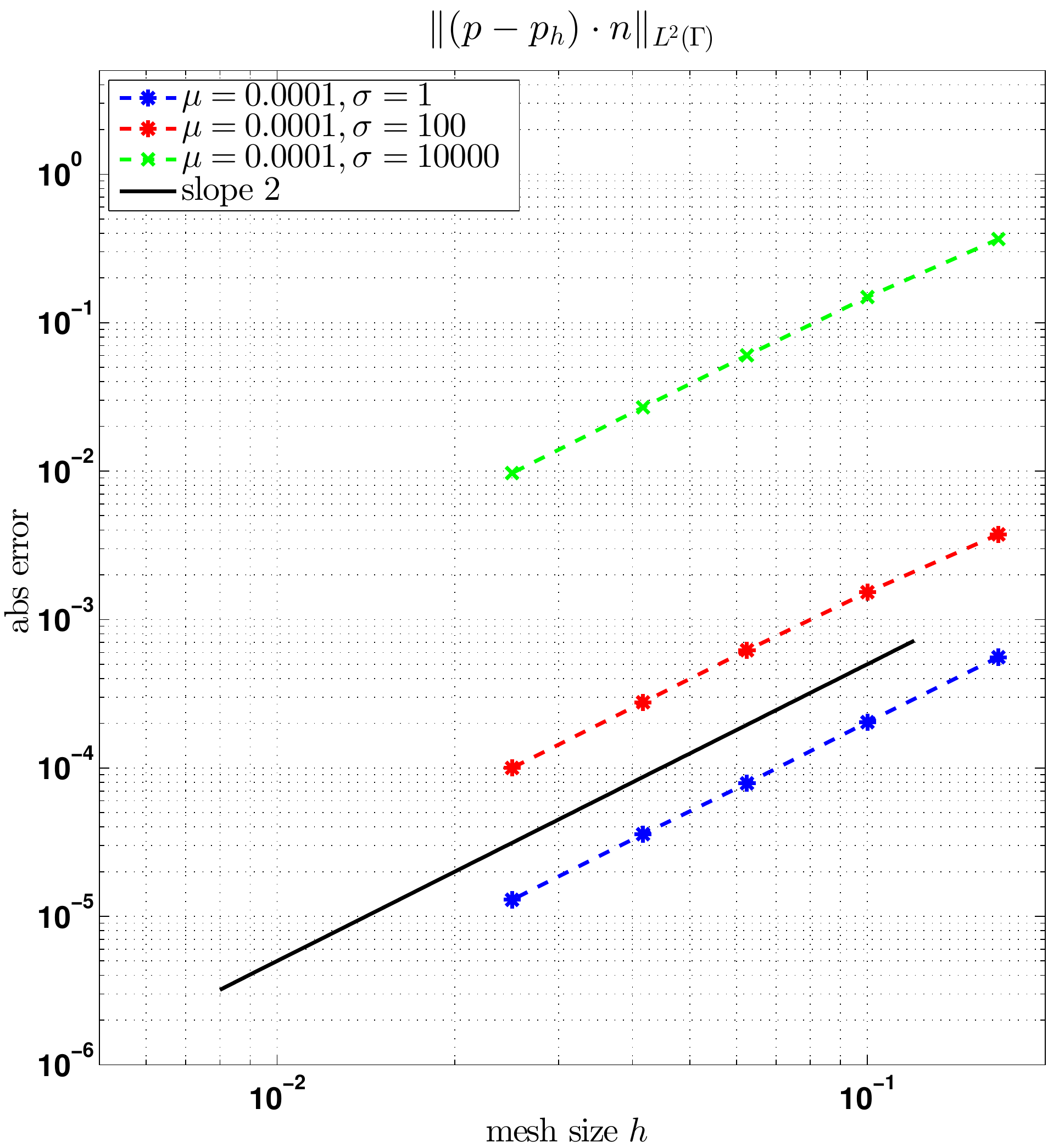}}
  \caption{High-Reynolds-number 3D Beltrami Flow with $\mu=0.0001$: Convergence rates in $L^2$-norms for velocity, velocity gradient and pressure in the domain (top row) and on the boundary (bottom row).}
  \label{fig:beltrami_oseen:spatial_convergence_conv}
\end{figure}

To underline the stability of the velocity and pressure solutions for the high-Reynolds-number setting,
in \Figref{fig:beltrami_oseen:solution_cross_section}, velocity streamlines and the pressure solution are visualized
along cross-sections computed on a coarse non-boundary-fitted mesh.
It is clearly visible that the solutions do not exhibit any oscillatory behavior, neither in the interior of the domain, nor at the boundary.
This is due to the different proposed continuous interior penalty and ghost-penalty stabilizations.
This fact underlines the stability of our proposed formulation even though the solution exhibits highly varying local element Reynolds numbers in the computational domain.
\begin{figure}
  \centering
\includegraphics[trim=0 0 0 50, clip, width=0.48\textwidth]{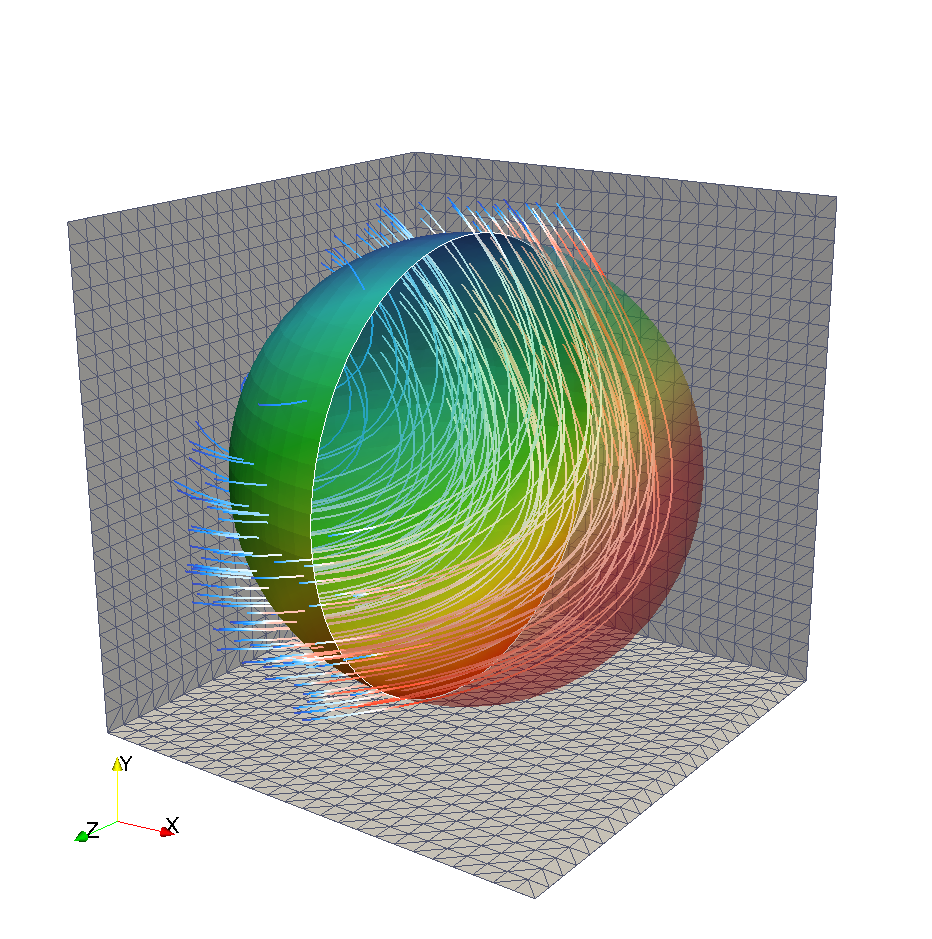}
\quad
\includegraphics[trim=0 0 0 50, clip, width=0.48\textwidth]{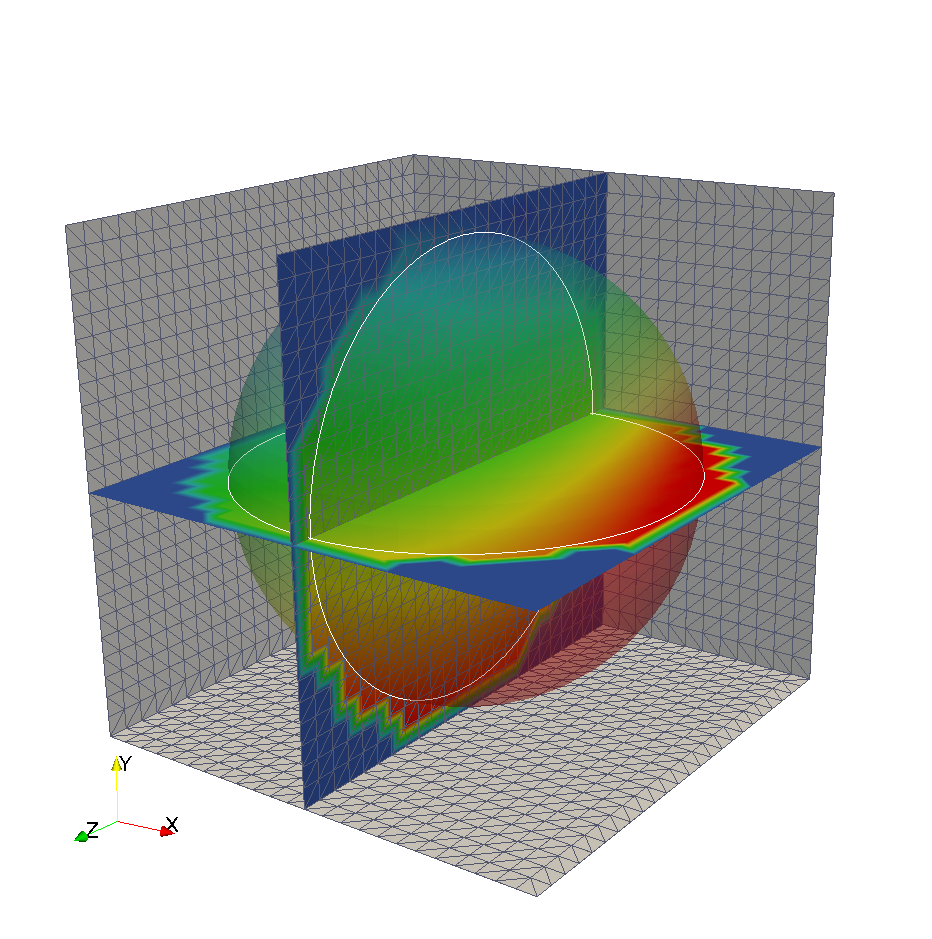}
  \caption{High-Reynolds-number 3D Beltrami flow: Computed velocity and pressure solutions on unfitted mesh with~$h=1/24$.
Stable solutions in the interior of the domain and in the boundary zone due to sufficient control ensured by different CIP and GP stabilization terms.
Left:~Streamlines colored by velocity magnitude and half-sphere ($x_1<1.0$) colored by pressure distribution. Right:~Pressure solution at cross-sections defined by $x_1=1.0$ and $x_2=0.5$.}
  \label{fig:beltrami_oseen:solution_cross_section}
\end{figure}

\subsection{Flow through a Helical Pipe}
\label{ssec:numexamples:pipe}

In the final numerical experiment, 
we demonstrate the applicability of the proposed cut finite element
formulation to solve the full time-dependent incompressible
Navier-Stokes equations in a complicated, implicitly described domain.
Temporal discretization is based on a one-step-$\theta$ scheme and the non-linear convective term
is approximated by fixed-point iterations. The resulting series of linear Oseen type systems are solved
by the proposed cut finite element method.

In this example, we consider the incompressible flow through a helical pipe.
The relevance of curved pipe flows ranges from basic industrial applications like chemical reactors, heat exchangers and pipelines to
medical applications considering physiological flows in the human body.
Helical pipe flows have been extensively studied in literature, see, e.g., in~\cite{Wang1981,Germano1982, Zabielski1998}.

The geometric setup of the pipe considered in this work is depicted in~\Figref{fig:helical_pipe} and is described as follows:
\begin{figure}[ht!]
  \centering
  \subfloat[\label{fig:helical_pipe:a}]{\includegraphics[trim=200 0 400 0, clip, width=0.38\textwidth]{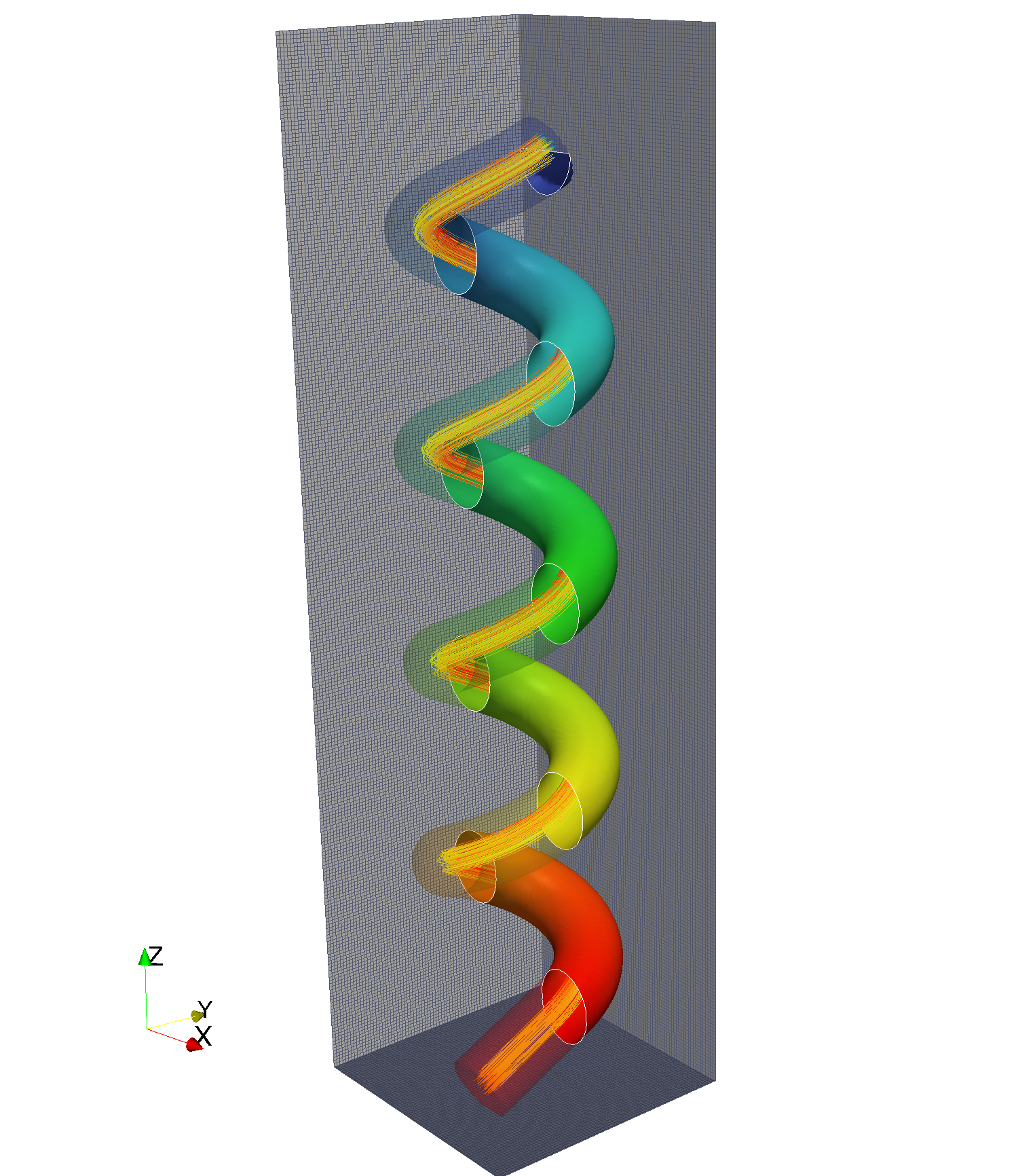}}
  \qquad\qquad
  \subfloat[\label{fig:helical_pipe:b}]{\includegraphics[trim=200 0 400 0, clip, width=0.38\textwidth]{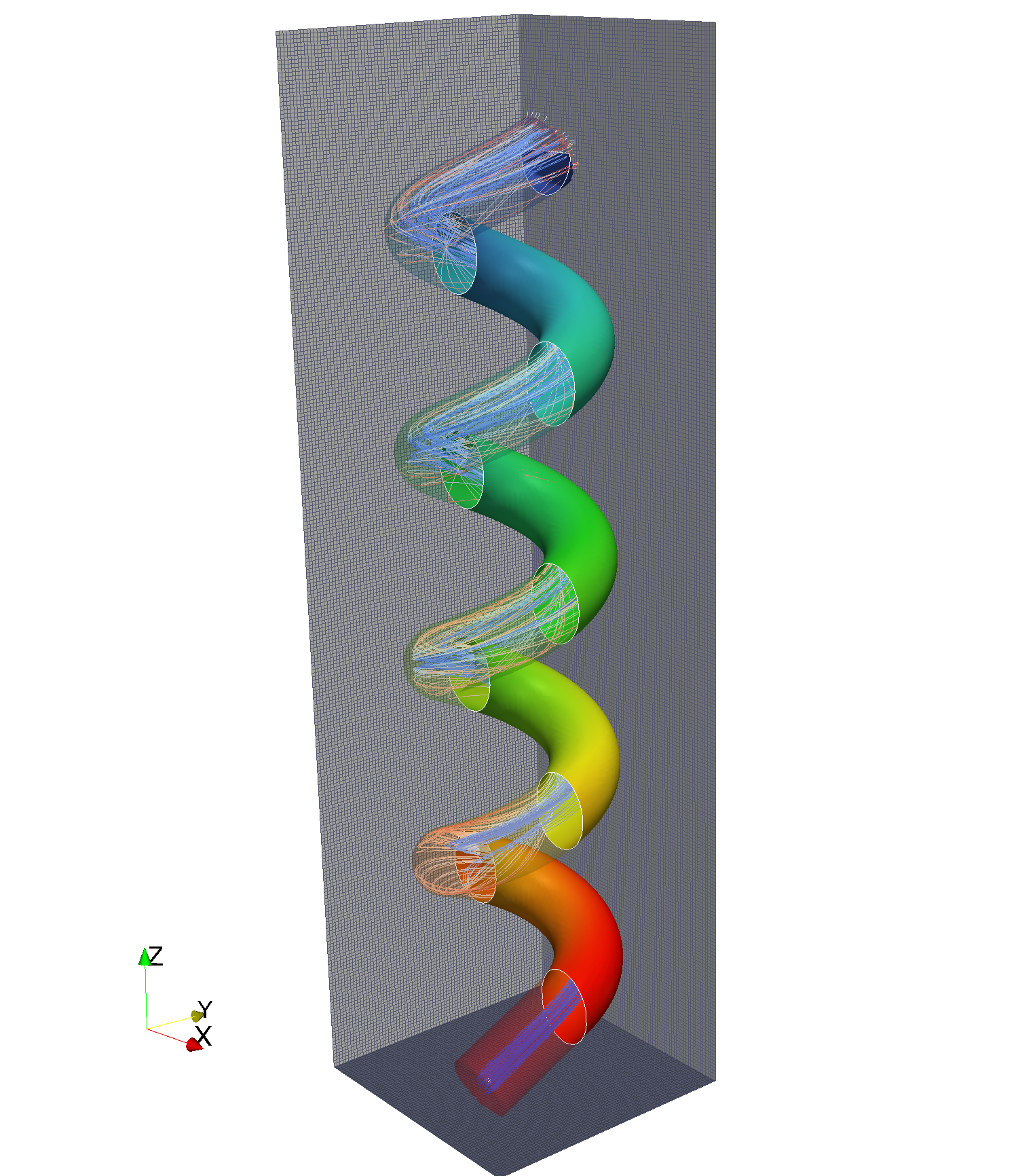}} 
  \caption{Helical Pipe Flow at $\RE=100$:
(a)~Flow during the ramp phase at $t=0.5$.
(b)~Laminar flow including twisted streamlines at $t=3.0$
when flow reaches steady state.
Streamlines start at inflow boundary located in a radius of $0.04$ around pipe centerline and are colored by velocity magnitude at $t=0.5$ and by vorticity at $t=3.0$.
Pressure distribution is visualized along pipe surface.}
  \label{fig:helical_pipe}
\end{figure}
The cross-section of the pipe defines a circle with a radius $r=0.1$
which is expanded along a helical curve
parametrized by $\bfx_{\textrm{curv}}(s)=(R\cos(2\pi s), R\sin(2\pi s), \alpha s)$.
This three-dimensional curve turns around the $x_3$-axis at a constant distance of $R=0.2$ and a constant thread pitch of $\alpha=0.6$.
The spiral twists four times parametrized by $s\in [-2,2]$.
In addition, a cylinder with a radius equal to the cross-section radius $r$ and a length of $h=0.35$ is put on the lower end of the spiral.
Its orientation is aligned to the tangential vector $\bft(s)=\partial \bfx_{\textrm{curv}}(s)/ \partial s$ of the helix at $s=-2$.

The front end of the cylinder defines the inflow boundary $\GammaIn$, where a velocity
$\bfg = u_{\textrm{in}} \bfn$
is imposed where $\bfn=\bft(s=-2)$ denotes the unit vector which is normal to the cylinder cross section.
Along the cylindrical and helical pipe surfaces, zero boundary conditions $\bfg=\bfzero$ are imposed,
while at the back end of the helical pipe a zero-traction Neumann boundary condition is set.
As before, all boundary conditions are enforced weakly.

The geometry as well as the flow solution are approximated on a cut background mesh~$\widehat{\mcT}_h$
covering a background cuboid $[-0.4,0.4]\times[-0.4,0.4]\times[-1.5,1.5]$
with $ 76\times 76\times 285$ trilinearly interpolated hexahedral elements.
In total, the number of active velocity and pressure degrees of freedom is~$859612$.

In the following, we consider a laminar pipe flow at $\RE=100$,
where the characteristic Reynolds number is defined as $\RE = u_{\textrm{eff}} r / \mu$
with an effective cross-section averaged velocity $u_{\textrm{eff}}$, the pipe radius $r$ and the viscosity of the fluid $\mu$.
At the inflow a constant velocity of $u_{\textrm{in}} = u_{\textrm{eff}} = 3.8$ is imposed which drives the mass flow.
Thereby, the velocity is chosen according to a wall Reynolds number of $\RE_{\tau}=180$ for pipe flows,
see, e.g., in~\cite{Hartel1994} for further explanations.
For this setup the viscosity is $\mu=1.9\cdot 10^{-3}$.
The pipe flow is investigated for a total simulation time of $T_{\textrm{end}}=3$ which is the approximated time needed for $3$ runs through the whole pipe
along its centerline.
For the temporal discretization a one-step-$\theta$ scheme with $\theta=0.5$ is applied and 
the time-step length is set to $\Delta t=0.001$, which ensures a
maximum $\CFL$-number $< 0.5$.
The inflow velocity is increased within $t\in[0,T_1]$ by a ramp function $1/2(1-\cos(\pi t/ T_1))$ with $T_1=0.1$.

In~\Figref{fig:helical_pipe}, the solution to the pipe flow is shown during the ramp phase at $t=0.5$
and when the flow is fully developed and reached steady state, as expected for this laminar setting.
During the ramp phase,
when the flow enters the helical pipe, streamlines follow the helical main curve through the pipe.
During the first turn the distance of the line of highest velocities to the $x_3$-axis decreases it's initial value~$R$,
however, remains unchanged for all following turns.
A snapshot of the ramp phase including streamlines and pressure distribution along the pipe surface is shown in~\Figref{fig:helical_pipe:a}.
Contour lines of the velocity magnitude at a cross-section clip plane $x_2=0.0$ is shown in~\Figref{fig:helical_pipe_low_RE_contour:a}
for the pipe range of $s\in[0,1.5]$.
Pressure isocontours along the pipe surface are visualized in~\Figref{fig:helical_pipe_low_RE_contour:b}.
Accurate enforcement of the zero boundary condition along the pipe surface as well as stability of velocity and pressure solutions
are clearly visible.
\begin{figure}[ht!]
  \centering
  \subfloat[\label{fig:helical_pipe_low_RE_contour:a}]{\includegraphics[trim=0 0 200 0, clip, width=0.35\textwidth]{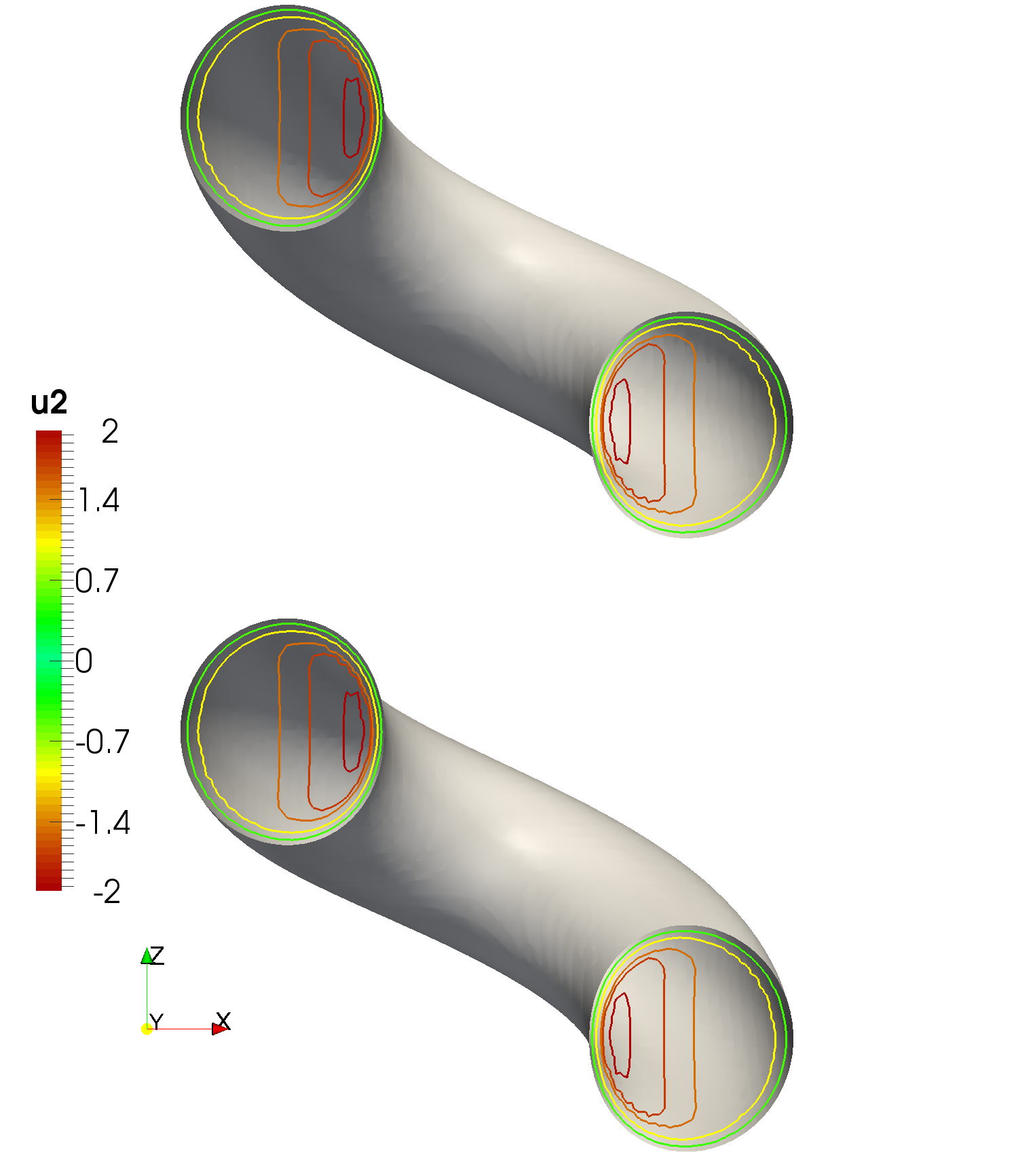}}
  \qquad
  \subfloat[\label{fig:helical_pipe_low_RE_contour:b}]{\includegraphics[trim=0 0 200 0, clip, width=0.35\textwidth]{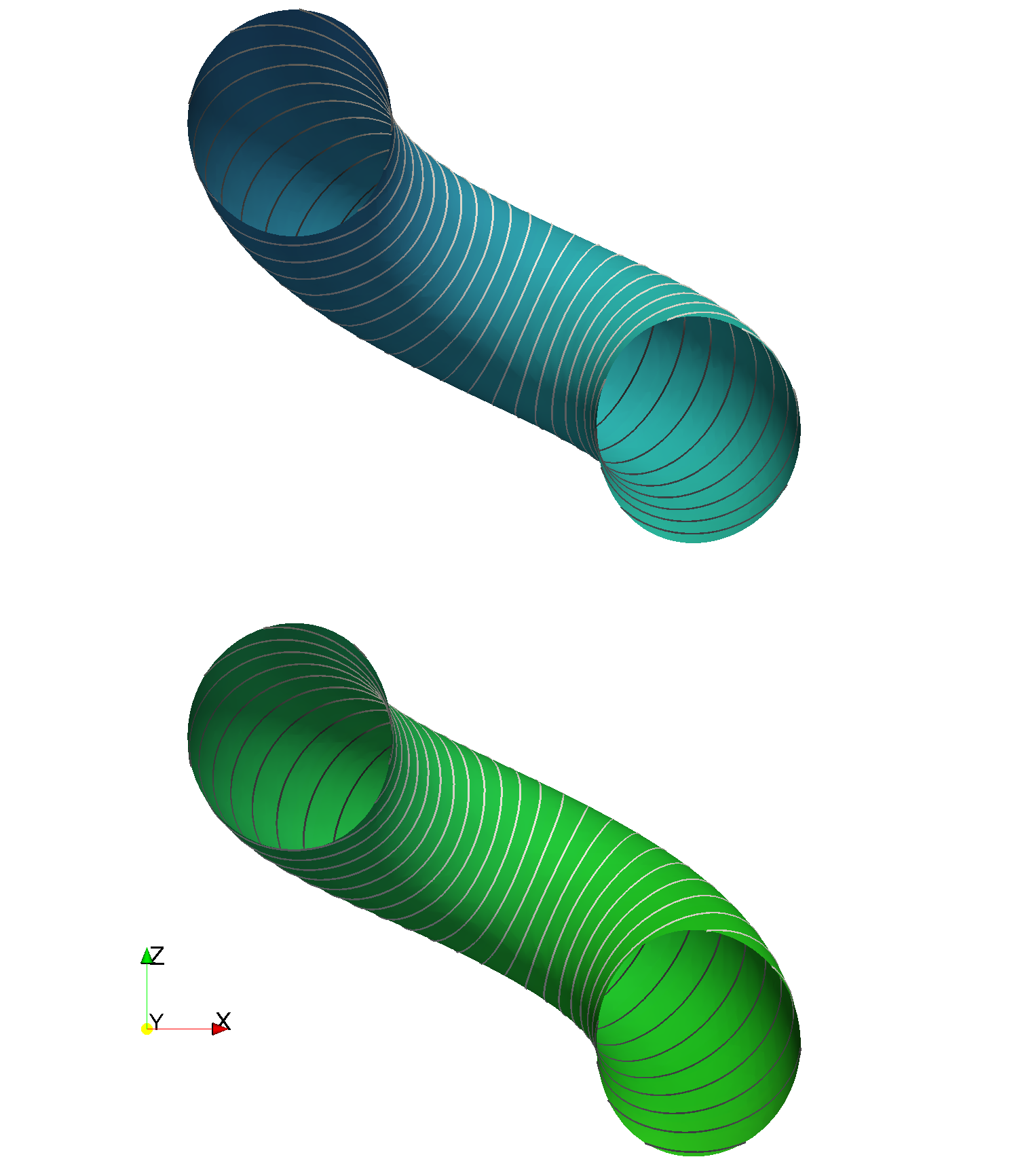}}
  \caption{Helical Pipe Flow at $\RE=100$ at $t=0.5$:
(a)~Isocontours of velocity magnitude at a cross section defined by the clip plane $x_2=0.0$ for a helix range of $s\in[0,1.5]$
show higher velocities with decreasing distance to the $x_3$-axis due to higher mass flow rate.
Zero boundary condition at the helix surface is accurately enforced and velocity solution is stable in the interior of the fluid domain as well as near the boundary.
(b)~Pressure contour lines at the helix surface show a stable pressure solution near the boundary.
}
  \label{fig:helical_pipe_low_RE_contour}
\end{figure}

When the flow is fully developed the flow pattern clearly changes and streamlines are twisted.
However, the flow reaches steady state after some time, which is shown in a snapshot in~\Figref{fig:helical_pipe:b} at $t=3.0$
including streamlines colored by vorticity and the pressure distribution is depicted along the helical surface.
It is inherently linked to the weak enforcement technique that the enforcement of boundary conditions in wall tangential direction
gets relaxed for higher local Reynolds numbers near the boundary.
However, the non-penetration condition in wall normal direction of the pipe is still sufficiently enforced.
Moreover, the solution exhibits non-oscillatory stable velocity and pressure due to stabilizing effects
of the different proposed stabilization operators in the interior of the fluid domain as well as near the boundary zone.
Contour lines are visualized in~\Figref{fig:helical_pipe_high_RE_contour:a} for the velocity and in~\Figref{fig:helical_pipe_high_RE_contour:b}
for the pressure.
\begin{figure}[ht!]
  \centering
  \subfloat[\label{fig:helical_pipe_high_RE_contour:a}]{\includegraphics[trim=0 0 200 0, clip, width=0.35\textwidth]{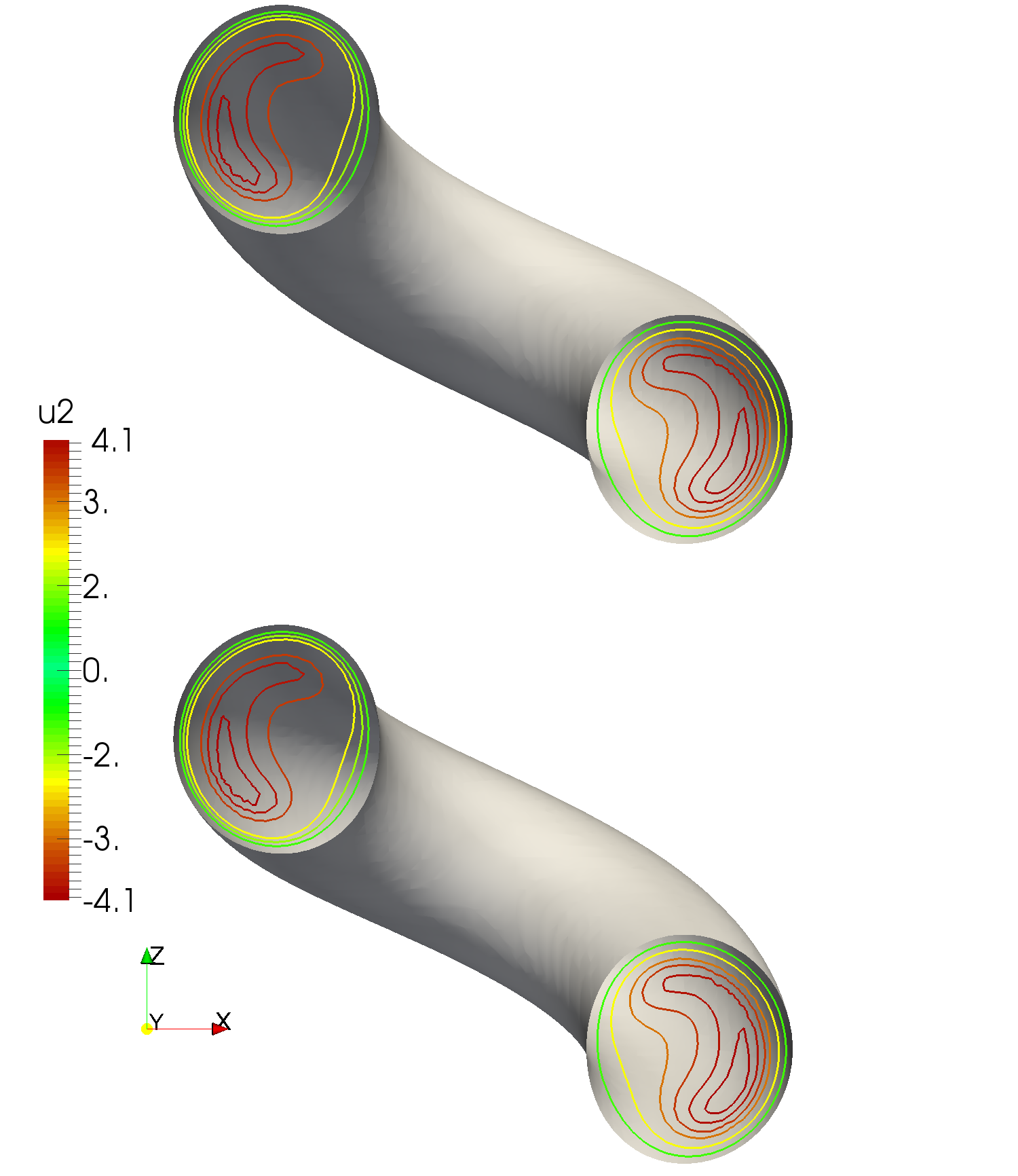}}
  \qquad
  \subfloat[\label{fig:helical_pipe_high_RE_contour:b}]{\includegraphics[trim=0 0 200 0, clip, width=0.35\textwidth]{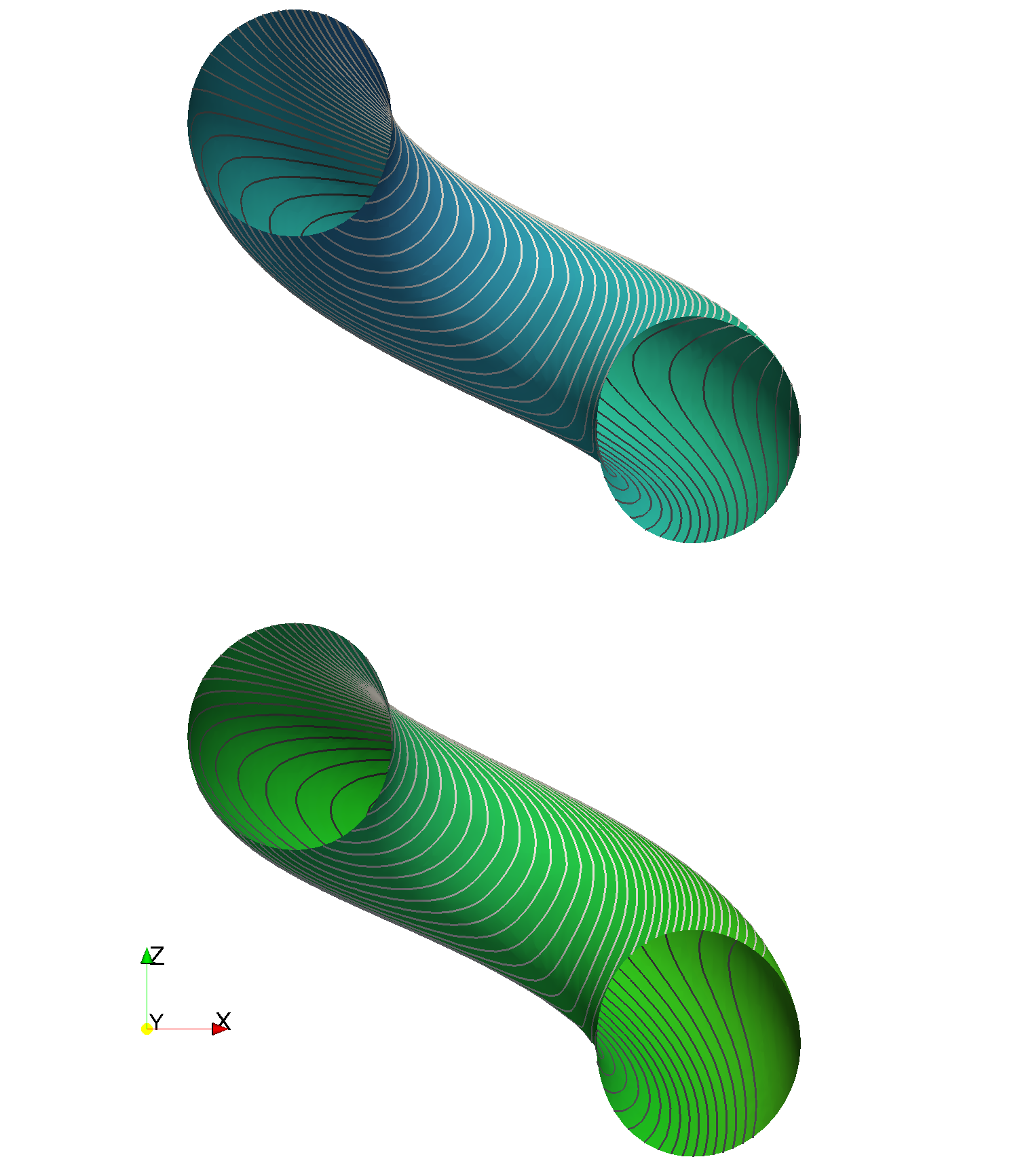}}
  \caption{Helical Pipe Flow at $\RE=100$ at $t=3.0$ when flow reaches steady state:
(a)~Isocontours of velocity magnitude at a cross section defined by the clip plane $x_2=0.0$ for a helix range of $s\in[0,1.5]$
exhibits stable velocity solution in the interior of the fluid domain as well as near the boundary.
(b)~Pressure contour lines at the helix surface show a stable pressure solution near the boundary.
}
  \label{fig:helical_pipe_high_RE_contour}
\end{figure}

\section{Useful Inequalities and Interpolation Estimates}
\label{sec:interpolation-est}
In this section, we start with the numerical analysis of the proposed
cut finite element method \eqref{eq:oseen-discrete-unfitted}.
We collect a number of useful inequalities and
elucidate the role of the ghost-penalties for the cut finite element method.
Moreover, we introduce suitable interpolation operators which will be instrumental
in deriving stability bounds in Section~\ref{sec:stability-properties} and a priori error estimates
in Section~\ref{sec:apriori-analysis}.

\subsection{Assumption on the Mesh and the Velocity Field}
\label{sec:assumpt-mesh-veloc}
To simplify the presentation of the numerical analysis of our cut
finite element method, we assume quasi-uniform meshes.  However, the
subsequent analysis can be also adapted to the case of locally
quasi-uniform meshes, as considered for boundary-fitted meshes in
\citet{BurmanFernandezHansbo2006}.
Recall that the velocity field $\bfbeta$ satisfies $\bfbeta \in
[W^{1,\infty}(\Omega)]^d \subseteq [C^{0,1}(\Omega)]^d$
by assumption and thus there is a piecewise constant discrete vector field $\bfbeta_h$
satisfying
\begin{equation}
h^{\onehalf}\|\bfbeta - \bfbeta_h\|_{0,\infty,\partial T}
+ \|\bfbeta - \bfbeta_h\|_{0,\infty,T} 
\lesssim h
\|\bfbeta\|_{1,\infty,T}
\quad
\text{and}
\quad
\| \bfbeta_h \|_{0,\infty,T} \lesssim  \| \bfbeta \|_{0,\infty,T}
\quad \foralls T\in\mcT_h.
\label{eq:beta-approximation}
\end{equation}
Such an approximative vector field $\bfbeta_h$ will be used at several occasions
in the forthcoming numerical analysis.
As usual, the notation $a\lesssim b$
means that $a\leqslant C b$ for a generic positive constant~$C$ which
is independent of~$h$.
Similar to~\cite{BurmanFernandezHansbo2006}, we additionally require 
that the flow field $\bfbeta$ is sufficiently resolved by the mesh in the sense
that
for some constant $c_{\beta} > 1$ and
$\foralls T\in\mcT_h$
\begin{align}
c_{\beta}^{-1}\| \bfbeta \|_{0,\infty,T'} 
\leqslant
\| \bfbeta \|_{0,\infty,T} 
\leqslant
c_{\beta}\| \bfbeta \|_{0,\infty,T'}
\quad \foralls T'\in\omega(T),
  \label{eq:beta-resolution-I}
\end{align}
where $\omega(T)$ denotes a local patch of elements neighboring $T$.
Assumption~(\ref{eq:beta-resolution-I}) can be ensured if, e.g.,
\begin{align}
  \|\bfbeta\|_{1,\infty,\omega(T)} \leqslant d_{\beta} h^{-1} \|\bfbeta\|_{0,\infty,\omega(T)} \quad \foralls T\in\mcT_h
  \label{eq:beta-resolution-II}
\end{align}
is satisfied for some constant $0 < d_{\beta} < h/\diam(\omega(T)) =: c_{\omega}$.
Then 
\begin{align}
 \|\bfbeta\|_{0,\infty,\omega(T)} &\leqslant 
                         \|\bfbeta\|_{0,\infty,T'}
                         + \diam(\omega(T)) \|\bfbeta\|_{1,\infty,\omega(T)}
\leqslant
                         \|\bfbeta\|_{0,\infty,T'}
                         + c_{\omega}^{-1} d_{\beta}\|\bfbeta\|_{0,\infty,\omega(T)}
\end{align}
and consequently, assumption~(\ref{eq:beta-resolution-I}) holds with $c_{\beta}^{-1} = 1 - c_{\omega}^{-1} d_{\beta}$
since
for any $T,T'\in\omega(T)$
\begin{align}
(1 - c_{\omega}^{-1} d_{\beta}) \|\bfbeta\|_{0,\infty,T} 
\leqslant 
(1 - c_{\omega}^{-1} d_{\beta})\|\bfbeta\|_{0,\infty,\omega(T)} 
\leqslant 
  \|\bfbeta\|_{0,\infty,T'}.
\end{align}
Due to assumption~(\ref{eq:beta-resolution-I}),
the piecewise constant stabilization parameters
are comparable in the sense that for $\phi \in \{\phi_u, \phi_{\beta}, \phi_p \}$
\begin{align}
  (c_{\beta} c_{\mcT_h})^{-1} \phi_{T'} \leqslant \
  \phi_{T} \leqslant \
  (c_{\beta} c_{\mcT_h}) \phi_{T'} 
\quad \foralls T' \in \omega(T),
\end{align}
where $c_{\mcT_h}$ characterizes the quasi-uniformness of $\mcT_h$.
With this in mind, we will simply write
\begin{align}
\label{eq:local_equivalence_stab_param_patch}
 \phi_T \sim \phi_{T'} \quad\forall T'  \in \omega(T),
\qquad 
 \phi_F \sim \phi_{T} \quad \forall T \in \omega(F).
\end{align}
\subsection{Trace Inequalities and Inverse Estimates}
\label{ssec:trace-inequalities}
Throughout our analysis, we will make heavy use of
the following well-known generalized inverse and trace inequalities for discrete functions $v_h
\in \mcX_h$:
\begin{alignat}{3}
  \norm{D^j v_h}_{T}
  & \lesssim h^{i-j} \norm{D^i v_h}_{T} & & \quad \foralls T \in
  \mcT_h, \quad 0\leqslant i\leqslant j,
  \label{eq:inverse-estimates_T}
  \\
  \norm{\partial_{\bfn}^j v_h}_{\partial T}
  & \lesssim h^{i-j-1/2} \norm{D^i v_h}_{T} & & \quad \foralls T \in \mcT_h, 
  \quad 0\leqslant i\leqslant j,
  \label{eq:inverse-estimates_F}
  \\
  \intertext{and their counterpart for elements $T$ which are arbitrarily intersected by the boundary $\Gamma$}
  \norm{\partial_{\bfn}^j v_h}_{\Gamma \cap T} & \lesssim
  h^{i-j-1/2} \norm{D^i v_h}_{T} & & \quad \foralls T \in
  \mcT_h, 
  \quad 0\leqslant i\leqslant j,
  \label{eq:inverse-estimates_Gamma}
\end{alignat}
proven in \cite{HansboHansbo2002,BurmanHansboLarsonEtAl2016}.
Here and throughout this work, we use the notation $a \lesssim b$ for
$a\leqslant C b$ for some generic
positive
constant $C$ which varies with the context
but is always independent of the mesh size $h$ and the position of
$\Gamma$ relative to~$\mcT_h$.
For $v \in H^1(\Oast_h)$, we will make use of trace inequalities of
the form
  \begin{align}
    \label{eq:trace-inequality}
    \norm{v}_{\partial T}
    &\lesssim
    h^{-1/2} \norm{v}_{T} +
    h^{1/2}  \norm{\nabla v}_{T}
    \quad \foralls T \in \mcT_h,
    \\
    \norm{v}_{\Gamma \cap T}
    &\lesssim
      h^{-1/2} \norm{v}_{T}
      + h^{1/2} \norm{\nabla v}_{T}
    \quad \foralls T \in \mcT_h,
    \label{eq:trace-inequality-cut}
  \end{align}
  see~\cite{HansboHansbo2002,BurmanHansboLarsonEtAl2016}
for a proof of the second one.
  Finally, we recall the well-known Poincar\'e and Korn
  inequalities~\cite{BrennerScott2008}, stating that 
  $\foralls \bfv \in
    [H_0^1(\Omega)]^d$,
  \begin{align}
    \| \bfv \|_{0,\Omega} 
    &\lesssim C_P \| \nabla \bfv \|_{0,\Omega},
    \label{eq:Poincare-I}
    \\
    \| \nabla \bfv \|_{0,\Omega}
    &\lesssim
    \| \bfeps(\bfv) \|_{0,\Omega},
    \label{eq:korn-I}
  \end{align}
  and the following variants if the boundary trace of $\bfv$ is not
  vanishing, that is  $\foralls \bfv \in [H^1(\Omega)]^d$:
  \begin{alignat}{2}
    \| \bfv \|_{0,\Omega} 
    &\lesssim C_P (\| \nabla \bfv \|_{0,\Omega} + \|\bfv\|_{\Gamma}),
    \label{eq:Poincare-II}
    \\
    \| \bfv \|_{1,\Omega}
    & \lesssim
    \| \bfeps(\bfv) \|_{0,\Omega}
    + \| \bfv \|_{0,\Omega},
    \label{eq:korn-II}
    \\
    \| \bfv \|_{1,\Omega}
    & \lesssim
    \| \bfeps(\bfv) \|_{0,\Omega}
    + \| \bfv \|_{0,\Gamma}.
    \label{eq:korn-III}
  \end{alignat}

\subsection{Interpolation Operators}
\label{ssec:interpolation}
To construct an appropriate interpolation operator $L^2(\Omega) \to
\mcX_h$, we first recall that for the Sobolev spaces $W^{m,q}(\Omega)$, $0
\leqslant m < \infty$, $1 \leqslant q \leqslant \infty$,
an extension operator can be defined
\begin{equation}
  \label{eq:extension-operator-definition}
  E: W^{m,q}({\Omega}) \rightarrow W^{m,q}(\Oast)
\end{equation}
which is bounded
\begin{equation}
  \label{eq:extension-operator-boundedness}
  \norm{E v}_{m,q,\Oast} \lesssim \norm{v}_{m,q,\Omega},
\end{equation}
see \cite{Stein1970}
for a proof. Occasionally, we write $v^\ast = Ev$. 
Choosing some fixed Lipschitz-domain $\Oast$ such
that $\Oast_h \subset \Oast$ for $h \lesssim 1$,
we can define for any interpolation operator $\pi_h: H^s(\Oast_h) \to \mcX_h$
its ``fictitious domain'' variant 
$\pi_h^\ast: H^s(\Omega) \to \mcX_h$ by simply  requiring that
  \begin{equation}
    \pi_h^\ast u = \pi_h(u^\ast)
    \label{eq:clement-interp-ext}
  \end{equation}
for $u \in H^s(\Omega)$.
In particular, we choose $\pi_h$ to be the \emph{Cl\'ement operator},
see for instance \cite{ErnGuermond2004}.
Recall that for $v \in H^r(\Oast)$,
the following interpolation estimates holds
for the Cl\'ement interpolant:
\begin{alignat}{3}
\| v - \pi_h v \|_{s,T} 
& \lesssim
  h^{t-s}| v |_{t,\omega(T)},
&\quad 0\leqslant s \leqslant t \leqslant m \quad &\foralls T\in
  \mcT_h,
  \label{eq:interpest0}
  \\
\| v - \pi_h v \|_{s,F} &\lesssim h^{t-s-1/2}| v |_{t,\omega(F)},
&\quad 0\leqslant s \leqslant t - 1/2 \leqslant m-1/2 \quad &\foralls F\in
  \mcF_h,
  \label{eq:interpest1}
\end{alignat}
with $s, t \in \NN$, $m=\min\{r,k+1\}$ and $k$ the interpolation order of $\mcX_h$.
Here, $\omega(T)$ and $\omega(F)$ are the sets of elements in $\mcT_h$ sharing at least
one vertex with $T$ and $F$, respectively. 
Due to the boundedness of the extension
operator~\eqref{eq:extension-operator-boundedness},
we observe that the extended Cl\'ement interpolant $\pi_h^{\ast}$ satisfies
\begin{alignat}{3}
  \| v^\ast - \pi_h^{\ast} v \|_{s,\mcT_h} 
  & \lesssim
  h^{t-s}\| v \|_{t,\Omega},
  & &\quad 0\leqslant s \leqslant t \leqslant m,
  \label{eq:interpest0-ast}
  \\
  \| v^\ast - \pi_h^{\ast} v \|_{s,\mcF_h} 
  &\lesssim h^{t-s-1/2} \| v \|_{t,\Omega},
  & &\quad 0\leqslant s \leqslant t - 1/2 \leqslant m - 1/2,
  \label{eq:interpest1-ast}
\end{alignat}
where the broken norms $\| \cdot \|_{\mcF_h}$ and $\|\cdot\|_{\mcT_h}$ are defined as
in Section~\ref{ssec:notation}.
In particular, we make use of the stability property
\begin{alignat}{3}
  \| \pi_h^{\ast} v \|_{s,\Omega} &\lesssim \| v \|_{s,\Omega},
  & &\quad 0 \leqslant s \leqslant m.
  \label{eq:interpest0_Clement_H_stability}
\end{alignat}
The Cl\'ement interpolant is denoted by~$\boldsymbol{\pi}_h^{\ast}$
for vector-valued functions~$\bfv$ and by $\Pi_h^{\ast}$
for functions in a product space.

A main ingredient in the analysis of the continuous interior penalty
method in~\cite{BurmanFernandezHansbo2006,BurmanErn2007} is the use
of the \emph{Oswald interpolation operator}.
The Oswald interpolation operator defines a mapping
$\mcO_h: \mcP_{\mathrm{dc}}^k(\mcT_h) \to \mcP^{\max\{1,k\}}(\mcT_h)$,
with $\mcP_{\mathrm{dc}}^k(\mcT_h)$ and $\mcP^k(\mcT_h)$ denoting the space of
discontinuous and continuous piecewise polynomials of order $k$.
More precisely, for $v \in \mcP_{\mathrm{dc}}^k(\mcT_h)$, the function $\mcO_h
v$ is constructed in each interpolation node $\bfx_i$
by the average value
\begin{align}
  \mcO_h v(\bfx_i) = \dfrac{1}{\mathrm{card}(\mcT_h(\bfx_i))} \sum_{T \in
  \mcT_h(\bfx_i)} v|_T(\bfx_i)
  \label{eq:oswald-construction}
\end{align}
where $\mcT_h(\bfx_i)$ is the set of all elements $T\in\mcT_h$ sharing the
node $\bfx_i$.
In particular, it was shown there that for $w_h \in
\mcX_h^{\mathrm{dc}} = \{ v \in L^2(\Oast): v|_T \in  \mcP^k(T)\; \foralls T
  \in \mcT_h\}$,
the fluctuation $ w_h - \mcO_h w_h$ can be controlled 
in terms of jump-penalties:
\begin{lemma}
\label{lem:Oswald-interpolant}
  Let $\phi$ be a piecewise constant function
  and $w_h \in \mcX_h^{\mathrm{dc}}$. Then
\begin{equation}
  \|\phi^{\onehalf}(w_h - \mcO_h w_h)\|_T^2
   \lesssim
   \sum_{F\in\mcF_i(T)}
   \phi_T h
   \|\jump{w_h}\|_F^2
   \quad \foralls w_h\in \mcX_h^{\mathrm{dc}},
\label{eq:Oswald-interpolant}
\end{equation}
where $\mcF_i(T)$ denotes the set of all faces $F \in \mcF_i$ with $F \cap T \neq \emptyset$,
and the hidden constant depends only on the shape-regularity of the
mesh, the order of the finite element space $\mcX_h^{\mathrm{dc}}$ and the dimension $d$.
\end{lemma}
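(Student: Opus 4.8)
The plan is to reduce the assertion to a node-by-node estimate exploiting the locality of the Oswald construction~\eqref{eq:oswald-construction} together with standard scaling arguments on a reference element. First I would fix an element $T \in \mcT_h$ and expand $w_h - \mcO_h w_h$ in the nodal (Lagrange) basis $\{\varphi_i\}$ associated with the interpolation nodes $\bfx_i$ of $T$. Since $(w_h - \mcO_h w_h)|_T = \sum_{i} \bigl(w_h|_T(\bfx_i) - \mcO_h w_h(\bfx_i)\bigr)\varphi_i$, norm equivalence on the finite-dimensional space $\mcP^k(T)$ (after scaling to the reference element $\widehat{T}$) gives
\begin{align}
\|w_h - \mcO_h w_h\|_T^2 \lesssim h^d \sum_{\bfx_i \in T} \bigl| w_h|_T(\bfx_i) - \mcO_h w_h(\bfx_i) \bigr|^2,
\label{eq:plan-nodal}
\end{align}
with the hidden constant depending only on $d$, $k$ and the shape-regularity. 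Multiplying by $\phi_T$ (a constant on $T$) turns the left-hand side into $\|\phi_T^{1/2}(w_h - \mcO_h w_h)\|_T^2$.

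Next I would bound each nodal difference. By~\eqref{eq:oswald-construction}, $w_h|_T(\bfx_i) - \mcO_h w_h(\bfx_i)$ is an average of the differences $w_h|_T(\bfx_i) - w_h|_{T'}(\bfx_i)$ over the elements $T' \in \mcT_h(\bfx_i)$, hence is controlled by $\max_{T'} | w_h|_T(\bfx_i) - w_h|_{T'}(\bfx_i) |$. For two elements sharing the node $\bfx_i$, one can ``walk'' between them through a uniformly bounded number of faces $F \in \mcF_i(T)$ (bounded in terms of shape-regularity), so the difference telescopes into a sum of one-sided face jumps $\jump{w_h}|_F$ evaluated at $\bfx_i$. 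Each such pointwise jump value is then estimated by the $L^2(F)$-norm of the jump via the inverse inequality $|\jump{w_h}(\bfx_i)|^2 \lesssim h^{-(d-1)}\|\jump{w_h}\|_F^2$, again obtained by scaling $\jump{w_h}|_F \in \mcP^k(F)$ to the reference facet. Combining these with~\eqref{eq:plan-nodal} and using $h^d \cdot h^{-(d-1)} = h$, together with the local comparability $\phi_F \sim \phi_T$ for $F \cap T \ne \emptyset$ from~\eqref{eq:local_equivalence_stab_param_patch}, yields exactly
\begin{align}
\|\phi^{1/2}(w_h - \mcO_h w_h)\|_T^2 \lesssim \sum_{F \in \mcF_i(T)} \phi_T h \, \|\jump{w_h}\|_F^2,
\end{align}
which is~\eqref{eq:Oswald-interpolant}. (Alternatively, I could simply cite the estimate directly: it is proved, e.g., in \cite{BurmanErn2007} and used in~\cite{BurmanFernandezHansbo2006}; the argument above is the standard one.)

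The main obstacle I expect is purely bookkeeping rather than conceptual: one has to handle nodes $\bfx_i$ that lie on lower-dimensional faces (edges, vertices) shared by many elements, and verify that the ``walk'' connecting any two elements of $\mcT_h(\bfx_i)$ through interior facets $F \in \mcF_i(T)$ has length bounded uniformly in $h$ — this is where shape-regularity enters and where some care is needed to ensure every face traversed actually belongs to $\mcF_i(T)$, i.e.\ touches $T$. One subtlety worth noting is that $\mcO_h$ maps into $\mcP^{\max\{1,k\}}$, so for $k=0$ the target space is $\mcP^1$ and the node set is the vertex set; the scaling argument is unaffected since the estimates in~\eqref{eq:plan-nodal} and the facet inverse inequality only use finite-dimensionality on the reference configuration. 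Everything else — the passage to the reference element, norm equivalence on finite-dimensional spaces, and the trace/inverse inequalities~\eqref{eq:inverse-estimates_T}--\eqref{eq:inverse-estimates_F} — is routine.
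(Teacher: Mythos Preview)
Your proof sketch is correct and is precisely the standard argument; the paper itself does not reproduce a proof but simply refers the reader to \cite{BurmanErn2007}, which is exactly the reference you also cite and whose argument you have outlined. There is nothing to add.
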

\noindent We refer to \cite{BurmanErn2007} for a proof. The previous lemma
elucidates the role of the CIP stabilization
operators~\eqref{eq:cip-s_beta}--\eqref{eq:cip-s_p}
as a control of certain fluctuations:
\begin{corollary}
\label{cor:cip-fluctuation-control}
  Under the assumptions of Lemma~\ref{lem:Oswald-interpolant} it holds that
\begin{align}
 \| \phi_{\beta}^{\onehalf} (\bfbeta_h\cdot\nabla \bfv_h - \mcO_h(\bfbeta_h\cdot\nabla \bfv_h)) \|_{T}^2
 & \lesssim 
 \sum_{F\in\mcF_i(T)} 
 \phi_{\beta,T} h \|\jump{\bfbeta_h \cdot\nabla \bfv_h }\|_F^2 ,
 \label{eq:oswald-stabilization-beta}
 \\
 \| \phi_p^{\onehalf} (\nabla q_h - \mcO_h (\nabla q_h)) \|_{T}^2
   & \lesssim
\sum_{F\in\mcF_i(T)} \phi_{p,T} h {\|\jump{\bfn_F \cdot \nabla q_h}\|_F^2} ,
 \label{eq:oswald-stabilization-pressure}
\\
\| \phi_u^{\onehalf}(\nabla \cdot \bfv_h - \mcO_h (\nabla \cdot \bfv_h)) \|_{T}^2
   & \lesssim \sum_{F\in\mcF_i(T)}
\phi_{u,T} h {\|\jump{\nabla \cdot \bfv_h}\|_F^2}.
 \label{eq:oswald-stabilization-div}
\end{align}
\end{corollary}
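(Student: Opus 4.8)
All three inequalities are direct corollaries of Lemma~\ref{lem:Oswald-interpolant}: each of the functions $\bfbeta_h\cdot\nabla\bfv_h$, $\nabla q_h$ and $\div\bfv_h$ belongs --- componentwise where it is vector-valued --- to the discontinuous space $\mcX_h^{\mathrm{dc}}$ on which \eqref{eq:Oswald-interpolant} is formulated, and the specific parameters $\phi_\beta$, $\phi_p$, $\phi_u$ are element-wise constant, so the hypotheses of the lemma are met. The plan is therefore threefold: (i) check this membership while keeping track of polynomial degrees; (ii) apply \eqref{eq:Oswald-interpolant} (componentwise, since $\mcO_h$ acts componentwise on vector fields) with $\phi$ replaced by $\phi_\beta$, $\phi_p$, $\phi_u$ respectively; and (iii) rewrite the full facet jumps coming out of the lemma in terms of the particular jump quantities on the right-hand sides of \eqref{eq:oswald-stabilization-beta}--\eqref{eq:oswald-stabilization-div}. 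All hidden constants are those of Lemma~\ref{lem:Oswald-interpolant} and hence depend only on the shape-regularity of $\mcT_h$, the polynomial order $k$ and the dimension $d$.

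Estimate \eqref{eq:oswald-stabilization-div} is the most immediate: $\div\bfv_h$ is scalar-valued and, restricted to any $T\in\mcT_h$, a polynomial of degree at most $k-1\leqslant k$, hence $\div\bfv_h\in\mcX_h^{\mathrm{dc}}$, and \eqref{eq:Oswald-interpolant} with $\phi=\phi_u$ gives the claim verbatim. For \eqref{eq:oswald-stabilization-beta} I would use that $\bfbeta_h$ is \emph{piecewise constant} by construction, see \eqref{eq:beta-approximation}: on each $T\in\mcT_h$ the field $\bfbeta_h\cdot\nabla\bfv_h = \sum_{j=1}^d (\bfbeta_h)_j\,\partial_j\bfv_h$ is again an $\R^d$-valued polynomial of degree at most $k-1$, so each of its $d$ components lies in $\mcX_h^{\mathrm{dc}}$. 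Applying \eqref{eq:Oswald-interpolant} to every component with $\phi=\phi_\beta$ and summing the $d$ resulting bounds yields
\[
\|\phi_{\beta}^{\onehalf}(\bfbeta_h\cdot\nabla\bfv_h - \mcO_h(\bfbeta_h\cdot\nabla\bfv_h))\|_{T}^2
\lesssim
\sum_{F\in\mcF_i(T)} \phi_{\beta,T}\, h\, \|\jump{\bfbeta_h\cdot\nabla\bfv_h}\|_F^2,
\]
which is exactly \eqref{eq:oswald-stabilization-beta}. The passage from $\bfbeta$ to the discrete field $\bfbeta_h$ is essential here: the unapproximated product $\bfbeta\cdot\nabla\bfv_h$ is in general not piecewise polynomial, so Lemma~\ref{lem:Oswald-interpolant} could not be invoked for it.

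For \eqref{eq:oswald-stabilization-pressure} the same reasoning applies to the $d$ components of $\nabla q_h$, each of which is a piecewise polynomial of degree $\leqslant k-1$ and hence in $\mcX_h^{\mathrm{dc}}$; \eqref{eq:Oswald-interpolant} with $\phi=\phi_p$, summed over components, gives $\|\phi_p^{\onehalf}(\nabla q_h - \mcO_h(\nabla q_h))\|_T^2 \lesssim \sum_{F\in\mcF_i(T)} \phi_{p,T}\, h\, \|\jump{\nabla q_h}\|_F^2$. To obtain the stated right-hand side I would then invoke the \emph{global continuity} of $q_h\in\mcQ_h$: across any interior facet $F$ shared by $T^+_F$ and $T^-_F$ the traces of $q_h|_{T^+_F}$ and $q_h|_{T^-_F}$ coincide, so their tangential gradients along $F$ agree, which forces $\jump{\nabla q_h}|_F = \jump{\bfn_F\cdot\nabla q_h}|_F\,\bfn_F$ and therefore $\|\jump{\nabla q_h}\|_F = \|\jump{\bfn_F\cdot\nabla q_h}\|_F$; substituting this turns the previous bound into \eqref{eq:oswald-stabilization-pressure}. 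I do not expect any genuine difficulty in this corollary: beyond the degree bookkeeping, the only points requiring a moment's thought are the reduction to the piecewise-constant $\bfbeta_h$ and the normal-jump identity for $\nabla q_h$ just described, and everything else is a single application of Lemma~\ref{lem:Oswald-interpolant}.
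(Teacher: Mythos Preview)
Your proposal is correct and matches the paper's treatment: the paper presents the corollary as an immediate consequence of Lemma~\ref{lem:Oswald-interpolant} without supplying a separate proof, and your write-up simply makes explicit the degree bookkeeping, the componentwise application to vector quantities, and the normal-jump identity $\jump{\nabla q_h}|_F = \jump{\bfn_F\cdot\nabla q_h}\,\bfn_F$ for continuous $q_h$ that the paper leaves implicit.
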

In the forthcoming stability and a priori error analysis we will make
heavy use of certain continuous, piecewise linear versions of the
stabilization parameters 
defined by 
\begin{align}
  \widetilde{\phi} = \mcO_h\phi, \quad \phi \in \{\phi_{\beta}, \phi_u, \phi_p\}.
  \label{eq:smoothed_stab_param}
\end{align}
Then by the definition of the Oswald interpolant and the local comparability of
the stabilization parameters~(\ref{eq:local_equivalence_stab_param_patch}) 
\begin{align}
\label{eq:local_equivalence_stab_param_oswald}
  \widetilde{\phi}|_T \sim \phi_T, \quad  \phi \in \{\phi_{\beta}, \phi_u, \phi_p\}.
\end{align}
We conclude this section by stating and proving a helpful lemma on the 
quasi-local stability of the Oswald interpolation in certain weighted norms.
\begin{lemma}
\label{lem:oswald-quasi-local-stability}
Let $v_h \in \mcX_h$ and let $\phi$ be a piecewise constant function
defined on $\mcT_h$.
Then
\begin{align}
\label{eq:oswald-quasi-local-stability}
  \| \phi^{\onehalf} \mcO_h(v_h)\|_{T} &\lesssim \| \phi_T^{\onehalf}
  v_h \|_{\omega(T)}
\quad  \foralls T \in \mcT_h.
\end{align}
\end{lemma}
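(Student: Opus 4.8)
The plan is to bootstrap the assertion from the jump-control of the Oswald fluctuation already provided by Lemma~\ref{lem:Oswald-interpolant}. Regarding $v_h$ as an element of $\mcX_h^{\mathrm{dc}}$, I would first write $\mcO_h v_h = v_h + (\mcO_h v_h - v_h)$ and apply the triangle inequality to get $\|\phi^{\onehalf}\mcO_h v_h\|_T \leqslant \|\phi^{\onehalf} v_h\|_T + \|\phi^{\onehalf}(v_h - \mcO_h v_h)\|_T$. Since $\phi$ is piecewise constant, $\phi|_T \equiv \phi_T$, so the first summand equals $\phi_T^{\onehalf}\|v_h\|_T \leqslant \phi_T^{\onehalf}\|v_h\|_{\omega(T)}$, which is already of the claimed form; the second summand vanishes identically when $v_h$ is continuous, so the content of the lemma is really the estimation of this fluctuation for a general $v_h \in \mcX_h^{\mathrm{dc}}$.

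For the fluctuation term I would invoke Lemma~\ref{lem:Oswald-interpolant} directly, which gives $\|\phi^{\onehalf}(v_h - \mcO_h v_h)\|_T^2 \lesssim \sum_{F\in\mcF_i(T)} \phi_T h\,\|\jump{v_h}\|_F^2$, where $\mcF_i(T)$ is the set of interior facets meeting $T$. It then remains to absorb these facet contributions into the volume norm over the patch $\omega(T)$. For $F\in\mcF_i(T)$ with adjacent elements $T_F^{+}$ and $T_F^{-}$, the discrete trace inequality~\eqref{eq:inverse-estimates_F} with $i=j=0$, applied to $v_h|_{T_F^{+}}$ and $v_h|_{T_F^{-}}$, gives $\|\jump{v_h}\|_F \lesssim h^{-\onehalf}\bigl(\|v_h\|_{T_F^{+}} + \|v_h\|_{T_F^{-}}\bigr)$, hence $h\,\|\jump{v_h}\|_F^2 \lesssim \|v_h\|_{T_F^{+}}^2 + \|v_h\|_{T_F^{-}}^2$. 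Since $F$ intersects $T$ and the mesh is conforming, both $T_F^{+}$ and $T_F^{-}$ share at least one vertex with $T$ and therefore lie in $\omega(T)$; summing over the uniformly bounded collection $\mcF_i(T)$ and pulling the constant $\phi_T$ out of the sum yields $\sum_{F\in\mcF_i(T)} \phi_T h\,\|\jump{v_h}\|_F^2 \lesssim \phi_T\,\|v_h\|_{\omega(T)}^2$. Combining this with the first step and taking square roots gives $\|\phi^{\onehalf}\mcO_h v_h\|_T \lesssim \phi_T^{\onehalf}\|v_h\|_{\omega(T)}$.

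The only real issue is some geometric bookkeeping: one has to verify that every element sharing a facet with $T$ actually belongs to the vertex patch $\omega(T)$ — so that the right-hand side is genuinely the patch norm and not a norm over a strictly larger neighbourhood — and that the numbers of facets in $\mcF_i(T)$ and of elements in $\omega(T)$ are bounded independently of $h$. Both follow from the shape-regularity and quasi-uniformity of $\mcT_h$. Beyond these combinatorial facts, the argument uses nothing more than Lemma~\ref{lem:Oswald-interpolant} and the standard inverse and trace estimates collected in Section~\ref{ssec:trace-inequalities}.
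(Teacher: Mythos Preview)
Your proof is correct and follows essentially the same route as the paper's: split $\mcO_h v_h$ as $v_h + (\mcO_h v_h - v_h)$, invoke Lemma~\ref{lem:Oswald-interpolant} for the fluctuation, and absorb the resulting jump terms via the discrete trace inequality~\eqref{eq:inverse-estimates_F}. Your observation that the result is trivial for continuous $v_h \in \mcX_h$ as literally stated is correct; in the applications the paper actually needs the $\mcX_h^{\mathrm{dc}}$ version you have supplied, and your extra geometric bookkeeping (that the facet neighbours of $T$ lie in $\omega(T)$) is exactly what is implicitly used.
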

\begin{proof}
The proof is a simple consequence of the Lemma~(\ref{eq:Oswald-interpolant})
and the inverse inequality~(\ref{eq:inverse-estimates_F}):
\begin{align}
   \| \phi^{\onehalf} \mcO_h(v_h)\|_{T}^2
= \| \phi^{\onehalf} v_h\|_{T}^2
+  \| \phi^{\onehalf} (\mcO_h(v_h) - v_h)\|_{T}^2
\lesssim
\| \phi^{\onehalf} v_h\|_{T}^2
+ h \|\phi_T^{\onehalf}\jump{v_h}\|_{\mcF_h(T)}^2
\lesssim
\| \phi_T^{\onehalf} v_h\|_{T}^2
+ \|\phi_T^{\onehalf}v_h\|_{\omega(T)}^2.
\end{align}
\end{proof}
\subsection{The Role of the Ghost Penalties}
\label{ssec:norms}
Following \cite{BurmanFernandezHansbo2006,BraackBurmanJohnEtAl2007},
the naturals norms associated with the discrete variational problem
defined for fitted meshes as \mbox{$A_h+S_h=L_h$} in
\eqref{eq:Ah-form-def}--\eqref{eq:Lh-form-def}, are given by
\begin{align}
  \label{eq:oseen-norm-u-fitted}
  \tn \bfu_h  \tn^2 &=
 \| \sigma^{1/2} \bfu_h \|_{\Omega}^2
 + \| \mu^{1/2} \grad \bfu_h\|_{\Omega}^2
+ \| (\gamma \mu/h)^{1/2} \bfu_h \|^2_{\Gamma} + s_u(\bfu_h, \bfu_h)
\nonumber
 \\
   & \quad
+ \| |\bfbeta \cdot \bfn |^{1/2} \bfu_h \|_{\Gamma}^2
+ \| (\gamma \phi_u/h )^{1/2} \bfu_h \cdot \bfn \|^2_{\Gamma}
+  s_{\beta}(\bfu_h, \bfu_h),
\\
  \label{eq:oseen-norm-p-fitted}
  \tn  p_h \tn^2 &=
 \Phi_{p}\| p_h \|_{\Omega}^2 + s_p(p_h,p_h),
\\
 \label{eq:oseen-norm-up-fitted}
 \tn U_h  \tn^2 &=
 \tn \bfu_h  \tn^2 + \tn p_h  \tn^2,
\end{align}
with $\Phi_p$ defined by~\eqref{eq:phi_p_definition}.
Using similar norms, inf-sup stability and energy-type error
estimates were proven in~\cite{BurmanFernandezHansbo2006}.

The main challenge in developing Nitsche-type fictitious domain
methods is now to establish stability and a priori error estimates which are
independent of the positioning of the unfitted boundary within the
background mesh. The key idea is to add certain (weakly) consistent stabilization
terms in the vicinity of the boundary which allow to extend suitable
norms for finite element functions from the physical domain $\Omega$
to the entire fictitious domain $\Oast_h$.
The subsequent lemmas elucidate the role of the different ghost penalties and motivate the definition of the
fictitious domain norms from \eqref{eq:oseen-norm-unfitted-u}--\eqref{eq:oseen-norm-up}.
The following lemma was
proven in~\cite{Burman2010,MassingLarsonLoggEtAl2014}:

\begin{lemma} \label{lem:norm_equivalence}
  Let $\Omega$, $\Oast_h$ and $\mcF_{\Gamma}$ be defined as in
   Section~\ref{ssec:cutfem-spaces}. Then
  for scalar functions \mbox{$p_h\in\mcQ_h$} as well as for vector-valued equivalents \mbox{$\bfu_h \in \mcV_h$} 
  the following estimates hold
  \begin{alignat}{1}
    \label{eq:ghost_penalty:norm_equivalence_L2_pressure}
    \| p_h \|_{\Oast_h}^2
    &\lesssim
    \bigl( \| p_h
    \|_{\Omega}^2
    +
    \sum_{F \in \mcF_{\Gamma}}
    \sum_{1\leqslant j \leqslant k}{h^{2j+1}\langle \jump{\nablan^j p_h}, \jump{\nablan^j p_h}\rangle_F}
    \bigr)
    \lesssim
    \|  p_h \|_{\Oast_h}^2, \\
    \label{eq:ghost_penalty:norm_equivalence_L2}
    \| \bfu_h \|_{\Oast_h}^2
    &\lesssim
    \bigl (
    \| \bfu_h \|_{\Omega}^2
    +
    \sum_{F \in \mcF_{\Gamma}}
    \sum_{1\leqslant j \leqslant k}{h^{2j+1}\langle \jump{\nablan^j \bfu_h}, \jump{\nablan^j \bfu_h}\rangle_F}
    \bigr )
    \lesssim
    \| \bfu_h \|_{\Oast_h}^2, \\
    \label{eq:ghost_penalty:norm_equivalence_H1}
    \| \nabla \bfu_h \|_{\Oast_h}^2
    &\lesssim
    \bigl( \| \nabla \bfu_h
    \|_{\Omega}^2
    +
    \sum_{F \in \mcF_{\Gamma}}
    \sum_{1\leqslant j \leqslant k}{h^{2j-1}\langle \jump{\nablan^j \bfu_h}, \jump{\nablan^j \bfu_h}\rangle_F}
    \bigr)
    \lesssim
    \| \nabla \bfu_h \|_{\Oast_h}^2,
  \end{alignat}
  where the hidden constants depend only on the shape-regularity and the polynomial order, but not on the mesh or the location of~$\Gamma$
within~$\mcT_h$.
\end{lemma}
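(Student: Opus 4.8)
The plan is to follow~\cite{Burman2010,MassingLarsonLoggEtAl2014} and reduce each of the three estimates to a local two-element inequality that is then iterated along the short ``walks'' furnished by the geometric condition~G3. The right-hand inequalities are immediate: since $\Omega\subset\Oasth$ we have $\norm{p_h}_{\Omega}\leqslant\norm{p_h}_{\Oasth}$ and $\norm{\nabla\bfu_h}_{\Omega}\leqslant\norm{\nabla\bfu_h}_{\Oasth}$, while for a facet $F\in\mcF_{\Gamma}$ shared by $T_F^{\pm}$ the inverse estimate~\eqref{eq:inverse-estimates_F} gives $h^{2j+1}\norm{\jump{\nablan^j p_h}}_F^2\lesssim\norm{p_h}_{T_F^+}^2+\norm{p_h}_{T_F^-}^2$ and, applied to $\partial_{\bfn}\bfu_h$, $h^{2j-1}\norm{\jump{\nablan^j\bfu_h}}_F^2\lesssim\norm{\nabla\bfu_h}_{T_F^+}^2+\norm{\nabla\bfu_h}_{T_F^-}^2$; summing over $F\in\mcF_{\Gamma}$ and invoking the bounded overlap of these facets closes this direction.

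For the left-hand inequalities the key building block is: if $T_1,T_2\in\mcT_h$ share a facet $F=T_1\cap T_2$ and $w_h$ is a (possibly discontinuous) piecewise polynomial of degree $\leqslant\ell$, then $\norm{w_h}_{T_1}^2\lesssim\norm{w_h}_{T_2}^2+\sum_{0\leqslant j\leqslant\ell}h^{2j+1}\norm{\jump{\nablan^j w_h}}_F^2$. To see this, extend $w_h|_{T_2}$ to the polynomial $\widetilde w$ on $\RR^d$; by affine scaling onto a reference patch and norm equivalence on the finite-dimensional polynomial space, $\norm{\widetilde w}_{T_1}\lesssim\norm{\widetilde w}_{T_2}=\norm{w_h}_{T_2}$. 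The remainder $q:=w_h|_{T_1}-\widetilde w$ is a polynomial of degree $\leqslant\ell$ whose normal derivatives on $F$ are exactly $\pm\jump{\nablan^j w_h}|_F$, and since a polynomial of degree $\leqslant\ell$ is recovered from its Cauchy data $(\nablan^j q|_F)_{0\leqslant j\leqslant\ell}$ on the hyperplane through $F$, a further reference-element and scaling argument gives $\norm{q}_{T_1}^2\lesssim\sum_{0\leqslant j\leqslant\ell}h^{2j+1}\norm{\nablan^j q}_F^2$; the triangle inequality then yields the building block.

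Specialise this as follows. For $v_h\in\mcX_h$, continuity forces $\jump{v_h}=0$, so the building block with $\ell=k$ and $w_h=v_h$ discards the $j=0$ term; for $w_h$ a Cartesian component of $\nabla\bfu_h$ (of degree $\leqslant k-1$) one uses in addition that, $\bfu_h$ being $C^0$, only its normal derivatives jump, so $\jump{\nablan^j\nabla\bfu_h}$ is the sum of a multiple of $\jump{\nablan^{j+1}\bfu_h}$ and the tangential gradient along $F$ of $\jump{\nablan^j\bfu_h}$, whence $\norm{\jump{\nablan^j\nabla\bfu_h}}_F\lesssim\norm{\jump{\nablan^{j+1}\bfu_h}}_F+h^{-1}\norm{\jump{\nablan^j\bfu_h}}_F$ by an inverse inequality along $F$; re-indexing (the $j=0$ tangential term vanishes) turns the sum into $\sum_{1\leqslant j\leqslant k}h^{2j-1}\norm{\jump{\nablan^j\bfu_h}}_F^2$. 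Now fix $T\in\mcT_{\Gamma}$; condition~G3 provides a chain $T=T_1,\dots,T_N=T'$ with $N$ bounded, $T'\in\mcT_h\setminus\mcT_{\Gamma}$ (hence $T'\subset\Omega$) and $F_\ell:=T_\ell\cap T_{\ell+1}\in\mcF_i$, and after truncating at the first uncut element we may assume each $F_\ell\in\mcF_{\Gamma}$. Iterating the local estimate along the chain and bounding $\norm{\cdot}_{T'}^2\leqslant\norm{\cdot}_{\Omega}^2$ gives a bound for $\norm{v_h}_T^2$, respectively $\norm{\nabla\bfu_h}_T^2$; summing over $T\in\mcT_{\Gamma}$, adding $\norm{\cdot}_{\mcT_h\setminus\mcT_{\Gamma}}^2\leqslant\norm{\cdot}_{\Omega}^2$, and using that by quasi-uniformity and the bounded walk length each uncut element and each facet of $\mcF_{\Gamma}$ is visited by only boundedly many walks, yields \eqref{eq:ghost_penalty:norm_equivalence_L2_pressure}; the componentwise version gives \eqref{eq:ghost_penalty:norm_equivalence_L2} for $\bfu_h\in\mcV_h$ and \eqref{eq:ghost_penalty:norm_equivalence_H1} for $\nabla\bfu_h$.

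The main obstacle is the local polynomial inequality $\norm{q}_{T_1}^2\lesssim\sum_{0\leqslant j\leqslant\ell}h^{2j+1}\norm{\nablan^j q}_F^2$: one has to pin down the correct Cauchy data of a polynomial on a hyperplane and carefully track the $h$-powers under the affine scaling to the reference configuration, and in the gradient case one must exploit the $C^0$-conformity of $\bfu_h$ so that precisely the shifted scaling $h^{2j-1}$ and the range $1\leqslant j\leqslant k$ survive. By contrast, the geometric bookkeeping tied to~G3 — bounded walk length and bounded multiplicity of the visited elements and facets — is routine once quasi-uniformity is invoked.
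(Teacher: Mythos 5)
Your argument is correct: the right-hand inequalities via the inverse estimate \eqref{eq:inverse-estimates_F}, the local two-element polynomial inequality obtained by extending $w_h|_{T_2}$ and controlling the remainder through its normal-derivative (Cauchy) data on the shared facet, the reduction of $\jump{\nablan^j\nabla\bfu_h}$ to normal jumps using $C^0$-conformity and a tangential inverse inequality, and the iteration along the bounded walks of condition~G3 together reproduce the standard ghost-penalty norm-equivalence proof. The paper itself gives no proof of this lemma but only cites \cite{Burman2010,MassingLarsonLoggEtAl2014}, and your reconstruction is essentially the argument found there, so no further comparison is needed.
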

  Thus the role of the
  ghost-penalties $g_{\mu}$ and $g_{\sigma}$ is to
  extend the control
  of the viscous and reactive 
  element contributions  in the velocity norm~\eqref{eq:oseen-norm-u-fitted}
  from $\Omega$ to the fictitious domain $\Omega_h^{\ast}$:
  \begin{corollary}
    \label{cor:norm_equivalence_visc_reac_pres}
  Let $\Omega$, $\Oast_h$ and $\Fast$ be defined as in Section~\ref{ssec:cutfem-spaces}.
Then for $\bfu_h \in \mcV_h$ the following scaled estimates hold
  \begin{align}
    \label{eq:ghost_penalty:norm_equivalence_H1_viscous}
    \| \mu^{\onehalf}\nabla \bfu_h \|_{\Oast_h}^2
    &\lesssim
    \| \mu^{\onehalf} \nabla \bfu_h
    \|_{\Omega}^2
    +
    g_{\mu}(\bfu_h,\bfu_h)
    \lesssim
    \| \mu^{\onehalf} \nabla \bfu_h \|_{\Oast_h}^2,
\\
    \label{eq:ghost_penalty:norm_equivalence_L2_reaction}
    \| \sigma^{\onehalf} \bfu_h \|_{\Oast_h}^2
    &\lesssim
    \| \sigma^{\onehalf}\bfu_h \|_{\Omega}^2
    +
    g_{\sigma}(\bfu_h,\bfu_h)
    \lesssim
    \| \sigma^{\onehalf}\bfu_h \|_{\Oast_h}^2,
  \end{align}
  where the ghost-penalty operators $g_\mu$ and $g_\sigma$ are defined as in \eqref{eq:ghost-penalty-sigma} and \eqref{eq:ghost-penalty-nu}.
\end{corollary}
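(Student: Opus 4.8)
The plan is to deduce both scaled equivalences directly from Lemma~\ref{lem:norm_equivalence} by multiplying its estimates through with the globally constant coefficients $\mu$ and $\sigma$ and then recognizing the resulting facet sums as the ghost-penalty operators $g_\mu$ and $g_\sigma$. No new analytical ingredient is needed; the argument is pure bookkeeping once Lemma~\ref{lem:norm_equivalence} is in hand.

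\textbf{Viscous estimate.} I would start from the chain of inequalities~\eqref{eq:ghost_penalty:norm_equivalence_H1} applied to $\bfu_h \in \mcV_h$ and multiply all three members by the constant $\mu > 0$. Since $\mu$ does not depend on $h$ or on the position of $\Gamma$ within $\mcT_h$, and the hidden constants in Lemma~\ref{lem:norm_equivalence} depend only on the shape-regularity, the polynomial order $k$ and the dimension $d$, this yields
\begin{align*}
\| \mu^{\onehalf}\nabla \bfu_h \|_{\Oasth}^2
&\lesssim
\| \mu^{\onehalf} \nabla \bfu_h \|_{\Omega}^2
+ \sum_{F\in\Fast}\sum_{1\leqslant j\leqslant k} \mu\, h^{2j-1}\|\jump{\nablan^j \bfu_h}\|_F^2 \\
&\lesssim
\| \mu^{\onehalf}\nabla \bfu_h \|_{\Oasth}^2 .
\end{align*}
By the definition~\eqref{eq:ghost-penalty-nu} of $g_\mu$, the middle facet sum equals $\gamma_\mu^{-1} g_\mu(\bfu_h,\bfu_h)$, and since $\gamma_\mu$ is a fixed positive stabilization parameter it may be absorbed into the hidden constants, which gives~\eqref{eq:ghost_penalty:norm_equivalence_H1_viscous}.

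\textbf{Reactive estimate.} The argument is identical: I would take the chain~\eqref{eq:ghost_penalty:norm_equivalence_L2}, multiply by $\sigma > 0$, and identify the resulting facet sum $\sum_{F\in\Fast}\sum_{1\leqslant j\leqslant k} \sigma\, h^{2j+1}\|\jump{\nablan^j \bfu_h}\|_F^2$ with $\gamma_\sigma^{-1} g_\sigma(\bfu_h,\bfu_h)$ via~\eqref{eq:ghost-penalty-sigma}, which produces~\eqref{eq:ghost_penalty:norm_equivalence_L2_reaction}.

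There is no genuine obstacle beyond this bookkeeping; the single point deserving a word of care is that the constants furnished by Lemma~\ref{lem:norm_equivalence} are independent of $\mu$ and $\sigma$, so that pulling these scalar coefficients through the inequalities is legitimate and the resulting estimates stay robust across all flow regimes. (If one preferred, the vector-valued bounds could also be obtained componentwise from the scalar version of Lemma~\ref{lem:norm_equivalence}, but since the lemma already states the vector form directly this reduction is unnecessary.)
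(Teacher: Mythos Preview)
Your proposal is correct and matches the paper's intended derivation: the corollary is stated without proof immediately after Lemma~\ref{lem:norm_equivalence}, precisely because it follows by multiplying the unscaled equivalences~\eqref{eq:ghost_penalty:norm_equivalence_L2} and~\eqref{eq:ghost_penalty:norm_equivalence_H1} through by the global constants $\sigma$ and $\mu$ and absorbing $\gamma_\sigma,\gamma_\mu$ into the hidden constants. Your remark that the constants in Lemma~\ref{lem:norm_equivalence} are independent of $\mu$ and $\sigma$ is exactly the point that makes this legitimate.
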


One major difference to the numerical analysis proposed by
\citet{BurmanFernandezHansbo2006} consists in the extended inf-sup
stability derived in this work.
In \cite{BurmanFernandezHansbo2006} inf-sup stability was proven with
respect to a weaker semi-norm and the orthogonality and approximation
properties of the $L^2$ projection were exploited to establish
\apriori~error estimates.
\citet{BurmanClausMassing2015} introduced a stabilized,
approximate $L^2$ projection to
facilitate the numerical analysis of a stabilized cut finite element method for the
three field Stokes problem based on equal-order, $P_1$ elements.
The definition of this stabilized $L^2$ projection incorporates
properly scaled ghost-penalty stabilization and leads to a
perturbation of the $L^2$ orthogonality in the vicinity of the
embedded boundary which is difficult to handle for other than first
order elements.
To compensate the lack of a suitable CutFEM variant
of the $L^2$ projection in the subsequent stability analysis,
we will gain
further control over weakly scaled (semi)-norms
\begin{equation}
\| \phi_{u}^{\onehalf} \nabla\cdot \bfu_h \|_{\Oast_h}, \quad
\| \phi_{\beta}^{\onehalf}(\bfbeta \cdot \nabla \bfu_h + \nabla p_h)\|_{\Oast_h}, \quad
\|p_h\|_{\Oast_h}
\label{eq:gls-semi-norms}
\end{equation}
with the help of the CIP stabilization operators \mbox{$s_u,s_\beta,s_p$}
and their ghost penalty counterparts.
Enhancing our natural norms by these residual-based stabilization like
contributions allows us to use alternative approximation operators
such as the Cl\'ement operator for which a proper CutFEM extension
can be defined by~\eqref{eq:clement-interp-ext}.

\begin{remark}
Previously, \citet{Burman2007a} established stability and convergence
results in related norms for stabilized finite element methods for
Friedrich's systems by demonstrating how to control the relevant
$h$-weighted graph-norms using CIP stabilizations.
Our approach here was also inspired by the presentation in \citet{KnoblochTobiska2013},
who showed how to gain control over the semi-norm
$\|\phi_{\beta}^{\onehalf}(\bfbeta_h \cdot \nabla \bfu_h + \nabla
p_h)\|_{\Omega}$  in a local projection stabilized, fitted finite element method for the Oseen
problem.
\end{remark}

To ensure stability and optimality for cut finite element
approximations in the different flow regimes, the
semi-norms~\eqref{eq:gls-semi-norms} need to be extended to the
enlarged domain~$\Oast_h$ with the help of related ghost-penalty
operators \mbox{$g_u,g_\beta,g_p$}.  For this purpose, in the
following useful estimates according to the aforementioned norms are
derived.

\begin{corollary}
\label{cor:norm_equivalence_rsb_pressure_div}
For \mbox{$p_h\in\mcQ_h$} and \mbox{$\bfu_h \in \mcV_h$} with $\Phi_p$ from \eqref{eq:phi_p_definition}, the following estimates hold
\begin{alignat}{1}
    \label{eq:norm_equivalence_rsb_pressure}
    \Phi_p \| p_h \|_{\Oast_h}^2
    &\lesssim
    \Phi_p \|  p_h \|_{\Omega}^2
    + g_{p}(p_h,p_h),
    \\
    \label{eq:norm_equivalence_rsb_div}
    \| \phi_u^{\onehalf} \nabla \cdot \bfu_h \|_{\Oast_h}^2
    &\lesssim
    \| \phi_u^{\onehalf} \nabla \cdot \bfu_h \|_{\Omega}^2
    + g_{u}(\bfu_h,\bfu_h)
  \end{alignat}
  where the ghost-penalty operators $g_p$ and $g_u$ are defined as in
  \eqref{eq:ghost-penalty-u} and \eqref{eq:ghost-penalty-p}.
\end{corollary}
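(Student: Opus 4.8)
The plan is to obtain both bounds from Lemma~\ref{lem:norm_equivalence} --- or, in the divergence case, from the element-local ``walk'' estimate underpinning its proof --- together with the patch-wise comparability~\eqref{eq:local_equivalence_stab_param_patch} of the stabilization parameters and a direct comparison of their scalings.

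For~\eqref{eq:norm_equivalence_rsb_pressure} I would apply the lower bound of~\eqref{eq:ghost_penalty:norm_equivalence_L2_pressure} to $p_h\in\mcQ_h$ and multiply by $\Phi_p$, which turns the jump term into $\sum_{F\in\Fast}\sum_{1\leqslant j\leqslant k}(\Phi_p h^2)\,h^{2j-1}\|\jump{\nablan^j p_h}\|_F^2$; it then only remains to check $\Phi_p h^2\lesssim\phi_{p,F}$. Using $\phi_{p,F}\sim\phi_{p,T}$ from~\eqref{eq:local_equivalence_stab_param_patch} and the definitions~\eqref{eq:cip-s_scalings}, \eqref{eq:phi_p_definition}, this reduces to $\mu+\|\bfbeta\|_{0,\infty,T}h+\sigma h^2\lesssim\Phi_p^{-1}$, which holds since $\|\bfbeta\|_{0,\infty,T}\leqslant\|\bfbeta\|_{0,\infty,\Omega}$, $h\lesssim C_P$, and $\Phi_p^{-1}\geqslant\mu+\|\bfbeta\|_{0,\infty,\Omega}C_P+\sigma C_P^2$. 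Identifying the resulting sum with $\gamma_p^{-1}g_p(p_h,p_h)$, see~\eqref{eq:ghost-penalty-p}, finishes this estimate.

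For~\eqref{eq:norm_equivalence_rsb_div} I would set $w_h:=\nabla\cdot\bfu_h$, a discontinuous piecewise polynomial of degree $k-1$, and run the walk argument behind Lemma~\ref{lem:norm_equivalence} for $w_h$ directly. Given a cut element $T\in\mcT_{\Gamma}$, pick a shortest chain of facet-neighbours $T=T_1,\dots,T_N=T'$ ending in an uncut element $T'\subset\Omega$; by assumption~G3 its length $N$ is uniformly bounded, and minimality forces $T_1,\dots,T_{N-1}$ to be cut, so every interior facet $F_i:=T_i\cap T_{i+1}$ lies in $\Fast$. Chaining the single-facet polynomial extension estimate $\|w_h\|_{T_i}^2\lesssim\|w_h\|_{T_{i+1}}^2+\sum_{0\leqslant j\leqslant k-1}h^{2j+1}\|\jump{\nablan^j w_h}\|_{F_i}^2$ --- whose $j=0$ term is genuinely needed here, since $w_h$ jumps across facets --- over the at most $N$ steps gives $\|w_h\|_{T}^2\lesssim\|w_h\|_{T'}^2+\sum_{i=1}^{N-1}\sum_{0\leqslant j\leqslant k-1}h^{2j+1}\|\jump{\nablan^j w_h}\|_{F_i}^2$. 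I would then multiply by $\phi_{u,T}$, use~\eqref{eq:local_equivalence_stab_param_patch} along the bounded chain to replace $\phi_{u,T}$ by the comparable $\phi_{u,T'}$ and $\phi_{u,F_i}$, and sum over all $T\in\mcT_h$: uncut elements contribute to $\|\phi_u^{\onehalf}\nabla\cdot\bfu_h\|_{\Omega}^2$ directly, while by finite overlap each uncut endpoint and each facet of $\Fast$ is reused only a bounded number of times. Finally, since $\nablan^j$ and $\nabla\cdot$ are commuting constant-coefficient operators on $F$, I would rewrite $\jump{\nablan^j w_h}=\jump{\nabla\cdot\nablan^j\bfu_h}$, so that the collected jump sum is exactly $\gamma_u^{-1}g_u(\bfu_h,\bfu_h)$, see~\eqref{eq:ghost-penalty-u}.

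I expect the main obstacle to be the divergence estimate: the norm-equivalence walk has to be carried out for the genuinely discontinuous field $\nabla\cdot\bfu_h$ --- so the $j=0$ facet-jump term is indispensable and the statement cannot simply be quoted from the $C^0$ setting of Lemma~\ref{lem:norm_equivalence} --- and the element-wise varying weight $\phi_u$ must be transported correctly through the chaining, which is exactly where~\eqref{eq:local_equivalence_stab_param_patch} and the uniform walk-length bound from~G3 enter. The pressure estimate, by contrast, is routine once $\Phi_p h^2\lesssim\phi_p$ has been verified.
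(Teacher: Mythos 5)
Your proposal is correct and follows essentially the same route as the paper: the pressure bound via Lemma~\ref{lem:norm_equivalence} applied to $\Phi_p^{\onehalf}p_h$ together with $\Phi_p h^2\lesssim\phi_{p,F}$ (using $h\lesssim C_P$), and the divergence bound via a localized, element-chained variant of the ghost-penalty norm equivalence combined with~\eqref{eq:local_equivalence_stab_param_patch} and the bounded walk length from~G3. The only difference is that you spell out the chaining argument for the discontinuous field $\nabla\cdot\bfu_h$ (including the necessary $j=0$ jump term), whereas the paper delegates this localized variant to the cited reference.
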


\begin{proof}
The estimate for the scaled pressure $L^2$-norm follows from applying
Lemma~\ref{lem:norm_equivalence} to \mbox{$\Phi_p^{\onehalf}p_h$} and
the fact that \mbox{$\Phi_p h^2 \lesssim \phi_{p,T} \sim \phi_{p,F}$},
see definitions \eqref{eq:phi_p_definition} and
\eqref{eq:cip-s_scalings}, together with \mbox{$h\lesssim C_P$}.

The estimate for the weakly scaled incompressibility results from a
localized variant of the ghost-penalty
Lemma~\ref{lem:norm_equivalence}, as proven in
\cite{MassingLarsonLoggEtAl2014}, and the comparability assumption
\mbox{$\phi_{u,T} \sim \phi_{u,T'} \sim \phi_{u,F}$}
\eqref{eq:local_equivalence_stab_param_patch} on patches of elements
\mbox{$T'\in \omega(T)$} surrounding intersected elements
\mbox{$T\in\mcT_{\Gamma}$}.  Note that the number of element traversals required to walk from a cut element to an uncut
element in a boundary zone patch~$\omega(T)$ 
is bounded independent of~$h$ owing to the mesh assumption~G3,
see Section~\ref{ssec:cutfem-spaces}.
\end{proof}

More subtle is the role of the mixed norm incorporating the advective term and the pressure gradient.
Estimates using the ghost-penalty operators~$g_{\beta}$ and $g_p$ can be deduced as follows.

\begin{lemma}
\label{lem:norm-extension-rbs-norms}
Let $\bfu_h \in \mcV_h$ and
let the scaling functions~\mbox{$\phi_\beta$} and $\phi_p$ be defined as in
\eqref{eq:cip-s_scalings}.
For a piecewise constant approximation \mbox{$\bfbeta_h\in[\mcX_h^{\mathrm{dc},0}]^d$} of~$\bfbeta$ on~$\mcT_h$
which satisfy the approximation properties specified in \eqref{eq:beta-approximation},
the following estimate for the streamline diffusion norm holds
  \begin{align}
    \|
    {\phi}_{\beta}^{\onehalf}
    (\bfbeta_h - \bfbeta) \cdot \nabla \bfu_h
    \|_{\Oast_h}^2
    &\lesssim
    \omega_h
    \bigl(
    \| \mu^{\onehalf}\nabla \bfu_h \|_{\Oast_h}^2
    +
    \| \sigma^{\onehalf} \bfu_h \|_{\Oast_h}^2
    \bigr).
    \label{eq:norm_rsb_beta_diff_const}
\end{align}
The mixed advective-pressure-gradient semi-norm can be estimated as
\begin{align}
    \| 
    \phi_{\beta}^{\onehalf} 
    (
    \bfbeta_h \cdot \nabla \bfu_h + \nabla p_h
    )
    \|_{\Oast_h}^2
    &\lesssim
    \| 
    \phi_{\beta}^{\onehalf} 
    (
    \bfbeta \cdot \nabla \bfu_h + \nabla p_h
    )
    \|_{\Omega}^2
    + 
    g_{\beta}(\bfu_h, \bfu_h)
    +
    g_{p}(p_h, p_h)
    + 
    \omega_h
    \bigl(
    \| \mu^{\onehalf}\nabla \bfu_h \|_{\Oast_h}^2
    +
    \| \sigma^{\onehalf} \bfu_h \|_{\Oast_h}^2
    \bigr),
    \label{eq:norm_equivalence_rsb_beta}
  \end{align}
with the non-dimensional scaling function $\omega_h$ from \eqref{eq:phi_p_definition}.
\end{lemma}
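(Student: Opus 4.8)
The plan is to prove~\eqref{eq:norm_rsb_beta_diff_const} first and then use it as the workhorse for~\eqref{eq:norm_equivalence_rsb_beta}. For the streamline-diffusion perturbation I work element by element on $T\in\mcT_h$ and pull the piecewise constant weight and the advective field out of the $L^2$ norm,
\begin{equation*}
  \| \phi_{\beta}^{\onehalf}(\bfbeta_h - \bfbeta)\cdot\nabla\bfu_h \|_T^2
  \leqslant
  \phi_{\beta,T}\,\|\bfbeta - \bfbeta_h\|_{0,\infty,T}^2\,\|\nabla\bfu_h\|_T^2 .
\end{equation*}
Using~\eqref{eq:beta-approximation} in the form $\|\bfbeta - \bfbeta_h\|_{0,\infty,T}\lesssim h\,|\bfbeta|_{1,\infty,T}$ together with the elementary bound $\|\bfbeta - \bfbeta_h\|_{0,\infty,T}\lesssim \|\bfbeta\|_{0,\infty,T}$ gives $\|\bfbeta - \bfbeta_h\|_{0,\infty,T}^2\lesssim h\,\|\bfbeta\|_{0,\infty,T}\,|\bfbeta|_{1,\infty,T}$, and since by~\eqref{eq:cip-s_scalings} one has $\phi_{\beta,T}\,h\,\|\bfbeta\|_{0,\infty,T} = h^{3}\|\bfbeta\|_{0,\infty,T}/(\mu+\|\bfbeta\|_{0,\infty,T}h+\sigma h^{2})\leqslant h^{2}$, I obtain
\begin{equation*}
  \phi_{\beta,T}\,\|\bfbeta - \bfbeta_h\|_{0,\infty,T}^2
  \lesssim h^{2}\,|\bfbeta|_{1,\infty,\Omega}
  = \omega_h\,(\mu+\sigma h^{2}) .
\end{equation*}
Writing $(\mu+\sigma h^{2})\|\nabla\bfu_h\|_T^2 = \mu\|\nabla\bfu_h\|_T^2 + \sigma h^{2}\|\nabla\bfu_h\|_T^2 \lesssim \mu\|\nabla\bfu_h\|_T^2 + \sigma\|\bfu_h\|_T^2$ by the inverse estimate~\eqref{eq:inverse-estimates_T} and summing over $T\in\mcT_h$ (so that $\|\cdot\|_{\Oasth}^2 = \sum_{T}\|\cdot\|_T^2$) yields~\eqref{eq:norm_rsb_beta_diff_const}.

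For~\eqref{eq:norm_equivalence_rsb_beta} I begin with the triangle inequality
\begin{equation*}
  \| \phi_{\beta}^{\onehalf}(\bfbeta_h\cdot\nabla\bfu_h+\nabla p_h) \|_{\Oasth}^2
  \lesssim
  \| \phi_{\beta}^{\onehalf}(\bfbeta\cdot\nabla\bfu_h+\nabla p_h) \|_{\Oasth}^2
  + \| \phi_{\beta}^{\onehalf}(\bfbeta_h-\bfbeta)\cdot\nabla\bfu_h \|_{\Oasth}^2 ,
\end{equation*}
and dispose of the last term by~\eqref{eq:norm_rsb_beta_diff_const}. It then remains to extend the first term from $\Omega$ to $\Oasth$. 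Since $\Oasth\setminus\Omega$ is covered by the cut elements in $\mcT_{\pO}$, this amounts to estimating $\sum_{T\in\mcT_{\pO}}\phi_{\beta,T}\|\bfbeta\cdot\nabla\bfu_h+\nabla p_h\|_T^2$; replacing $\bfbeta$ by $\bfbeta_h$ on these elements at the cost of one more~\eqref{eq:norm_rsb_beta_diff_const}-type contribution, I am left with the piecewise polynomial residual $\boldsymbol{r}_h := \bfbeta_h\cdot\nabla\bfu_h+\nabla p_h$.

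Now I invoke the localized ghost-penalty norm equivalence behind Lemma~\ref{lem:norm_equivalence} (see~\cite{MassingLarsonLoggEtAl2014}): by mesh assumption~G3, for every $T\in\mcT_{\pO}$ there is a chain of at most $N$ interior faces leading to an uncut element $T'\subset\Omega$ along which
\begin{equation*}
  \phi_{\beta,T}\,\|\boldsymbol{r}_h\|_T^2
  \lesssim
  \phi_{\beta,T'}\,\|\boldsymbol{r}_h\|_{T'}^2
  + \sum_{F\,\text{on the chain}}\ \sum_{0\leqslant j\leqslant k-1}
  \phi_{\beta,F}\,h^{2j+1}\,\|\jump{\nablan^{j}\boldsymbol{r}_h}\|_F^2 ,
\end{equation*}
where I used the patch comparability $\phi_{\beta,T}\sim\phi_{\beta,T'}\sim\phi_{\beta,F}$. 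Summing over $T\in\mcT_{\pO}$, the $T'$-terms are bounded by $\|\phi_{\beta}^{\onehalf}\boldsymbol{r}_h\|_{\Omega}^2$ and hence, once again via~\eqref{eq:norm_rsb_beta_diff_const}, by $\|\phi_{\beta}^{\onehalf}(\bfbeta\cdot\nabla\bfu_h+\nabla p_h)\|_{\Omega}^2$ plus an $\omega_h$-term. For the jump terms I expand $\jump{\nablan^{j}\boldsymbol{r}_h} = \jump{\bfbeta_h\cdot\nabla\nablan^{j}\bfu_h} + \jump{\nabla\nablan^{j}p_h}$ (legitimate since $\bfbeta_h$ is elementwise constant). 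In the pressure part the tangential derivative jumps vanish because $p_h$ is continuous, so $\jump{\nabla\nablan^{j}p_h}$ reduces to $\jump{\nablan^{j+1}p_h}\,\bfn$ up to a term rescaled by the face inverse estimate~\eqref{eq:inverse-estimates_F}; using $\phi_{\beta,F}\sim\phi_{p,F}$ these are precisely the terms of $g_p(p_h,p_h)$. In the velocity part I write $\jump{\bfbeta_h\cdot\nabla\nablan^{j}\bfu_h} = \jump{\bfbeta\cdot\nabla\nablan^{j}\bfu_h} + \jump{(\bfbeta_h-\bfbeta)\cdot\nabla\nablan^{j}\bfu_h}$; the first term is exactly a term of $g_{\beta}(\bfu_h,\bfu_h)$, and the second is absorbed into the $\omega_h$-contributions by the manipulations of the first part combined with~\eqref{eq:inverse-estimates_F} and the continuity of $\bfu_h$. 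Collecting all pieces gives~\eqref{eq:norm_equivalence_rsb_beta}.

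The first estimate is essentially bookkeeping; the difficulty is concentrated in the second. I expect the main obstacle to be the handling of the jump terms produced by the ghost-penalty transport: the residual $\bfbeta\cdot\nabla\bfu_h+\nabla p_h$ is not a polynomial, so the transport must be preceded by a quantified $\bfbeta\to\bfbeta_h$ reduction, and the resulting derivative jumps have to be matched term by term with the precise structure of $g_{\beta}$ — which penalizes $\jump{\bfbeta\cdot\nabla\nablan^{j}\bfu_h}$ rather than generic $(j+1)$-st order jumps — and of $g_p$, which forces one to exploit the continuity of $\bfu_h$ and $p_h$ to discard tangential derivative jumps and to use the face inverse estimates to rescale the surviving ones. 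Some additional care is then needed to ensure that all the scattered $(\bfbeta-\bfbeta_h)$-error terms collapse into the single factor $\omega_h\bigl(\|\mu^{\onehalf}\nabla\bfu_h\|_{\Oasth}^2 + \|\sigma^{\onehalf}\bfu_h\|_{\Oasth}^2\bigr)$ on the right-hand side.
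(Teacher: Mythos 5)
Your proposal is correct and follows essentially the same route as the paper: the first estimate is obtained by the identical Cauchy--Schwarz computation combined with $\phi_{\beta}\|\bfbeta\|_{0,\infty,T}h\lesssim h^2$ and the inverse estimate turning $(\mu+\sigma h^2)\|\nabla\bfu_h\|_T^2$ into the viscous and reactive norms, and the second rests on the ghost-penalty norm equivalence of Lemma~\ref{lem:norm_equivalence} applied to the piecewise polynomial $\bfbeta_h\cdot\nabla\bfu_h+\nabla p_h$, with the same normal/tangential splitting of the pressure jumps (face inverse estimate on the tangential part) and the same $\bfbeta_h\leftrightarrow\bfbeta$ exchanges producing the $g_{\beta}$, $g_p$ and $\omega_h$ contributions. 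The only difference is one of bookkeeping: you apply a triangle inequality and localize to the cut elements before invoking the (localized) norm equivalence, whereas the paper applies Lemma~\ref{lem:norm_equivalence} directly to the full quantity on $\Oast_h$ and then sorts the resulting jump terms.
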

\begin{proof}~\\
\noindent{\bf Estimates~\eqref{eq:norm_rsb_beta_diff_const}.}
A simple application of the Cauchy-Schwarz inequality shows that
\begin{align}
  \|
  {\phi}_{\beta}^{\onehalf}
  (\bfbeta_h - \bfbeta) \cdot \nabla \bfu_h\|^2_{\Oast_h}
  \lesssim
  \sum\nolimits_{T \in \mcT_h}
  \phi_{\beta}
  \|\bfbeta_h - \bfbeta\|_{0,\infty,T}^2 
  \| \nabla \bfu_h\|^2_{T}.
\end{align}
Now using the interpolation property of~$\bfbeta_h$ \eqref{eq:beta-approximation} and
the simple fact that \mbox{$\phi_{\beta} \|\bfbeta\|_{0,\infty,T} \lesssim h$} (see definition \eqref{eq:cip-s_scalings}),
it can be further estimated that
\begin{align}
  \phi_{\beta} \|\bfbeta_h - \bfbeta\|_{0,\infty,T}^2 
  &\lesssim
  \phi_{\beta} 
  \|\bfbeta\|_{0,\infty,T}
  h
  |\bfbeta|_{1,\infty,T}
  \lesssim
  h^2
  |\bfbeta|_{1,\infty,T}.
\end{align}
Then we apply the inverse inequality~\eqref{eq:inverse-estimates_T} to obtain the following simple estimate for
$\|\nabla\bfu_h\|_{T}$,
\begin{align}
  \|\nabla \bfu_h \|_{T}
  =\dfrac{\mu + \sigma h^2}{\mu + \sigma h^2} 
  \| \nabla \bfu_h\|_T^2
  \lesssim \dfrac{1}{\mu + \sigma h^2} 
  ( \| \mu^{\onehalf} \nabla \bfu_h\|_T^2
  + \| \sigma^{\onehalf} \bfu_h \|_T^2
  ),
\end{align}
which gives the desired estimate by taking the maximum over all elements and by defining $\omega_h$ as in \eqref{eq:phi_p_definition}.
\smallskip

\noindent{\bf Estimate~\eqref{eq:norm_equivalence_rsb_beta}.}
  Note that the function \mbox{$\bfv_h := \phi_{\beta}(\bfbeta_h \cdot \nabla \bfu_h + \nabla  p_h)$}
  is a piecewise polynomial function of order \mbox{$k-1$} owing to the fact that
  $\phi_\beta$ and $\bfbeta_h$ are piecewise constant.
  Applying the norm equivalence from Lemma~\ref{lem:norm_equivalence}
  to~$\bfv_h$ and using that \mbox{$\phi_{\beta} \sim \phi_{p}$}, as assumed in \eqref{eq:cip-s_scalings},
  and the local comparability of $\phi$ \eqref{eq:local_equivalence_stab_param_patch} yields
\begin{align}
  &\|
  \phi_{\beta}^{\onehalf}
  (\bfbeta_h \cdot \nabla \bfu_h + \nabla p_h)
  \|^2_{\Oast_h} \nonumber \\
  &\quad\lesssim
  \|
  \phi_{\beta}^{\onehalf}
  (\bfbeta_h \cdot \nabla \bfu_h + \nabla p_h)
  \|^2_{\Omega}
  +
  \sum\nolimits_{j=0}^{k-1}
  \phi_{\beta,F} h^{2j+1}
  \| 
  \jump{\nablan^j(
  \bfbeta_h \cdot \nabla \bfu_h + \nabla p_h
  )}
  \|_{\Fast}^2
  \\
  &\quad\lesssim
  \|
  \phi_{\beta}^{\onehalf}
  (\bfbeta_h \cdot \nabla \bfu_h + \nabla p_h)
  \|^2_{\Omega}
  +
  \sum\nolimits_{j=0}^{k-1}
  \phi_{\beta,F} h^{2j+1}
  \| 
  \jump{
    \nablan^j \nabla p_h
  }
  \|_{\Fast}^2
  +
  \sum\nolimits_{j=0}^{k-1}
  \phi_{\beta,F} h^{2j+1}
  \| 
  \jump{
  \bfbeta_h \cdot \nabla \nablan^j \bfu_h 
  }
  \|_{\Fast}^2
\\
  &\quad\lesssim
  \|
  \phi_{\beta}^{\onehalf}
  (\bfbeta_h \cdot \nabla \bfu_h + \nabla p_h)
  \|^2_{\Omega}
  +
  \sum\nolimits_{j=0}^{k-1}
  \phi_{\beta,F} h^{2j+1}
  \| 
  \jump{
    \nablan^j \nabla p_h
  }
  \|_{\Fast}^2
  \nonumber
  \\
  &\quad\qquad
  +
  \sum\nolimits_{j=0}^{k-1}
  \phi_{\beta,F} h^{2j+1}
  \| 
  \jump{
  (\bfbeta_h -\bfbeta)\cdot \nabla \nablan^j \bfu_h 
  }
  \|_{\Fast}^2
  +
  \sum\nolimits_{j=0}^{k-1}
  \phi_{\beta,F} h^{2j+1}
  \| 
  \jump{
  \bfbeta \cdot \nabla \nablan^j \bfu_h 
  }
  \|_{\Fast}^2
  \\
  &\quad\lesssim
  \| \phi_{\beta}^{\onehalf}
  (\bfbeta_h \cdot \nabla \bfu_h + \nabla p_h)
  \|^2_{\Omega}
  +
  g_{\beta}(\bfu_h, \bfu_h)
  + 
  g_{p}(p_h, p_h)
+ 
    \omega_h
    \bigl(
    \| \mu^{\onehalf}\nabla \bfu_h \|_{\Oast_h}^2
    +
    \| \sigma^{\onehalf} \bfu_h \|_{\Oast_h}^2
    \bigr).
\end{align}
Here, in the last step, the gradient
\mbox{$\nabla p_h = (\nablan p_h) \bfn_F + \boldsymbol{P}_{F} \nabla p_h$}
has been decomposed into its normal and tangential gradient part
using the tangential projection
\mbox{$\boldsymbol{P}_F := \bfI - \bfn_F \otimes \bfn_F$}
and the inverse estimate
\begin{align}
\|\jump{\boldsymbol{P}_{F} \nabla \nablan^j p_h} \|_F^2 =
\|\boldsymbol{P}_{F} \nabla \jump{\nablan^j p_h} \|_F^2 \lesssim
  h^{-2} \|\jump{\nablan^j p_h} \|_F^2
  \label{eq:face-tangential-inverse-eq}
\end{align}
has been employed on the tangential part to obtain 
\begin{align}
  h^{2j+1}
  \|
  \jump{\nablan^j \nabla p_h}
  \|_F^2
  \lesssim
  h^{2j+1}
  \|
  \jump{\nablan^{j+1} p_h}
  \|_F^2
  + 
  h^{2j-1}
  \|
  \jump{\nablan^{j} p_h}
  \|_F^2.
  \label{eq:gp_simple}
\end{align}
Choosing the stabilization parameters \mbox{$\gamma_\beta,\gamma_p>0$}
strictly positive
allows to control the facet terms in the vicinity of~$\Gamma$ by the
two higher-order ghost penalty terms~$g_\beta$ and $g_p$.
For the facet term which includes the difference $\bfbeta_h-\bfbeta$,
applying Cauchy Schwarz on the facet followed by the interpolation estimate \eqref{eq:beta-approximation}
and standard inverse estimates for the remaining normal derivatives of $\bfu_h$,
which completes the proof of \eqref{eq:norm_equivalence_rsb_beta}.
\end{proof}
\\
The previous lemma in combination with Corollary~\ref{cor:norm_equivalence_visc_reac_pres}
explains how the ghost penalty terms help us to extend the
mixed advective-pressure-gradient semi-norm from the physical
domain to the entire active background mesh:
\begin{corollary}
  \label{cor:norm-extension-rbs-norms}
  Under the assumption of Lemma~\ref{lem:norm-extension-rbs-norms}
  it holds that
\begin{align}
  \|
  \phi_{\beta}^{\onehalf}
  (\bfbeta \cdot \nabla \bfu_h + \nabla p_h)
  \|^2_{\Oast_h}
  \lesssim
  \|
  \phi_{\beta}^{\onehalf}
  (\bfbeta \cdot \nabla \bfu_h + \nabla p_h)
  \|^2_{\Omega}
  +
    \omega_h | U_h |_h^2.
\end{align}
\end{corollary}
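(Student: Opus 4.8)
The plan is to derive the estimate directly from Lemma~\ref{lem:norm-extension-rbs-norms} combined with Corollary~\ref{cor:norm_equivalence_visc_reac_pres}. The only genuine work is to pass from the Lipschitz field $\bfbeta$ to the piecewise constant surrogate $\bfbeta_h$ appearing in those results, and then to recognise that every term produced on the right-hand side is, up to a generic constant, a summand of $| U_h |_h^2$ as defined in~\eqref{eq:oseen-norm-unfitted-u}--\eqref{eq:Ah-semi-norm}. First I would write $\bfbeta\cdot\nabla\bfu_h + \nabla p_h = (\bfbeta_h\cdot\nabla\bfu_h + \nabla p_h) + (\bfbeta - \bfbeta_h)\cdot\nabla\bfu_h$ and use the triangle inequality in the $\phi_\beta$-weighted $L^2(\Oast_h)$-norm, bounding the left-hand side by $\| \phi_\beta^{\onehalf}(\bfbeta_h\cdot\nabla\bfu_h + \nabla p_h) \|_{\Oast_h}^2 + \| \phi_\beta^{\onehalf}(\bfbeta - \bfbeta_h)\cdot\nabla\bfu_h \|_{\Oast_h}^2$.

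For the first summand I would invoke~\eqref{eq:norm_equivalence_rsb_beta}, which bounds it by $\| \phi_\beta^{\onehalf}(\bfbeta\cdot\nabla\bfu_h + \nabla p_h) \|_{\Omega}^2 + g_{\beta}(\bfu_h,\bfu_h) + g_{p}(p_h,p_h) + \omega_h\bigl( \| \mu^{\onehalf}\nabla\bfu_h \|_{\Oast_h}^2 + \| \sigma^{\onehalf}\bfu_h \|_{\Oast_h}^2 \bigr)$; note that this already reinstates the desired bulk term over $\Omega$ written in terms of $\bfbeta$ (not $\bfbeta_h$), so no further manipulation of the physical-domain term is needed. For the second summand I would apply~\eqref{eq:norm_rsb_beta_diff_const}, which gives $\| \phi_\beta^{\onehalf}(\bfbeta - \bfbeta_h)\cdot\nabla\bfu_h \|_{\Oast_h}^2 \lesssim \omega_h\bigl( \| \mu^{\onehalf}\nabla\bfu_h \|_{\Oast_h}^2 + \| \sigma^{\onehalf}\bfu_h \|_{\Oast_h}^2 \bigr)$. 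It then remains only to transport the $\Oast_h$-contributions back to $\Omega$: by Corollary~\ref{cor:norm_equivalence_visc_reac_pres} one has $\| \mu^{\onehalf}\nabla\bfu_h \|_{\Oast_h}^2 + \| \sigma^{\onehalf}\bfu_h \|_{\Oast_h}^2 \lesssim \| \mu^{\onehalf}\nabla\bfu_h \|_{\Omega}^2 + \| \sigma^{\onehalf}\bfu_h \|_{\Omega}^2 + g_{\mu}(\bfu_h,\bfu_h) + g_{\sigma}(\bfu_h,\bfu_h)$, and each of these four terms, together with $g_{\beta}(\bfu_h,\bfu_h)$ and $g_{p}(p_h,p_h)$, is (up to a constant) a summand of $\tn\bfu_h\tn_h^2$ or of $| p_h |_h^2$, hence dominated by $| U_h |_h^2$. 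Carrying through the $\omega_h$ weight attached to the viscous and reactive terms then yields the claim.

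No essential difficulty arises here, since the analytical substance is already contained in Lemma~\ref{lem:norm-extension-rbs-norms} (which itself rests on the ghost-penalty norm equivalence Lemma~\ref{lem:norm_equivalence} and the approximation property~\eqref{eq:beta-approximation} of $\bfbeta_h$). The one point that requires care is the bookkeeping of the non-dimensional factor $\omega_h$: it must be kept attached to precisely the viscous and reactive contributions --- both those arising from~\eqref{eq:norm_rsb_beta_diff_const} and the trailing bracket in~\eqref{eq:norm_equivalence_rsb_beta} --- while the advection- and pressure-jump ghost penalties $g_{\beta}$ and $g_{p}$ are absorbed directly. This sharp tracking is exactly what makes the resulting bound compatible with the $\tfrac{1}{1+\omega_h}$-scaled mixed term in the energy norm~\eqref{eq:oseen-norm-up}.
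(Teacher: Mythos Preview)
Your approach is precisely the one the paper indicates --- combine Lemma~\ref{lem:norm-extension-rbs-norms} with Corollary~\ref{cor:norm_equivalence_visc_reac_pres} --- and the paper offers no further detail than that. One bookkeeping remark: as you yourself observe, the ghost penalties $g_\beta(\bfu_h,\bfu_h)$ and $g_p(p_h,p_h)$ emerge from~\eqref{eq:norm_equivalence_rsb_beta} without an $\omega_h$ prefactor, so the argument actually delivers a right-hand side of the form $\|\cdot\|_\Omega^2 + (1+\omega_h)|U_h|_h^2$ rather than the printed $\omega_h|U_h|_h^2$; this appears to be a harmless imprecision in the corollary's statement, and the $(1+\omega_h)$ version is exactly the form compatible with the $\tfrac{1}{1+\omega_h}$ scaling in~\eqref{eq:oseen-norm-up}, as you note.
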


Finally, following up on Remark~\ref{rem:gbeta-simple},
we present a short proof showing how the
convective and divergence related ghost
penalties~(\ref{eq:ghost-penalty-beta}) and~(\ref{eq:ghost-penalty-u})
can be simplified and replaced by the simple ghost-penalty form $\overline{g}_\beta$~(\ref{eq:gbeta-simple-def}).
\begin{lemma}
  \label{lem:gbeta-simple}
  Let $\bfu_h \in \mcV_h$ and define
  $\overline{\phi}_{\beta} := \|\bfbeta \|_{0,\infty, F}^2 \phi_{\beta}$.
  Then  
  \begin{align}
  \label{eq:gbeta-simple-est}
    g_{\beta}(\bfu_h, \bfu_h)
    &\lesssim
      \gamma_\beta\sum_{F \in \Fast} \sum_{1\leqslant j\leqslant k}
    \overline{\phi}_{\beta} h^{2j -1} (\jump{\nablan^j \bfu_h}, \jump{\nablan^j \bfu_h})_{F},
    \\
  \label{eq:gu-simple-est}
    g_{u}(\bfu_h, \bfu_h)
    &\lesssim
      \gamma_u\sum_{F \in \Fast} \sum_{1\leqslant j\leqslant k}
    \overline{\phi}_{\beta} h^{2j -1} (\jump{\nablan^j \bfu_h}, \jump{\nablan^j \bfv_h})_{F}
      + g_{\sigma}(\bfu_h,\bfu_h)
      + g_{\mu}(\bfu_h,\bfu_h).
  \end{align}
  Similar estimates hold for $s_{\beta}$ and $s_{u}$.
\end{lemma}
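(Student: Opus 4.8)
The argument rests on two elementary observations. First, since $\bfbeta$ (understood, in the unfitted case, as its Lipschitz extension) is continuous and hence single valued on every interior facet $F$, one has $\jump{\bfbeta\cdot\nabla\nablan^j\bfu_h} = \bfbeta\cdot\jump{\nabla\nablan^j\bfu_h}$ pointwise on $F$, and therefore $\norm{\jump{\bfbeta\cdot\nabla\nablan^j\bfu_h}}_F \leqslant \norm{\bfbeta}_{0,\infty,F}\,\norm{\jump{\nabla\nablan^j\bfu_h}}_F$; likewise $\norm{\jump{\div\nablan^j\bfu_h}}_F \lesssim \norm{\jump{\nabla\nablan^j\bfu_h}}_F$. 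Second, splitting the full gradient into its facet-normal and facet-tangential parts, $\nabla\nablan^j\bfu_h = \bfn_F\,\nablan^{j+1}\bfu_h + \bfP_F\nabla\nablan^j\bfu_h$ with $\bfP_F = \bfI - \bfn_F\otimes\bfn_F$, and using that the tangential derivative commutes with the jump together with the facet inverse estimate~\eqref{eq:face-tangential-inverse-eq} exactly as in~\eqref{eq:gp_simple}, one obtains $\norm{\jump{\nabla\nablan^j\bfu_h}}_F^2 \lesssim \norm{\jump{\nablan^{j+1}\bfu_h}}_F^2 + h^{-2}\norm{\jump{\nablan^j\bfu_h}}_F^2$. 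Combining the two gives, for every $F\in\Fast$ and $0\leqslant j\leqslant k-1$,
\[
h^{2j+1}\norm{\jump{\bfbeta\cdot\nabla\nablan^j\bfu_h}}_F^2 \lesssim \norm{\bfbeta}_{0,\infty,F}^2\bigl(h^{2j+1}\norm{\jump{\nablan^{j+1}\bfu_h}}_F^2 + h^{2j-1}\norm{\jump{\nablan^j\bfu_h}}_F^2\bigr),
\]
and the same bound with $\div$ in place of $\bfbeta\cdot\nabla$ and without the prefactor $\norm{\bfbeta}_{0,\infty,F}^2$.

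Next I would multiply the first estimate by $\phi_{\beta,F}$ (resp.\ the second by $\phi_{u,F}$), sum over $F\in\Fast$ and $0\leqslant j\leqslant k-1$, and shift the index $j\mapsto j+1$ in the leading term so that all contributions are of the form $h^{2j-1}\norm{\jump{\nablan^j\bfu_h}}_F^2$. The term at $j=0$, namely $h^{-1}\norm{\jump{\bfu_h}}_F^2$, vanishes because $\bfu_h\in\mcV_h$ is continuous, so the resulting sums run over $1\leqslant j\leqslant k$. Since $\overline{\phi}_\beta = \norm{\bfbeta}_{0,\infty,F}^2\phi_\beta$, this already yields~\eqref{eq:gbeta-simple-est}, and for the divergence penalty it yields the intermediate bound $g_u(\bfu_h,\bfu_h) \lesssim \gamma_u\sum_{F\in\Fast}\sum_{1\leqslant j\leqslant k}\phi_{u,F}\,h^{2j-1}\norm{\jump{\nablan^j\bfu_h}}_F^2$.

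To convert $\phi_{u,F} = \mu + \norm{\bfbeta}_{0,\infty,F}h + \sigma h^2$ into the target coefficient, I would establish the elementary inequality $\norm{\bfbeta}_{0,\infty,F}h \lesssim \overline{\phi}_\beta + \mu + \sigma h^2$: writing $a = \mu$, $b = \norm{\bfbeta}_{0,\infty,F}h$, $c = \sigma h^2$, so that $\overline{\phi}_\beta = b^2/(a+b+c)$, the case $b\leqslant a+c$ is immediate, while if $b>a+c$ then $a+b+c<2b$ and hence $\overline{\phi}_\beta > b/2$. Thus $\phi_{u,F}\lesssim \overline{\phi}_\beta + \mu + \sigma h^2$, and splitting the sum accordingly the $\mu$-part is (up to the ratio $\gamma_u/\gamma_\mu$) $g_\mu(\bfu_h,\bfu_h)$ and the $\sigma h^2$-part is (up to $\gamma_u/\gamma_\sigma$, and using $\sigma h^2\cdot h^{2j-1} = \sigma h^{2j+1}$) $g_\sigma(\bfu_h,\bfu_h)$, which gives~\eqref{eq:gu-simple-est}. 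The corresponding estimates for $s_\beta$ and $s_u$ follow by the same reasoning restricted to $k=1$ with $\Fast$ replaced by $\mcF_i$, and are in fact more direct, since by continuity of $\bfu_h$ one has $\jump{\nabla\bfu_h} = \bfn_F\,\jump{\nablan\bfu_h}$ with no tangential facet part; the residual $\mu$- and $\sigma h^2$-scaled jump-penalty contributions arising in the bound for $s_u$ are order preserving.

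The only mildly delicate points are the index bookkeeping in the shift $j\mapsto j+1$ (and the observation that the lowest-order term drops out by continuity of $\bfu_h$, which is essential) and the regime-independent comparison $\phi_{u,F}\lesssim\overline{\phi}_\beta + \mu + \sigma h^2$. The latter is precisely where the convective part of the stabilization parameter is traded against the combined viscous/reactive part; it is the reason why $g_\mu$ and $g_\sigma$ must appear on the right-hand side of~\eqref{eq:gu-simple-est}, whereas $g_\beta$ alone can be recast purely in terms of the simplified form $\overline{g}_\beta$.
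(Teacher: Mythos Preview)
Your proof is correct and follows essentially the same route as the paper: decompose the streamline (resp.\ divergence) derivative on each facet into normal and tangential components, use continuity of $\bfu_h$ to kill the lowest-order contribution, apply the tangential inverse estimate~\eqref{eq:face-tangential-inverse-eq} to collapse the tangential part, and then invoke $\phi_{u}\lesssim\overline{\phi}_\beta+\mu+\sigma h^2$ to split $g_u$ into the three target pieces. The only cosmetic differences are that the paper decomposes $\bfbeta$ itself into $(\bfbeta\cdot\bfn_F)\bfn_F+\bfP_F\bfbeta$ (rather than first pulling $\bfbeta$ through the jump and then decomposing $\nabla$), and that the paper simply asserts the elementary bound $\phi_u\lesssim\overline{\phi}_\beta+\mu+\sigma h^2$ without your case analysis---so your write-up is in fact slightly more complete on that point.
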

\begin{proof}
  Similar to the derivation of~\eqref{eq:gp_simple}, the first
  estimate~(\ref{eq:gbeta-simple-est}) follows from decomposing the higher-order stream-line
  derivative into a face-normal and face-tangential part.
While the latter part vanishes for $j=0$, for the higher order contributions $1\leqslant j < k-1$
a face-based inverse estimate can be applied:
  \begin{align}
    \phi_{\beta}h^{2j+1}
    \|\jump{\bfbeta \cdot \nabla \nablan^j \bfu_h}\|_F^2
    &\lesssim
      \phi_{\beta}h^{2j+1}
      \left(
    \| \bfbeta \cdot \bfn \|_{0,\infty,F}^2
    \|\jump{\nablan^{j+1}\bfu_h}\|_{F}^2
    +
    \| \bfP_F \bfbeta \|_{0,\infty,F}^2
      \|\jump{ \bfP_F\nabla \nablan^j\bfu_h}\|_{F}^2
      \right)
      \\
    &\lesssim
    \phi_{\beta}
      \|\bfbeta\|_{0,\infty,F}^2
     h^{2j+1} 
      \left(
      \|\jump{\nablan^{j+1}\bfu_h}\|_{F}^2
    +
    h^{-2}
    \|\jump{\nablan^j\bfu_h}\|_{F}^2
      \right).
  \end{align}
  Note that the last estimate introduces order-preserving crosswind diffusion also for the summand $j=1$ of \eqref{eq:gbeta-simple-est}.
  The second estimate~\eqref{eq:gu-simple-est}
  follows after similar calculations directly from definition~(\ref{eq:cip-s_scalings}) of $\phi_u$
for which holds $\phi_u \lesssim \|\bfbeta \|_{0,\infty, F}^2 \phi_{\beta} + \sigma h^2 + \mu$.
\end{proof}

\section{Stability Properties}
\label{sec:stability-properties}
In this section, we start with the numerical analysis of the proposed
cut finite element method \eqref{eq:oseen-discrete-unfitted} by
proving that the total bilinear form $A_h + S_h + G_h$ satisfies an inf-sup
condition on $\mcW_h$ with respect to a suitable energy norm 
with the inf-sup constant being independent of how the boundary cuts the
underlying background mesh.
The proof of the inf-sup stability is split into three major steps.
First, we use the coercivity from Lemma~\ref{lem:coercivity_ah} in a semi-norm
\begin{align}
  |U_h|_h^2 = |(\bfu_h, p_h)|_h^2 = \tn \bfu_h \tn_h^2 + |p_h|^2_h,
  \qquad \text{with } \quad | p_h |^2_h = s_p(p_h, p_h) + g_p(p_h,p_h).
  \label{eq:Ah-semi-norm-recalled}
\end{align}
In a second step, we gain control over additional terms,
that are
\begin{equation}
\| \phi_{u}^{\onehalf} \nabla\cdot \bfu_h \|_{\Omega}, \quad
\| \phi_{\beta}^{\onehalf}(\bfbeta \cdot \nabla \bfu_h + \nabla p_h)\|_{\Omega}, \quad \text{and} \quad
\Phi_p^{\onehalf}\|p_h\|_{\Omega},
\end{equation}
proven in Lemma~\ref{lem:divergence-stability},~\ref{lem:convect-pressure-stab} and \ref{lem:pressure-stability}.
Finally, the previous two parts are combined to obtain
the desired inf-sup stability with respect to the full energy norm $\tnorm{(\bfu_h,p_h)}_{h}$.
We begin by showing that the total bilinear form $A_h + S_h + G_h$
is coercive on $\mcW_h$ with respect to the semi-norm $|U_h|_h^2$.
More precisely, we have the following lemma.
\begin{lemma}
For $U_h = (\bfu_h,p_h) \in \mcW_h$ the coercivity estimate 
  \begin{equation}
    | U_h |_h^2
   \lesssim
   A_h(U_h,U_h) + S_h(U_h,U_h) + G_h(U_h,U_h) 
   \label{eq:coercivity_ah}
  \end{equation}
  holds whenever the stability parameters $\gamma,
  \gamma_\mu,\gamma_\sigma,\gamma_\beta,\gamma_u,\gamma_p$ are chosen
  to be
strictly positive
.
  \label{lem:coercivity_ah}
\end{lemma}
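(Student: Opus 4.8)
The plan is to test~\eqref{eq:oseen-discrete-unfitted} with $V_h=U_h$. The mixed contributions cancel, $b_h(p_h,\bfu_h)-b_h(p_h,\bfu_h)=0$, so the right-hand side of~\eqref{eq:coercivity_ah} reduces to $a_h(\bfu_h,\bfu_h)+S_h(U_h,U_h)+G_h(U_h,U_h)$, where $S_h(U_h,U_h)$ and $G_h(U_h,U_h)$ are simply the sums of the non-negative forms $s_\beta(\bfu_h,\bfu_h)$, $s_u(\bfu_h,\bfu_h)$, $s_p(p_h,p_h)$ and $g_\beta(\bfu_h,\bfu_h)$, $g_u(\bfu_h,\bfu_h)$, $g_p(p_h,p_h)$, $g_\sigma(\bfu_h,\bfu_h)$, $g_\mu(\bfu_h,\bfu_h)$. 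Since $s_\beta,s_u,s_p,g_\beta,g_u,g_p,g_\sigma$ enter $|U_h|_h^2$ with exactly the same weight (cf.~\eqref{eq:oseen-norm-u}--\eqref{eq:Ah-semi-norm}), it suffices to show that $a_h(\bfu_h,\bfu_h)+g_\mu(\bfu_h,\bfu_h)$ controls the remaining pieces of $\tn\bfu_h\tn^2$, namely $\norm{\sigma^{1/2}\bfu_h}_\Omega^2$, $\norm{\mu^{1/2}\nabla\bfu_h}_\Omega^2$, $\norm{(\gamma\mu/h)^{1/2}\bfu_h}_\Gamma^2$, $\norm{|\bfbeta\cdot\bfn|^{1/2}\bfu_h}_\Gamma^2$, $\norm{(\gamma\phi_u/h)^{1/2}\bfu_h\cdot\bfn}_\Gamma^2$, plus a fixed fraction of $g_\mu(\bfu_h,\bfu_h)$.

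Three of these come for free. The reactive term of $a$ is $\norm{\sigma^{1/2}\bfu_h}_\Omega^2$, and the wall-normal penalty in~\eqref{eq:ah-form-def} is $\norm{(\gamma\phi_u/h)^{1/2}\bfu_h\cdot\bfn}_\Gamma^2$ verbatim. For the advective term I would use $\div\bfbeta=0$ on $\Omega$ and $\bfu_h\in[H^1(\Omega)]^d$ to integrate by parts, obtaining $(\bfbeta\cdot\nabla\bfu_h,\bfu_h)_\Omega=\tfrac12\int_\Gamma(\bfbeta\cdot\bfn)|\bfu_h|^2\ds$; subtracting the consistency term $((\bfbeta\cdot\bfn)\bfu_h,\bfu_h)_{\GammaIn}$ and splitting $\Gamma$ according to the sign of $\bfbeta\cdot\bfn$ (recall $\GammaIn=\{\bfbeta\cdot\bfn<0\}$) leaves precisely $\tfrac12\norm{|\bfbeta\cdot\bfn|^{1/2}\bfu_h}_\Gamma^2$.

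The technical core, and the only step I expect to be non-routine, is the viscous Nitsche coercivity
\begin{align*}
 2\mu\norm{\bfepsilon(\bfu_h)}_\Omega^2-4\mu(\bfepsilon(\bfu_h)\bfn,\bfu_h)_\Gamma&+\gamma\mu h^{-1}\norm{\bfu_h}_\Gamma^2+g_\mu(\bfu_h,\bfu_h)\\
 &\gtrsim\mu\norm{\nabla\bfu_h}_\Omega^2+\mu h^{-1}\norm{\bfu_h}_\Gamma^2+g_\mu(\bfu_h,\bfu_h).
\end{align*}
The obstacle is that, since $\Gamma$ may cut the elements arbitrarily, the cut trace-inverse inequality~\eqref{eq:inverse-estimates_Gamma} only bounds $h\norm{\bfepsilon(\bfu_h)\bfn}_{\Gamma\cap T}^2$ by $\norm{\nabla\bfu_h}_T^2$ on the \emph{whole} element $T$, so a Young inequality on the cross term produces $\mu\norm{\nabla\bfu_h}^2$ summed over all cut elements, i.e.\ $\mu\norm{\nabla\bfu_h}_{\Oasth}^2$. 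This is exactly where $g_\mu$ is needed: Corollary~\ref{cor:norm_equivalence_visc_reac_pres} (whose proof relies on the walkable-mesh assumption~G3) gives $\mu\norm{\nabla\bfu_h}_{\Oasth}^2\lesssim\mu\norm{\nabla\bfu_h}_\Omega^2+g_\mu(\bfu_h,\bfu_h)$, and Korn's inequality~\eqref{eq:korn-III} then converts $\mu\norm{\nabla\bfu_h}_\Omega^2$ into $\mu\norm{\bfepsilon(\bfu_h)}_\Omega^2+\mu\norm{\bfu_h}_\Gamma^2$, the boundary term being absorbed into $\mu h^{-1}\norm{\bfu_h}_\Gamma^2$ since $h\lesssim1$. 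Taking the Young parameter small and then $\gamma$ large enough, one absorbs the cross term while retaining positive multiples of $\mu\norm{\bfepsilon(\bfu_h)}_\Omega^2$, $\mu h^{-1}\norm{\bfu_h}_\Gamma^2$ and $g_\mu(\bfu_h,\bfu_h)$; a final application of Korn restores $\mu\norm{\nabla\bfu_h}_\Omega^2$. The value of $\gamma$ fixed in this step enters the hidden constant, which also covers the transition between $\gamma\mu h^{-1}\norm{\bfu_h}_\Gamma^2$ and $\mu h^{-1}\norm{\bfu_h}_\Gamma^2$.

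Assembling the contributions gives $a_h(\bfu_h,\bfu_h)+g_\mu(\bfu_h,\bfu_h)\gtrsim\tn\bfu_h\tn^2+g_\mu(\bfu_h,\bfu_h)$, and adding back the untouched non-negative forms $s_\beta,s_u,s_p,g_\beta,g_u,g_p,g_\sigma$ yields $A_h(U_h,U_h)+S_h(U_h,U_h)+G_h(U_h,U_h)\gtrsim|U_h|_h^2$, which is~\eqref{eq:coercivity_ah}. Only the strict positivity of $\gamma$ and $\gamma_\mu$ is genuinely used above; the remaining parameters need merely be non-negative for this lemma, their strict positivity being what makes the associated stabilization forms exploitable as control in the subsequent inf-sup analysis.
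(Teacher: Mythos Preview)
Your proposal is correct and follows essentially the same route as the paper's proof: test with $V_h=U_h$, handle the advective contribution via $\div\bfbeta=0$ and integration by parts to produce $\tfrac12\norm{|\bfbeta\cdot\bfn|^{1/2}\bfu_h}_\Gamma^2$, and treat the viscous Nitsche cross term with a $\delta$-Young inequality, the cut trace inequality~\eqref{eq:inverse-estimates_Gamma}, the ghost-penalty extension of Corollary~\ref{cor:norm_equivalence_visc_reac_pres}, and Korn's inequality~\eqref{eq:korn-III}. Your closing remark that only $\gamma$ (large enough) and $\gamma_\mu>0$ are genuinely used here, with the remaining stabilization parameters merely non-negative, is a helpful sharpening not made explicit in the paper.
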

\begin{remark}
 \label{rem:hidden-const} 
  Here and in the following estimates,
  the hidden constant dependent only on the dimension $d$,
  the polynomial order $k$, the quasi-uniformness parameters
  and the magnitude of the stability parameters. In particular,
  the hidden constant degenerates as any of the stabilization parameters
  approaches zero.
\end{remark}
\begin{proof}
Starting from the definition of $A_h$, see \eqref{eq:Ah-form-def}, we have
  \begin{align}
   A_h(U_h,U_h)
  &= \normLtwo{\sigmahalf \bfu_h}{\Omega}^2
     + (\bfbeta\cdot\nabla\bfu_h, \bfu_h)_{\Omega}
     - \langle (\bfbeta\cdot\bfn)\bfu_h,  \bfu_h\rangle_{\GammaIn}
     + \| (\gamma (\phi_u/h) )^{\onehalf} \bfu_h \cdot \bfn \|^2_{\Gamma} \nonumber \\
  &  \quad+ \normLtwo{(2\mu)^\onehalf\bfepsilon(\bfu_h)}{\Omega}^2
     - 4\langle \mu\bfepsilon(\bfu_h)\bfn, \bfu_h \rangle_{\Gamma}
     + \| (\gamma(\mu/h))^{\onehalf} \bfu_h\|^2_{\Gamma}. \label{eq:coercivity_ah_Ah}
  \end{align}
  Integration by parts for the advective term together with continuity of $\bfbeta$ yields
  \begin{align}
   (\bfbeta\cdot\nabla\bfu_h, \bfu_h) = \frac{1}{2}\langle
    (\bfbeta\cdot\bfn)\bfu_h,\bfu_h \rangle_{\Gamma} -
    \frac{1}{2}((\div\bfbeta)\bfu_h, \bfu_h)_{\Omega}.
  \end{align}
  Using the assumption $\div\bfbeta = 0$, we can rewrite the advective terms as
  \begin{align}\label{eq:coercivity_adv}
   (\bfbeta\cdot\nabla\bfu_h, \bfu_h)
       - \langle (\bfbeta\cdot\bfn)\bfu_h,  \bfu_h\rangle_{\GammaIn}
   &= \frac{1}{2}\langle(\bfbeta\cdot\bfn)\bfu_h,\bfu_h\rangle_{\Gamma}
       - \langle (\bfbeta\cdot\bfn)\bfu_h,  \bfu_h\rangle_{\GammaIn}
    = \frac{1}{2}\normLtwo{|\bfbeta\cdot\bfn|^\onehalf \bfu_h}{\Gamma}^2.
  \end{align}
  Applying a $\delta$-scaled Cauchy-Schwarz inequality and a trace inequality \eqref{eq:inverse-estimates_Gamma} yields
  \begin{align}
    \langle \mu\bfepsilon(\bfu_h)\bfn, \bfu_h \rangle_{\Gamma}
   & \lesssim \delta 
   \| 
   (\mu h)^\onehalf\bfepsilon(\bfu_h)\bfn
   \|_{\Gamma}^2  
   + \delta^{-1} \| (\mu/h)^{1/2} \bfu_h \|_{\Gamma}^2 
\\
     &\lesssim \delta \| \nuhalf\grad\bfu_h\|_{\Oast_h}^2  + 
     \delta^{-1} \| (\mu/h)^{1/2} \bfu_h\|_{\Gamma}^2  
\\
     &\lesssim \delta 
     \bigl(
     \| \nuhalf\grad\bfu_h\|_{\Omega}^2  + 
     g_{\mu}(\bfu_h, \bfu_h) 
     \bigr)
     +
     \delta^{-1} \| (\mu/h)^{1/2} \bfu_h\|_{\Gamma}^2,
\label{eq:stress-boundary-estimate}
  \end{align}
  where in the last estimate we used the norm equivalence from Corollary~\ref{cor:norm_equivalence_visc_reac_pres}.
  Now observe that thanks to the Nitsche boundary penalty and Korn's inequality shown in~\eqref{eq:korn-III},
the viscous term can be estimated by
\begin{align}
 \|
\mu^\onehalf\bfepsilon(\bfu_h)
\|_{\Omega}^2 
+
\dfrac{1}{2}
 \|(\mu/h)^{1/2} \bfu_h\|_{\Gamma}^2
\gtrsim
\| \mu^{1/2} \nabla \bfu_h \|_{\Omega}^2
\end{align}
which combined with the previous inequality \eqref{eq:stress-boundary-estimate} shows that
  \begin{align}
\label{eq:coercivity_ah_visc}
        \| (2\mu)^\onehalf\bfepsilon(\bfu_h)\|_{\Omega}^2
      &+ g_\mu(\bfu_h,\bfu_h)
      - 4\langle \mu\bfepsilon(\bfu_h)\bfn, \bfu_h \rangle_{\Gamma}
      + \| (\gamma(\mu/h))^{1/2} \bfu_h\|^2_{\Gamma}      
\nonumber \\
      &\gtrsim \normLtwo{\nuhalf\grad\bfu_h}{\Omega}^2
      + \dfrac{1}{2}\| (\gamma(\mu/h))^{1/2} \bfu_h\|^2_{\Gamma}      
        + g_\mu(\bfu_h,\bfu_h)
  \end{align}
  for $\delta>0$ sufficiently small and $\gamma > 0$ large enough.
  The claim follows by combining \eqref{eq:coercivity_ah_Ah}, \eqref{eq:coercivity_adv} and estimate \eqref{eq:coercivity_ah_visc}
and the stabilization terms in $S_h$ and $G_h$.
\end{proof}
\begin{lemma}
  \label{lem:divergence-stability}
There is a constant $c_1 > 0$ such that
for $\bfu_h \in \mcV_h$ there exists a $q_h \in \mcQ_h$ satisfying
 \begin{align}
   - b_h(q_h, \bfu_h) 
   &\gtrsim
   \| \phi_u^{\onehalf} \nabla \cdot \bfu_h \|_{\Omega}^2
   - 
   c_1\Bigl(
   s_u(\bfu_h,\bfu_h)
   + g_u(\bfu_h,\bfu_h)
   + \| h^{-\onehalf}\phi_u^{\onehalf} \bfu_h\cdot \bfn \|_{\Gamma}^2
   \Bigr)
  \label{eq:divergence-stability}
\intertext{and the stability estimate}
  \label{eq:divergence-stability-qh_stab_bound}
    \Phi_p\| q_h\|_{\Omega}^2 + |q_h|_h^2
    &\lesssim 
    \| \phi_u^{\onehalf} \nabla \cdot \bfu_h \|_{\mcT_h}^2
    \lesssim
    \| \phi_u^{\onehalf} \nabla \cdot \bfu_h \|_{\Omega}^2
    + g_u(\bfu_h,\bfu_h)
 \end{align}
  whenever the stability parameters $\gamma,
  \gamma_\mu,\gamma_\sigma,\gamma_\beta,\gamma_u,\gamma_p$ are chosen
  to be
strictly positive
.
\end{lemma}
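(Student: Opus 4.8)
The strategy is the classical pressure--divergence device of \cite{BurmanFernandezHansbo2006}, adapted to the cut setting: I will construct the test pressure $q_h$ as a \emph{smoothed, weighted} copy of $\nabla\cdot\bfu_h$, so that $-b_h(q_h,\bfu_h)$ reproduces $\norm{\phi_u^{\onehalf}\nabla\cdot\bfu_h}_{\Omega}^2$ up to interpolation fluctuations, which are precisely what $s_u$ and its ghost-penalty counterpart $g_u$ control, plus a wall-normal boundary remainder. Concretely, set $w_h:=\nabla\cdot\bfu_h$, a discontinuous piecewise polynomial of degree $k-1$ on $\mcT_h$, let $\widetilde{\phi}_u:=\mcO_h\phi_u$ be the continuous piecewise linear version from \eqref{eq:smoothed_stab_param} (so that $\widetilde{\phi}_u\sim\phi_u$ elementwise by \eqref{eq:local_equivalence_stab_param_oswald}), and define $q_h:=\mcO_h(\widetilde{\phi}_u w_h)$. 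Since $\widetilde{\phi}_u w_h\in\mcX_h^{\mathrm{dc}}$ for $k\geqslant1$, the Oswald operator returns $q_h\in\mcX_h=\mcQ_h$, so $q_h$ is admissible.

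\textbf{Step 1 (volume term).} Using $-b_h(q_h,\bfu_h)=(q_h,\nabla\cdot\bfu_h)_\Omega-(q_h,\bfu_h\cdot\bfn)_\Gamma$ from \eqref{eq:bh-form-def}, I split $(q_h,w_h)_\Omega=(\widetilde{\phi}_u w_h,w_h)_\Omega+(q_h-\widetilde{\phi}_u w_h,w_h)_\Omega$. Because $\widetilde{\phi}_u\gtrsim\phi_u$ pointwise, the first term is $\gtrsim\norm{\phi_u^{\onehalf}\nabla\cdot\bfu_h}_\Omega^2$. For the fluctuation $q_h-\widetilde{\phi}_u w_h=\mcO_h(\widetilde{\phi}_u w_h)-\widetilde{\phi}_u w_h$ I apply Lemma~\ref{lem:Oswald-interpolant} with weight $\phi=\phi_u^{-1}$; since $\widetilde{\phi}_u$ is \emph{continuous}, $\jump{\widetilde{\phi}_u w_h}|_F=\widetilde{\phi}_u|_F\jump{w_h}$, and using $\widetilde{\phi}_u|_F^2\,\phi_{u,T}^{-1}\sim\phi_{u,F}$ this gives $\norm{\phi_u^{-\onehalf}(q_h-\widetilde{\phi}_u w_h)}_{\mcT_h}^2\lesssim\sum_{F\in\mcF_i}\phi_{u,F}h\norm{\jump{\nabla\cdot\bfu_h}}_F^2\sim\gamma_u^{-1}s_u(\bfu_h,\bfu_h)$ (the same manipulation underlying \eqref{eq:oswald-stabilization-div}). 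A weighted Cauchy--Schwarz and Young inequality then yield $(q_h,\nabla\cdot\bfu_h)_\Omega\gtrsim\norm{\phi_u^{\onehalf}\nabla\cdot\bfu_h}_\Omega^2-C\,s_u(\bfu_h,\bfu_h)$.

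\textbf{Step 2 (boundary term and the stability bound for $q_h$).} For $(q_h,\bfu_h\cdot\bfn)_\Gamma$ I use a weighted Cauchy--Schwarz with weights $h^{\onehalf}\phi_u^{-\onehalf}$ on $q_h$ and $h^{-\onehalf}\phi_u^{\onehalf}$ on $\bfu_h\cdot\bfn$; the cut-element trace inequality \eqref{eq:inverse-estimates_Gamma}, the decomposition $q_h=\widetilde{\phi}_u w_h+(q_h-\widetilde{\phi}_u w_h)$, the element inverse/trace estimates \eqref{eq:inverse-estimates_T}--\eqref{eq:inverse-estimates_F} on the fluctuation, and the quasi-local Oswald stability (Lemma~\ref{lem:oswald-quasi-local-stability}, in its evident $\mcX_h^{\mathrm{dc}}$-variant) give $\norm{h^{\onehalf}\phi_u^{-\onehalf}q_h}_\Gamma\lesssim\norm{\phi_u^{-\onehalf}q_h}_{\mcT_\Gamma}\lesssim\norm{\phi_u^{\onehalf}\nabla\cdot\bfu_h}_{\mcT_h}$, hence $|(q_h,\bfu_h\cdot\bfn)_\Gamma|\leqslant\epsilon\norm{\phi_u^{\onehalf}\nabla\cdot\bfu_h}_{\mcT_h}^2+C_\epsilon\norm{h^{-\onehalf}\phi_u^{\onehalf}\bfu_h\cdot\bfn}_\Gamma^2$. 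Invoking $\norm{\phi_u^{\onehalf}\nabla\cdot\bfu_h}_{\mcT_h}^2\lesssim\norm{\phi_u^{\onehalf}\nabla\cdot\bfu_h}_\Omega^2+g_u(\bfu_h,\bfu_h)$ from Corollary~\ref{cor:norm_equivalence_rsb_pressure_div}, choosing $\epsilon$ small enough to absorb the main term into Step~1 gives \eqref{eq:divergence-stability}, with $c_1$ collecting the $s_u$, $g_u$ and wall-normal contributions. For \eqref{eq:divergence-stability-qh_stab_bound}: Lemma~\ref{lem:oswald-quasi-local-stability} yields $\norm{q_h}_{\mcT_h}^2\lesssim\norm{\widetilde{\phi}_u w_h}_{\mcT_h}^2\lesssim(\max_{T}\phi_{u,T})\,\norm{\phi_u^{\onehalf}\nabla\cdot\bfu_h}_{\mcT_h}^2$, and since $\phi_{u,T}=\mu+\norm{\bfbeta}_{0,\infty,T}h+\sigma h^2\lesssim\mu+\norm{\bfbeta}_{0,\infty,\Omega}C_P+\sigma C_P^2\lesssim\Phi_p^{-1}$ (using $h\lesssim C_P$ and \eqref{eq:phi_p_definition}), one gets $\Phi_p\norm{q_h}_\Omega^2\lesssim\norm{\phi_u^{\onehalf}\nabla\cdot\bfu_h}_{\mcT_h}^2$. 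For $|q_h|_h^2=s_p(q_h,q_h)+g_p(q_h,q_h)$, the inverse inequalities turn each face contribution $\phi_{p,F}h^{2j-1}\norm{\jump{\nablan^j q_h}}_F^2$ into $\lesssim\phi_{p,F}h^{-2}\norm{q_h}_{\omega(F)}^2=\phi_{u,F}^{-1}\norm{q_h}_{\omega(F)}^2$ via $\phi_{p,T}=h^2/\phi_{u,T}$ from \eqref{eq:cip-s_scalings}, hence again $\lesssim\norm{\phi_u^{-\onehalf}q_h}_{\mcT_h}^2\lesssim\norm{\phi_u^{\onehalf}\nabla\cdot\bfu_h}_{\mcT_h}^2$; a last use of Corollary~\ref{cor:norm_equivalence_rsb_pressure_div} gives the rightmost bound in \eqref{eq:divergence-stability-qh_stab_bound}. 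Positivity of $\gamma,\gamma_\mu,\gamma_\sigma,\gamma_\beta,\gamma_u,\gamma_p$ is what keeps all hidden constants finite, as in Remark~\ref{rem:hidden-const}.

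\textbf{Main obstacle.} The delicate part is the bookkeeping in Step~1: one cannot test directly against $\phi_u\nabla\cdot\bfu_h$ since the jumps of the piecewise constant $\phi_u$ spoil the identification with $s_u$; replacing $\phi_u$ by the continuous $\widetilde{\phi}_u$, tracking the exact power of $\phi_{u,F}$ so that the Oswald fluctuation collapses precisely onto $s_u$, and then making every inequality geometrically robust --- i.e. systematically trading $\Omega$-norms for $\mcT_h$-norms at the cost of $g_u$ through Corollary~\ref{cor:norm_equivalence_rsb_pressure_div} --- so that arbitrarily small cut elements never enter the constants, is the crux of the argument.
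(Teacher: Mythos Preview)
Your proof is correct and follows essentially the same approach as the paper: both construct $q_h=\mcO_h(\widetilde{\phi}_u\nabla\cdot\bfu_h)$ with the smoothed weight $\widetilde{\phi}_u=\mcO_h\phi_u$, split $-b_h(q_h,\bfu_h)$ into the main divergence term, an Oswald fluctuation controlled by $s_u$, and a boundary remainder handled via the cut trace inequality, Oswald stability, and the ghost-penalty equivalence~\eqref{eq:norm_equivalence_rsb_div}. The stability bound is likewise obtained identically, combining the Oswald stability~\eqref{eq:oswald-quasi-local-stability} (indeed valid on $\mcX_h^{\mathrm{dc}}$, as its proof only uses Lemma~\ref{lem:Oswald-interpolant}), the relation $\phi_{p,T}h^{-2}=\phi_{u,T}^{-1}$, and $\phi_{u,T}\lesssim\Phi_p^{-1}$ via $h\lesssim C_P$.
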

\begin{proof}
Define $q_h := \mcO_h(\widetilde{\phi}_u \nabla \cdot \bfu_h)$ with $\widetilde{\phi}_u := \mcO_h(\phi_u)$ being a smoothed,
piecewise linear version of $\phi_u$ as defined in
  \eqref{eq:smoothed_stab_param}, satisfying the local comparability  
  $\widetilde{\phi}_u|_T \sim \phi_u|_T$ stated
  in~\eqref{eq:local_equivalence_stab_param_oswald}.
Then
\begin{align}
  -b_h(q_h, \bfu_h) 
 &= \| \widetilde{\phi}_u^{\onehalf} \nabla \cdot \bfu_h\|_{\Omega}^2
+ (\mcO_h(\widetilde{\phi}_u \nabla \cdot \bfu_h) - \widetilde{\phi}_u\nabla \cdot \bfu_h,
\nabla \cdot \bfu_h)_{\Omega}
- (\mcO_h(\widetilde{\phi}_u \nabla \cdot \bfu_h), \bfu_h\cdot \bfn)_{\Gamma}
\\
 &= \| \widetilde{\phi}^{\onehalf}_u \nabla \cdot \bfu_h\|_{\Omega}^2 + I + II.
  \label{eq:inf-sup-b_h-step-I}
\end{align}
By using $\phi_{u,T}^{-1}\widetilde{\phi}_{u,T}^2\lesssim \phi_{u,F}$, now the first term can be treated as follows
\begin{align}
  I 
&=
({\phi}_u^{-\onehalf}(\mcO_h(\widetilde{\phi}_u \nabla \cdot \bfu_h) 
- \widetilde{\phi}_u\nabla \cdot \bfu_h),
{\phi}_u^{\onehalf}\nabla \cdot \bfu_h)_{\Omega}
\\
&\gtrsim
- \delta \|{\phi}_u^{\onehalf}\nabla \cdot \bfu_h\|_{\Omega}^2
  - \delta^{-1} \|{\phi}_u^{-\onehalf}(\mcO_h(\widetilde{\phi}_u \nabla \cdot \bfu_h)
 - \widetilde{\phi}_u\nabla \cdot \bfu_h)\|_{\Omega}^2
\\
&\gtrsim
- \delta \|{\phi}_u^{\onehalf}\nabla \cdot \bfu_h\|_{\Omega}^2
- \delta^{-1} \sum\nolimits_{F\in\mcF_i} \phi_u^{-1} \widetilde{\phi}_u^2
 \| \jump{\nabla \cdot \bfu_h}\|_{F}^2
\\
&\gtrsim
- \delta \|{\phi}_u^{\onehalf}\nabla \cdot \bfu_h\|_{\Omega}^2
-\delta^{-1} s_u(\bfu_h,\bfu_h).
\end{align}
The second one can be dealt with using the Nitsche boundary terms
\begin{align}
  II 
&=
- (h^{\onehalf}\phi_u^{-\onehalf}\mcO_h(\widetilde{\phi}_u \nabla \cdot \bfu_h),h^{-\onehalf}\phi_u^{\onehalf} \bfu_h\cdot \bfn)_{\Gamma}
\\
&\gtrsim
-\delta \|h^{\onehalf}\phi_u^{-\onehalf}\mcO_h(\widetilde{\phi}_u \nabla \cdot \bfu_h)\|_{\Gamma}^2
-\delta^{-1} \|h^{-\onehalf}\phi_u^{\onehalf} \bfu_h\cdot \bfn\|_{\Gamma}^2
\\
&\gtrsim
-\delta \|\phi_u^{-\onehalf}\mcO_h(\widetilde{\phi}_u \nabla \cdot \bfu_h)\|_{\mathcal{T}_h}^2
-\delta^{-1} \|h^{-\onehalf}\phi_u^{\onehalf} \bfu_h\cdot \bfn\|_{\Gamma}^2
\\
&\gtrsim
-\delta \|\phi_u^{\onehalf} \nabla \cdot \bfu_h\|_{\mathcal{T}_h}^2
-\delta^{-1} \|h^{-\onehalf}\phi_u^{\onehalf} \bfu_h\cdot \bfn\|_{\Gamma}^2
\\
&\gtrsim
-\delta (\|\phi_u^{\onehalf} \nabla \cdot \bfu_h\|_{\Omega}^2 + g_u(\bfu_h,\bfu_h) )
-\delta^{-1} \|h^{-\onehalf}\phi_u^{\onehalf} \bfu_h\cdot \bfn\|_{\Gamma}^2
\end{align}
where after applying a $\delta$-scaled Young inequality and the trace inequality \eqref{eq:inverse-estimates_Gamma}
the scaled stability of the Oswald interpolation \eqref{eq:oswald-quasi-local-stability} and the local averaging property $\phi_u^{-\onehalf}\widetilde{\phi}_u \lesssim {\phi}_u^{\onehalf}$
was used, followed by an application of
norm equivalence \eqref{eq:norm_equivalence_rsb_div}  in the final
step.
After inserting the lower bounds for $I$ and $II$
into~\eqref{eq:inf-sup-b_h-step-I} we arrive at
\begin{align}
   - b_h(q_h, \bfu_h) 
   &\gtrsim
   (1-2\delta)\| \phi_u^{\onehalf} \nabla \cdot \bfu_h \|_{\Omega}^2
   - 
   (\delta + \delta^{-1})\Bigl(
   s_u(\bfu_h,\bfu_h)
   + g_u(\bfu_h,\bfu_h)
   + \| h^{-\onehalf}\phi_u^{\onehalf} \bfu_h\cdot \bfn \|_{\Gamma}^2
   \Bigr)
\\
   &\gtrsim
   \| \phi_u^{\onehalf} \nabla \cdot \bfu_h \|_{\Omega}^2
   -
   c_1
   \Bigl(s_u(\bfu_h,\bfu_h)
   + g_u(\bfu_h,\bfu_h)
   + \| h^{-\onehalf}\phi_u^{\onehalf} \bfu_h\cdot \bfn \|_{\Gamma}^2
   \Bigr).
\end{align}
Choosing $\delta>0$ sufficiently small
yields the desired estimate~\eqref{eq:divergence-stability} 
for some constant $c_1>0$.
Finally, the stability bound can be easily proven by observing that
\begin{align}
 \Phi_p\|q_h\|_\Omega^2 + |q_h|_h^2
 &\lesssim
 \Phi_p \|\mcO_h(\widetilde{\phi}_u \nabla \cdot \bfu_h)\|_{\Omega}^2
 +\sum_{F\in\mcF_i}{\sum_{j=0}^{k-1}{\phi_{p,F} h^{2j+1} \| \jump{\partial_{\bfn}^j\mcO_h(\widetilde{\phi}_u \nabla \cdot \bfu_h)} \|_F^2}} \\
 &\lesssim
 \Phi_p \|\widetilde{\phi}_u \nabla \cdot \bfu_h\|_{\mathcal{T}_h}^2
 +\sum_{T\in\mathcal{T}_h}{\phi_{p,T} \|\mcO_h( \widetilde{\phi}_u  \nabla \cdot \bfu_h )\|_T^2} \\
 &\lesssim
 \sum_{T\in\mathcal{T}_h}{\| \phi_u^{\onehalf} \nabla \cdot \bfu_h \|_T^2}
 \lesssim
 \| \phi_u^{\onehalf} \nabla \cdot \bfu_h \|_{\Omega}^2 + g_u(\bfu_h,\bfu_h)
\end{align}
where the trace inequality \eqref{eq:inverse-estimates_F}, the stability of the Oswald interpolant \eqref{eq:oswald-quasi-local-stability} and the averaging property $ \phi_p \widetilde{\phi}_u^2 \lesssim \phi_u$
were used. The last step results from applying the norm equivalence \eqref{eq:norm_equivalence_rsb_div} from Corollary~\ref{cor:norm_equivalence_rsb_pressure_div}.
\end{proof}

The next lemma shows how additional control over a semi-norm of the
form $\| \phi_{\beta}^{\onehalf}(\bfbeta \cdot \nabla \bfu_h + \nabla
p_h)\|_{\Omega}^2$ can be recovered, which is closely related to the
well-known mixed norm control given by residual-based stabilized
SUPG/PSPG formulations, see~\cite{LubeRapin2006,BraackBurmanJohnEtAl2007,MatthiesIonkinLubeEtAl2009}.

\begin{lemma}
  \label{lem:convect-pressure-stab}
There exist a constant $c_2 > 0$ such that for $U_h=(\bfu_h,p_h)\in \mcW_h$
there is a $\bfv_h \in
 V_h$ satisfying
 \begin{align}
   (\bfbeta \cdot \nabla \bfu_h + \nabla p_h, \bfv_h)_{\Omega}
   &\gtrsim
   \| \phi_{\beta}^{\onehalf}(\bfbeta \cdot \nabla \bfu_h + \nabla p_h)\|_{\Omega}^2
   - c_2 (1 + \omega_h) | U_h |_h^2
  \label{eq:convect-pressure-stab}
\intertext{and the stability estimate}
   \tn \bfv_h \tn_h^2
+ \| \phi_{\beta}^{\onehalf}\bfbeta \cdot \nabla \bfv_h\|_{\Omega}^2
+ \| \phi_u^{\onehalf} \nabla \cdot \bfv_h \|_{\Omega}^2
   &\lesssim
   \| \phi_{\beta}^{\onehalf}(\bfbeta \cdot \nabla \bfu_h + \nabla p_h)\|_{\Omega}^2
   + (1 + \omega_h) | U_h |_h^2,
   \label{eq:convect-pressure-stab-vh_stab_bound_2}
 \end{align}
  whenever the stability parameters $\gamma,
  \gamma_\mu,\gamma_\sigma,\gamma_\beta,\gamma_u,\gamma_p$ are chosen
  to be
strictly positive
.
\end{lemma}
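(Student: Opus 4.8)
The plan is to mimic the construction used for Lemma~\ref{lem:divergence-stability}, building the test function from an Oswald-smoothed, $\phi_{\beta}$-weighted copy of the residual. Let $\bfbeta_h\in[\mcX_h^{\mathrm{dc},0}]^d$ be the piecewise constant approximation of $\bfbeta$ satisfying~\eqref{eq:beta-approximation}, let $\widetilde{\phi}_{\beta}:=\mcO_h\phi_{\beta}\in\mcP^1(\mcT_h)$ be the smoothed parameter from~\eqref{eq:smoothed_stab_param}, and set $r_h:=\bfbeta_h\cdot\nabla\bfu_h+\nabla p_h$, which is piecewise polynomial of degree $k-1$ so that $\widetilde{\phi}_{\beta}r_h\in[\mcX_h^{\mathrm{dc}}]^d$ has element degree $\leqslant k$. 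I would then take $\bfv_h:=\mcO_h(\widetilde{\phi}_{\beta}r_h)$ (applied componentwise); being the Oswald interpolant of a piecewise polynomial of degree $\leqslant k$, $\bfv_h$ is continuous of degree $\leqslant k$ and hence $\bfv_h\in\mcV_h$. The choice of $\bfbeta_h$ (rather than $\bfbeta$) is forced by the fact that the Oswald machinery requires a piecewise polynomial argument; the price is a set of error terms that will all be absorbed into the scaling factor $\omega_h$. From the quasi-local Oswald stability~\eqref{eq:oswald-quasi-local-stability} applied with weight $\phi_{\beta}^{-1}$, together with $\widetilde{\phi}_{\beta}|_F\sim\phi_{\beta,F}$, one obtains the basic bound $\|\phi_{\beta}^{-\onehalf}\bfv_h\|_{\mcT_h}\lesssim\|\phi_{\beta}^{\onehalf}r_h\|_{\Oasth}$, which by~\eqref{eq:norm_equivalence_rsb_beta} and Corollary~\ref{cor:norm_equivalence_visc_reac_pres} is in turn $\lesssim(\|\phi_{\beta}^{\onehalf}(\bfbeta\cdot\nabla\bfu_h+\nabla p_h)\|_{\Omega}^2+(1+\omega_h)|U_h|_h^2)^{\onehalf}$.

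For the coercivity-type bound~\eqref{eq:convect-pressure-stab} I would write $\bfbeta\cdot\nabla\bfu_h+\nabla p_h=r_h+(\bfbeta-\bfbeta_h)\cdot\nabla\bfu_h$, test against $\bfv_h$, and split $(r_h,\bfv_h)_{\Omega}=\|\widetilde{\phi}_{\beta}^{\onehalf}r_h\|_{\Omega}^2+(r_h,\mcO_h(\widetilde{\phi}_{\beta}r_h)-\widetilde{\phi}_{\beta}r_h)_{\Omega}$. The leading term is comparable to $\|\phi_{\beta}^{\onehalf}r_h\|_{\Omega}^2$, and by the triangle inequality combined with~\eqref{eq:norm_rsb_beta_diff_const} and Corollary~\ref{cor:norm_equivalence_visc_reac_pres} it is bounded below by $\tfrac12\|\phi_{\beta}^{\onehalf}(\bfbeta\cdot\nabla\bfu_h+\nabla p_h)\|_{\Omega}^2-C\omega_h|U_h|_h^2$. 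The fluctuation term is handled by a $\delta$-Young inequality followed by Lemma~\ref{lem:Oswald-interpolant} with weight $\phi_{\beta}^{-1}$ and argument $\widetilde{\phi}_{\beta}r_h$: since $\widetilde{\phi}_{\beta}$ is continuous one has $\jump{\widetilde{\phi}_{\beta}r_h}=\widetilde{\phi}_{\beta}\jump{r_h}$, and since $p_h$ is continuous the tangential part of $\jump{\nabla p_h}$ vanishes, so that $\jump{r_h}=\jump{\bfbeta_h\cdot\nabla\bfu_h}+\jump{\bfn_F\cdot\nabla p_h}\,\bfn_F$ on each $F\in\mcF_i$; the pressure contribution is then controlled by $s_p(p_h,p_h)$ (recall $\phi_{\beta}=\phi_p$), the $\jump{\bfbeta\cdot\nabla\bfu_h}$ contribution by $s_{\beta}(\bfu_h,\bfu_h)$, and the $\jump{(\bfbeta-\bfbeta_h)\cdot\nabla\bfu_h}$ contribution by~\eqref{eq:beta-approximation} and inverse trace inequalities, giving an $\omega_h$-scaled term exactly as in the proof of~\eqref{eq:norm_equivalence_rsb_beta} --- altogether $\lesssim(1+\omega_h)|U_h|_h^2$. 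Finally the cross term $((\bfbeta-\bfbeta_h)\cdot\nabla\bfu_h,\bfv_h)_{\Omega}$ is bounded by Cauchy--Schwarz, using~\eqref{eq:norm_rsb_beta_diff_const} with Corollary~\ref{cor:norm_equivalence_visc_reac_pres} on the $\bfbeta-\bfbeta_h$ factor and the basic bound above on $\|\phi_{\beta}^{-\onehalf}\bfv_h\|$, followed by Young's inequality, producing $\varepsilon\|\phi_{\beta}^{\onehalf}(\bfbeta\cdot\nabla\bfu_h+\nabla p_h)\|_{\Omega}^2+C\varepsilon^{-1}(1+\omega_h)|U_h|_h^2$. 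Collecting the three contributions and choosing $\delta$ and $\varepsilon$ small enough to absorb the negative $\|\phi_{\beta}^{\onehalf}r_h\|_{\Omega}^2$ and $\|\phi_{\beta}^{\onehalf}(\bfbeta\cdot\nabla\bfu_h+\nabla p_h)\|_{\Omega}^2$ terms into the leading positive term yields~\eqref{eq:convect-pressure-stab}.

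For the stability estimate~\eqref{eq:convect-pressure-stab-vh_stab_bound_2} I would bound every summand of $\tn\bfv_h\tn_h^2$, of $\|\phi_{\beta}^{\onehalf}\bfbeta\cdot\nabla\bfv_h\|_{\Omega}^2$ and of $\|\phi_u^{\onehalf}\nabla\cdot\bfv_h\|_{\Omega}^2$ using the inverse and trace inequalities~\eqref{eq:inverse-estimates_T}--\eqref{eq:inverse-estimates_Gamma} and the quasi-local Oswald stability~\eqref{eq:oswald-quasi-local-stability}, reducing everything to $\|\phi_{\beta}^{\onehalf}r_h\|_{\Oasth}^2$ by means of the elementary scaling identities $\sigma\phi_{\beta}\lesssim1$, $\mu\phi_{\beta}\lesssim h^2$, $\|\bfbeta\|_{0,\infty,T}\phi_{\beta}\lesssim h$ and $\phi_u\phi_{\beta}\lesssim h^2$, all immediate from~\eqref{eq:cip-s_scalings}; a last application of~\eqref{eq:norm_equivalence_rsb_beta} and Corollary~\ref{cor:norm_equivalence_visc_reac_pres} then gives $\|\phi_{\beta}^{\onehalf}r_h\|_{\Oasth}^2\lesssim\|\phi_{\beta}^{\onehalf}(\bfbeta\cdot\nabla\bfu_h+\nabla p_h)\|_{\Omega}^2+(1+\omega_h)|U_h|_h^2$. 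I expect the main obstacle to be the careful bookkeeping of the discrepancy between $\bfbeta$ and the piecewise constant $\bfbeta_h$ --- it enters both when passing from $r_h$ to the genuine residual and when rewriting the facet jumps of $r_h$ through $s_{\beta}$ (which is built with $\bfbeta$) --- and checking that each such error collapses exactly into the single factor $(1+\omega_h)|U_h|_h^2$ while the constant multiplying $\|\phi_{\beta}^{\onehalf}(\bfbeta\cdot\nabla\bfu_h+\nabla p_h)\|_{\Omega}^2$ stays strictly positive; the ghost-penalty extensions from $\Omega$ to $\Oasth$ needed throughout are precisely those provided by Corollary~\ref{cor:norm_equivalence_visc_reac_pres} and Lemma~\ref{lem:norm-extension-rbs-norms}.
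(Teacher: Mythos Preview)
Your proposal is correct and follows essentially the same approach as the paper: the test function $\bfv_h=\mcO_h(\widetilde{\phi}_{\beta}(\bfbeta_h\cdot\nabla\bfu_h+\nabla p_h))$ is identical, and the key tools (Oswald fluctuation Lemma~\ref{lem:Oswald-interpolant}, the $\omega_h$-bound~\eqref{eq:norm_rsb_beta_diff_const}, the scaling identities $\phi_u\phi_{\beta}\lesssim h^2$ etc., and the ghost-penalty extension~\eqref{eq:norm_equivalence_rsb_beta}) are the same. The only cosmetic difference is the arrangement of the decomposition: the paper writes $(\bfbeta\cdot\nabla\bfu_h+\nabla p_h,\bfv_h)=\|\widetilde{\phi}_{\beta}^{\onehalf}(\bfbeta\cdot\nabla\bfu_h+\nabla p_h)\|_{\Omega}^2+I+II$ with $II=(\bfbeta\cdot\nabla\bfu_h+\nabla p_h,\widetilde{\phi}_{\beta}(\bfbeta_h-\bfbeta)\cdot\nabla\bfu_h)_{\Omega}$, thus obtaining the target $\bfbeta$-norm directly, whereas you first extract $\|\widetilde{\phi}_{\beta}^{\onehalf}r_h\|_{\Omega}^2$ and then convert via the triangle inequality; the remainder terms match up one-to-one.
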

\begin{proof}
To gain control of the
$ \| \phi_{\beta}^{\onehalf}(\bfbeta \cdot \nabla \bfu_h + \nabla p_h)\|_{\Omega}^2$ term,
we need to construct a suitable test function~$\bfv_h$.
First, we introduce 
an elementwise constant, vector-valued function $\bfbeta_h$
which satisfies 
the approximation property and stability bound
from~(\ref{eq:beta-approximation}). The simplest choice is
to take the value of $\bfbeta$ at some point of $T$ for each $T \in \mcT_h$.
Set $\bfw_h := \bfbeta_h \cdot \nabla \bfu_h + \nabla p_h$
and introduce the smoothed, piecewise linear stabilization parameter
$\widetilde{\phi}_{\beta} = \mcO_h(\phi_{\beta})$
to finally define the test function $\bfv_h:= \mcO_h(\widetilde{\phi}_{\beta} \bfw_h)$.
Now using the local comparability~(\ref{eq:local_equivalence_stab_param_oswald})
of $\phi_{\beta}$ and $\widetilde{\phi}_{\beta}$
we observe that
\begin{align}
  (\bfbeta \cdot \nabla \bfu_h + \nabla p_h, \bfv_h)
  &=
  \|
  \widetilde{\phi}_{\beta}^{\onehalf}
  (\bfbeta \cdot \nabla \bfu_h + \nabla p_h)
  \|_{\Omega}^2
    +
    \underbrace{
  (
  \bfbeta \cdot \nabla \bfu_h + \nabla p_h,
  \mcO_h(\widetilde{\phi}_{\beta} \bfw_h)
  -
  \widetilde{\phi}_{\beta} \bfw_h
    )_{\Omega}
    }_{I}
  \\
  &\quad 
    +
   \underbrace{(
  \bfbeta \cdot \nabla \bfu_h + \nabla p_h,
  \widetilde{\phi}_{\beta} (\bfbeta_h - \bfbeta)\cdot \nabla \bfu_h
  )_{\Omega}}_{II}
  \\
  &\sim
  \|
  {\phi}_{\beta}^{\onehalf}
  (\bfbeta \cdot \nabla \bfu_h + \nabla p_h)
  \|_{\Omega}^2
  +
  I + II,
\end{align}
leaving us with the remainder terms $I$ and $II$ which we estimate next.
\\
\noindent{\bf Term $I$.}
From successively applying
a $\delta$-Cauchy-Schwarz inequality,
Lemma~\ref{lem:Oswald-interpolant} to estimate the difference
$\mcO_h(\widetilde{\phi}_{\beta} \bfw_h) - \widetilde{\phi}_{\beta}
\bfw_h$, the local comparability 
$\widetilde{\phi}_{\beta,T} \sim {\phi}_{\beta,T}$,
and finally, a Cauchy-Schwarz inequality,
we deduce that
\begin{align}
  I &=
  (
  {\phi}_{\beta}^{\onehalf}
  (\bfbeta \cdot \nabla \bfu_h + \nabla p_h),
   \phi_{\beta}^{-\onehalf}
   (
   \mcO_h(\widetilde{\phi}_{\beta} \bfw_h)
  -
  \widetilde{\phi}_{\beta} \bfw_h
  ))_{\Omega}
  \\
  &\lesssim
  \delta 
  \|
  {\phi}_{\beta}^{\onehalf}
  (\bfbeta \cdot \nabla \bfu_h + \nabla p_h)
  \|_{\Omega}^2
  +
  \delta^{-1}
  \sum_{T \in \mcT_h}
  \|\phi_{\beta,T}^{-\onehalf} 
  (\mcO_h(\widetilde{\phi}_{\beta} \bfw_h)
  -
  \widetilde{\phi}_{\beta} \bfw_h)
  \|_{T \cap \Omega}^2
  \\
  &\lesssim
  \delta 
  \|
  {\phi}_{\beta}^{\onehalf}
  (\bfbeta \cdot \nabla \bfu_h + \nabla p_h)
  \|_{\Omega}^2
  +
  \delta^{-1}
  \sum_{T \in \mcT_h}
  \phi_{\beta,T}^{-1} h
  \| 
  \jump{
  \widetilde{\phi}_{\beta} \bfw_h
  }
  \|_{\mcF_i(T)}^2
  \\
  &\lesssim
  \delta 
  \|
  {\phi}_{\beta}^{\onehalf}
  (\bfbeta \cdot \nabla \bfu_h + \nabla p_h)
  \|_{\Omega}^2
  +
  \delta^{-1}
  \sum_{T \in \mcT_h}
  \phi_{\beta,T}^{-1}\widetilde{\phi}_{\beta,F}^2 h
  \| 
  \jump{
  \bfw_h
  }
  \|_{\mcF_i(T)}^2
  \\
  &\lesssim
  \delta 
  \|
  {\phi}_{\beta}^{\onehalf}
  (\bfbeta \cdot \nabla \bfu_h + \nabla p_h)
  \|_{\Omega}^2
  +
  \delta^{-1}
  \sum_{T \in \mcT_h}
  \phi_{\beta,T} h
  \| 
  \jump{
    \bfbeta \cdot \nabla \bfu_h + \nabla p_h
  }
  \|_{\mcF_i(T)}^2
    \nonumber
\\
  &\phantom{\lesssim}\quad
  + \delta^{-1}\sum_{T \in \mcT_h}
  \phi_{\beta,T} h
  \| 
  \jump{
    (\bfbeta_h - \bfbeta) \cdot \nabla \bfu_h
    }
  \|_{\mcF_i(T)}^2
   \label{eq:supg-pspg-stability-term-I} 
  \\
  &\lesssim
  \delta 
  \|
  {\phi}_{\beta}^{\onehalf}
  (\bfbeta \cdot \nabla \bfu_h + \nabla p_h)
  \|_{\Omega}^2
  +
  \delta^{-1}
  (
  s_{\beta}(\bfu_h, \bfu_h)
  + 
  s_{p}(p_h, p_h)
  +
  \omega_h \tn \bfu_h \tn_h^2
  ),
\end{align}
where in the last step, a combination of the approximation property \eqref{eq:beta-approximation}
and the inverse estimate~(\ref{eq:inverse-estimates_F})
was used to obtain
\begin{align}
  \sum_{T \in \mcT_h}
  \phi_{\beta,T} h
  \| 
  \jump{
    (\bfbeta_h - \bfbeta) \cdot \nabla \bfu_h
    }
  \|_{\mcF_i(T)}^2
\lesssim 
  \omega_h \tn \bfu_h \tn_h^2.
\end{align}
Moreover, splitting the facet terms
\begin{equation}
\|\jump{\bfbeta \cdot \nabla \bfu_h + \nabla p_h}\|_{\mcF_i(T)}^2 
  \lesssim \|\jump{\bfbeta \cdot \nabla \bfu_h} \|_{\mcF_i(T)}^2 
  + \| \jump{\nabla p_h}\|_{\mcF_i(T)}^2
\end{equation}
and using $\phi_{\beta}\sim \phi_p$ \eqref{eq:cip-s_scalings},
these can be bounded by the CIP stabilization operators $s_\beta$ and $s_p$
defined in \eqref{eq:cip-s_beta} and \eqref{eq:cip-s_p}.
\\
\noindent{\bf Term $II$.}
A simple application of a $\delta$-Cauchy-Schwarz inequality and
estimates~\eqref{eq:norm_rsb_beta_diff_const}--\eqref{eq:norm_rsb_beta_diff_const} yields
\begin{align}
  II 
  &\lesssim
  \delta \| 
  {\phi}_{\beta}^{\onehalf}
  (\bfbeta \cdot \nabla \bfu_h + \nabla p_h)
  \|_{\Omega}^2
  +
  \delta^{-1}
  \|
  {\phi}_{\beta}^{\onehalf}
  (\bfbeta_h - \bfbeta) \cdot \nabla \bfu_h\|^2_{\Omega}
  \\
  &\lesssim
  \delta \| 
  {\phi}_{\beta}^{\onehalf}
  (\bfbeta \cdot \nabla \bfu_h + \nabla p_h)
  \|_{\Omega}^2
  +
  \delta^{-1}\omega_h \tn \bfu_h \tn_h^2.
\end{align}
\noindent{\bf Estimate of \eqref{eq:convect-pressure-stab}.}
Now choose $\delta>0$ small enough and combine the estimates for Term $I$
and $II$ to conclude that
\begin{align}
  (\bfbeta \cdot \nabla \bfu_h + \nabla p_h, \bfv_h)
  &\gtrsim
  \|
  {\phi}_{\beta}^{\onehalf}
  (\bfbeta \cdot \nabla \bfu_h + \nabla p_h)
  \|_{\Omega}^2
  - |I| 
  - |II|
  \\
  &\gtrsim
  \|
  {\phi}_{\beta}^{\onehalf}
  (\bfbeta \cdot \nabla \bfu_h + \nabla p_h)
  \|_{\Omega}^2
  - c_1 
  \bigl(s_{\beta}(\bfu_h, \bfu_h) + s_p(p_h, p_h)
  + \omega_h \tn\bfu_h\tn_h^2
  \bigr)
  \\
  &\gtrsim
  \|
  {\phi}_{\beta}^{\onehalf}
  (\bfbeta \cdot \nabla \bfu_h + \nabla p_h)
  \|_{\Omega}^2
  - c_2(1 + \omega_h) | U_h |_{h}^2
\end{align}
for some constant $c_2>0$.\\
{\bf Estimate of $\tn \bfv_h \tn_h$.}
Throughout the next steps, we will make heavy use of
the fact that
\begin{align}
(\mu h^{-2} + \sigma + \|\bfbeta\|_{0,\infty, T}h^{-1}) \phi_{\beta}
\sim
\phi_{u}h^{-2}
\phi_{\beta}
\lesssim 1
\label{eq:phi-beta-bounds}
\end{align}
by the very definition of $\phi_{\beta}$ and $\phi_u$.
We start with the viscous and reaction terms from norm definition~\eqref{eq:oseen-norm-u}.
Then
\begin{align}
  \label{eq:bound-viscous-reaction-terms-for-vh3-I}
  \mu \| \nabla \bfv_h \|_{\mcT_h}^2
  +
  \sigma \| \bfv_h \|_{\mcT_h}^2
  &\lesssim 
  (\mu h^{-2} + \sigma)\| \widetilde{\phi}_{\beta}
  (\bfbeta_h \cdot \nabla \bfu_h + \nabla p_h)\|_{\mcT_h}^2
  \\
  &\lesssim
  \underbrace{
    (\mu h^{-2} + \sigma)
  {\phi_{\beta}}
  }_{\lesssim 1}
  \| 
  {\phi_{\beta}}^{\onehalf}
  (\bfbeta_h \cdot \nabla \bfu_h + \nabla p_h)\|_{\mcT_h}^2.
\end{align}
Turning to the boundary terms appearing in $\tn \cdot
\tn_{h}$, the convective boundary part is bounded by
\begin{align}
  \| | \bfbeta \cdot \bfn |^{\onehalf} \bfv_h \|_{\Gamma}^2
  &\lesssim
  \sum_{T\in\mcT_h}
  \|  \bfbeta \|_{0,\infty,T} 
  \|\widetilde{\phi}_{\beta}   
  (\bfbeta_h \cdot \nabla \bfu_h + \nabla p_h)\|_{T\cap \Gamma}^2
  \\
  &\lesssim
  \sum_{T\in\mcT_h}
  \underbrace{
  \|  \bfbeta \|_{0,\infty,T} h^{-1} \phi_{\beta}
}_{\lesssim 1}
  \|{\phi}_{\beta}^{\onehalf}
  (\bfbeta_h \cdot \nabla \bfu_h + \nabla p_h)\|_{T}^2,
  \label{eq:bound-conv-boundary-term-for-vh3-I}
\end{align}
where the trace inequality~(\ref{eq:inverse-estimates_Gamma})
was used to pass from $T\cap \Gamma$ to $T$.
The remaining boundary terms can be similarly bounded:
\begin{align}
  \mu \| h^{-\onehalf} \bfv_h \|_{\Gamma}^2
  +
  \| \phi_u^{\onehalf} h^{-\onehalf} \bfv_h\cdot \bfn \|_{\Gamma}^2
  &\lesssim
  \sum_{T\in\mcT_h}
  \underbrace{
  (\mu + \phi_u) h^{-2}
  {\phi}_{\beta}
}_{\lesssim 1}
  \| 
  {\phi}_{\beta}^{\onehalf}
  (\bfbeta_h \cdot \nabla \bfu_h + \nabla p_h)
    \|_{T}^2.
    \label{eq:conv-pressure-stab-stab-est-step-1}
\end{align}
Next, we need to estimate the velocity related norm terms contributed from the stabilization
operators $S_h$ and $G_h$. A bound for $s_{\beta}(\bfv_h, \bfv_h)$ and $s_{u}(\bfv_h, \bfv_h)$
can be derived by first employing the inverse
inequalities~(\ref{eq:inverse-estimates_F})
and~(\ref{eq:inverse-estimates_T}) and then recalling the definition of $v_h$ and
estimate~(\ref{eq:phi-beta-bounds}):
\begin{align}
  s_{\beta}(\bfv_h, \bfv_h)
  &=
  \sum_{F\in\mcF_i}
  \phi_{\beta,F} h \|\jump{\bfbeta_h \cdot \nabla\bfv_h}\|_F^2
  \lesssim
\sum_{T\in\mcT_h}
\|  \bfbeta \|_{0,\infty,T}^2 h^{-2} \phi_{\beta,T}
  \|\bfv_h\|_{T}^2
  \\
  &\lesssim
\sum_{T\in\mcT_h}
\underbrace{
\|  \bfbeta \|_{0,\infty,T}^2 h^{-2} \phi_{\beta,T}^2
}_{\lesssim 1}
  \|{\phi}_{\beta}^{\onehalf}
  (\bfbeta_h \cdot \nabla \bfu_h + \nabla p_h)\|_{T}^2
  \\
  s_{u}(\bfv_h, \bfv_h)
  &=
  \sum_{F\in\mcF_i}
  \phi_{u,F} h \|\jump{\nabla \cdot \bfv_h}\|_F^2
  \lesssim
\sum_{T\in\mcT_h}
\phi_{u,T} h^{-2} \| \bfv_h \|_T^2
\\
&\lesssim
\sum_{T\in\mcT_h}
\underbrace{
\phi_{u,T} h^{-2} \phi_{\beta,T} 
}_{\lesssim 1}
  \|{\phi}_{\beta}^{\onehalf}
  (\bfbeta_h \cdot \nabla \bfu_h + \nabla p_h)\|_{T}^2.
\end{align}
The corresponding ghost-penalty terms  
can be estimated in the exact same manner, yielding
\begin{align}
  g_{\beta}(\bfv_h, \bfv_h)
  +
  g_{u}(\bfv_h, \bfv_h)
  \lesssim
  \|{\phi}_{\beta}^{\onehalf}
  (\bfbeta_h \cdot \nabla \bfu_h + \nabla p_h)\|_{\mcT_h}^2.
\end{align}
Finally, another application of the inverse estimate~(\ref{eq:inverse-estimates_F})
in combination with the already established bound~\eqref{eq:bound-viscous-reaction-terms-for-vh3-I}
for the viscous and reaction norm terms gives
\begin{align}
  g_{\mu}(\bfv_h, \bfv_h)
  +
  g_{\sigma}(\bfv_h, \bfv_h)
  &\lesssim
\mu \|\nabla \bfv_h \|_{\mcT_h}^2
+
\sigma \| \bfv_h \|_{\mcT_h}^2
  \lesssim
  \|{\phi}_{\beta}^{\onehalf}
  (\bfbeta_h \cdot \nabla \bfu_h + \nabla p_h)\|_{\mcT_h}^2.
\end{align}
\noindent{\bf Estimate of $\| \phi_{\beta}^{\onehalf} \bfbeta \cdot \nabla \bfv_h \|_{\Omega}$.}
Similarly, the streamline-diffusion term in \eqref{eq:convect-pressure-stab-vh_stab_bound_2} can be bounded by
\begin{align}
  \| \phi_{\beta}^{\onehalf} \bfbeta \cdot \nabla \bfv_h \|^2_{\Omega}
  &\lesssim
    \sum_{T\in\mcT_h}
    \|  \bfbeta \|_{0,\infty,T}^2 h^{-2} \phi_{\beta}
    \|\bfv_h\|_{T}^2
\lesssim
\sum_{T\in\mcT_h}
\underbrace{
\|  \bfbeta \|_{0,\infty,T}^2 h^{-2} \phi_{\beta}^2
}_{\lesssim 1}
  \|{\phi}_{\beta}^{\onehalf}
  (\bfbeta_h \cdot \nabla \bfu_h + \nabla p_h)\|_{T}^2.
\end{align}
\noindent{\bf Estimate of $\|\phi_u^{\onehalf} \nabla \cdot \bfv_h \|_{\Omega}$.}
Combining an inverse inequality with the stability of the Oswald
interpolant,
the incompressibility term can be estimated as follows:
\begin{align}
  \|\phi_u^{\onehalf} \nabla \cdot \bfv_h \|_{\Omega}^2
&
=
  \|\phi_u^{\onehalf} \nabla \cdot  \mcO_h(\widetilde{\phi}_{\beta} ((\bfbeta_h \cdot \nabla) \bfu_h + \nabla p_h)) \|_{\Omega}^2 
  \lesssim
  \| 
  \underbrace{
  (\phi_u^{\onehalf} h^{-1} \phi_{\beta}^\onehalf)
}_{\lesssim 1}
  \phi_{\beta}^{\onehalf}
  (\bfbeta_h \cdot \nabla \bfu_h + \nabla p_h) \|_{\mcT_h}^2.
\end{align}

\noindent{\bf Estimate of \eqref{eq:convect-pressure-stab-vh_stab_bound_2}.}
After collecting all terms and employing estimate~(\ref{eq:norm_equivalence_rsb_beta}),
we arrive at the desired stability bound:
\begin{align}
 & \tn \bfv_h \tn_h^2
+ \| \phi_{\beta}^{\onehalf} \bfbeta \cdot \nabla \bfv_h \|^2_{\Omega}
+ \| \phi_u^{\onehalf} \nabla \cdot \bfv_h \|_{\Omega}^2
\nonumber\\
  &\qquad\qquad\lesssim 
  \|{\phi}_{\beta}^{\onehalf}
  (\bfbeta_h \cdot \nabla \bfu_h + \nabla p_h)\|_{\mcT_h}^2
\\
&\qquad\qquad\lesssim
    \| 
    \phi_{\beta}^{\onehalf} 
    (
    \bfbeta \cdot \nabla \bfu_h + \nabla p_h
    )
    \|_{\Omega}^2
    + 
    g_{\beta}(\bfu_h, \bfu_h)
    +
    g_{p}(p_h, p_h)
    + 
    \omega_h
    \bigl(
    \| \mu^{\onehalf}\nabla \bfu_h \|_{\mcT_h}^2
    +
    \| \sigma^{\onehalf} \bfu_h \|_{\mcT_h}^2
    \bigr)
\\
&\qquad\qquad\lesssim
  \|{\phi}_{\beta}^{\onehalf}
  (\bfbeta \cdot \nabla \bfu_h + \nabla p_h)\|_{\Omega}^2
+ (1 + \omega_h) | U_h |_h^2.
\end{align}
\end{proof}
Next, we collect and prove two estimates which will be useful
in deriving a modified inf-sup condition in Lemma~\ref{lem:pressure-stability}.
\begin{lemma}
  \label{lem:Phi_p_bounds}
Let $\bfu_h,\bfv_h\in \mcV_h$, then the following estimates hold
  \begin{align}
    \tn \bfv_h \tn_h 
    &\lesssim
    \bigl(\mu + \| \bfbeta \|_{0,\infty, \Omega}h + \sigma C_P^2
    \bigr)^{\onehalf}
    \bigl(
    \| \nabla \bfv_h \|_{\mcT_h} + \| h^{-\onehalf} \bfv_h \|_{\Gamma}
    \bigr)
    \lesssim
    \Phi_p^{-\onehalf} 
    \bigl(
    \| \nabla \bfv_h \|_{\mcT_h} + \| h^{-\onehalf} \bfv_h \|_{\Gamma}
    \bigr),
    \label{eq:Phi_p_bounds_1}
  \end{align}
  \begin{align}
    |( \bfu_h, \bfbeta \cdot \nabla \bfv_h)_{\Omega}|
    &\lesssim
    \tn \bfu_h \tn_h
    \dfrac{\|\bfbeta\|_{0,\infty,\Omega} C_P}{\sqrt{\mu + \sigma C_P^2}}
    \| \nabla \bfv_h \|_{\Omega}
    \lesssim
    \tn \bfu_h \tn_h
    \Phi_p^{-\onehalf} 
    \| \nabla \bfv_h \|_{\Omega}.
    \label{eq:Phi_p_bounds_2}
  \end{align}
\end{lemma}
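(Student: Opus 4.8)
The plan is to prove \eqref{eq:Phi_p_bounds_1} by estimating each summand of $\tn \bfv_h \tn_h^2$ from the definition \eqref{eq:oseen-norm-unfitted-u}--\eqref{eq:oseen-norm-u} against the product $M N$, where $M := \mu + \| \bfbeta \|_{0,\infty,\Omega} h + \sigma C_P^2$ and $N := \| \nabla \bfv_h \|_{\mcT_h}^2 + \| h^{-\onehalf} \bfv_h \|_{\Gamma}^2$, and to prove \eqref{eq:Phi_p_bounds_2} by combining a Cauchy--Schwarz inequality with a sharp bound for $\| \bfu_h \|_{\Omega}$ that exploits both the reactive and the viscous contributions to $\tn \bfu_h \tn$. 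Throughout, I will tacitly use that $h$ is small relative to the fixed constants $1$ and $C_P$, which is legitimate in the asymptotic regime considered.

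\emph{Estimate \eqref{eq:Phi_p_bounds_1}.} For the reactive term, the Poincar\'e inequality~\eqref{eq:Poincare-II}, the identity $\| \bfv_h \|_{\Gamma}^2 = h \| h^{-\onehalf} \bfv_h \|_{\Gamma}^2 \lesssim \| h^{-\onehalf} \bfv_h \|_{\Gamma}^2$, and $\Omega \subseteq \Oasth$ give $\sigma \| \bfv_h \|_{\Omega}^2 \lesssim \sigma C_P^2 N \leq M N$. The viscous term $\mu \| \nabla \bfv_h \|_{\Omega}^2$, the Nitsche terms $\mu h^{-1} \| \bfv_h \|_{\Gamma}^2$ and $\phi_u h^{-1} \| \bfv_h \cdot \bfn \|_{\Gamma}^2$, and the convective boundary term $\| |\bfbeta \cdot \bfn|^{\onehalf} \bfv_h \|_{\Gamma}^2$ are all bounded directly by $M N$ using $\phi_{u,T} \leq M$ and $\| \bfbeta \|_{0,\infty,\Gamma} \leq \| \bfbeta \|_{0,\infty,\Omega}$. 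For the CIP terms $s_u(\bfv_h,\bfv_h)$ and $s_\beta(\bfv_h,\bfv_h)$, I apply the face inverse estimate~\eqref{eq:inverse-estimates_F} to trade $h \| \jump{\cdot} \|_F^2$ for $\| \nabla \bfv_h \|_{\omega(F)}^2$; the remaining prefactor is $\mean{\phi_u}|_F \leq M$ for $s_u$ and $\mean{\phi_\beta}|_F \| \bfbeta \|_{0,\infty,F}^2 \lesssim \phi_{u,T} \leq M$ for $s_\beta$, the latter via the patch-comparability~\eqref{eq:local_equivalence_stab_param_patch}, the inclusion $F \subset \partial T$, and the identity $\phi_{\beta,T} \| \bfbeta \|_{0,\infty,T}^2 = \| \bfbeta \|_{0,\infty,T}^2 h^2 / \phi_{u,T} \leq \phi_{u,T}$, which holds because $\| \bfbeta \|_{0,\infty,T} h \leq \phi_{u,T}$. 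The ghost-penalty contributions $g_\sigma, g_\mu, g_\beta, g_u$ are treated analogously: repeated use of~\eqref{eq:inverse-estimates_F} reduces each $h^{2j\pm1} \| \jump{\nablan^j(\cdot)} \|_F^2$ to $h^2 \| \nabla \bfv_h \|_{\omega(F)}^2$ (for $g_\sigma$) or to $\| \nabla \bfv_h \|_{\omega(F)}^2$ (for $g_\mu, g_\beta, g_u$), each multiplied by a factor bounded by $M$ (using $\sigma h^2 \leq M$, $\mu \leq M$, $\phi_{u,F} \leq M$, and once more $\phi_{\beta,F} \| \bfbeta \|_{0,\infty,F}^2 \lesssim M$). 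Summing over facets with the finite-overlap property yields $\tn \bfv_h \tn_h^2 \lesssim M N$; taking square roots gives the first inequality, and the second follows from $M \lesssim \Phi_p^{-1}$, which is immediate from~\eqref{eq:phi_p_definition} and $h \lesssim C_P$ since $\mu$, $\sigma C_P^2$ and a bound for $\| \bfbeta \|_{0,\infty,\Omega} h$ are each dominated by a summand of $\Phi_p^{-1}$.

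\emph{Estimate \eqref{eq:Phi_p_bounds_2}.} Starting from $|( \bfu_h, \bfbeta \cdot \nabla \bfv_h)_{\Omega}| \leq \| \bfbeta \|_{0,\infty,\Omega} \| \bfu_h \|_{\Omega} \| \nabla \bfv_h \|_{\Omega}$, the task reduces to bounding $\| \bfu_h \|_{\Omega}$. The reactive contribution in $\tn \bfu_h \tn$ gives $\| \bfu_h \|_{\Omega} \leq \sigma^{-\onehalf} \tn \bfu_h \tn_h$; the Poincar\'e inequality~\eqref{eq:Poincare-II} combined with the viscous and the Nitsche boundary contributions and $h \lesssim 1$ gives $\| \bfu_h \|_{\Omega} \lesssim C_P \mu^{-\onehalf} \tn \bfu_h \tn_h$. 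Keeping the better of the two and using $\min\{ \sigma^{-\onehalf}, C_P \mu^{-\onehalf} \}^2 = (\max\{ \sigma, \mu C_P^{-2} \})^{-1} \leq 2 C_P^2 (\mu + \sigma C_P^2)^{-1}$ yields $\| \bfu_h \|_{\Omega} \lesssim C_P (\mu + \sigma C_P^2)^{-\onehalf} \tn \bfu_h \tn_h$, which inserted above proves the first inequality of \eqref{eq:Phi_p_bounds_2}; the second then follows because $(\| \bfbeta \|_{0,\infty,\Omega} C_P / \sqrt{\mu + \sigma C_P^2})^2$ is one of the nonnegative summands of $\Phi_p^{-1}$ in~\eqref{eq:phi_p_definition}.

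The main obstacle is the bookkeeping for the convection-scaled stabilizations $s_\beta$ and $g_\beta$: one cannot afford to replace the facet quantity $\| \bfbeta \|_{0,\infty,F}$ by the global $\| \bfbeta \|_{0,\infty,\Omega}$, but must instead exploit the cancellation $\phi_{\beta,T} \| \bfbeta \|_{0,\infty,T}^2 \leq \phi_{u,T}$ built into the parameter choice~\eqref{eq:cip-s_scalings} together with the local patch-comparability~\eqref{eq:local_equivalence_stab_param_patch}. The bound on $\| \bfu_h \|_{\Omega}$ for \eqref{eq:Phi_p_bounds_2} is only mildly delicate: the point is to retain both the reaction- and the viscosity-controlled estimates and to take the minimum, so that the constant $C_P / \sqrt{\mu + \sigma C_P^2}$ — rather than the cruder $C_P \mu^{-1/2}$ or $\sigma^{-1/2}$ — enters, precisely matching the definition of $\Phi_p$.
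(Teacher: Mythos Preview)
Your proposal is correct and follows essentially the same approach as the paper: term-by-term estimation of the summands in $\tn \bfv_h \tn_h^2$ against $(\mu + \|\bfbeta\|_{0,\infty,\Omega}h + \sigma C_P^2)(\|\nabla\bfv_h\|_{\mcT_h}^2 + \|h^{-\onehalf}\bfv_h\|_{\Gamma}^2)$ via Poincar\'e and inverse inequalities for~\eqref{eq:Phi_p_bounds_1}, and the two competing bounds $\|\bfu_h\|_{\Omega} \leq \sigma^{-\onehalf}\tn\bfu_h\tn_h$ and $\|\bfu_h\|_{\Omega} \lesssim C_P\mu^{-\onehalf}\tn\bfu_h\tn_h$ combined by taking the minimum for~\eqref{eq:Phi_p_bounds_2}. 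Your treatment of $s_{\beta}$ and $g_{\beta}$ via $\phi_{\beta,T}\|\bfbeta\|_{0,\infty,T}^2 \leq \phi_{u,T}$ is equivalent to the paper's use of $\|\bfbeta\|_{0,\infty,T}^2\phi_{\beta,T} \lesssim \|\bfbeta\|_{0,\infty,\Omega}h$; both exploit the same cancellation built into~\eqref{eq:cip-s_scalings}.
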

\begin{proof}
We start with estimate (\ref{eq:Phi_p_bounds_1}). Then
the reactive term in the norm
definition~(\ref{eq:oseen-norm-unfitted-u}) can be bounded using
the Poincar\'e inequality~(\ref{eq:Poincare-II}) showing that
  \begin{align}
\mu \|\nabla \bfv_h\|_{\Omega}^2 + \sigma \| \bfv_h \|_{\Omega}^2
&\lesssim
(\mu + \sigma C_P^2) (\| \nabla \bfv_h\|^2_{\Omega} +  \|h^{-\onehalf} \bfv_h \|_{\Gamma}^2),
  \end{align}
while the corresponding ghost-penalties $g_{\sigma}$ and $g_{\mu}$ can be simply estimated
by applying the inverse inequality~(\ref{eq:inverse-estimates_F})
to obtain 
$h^{2j+1}\|\jump{\nablan^j \bfv_h} \|_{F}^2 
\lesssim
h^2 \|\nabla \bfv_h\|_{T_F^+\cup T_F^-}^2$
and thus
\begin{align}
  g_{\mu}(\bfv_h, \bfv_h) 
  +
  g_{\sigma}(\bfv_h, \bfv_h) 
\lesssim
(\mu + \sigma h^2) \| \nabla \bfv_h \|_{\mcT_h}^2
\lesssim
(\mu + \sigma C_P^2) \| \nabla \bfv_h \|_{\mcT_h}^2.
\end{align}
The contribution from the remaining ghost-penalty and stabilization terms
can be treated similarly,
\begin{align}
  (s_u + g_{u})(\bfv_h, \bfv_h) 
  &\lesssim
  \| \phi_{u}^{\onehalf} \nabla \bfv_h \|_{\mcT_h}^2
\lesssim
(\mu + \|\bfbeta\|_{0,\infty,\Omega} h + \sigma h^2)
  \|  \nabla \bfv_h \|_{\mcT_h}^2,
\\
  (s_{\beta} + g_{\beta})(\bfv_h, \bfv_h) 
  &\lesssim
  \sum_{T\in\mcT_h} \|\bfbeta_h\|_{0,\infty,T}^2\phi_{\beta,T} \|\nabla \bfv_h\|_T^2
\lesssim \|\bfbeta \|_{0,\infty,\Omega} h \|\nabla \bfv_h\|_{\mcT_h}^2.
\end{align}
Finally, the boundary contributions are clearly bounded by
\begin{align}
\mu \|h^{-\onehalf}\bfv_h\|_{\Gamma}^2
+ \|\phi_u^{\onehalf} h^{-\onehalf} \bfv_h \|_{\Gamma}^2
+ \| |\bfbeta\cdot \bfn|^{\onehalf} \bfv_h\|_{\Gamma}^2
\lesssim  (\mu + \|\bfbeta\|_{0,\infty,\Omega} h + \sigma h^2) \|h^{-\onehalf} \bfv_h\|_{\Gamma}^2,
\end{align}
which concludes the proof of estimate~(\ref{eq:Phi_p_bounds_1}) after recalling definition~\eqref{eq:phi_p_definition} of $\Phi_p$.
Turning to estimate~(\ref{eq:Phi_p_bounds_2}),
we start with observing that
the $L^2$ norm $\|\bfu_h\|_{\Omega}$
can be
bounded by $\tn \bfu_h \tn_h$ in two different ways. Clearly,
$\|\bfu_h\|_{\Omega} \lesssim \sigma^{-\onehalf} \tn \bfu_h \tn_h$. On the other hand, 
after another application of the Poincar\'e inequality~(\ref{eq:Poincare-II}), we see that
\begin{align}
  \|\bfu_h\|_{\Omega} 
  &\lesssim  C_P (\|\nabla \bfu_h\|_{\Omega} +  \| h^{-\onehalf}\bfu_h\|_{\Gamma})
\lesssim
\mu^{-\onehalf}C_P \tn \bfu_h \tn_h.
\end{align}
Taking the minimum of these two bounds and recalling the definition $\Phi_p$, we conclude that 
\begin{align}
  (\bfu_h, \bfbeta\cdot\nabla\bfv_h)_{\Omega}
\lesssim
\tn \bfu_h \tn_h
\min\{\sigma^{-\onehalf},\mu^{-\onehalf}C_P\} \|\bfbeta\|_{0,\infty,\Omega}
\|\nabla \bfv_h \|_{\Omega}
\lesssim
\tn \bfu_h \tn_h
\dfrac{\|\bfbeta\|_{0,\infty,\Omega} C_P}{\sqrt{\mu + \sigma C_P^2}}
\|\nabla \bfv_h \|_{\Omega}
\lesssim
\tn \bfu_h \tn_h
  \Phi_p^{-\onehalf}
\|\nabla \bfv_h \|_{\Omega}.
\end{align}
\end{proof}
In the final lemma, a modified inf-sup condition for $b_h$ is derived, revealing
how the $L^2$ pressure norm can be controlled by adding the symmetric CIP operator~$s_p$.
\begin{lemma}
There is a constant $c_3 > 0$ such that
for $p_h \in \mcQ_h$ there exists a $\bfv_h \in \mcV_h$ satisfying
 \begin{align}
   b_h(p_h, \bfv_h) 
   &\gtrsim
   \Phi_p \| p_h \|_{\Omega}^2
   - c_3 s_p(p_h, p_h)
  \label{eq:pressure-stability}
\intertext{and the stability estimate}
  \label{eq:pressure-stability-vh_stab_bound}
    \tn \bfv_h \tn_h
    &\lesssim
    \Phi_p^{-\onehalf} (
    \| \nabla \bfv_h \|_{\mcT_h}
    +
    \| h^{-\onehalf} \bfv_h \|_{\Gamma}
    )
    \lesssim
    \Phi_p^{\onehalf} \| p_h \|_{\Omega} + g_p(p_h,p_h)^{\onehalf}
 \end{align}
  whenever the stability parameters $\gamma,
  \gamma_\mu,\gamma_\sigma,\gamma_\beta,\gamma_u,\gamma_p$ are chosen
  to be
strictly positive
.
  \label{lem:pressure-stability}
\end{lemma}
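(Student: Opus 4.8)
The plan is to set up a Fortin-type argument for an inf-sup estimate on $b_h$, using the Cl\'ement operator in place of the (unavailable) stable CutFEM $L^2$-projection and compensating the resulting loss of $L^2$-orthogonality by the symmetric pressure penalty $s_p$. Since the constant pressure mode lies in the kernel of $b_h$ and is filtered out in practice, we work with $p_h \in \mcQ_h$ of vanishing mean on $\Omega$. By the surjectivity of the divergence from $[H_0^1(\Omega)]^d$ onto $L^2_0(\Omega)$ (see e.g.~\cite{RaviartGirault1986}) there is $\bfw \in [H_0^1(\Omega)]^d$ with $-\div\bfw = p_h$ and $\|\bfw\|_{1,\Omega} \lesssim \|p_h\|_{\Omega}$; let $\bfw^\ast = E\bfw$ be its extension to $\Oast$ and define the test function $\bfv_h := \Phi_p\,\bfpiast\bfw \in \mcV_h$.

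The first step is an exact identity for $b_h(p_h,\bfv_h)$. Because $p_h \in \mcQ_h \subset C^0(\Oast_h)$ is in particular in $H^1(\Omega)$, integration by parts turns the Nitsche boundary term into a bulk term, giving $b_h(p_h,\cdot) = (\nabla p_h,\cdot)_{\Omega}$ on $\mcV_h$. Combining this with $(\nabla p_h,\bfw)_{\Omega} = -(p_h,\div\bfw)_{\Omega} + (p_h,\bfw\cdot\bfn)_{\Gamma} = \|p_h\|_{\Omega}^2$, where the $\Gamma$-term drops since $\bfw|_{\Gamma}=0$, yields
\begin{equation}
 b_h(p_h,\bfv_h) = \Phi_p\|p_h\|_{\Omega}^2 + \Phi_p\,(\nabla p_h,\bfpiast\bfw - \bfw^\ast)_{\Omega}.
\end{equation}
It then suffices to show that the remainder $R := (\nabla p_h,\bfpiast\bfw - \bfw^\ast)_{\Omega}$ can be absorbed, i.e.\ $|R| \lesssim \delta\,\|p_h\|_{\Omega}^2 + \delta^{-1}\Phi_p^{-1} s_p(p_h,p_h)$ for arbitrary $\delta > 0$.

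To bound $R$ I would decompose $\nabla p_h = \mcO_h(\nabla p_h) + (\nabla p_h - \mcO_h\nabla p_h)$ into its Oswald-smoothed part and its fluctuation. For the fluctuation, a $\phi_p^{\pm\onehalf}$-weighted Cauchy--Schwarz inequality, the fluctuation bound~\eqref{eq:oswald-stabilization-pressure} of Corollary~\ref{cor:cip-fluctuation-control}, the Cl\'ement estimate~\eqref{eq:interpest0-ast}, and the scaling relation $\phi_{p,T}^{-1}h^2 = \mu + \|\bfbeta\|_{0,\infty,T}h + \sigma h^2 \lesssim \Phi_p^{-1}$ (which uses $h \lesssim C_P$, as in the proof of Corollary~\ref{cor:norm_equivalence_rsb_pressure_div}) give $|(\nabla p_h - \mcO_h\nabla p_h,\bfpiast\bfw - \bfw^\ast)_{\Omega}| \lesssim \Phi_p^{-\onehalf} s_p(p_h,p_h)^{\onehalf}\|p_h\|_{\Omega}$, which is of the required form. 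The genuinely delicate term is the one carrying the \emph{continuous}, non-fluctuating part $\mcO_h(\nabla p_h)$: a plain Cauchy--Schwarz estimate fails here since $\|\phi_p^{\onehalf}\mcO_h\nabla p_h\|_{\Omega}$ is not controlled by $s_p$, and this is the step I expect to be the main obstacle. The plan is to exploit that $\mcO_h(\nabla p_h)$ is itself a finite element function, so that the quasi-local $L^2$-stability of the Oswald and Cl\'ement operators (Lemma~\ref{lem:oswald-quasi-local-stability} together with~\eqref{eq:interpest0-ast}) and the comparability $\widetilde{\phi}_p \sim \phi_p$ of the smoothed parameters allow one to re-route this contribution, modulo terms already handled by the fluctuation estimate, once more through $s_p$ and a small multiple of $\|p_h\|_{\Omega}^2$; this is precisely where the missing $L^2$-orthogonality on $\Omega$, which would annihilate the term outright, must be replaced by the CIP pressure penalty.

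Finally, for the stability estimate~\eqref{eq:pressure-stability-vh_stab_bound} I would apply~\eqref{eq:Phi_p_bounds_1} of Lemma~\ref{lem:Phi_p_bounds} to $\bfv_h$ to get $\tn\bfv_h\tn_h \lesssim \Phi_p^{-\onehalf}(\|\nabla\bfv_h\|_{\mcT_h} + \|h^{-\onehalf}\bfv_h\|_{\Gamma})$, then use the $H^1$-stability~\eqref{eq:interpest0_Clement_H_stability} of the extended Cl\'ement operator to obtain $\|\nabla\bfv_h\|_{\mcT_h} = \Phi_p\|\nabla\bfpiast\bfw\|_{\mcT_h} \lesssim \Phi_p\|\bfw^\ast\|_{1,\Oast} \lesssim \Phi_p\|p_h\|_{\Omega}$, while the cut trace inequality~\eqref{eq:trace-inequality-cut} combined with a Poincar\'e/Hardy estimate in the boundary strip (using $\bfw|_{\Gamma} = 0$) bounds $\|h^{-\onehalf}\bfv_h\|_{\Gamma}$ in the same way; hence $\tn\bfv_h\tn_h \lesssim \Phi_p^{\onehalf}\|p_h\|_{\Omega} \leqslant \Phi_p^{\onehalf}\|p_h\|_{\Omega} + g_p(p_h,p_h)^{\onehalf}$, where the last term is kept for later convenience and may also be produced by the norm equivalence of Corollary~\ref{cor:norm_equivalence_rsb_pressure_div}. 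Choosing $\delta$ small enough in the remainder estimate then gives~\eqref{eq:pressure-stability} with a suitable constant $c_3 > 0$.
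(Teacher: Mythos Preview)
Your set-up in Step~1 matches the paper's: take $\bfw\in[H_0^1(\Omega)]^d$ with $-\div\bfw=p_h$, set $\bfv_h^1=\Phi_p\,\bfpiast\bfw$, integrate by parts to obtain
\[
b_h(p_h,\bfv_h^1)=\Phi_p\|p_h\|_\Omega^2+\Phi_p\,(\nabla p_h,\bfpiast\bfw-\bfw^\ast)_\Omega .
\]
Your treatment of the fluctuation part $(\nabla p_h-\mcO_h\nabla p_h,\bfpiast\bfw-\bfw^\ast)_\Omega$ is fine. The gap is the ``delicate'' contribution $(\mcO_h\nabla p_h,\bfpiast\bfw-\bfw^\ast)_\Omega$. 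Your plan to ``re-route it through $s_p$ and a small multiple of $\|p_h\|_\Omega^2$'' does not go through: $\mcO_h\nabla p_h$ is comparable to $\nabla p_h$ itself in $L^2$ (Lemma~\ref{lem:oswald-quasi-local-stability}), \emph{not} to its face jumps, and $\bfpiast\bfw-\bfw^\ast$ has no useful orthogonality to finite element functions. A direct Cauchy--Schwarz plus Cl\'ement estimate only yields
$\Phi_p\,|(\mcO_h\nabla p_h,\bfpiast\bfw-\bfw^\ast)_\Omega|\lesssim \Phi_p\,h\,\|\nabla p_h\|_{\mcT_h}\|p_h\|_\Omega\lesssim \Phi_p\,\|p_h\|_{\mcT_h}\|p_h\|_\Omega$,
which is of order $\Phi_p\|p_h\|_\Omega^2$ without a small prefactor and hence cannot be absorbed.

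The paper circumvents this by \emph{not} trying to bound the full remainder. Instead it accepts, after a $\delta$-Cauchy--Schwarz on the remainder of $\bfv_h^1$, an intermediate negative term $-\delta^{-1}\|\Phi_p^{\onehalf}h\nabla p_h\|_\Omega^2$ and then \emph{adds a second test function}
\[
\bfv_h^2:=\Phi_p\,h^2\,\mcO_h(\nabla p_h)\in\mcV_h .
\]
Since $b_h(p_h,\bfv_h^2)=(\nabla p_h,\bfv_h^2)_\Omega=\|\Phi_p^{\onehalf}h\nabla p_h\|_\Omega^2+(\Phi_p^{\onehalf}h\nabla p_h,\Phi_p^{\onehalf}h(\mcO_h\nabla p_h-\nabla p_h))_\Omega$, the fluctuation bound~\eqref{eq:oswald-stabilization-pressure} together with $\Phi_p h^2\lesssim\phi_{p,F}$ gives
$b_h(p_h,\bfv_h^2)\gtrsim(1-\delta)\|\Phi_p^{\onehalf}h\nabla p_h\|_\Omega^2-\delta^{-1}s_p(p_h,p_h)$. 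The linear combination $\bfv_h=\bfv_h^1+2\delta^{-1}\bfv_h^2$ then yields~\eqref{eq:pressure-stability}. For the stability estimate, your argument for $\bfv_h^1$ is essentially the paper's (write $\|h^{-\onehalf}\bfv_h^1\|_\Gamma=\|h^{-\onehalf}(\bfpiast\bfw-\bfw)\|_\Gamma$, apply the cut trace inequality and the Cl\'ement estimate; no Hardy inequality is needed). The $g_p(p_h,p_h)^{\onehalf}$ in~\eqref{eq:pressure-stability-vh_stab_bound} is not merely decorative: it arises when bounding $\|\nabla\bfv_h^2\|_{\mcT_h}\lesssim\Phi_p\|p_h\|_{\mcT_h}$ and then passing from $\mcT_h$ to $\Omega$ via the pressure ghost-penalty equivalence~\eqref{eq:norm_equivalence_rsb_pressure}.
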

\begin{proof}
  For given $p_h\in\mcQ_h$, we construct $\bfv_h$ in two steps.
  \\
  {\bf Step 1.} Due to the surjectivity of the divergence operator
  $\nabla \cdot : [H^1_0(\Omega)]^d \to L^2(\Omega)$ there exists a function
  $\bfv_p$ such that $\nabla \cdot \bfv_p = -\Phi_p p_h$
  and $\| \bfv_p \|_{1,\Omega} \sim \| \nabla \bfv_p \|_{\Omega} \lesssim \Phi_p \| p_h \|_{\Omega}$.
  Using the Cl\'ement interpolant, we
  set $\bfv_h^1 := \bfpiast \bfv_p \in \mcV_h$ and recall that 
  $\bfv_p |_{\Gamma} = 0$ to obtain the identity
  \begin{align}
    b_h(p_h, \bfv_h^1)
    &=
    b_h(p_h, \bfv_p)
    +
    b_h(p_h, \bfpiast \bfv_p - \bfv_p)
    \label{eq:inf-sup_vh1_constr-I}
    \\
    &= \Phi_p \| p_h \|_{\Omega}^2 
    + \bigl(
    \Phi_p^{\onehalf} h \nabla p_h, \Phi_p^{-\onehalf}
    h^{-1}(\bfpiast \bfv_p - \bfv_p)
    \bigr)_{\Omega},
    \label{eq:inf-sup_vh1_constr-II}
  \end{align}
  where the second term in~\eqref{eq:inf-sup_vh1_constr-I} was
  integrated by parts. Now a combination of a
  $\delta$-scaled Cauchy-Schwarz inequality,
  the interpolation estimate~\eqref{eq:interpest0-ast},
  and finally, the stability bound
  $\| \nabla \bfv_p \|_{\Omega} \lesssim \Phi_p \| p_h \|_{\Omega}$
  yields
  \begin{align}
    b_h(p_h, \bfv_h^1)
    &\gtrsim
    \Phi_p \| p_h \|_{\Omega}^2 
    - \delta^{-1}\|\Phi_p^{\onehalf} h\nabla p_h\|^2_{\Omega}
    - \delta 
    \Phi_p^{-1} \| \bfv_p \|_{1, \Omega}^2
    \\
    &\gtrsim
    (1 - \delta) \Phi_p \| p_h \|_{\Omega}^2 
    - \delta^{-1}\|\Phi_p^{\onehalf} h\nabla p_h\|^2_{\Omega}.
    \label{eq:inf-sup_vh1_constr-III}
  \end{align}
  {\bf Step 2.} 
  Next, we show how to compensate for the
  $\|\Phi_p^{\onehalf} h \nabla p_h\|^2_{\Omega}$
  term appearing in \eqref{eq:inf-sup_vh1_constr-III} using the
  stabilization form $s_p$.
  To construct a suitable test function, set 
  $\bfv_h^2 := \Phi_p h^2 \mcO_h(\nabla p_h)\in \mcV_h$.
  Then inserting $\bfv_h^2$ into
  $b_h$ shows after an integration by parts that
\begin{align}
  b_h(p_h, \bfv_h^2)
  = (\nabla p_h, \bfv_h^2)_{\Omega}
  &
  = 
    \|\Phi_p^{\onehalf} h\nabla p_h\|^2_{\Omega}
  + 
  \bigl(
  \Phi_p^{\onehalf} h \nabla p_h,
  \Phi_p^{\onehalf} h
  (\mcO_h(\nabla p_h) -
  \nabla p_h)
  \bigr)_{\Omega}
  \\
  &\gtrsim
  (1 -\delta)
    \|\Phi_p^{\onehalf} h\nabla p_h\|^2_{\Omega}
  -\delta^{-1} s_p(p_h,p_h),
    \label{eq:inf-sup_vh2_constr-III}
\end{align}
where we combined a $\delta$-Young inequality and
the Oswald interpolant Lemma~\ref{lem:Oswald-interpolant}
to obtain
\begin{align}
  \|
  \Phi_p^{\onehalf} h
  (\mcO_h(\nabla p_h) -
  \nabla p_h)
  \|_{\Omega}^2
  \lesssim
  \sum_{F\in \mcF_i}
  \underbrace{\Phi_p h^2}_{\lesssim \phi_{p,F}} h \|\jump{\nabla p_h}\|_F^2
  \lesssim
  s_p(p_h, p_h).
\end{align}
Finally, 
using the same $\delta \sim 0.5$ 
in~\eqref{eq:inf-sup_vh1_constr-III}
and~\eqref{eq:inf-sup_vh2_constr-III},
we set $\bfv_h = \bfv_h^1 + 2\delta^{-1} \bfv_h^2$
yielding
\begin{align}
  b_h(p_h, \bfv_h)
  \gtrsim
  \Phi_p \|p_h\|_{\Omega}^2
  - c_3 s_p(p_h,p_h)
\end{align}
for some constant $c_3>0$.
\\
{\bf Estimate~\eqref{eq:pressure-stability-vh_stab_bound}.}
Utilizing the stability bound \eqref{eq:Phi_p_bounds_1} for~$\bfv_h$ it is sufficient to prove
\begin{align}
  \| \nabla \bfv_h^1 \|_{\mcT_h}
  + \| h^{-\onehalf} \bfv_h^1 \|_{\Gamma}
  &= \| \nabla \bfpiast \bfv_p \|_{\mcT_h}
  + \| h^{-\onehalf} (\bfpiast \bfv_p - \bfv_p)\|_{\Gamma}
  \\
  &\lesssim \| \nabla \bfpiast \bfv_p \|_{\mcT_h}
  + \| h^{-1} (\bfpiast \bfv_p - \bfv_p)\|_{\mcT_h}
  \\
  &\lesssim \| \nabla \bfv_p \|_{\Omega}
  \lesssim \Phi_p \| p_h \|_{\Omega},
  \label{eq:stability-bound-v_h1}
\end{align}
where the fact was used that \mbox{$\bfv_p|_\Gamma=0$}, followed by a trace inequality
and the interpolation and stability properties of the Cl\'ement interpolant \eqref{eq:interpest0-ast} and \eqref{eq:interpest0_Clement_H_stability}.
Similarly,
\begin{align}
  \| \nabla \bfv_h^2 \|_{\mcT_h}
  + \| h^{-\onehalf} \bfv_h^2 \|_{\Gamma}
  &\lesssim
  \| h^{-1} \mcO_h(h^2 \Phi_p \nabla p_h) \|_{\mcT_h}
  \lesssim
  \|{\Phi}_p p_h \|_{\mcT_h}
  \\
  &\lesssim
  {\Phi}_p  \| p_h \|_{\Omega}
  + {\Phi}_p^{\onehalf} g_p(p_h, p_h)^{\onehalf},
  \label{eq:stability-bound-v_h2}
\end{align}
where in the last step, the ghost-penalty norm equivalence for the pressure from Corollary~\ref{cor:norm_equivalence_rsb_pressure_div} was applied.
Consequently, combing the stability
bounds~\eqref{eq:stability-bound-v_h1} and
\eqref{eq:stability-bound-v_h2} with stability
bound~\eqref{eq:Phi_p_bounds_1}
from Lemma~\ref{lem:Phi_p_bounds} gives
\begin{align}
  \tn \bfv_h \tn_h 
  \lesssim
  \Phi_p^{-\onehalf} (\| \nabla \bfv_h \|_{\mcT_h} + \|h^{-\onehalf}
  \bfv\|_{\Gamma})
  \lesssim 
  \Phi_p^{\onehalf} \|p_h\|_{\Omega}
  + g_p(p_h,p_h)^{\onehalf}.
\end{align}

\end{proof}
\begin{remark}
We point out that the classical proof of the modified inf-sup
condition~(\ref{eq:pressure-stability}) uses the $L^2$ projection to
construct a proper test function $\bfv_h^1$
and exploits the $L^2$ orthogonality to insert $\mcO_h(\nabla p_h) \in
\mcV_h$ in ~\eqref{eq:inf-sup_vh1_constr-II} and then use ~(\ref{eq:oswald-stabilization-pressure})
from Corollary~\ref{cor:cip-fluctuation-control}
directly. While a stabilized/perturbed $L^2$ projection
was successfully used in \cite{BurmanClausMassing2015} to analyze
a first-order cut finite element method for the three-field Stokes problem,
its theoretical treatment for higher-order elements is not trivial.
Consequently, we used an alternative route to establish~(\ref{eq:pressure-stability})
without relying on some sort of $L^2$ orthogonality.
\end{remark}

As a consequence of the previous lemma we can now show that
the bilinear form $A_h+S_h+G_h$ satisfies an inf-sup
condition with respect to the norm
\begin{align}
  \tn U_h \tn_h^2 
  = | U_h |_h^2 + \| \phi_u^{\onehalf} \nabla \cdot \bfu_h \|_{\Omega}^2
  + \dfrac{1}{1 + \omega_h} \|\phi_{\beta}^{\onehalf}(\bfbeta\cdot \nabla \bfu_h + \nabla p_h) \|_{\Omega}^2
  + \Phi_p \| p_h\|_{\Omega}^2,
  \label{eq:Ah-norm}
\end{align}
which ensures existence and uniqueness of a discrete velocity and pressure solution.
\begin{theorem}
\label{theorem:inf-sup_condition_total}
 Let $U_h=(\bfu_h,p_h)\in \mcW_h$.
Then, under the assumptions of Lemma~\ref{lem:coercivity_ah}, \ref{lem:divergence-stability}, \ref{lem:convect-pressure-stab}
and~\ref{lem:pressure-stability} on the stabilization parameters,
the cut finite element method is inf-sup stable 
 \begin{equation}
\label{eq:inf-sup_condition_Ah_weak_norm}
\tn U_h \tn_h
\lesssim
\sup_{V_h\in \mcW_h \setminus \{0\}}
\dfrac{
A_h(U_h,V_h) + S_h(U_h,V_h)
+ G_h(U_h,V_h)
}
{\tn V_h \tn_h },
\end{equation}
where the hidden stability constant is independent of the mesh size $h$ and the position of the boundary
relative to the background mesh.
\end{theorem}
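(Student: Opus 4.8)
The plan is to combine the coercivity estimate with the three partial stability lemmas into a single inf-sup bound by the familiar device of building one composite test function. For a given $U_h = (\bfu_h,p_h) \in \mcW_h$, let $q_h \in \mcQ_h$ be the pressure from Lemma~\ref{lem:divergence-stability}, $\bfv_h \in \mcV_h$ the velocity from Lemma~\ref{lem:convect-pressure-stab}, and $\bfw_h \in \mcV_h$ the velocity from Lemma~\ref{lem:pressure-stability}, and set
\[
  V_h := U_h + \varepsilon_1 (\bfzero, q_h) + \dfrac{\varepsilon_2}{1+\omega_h}(\bfv_h, 0) + \varepsilon_3 (\bfw_h, 0),
\]
with small positive constants $\varepsilon_1,\varepsilon_2,\varepsilon_3$ and an auxiliary Young parameter $\delta > 0$ to be fixed at the end. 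I then want to show $A_h(U_h,V_h) + S_h(U_h,V_h) + G_h(U_h,V_h) \gtrsim \tn U_h \tn_h^2$ and $\tn V_h \tn_h \lesssim \tn U_h \tn_h$, after which choosing $W_h = V_h$ in~\eqref{eq:inf-sup_condition_Ah_weak_norm} finishes the proof. The essential trick in the definition of $V_h$ is the weight $1/(1+\omega_h)$ attached to $\bfv_h$: it matches the scaling of the mixed contribution in the norm~\eqref{eq:oseen-norm-up} and turns the $\omega_h$-polluted remainder $(1+\omega_h)|U_h|_h^2$ appearing in Lemma~\ref{lem:convect-pressure-stab} into a plain multiple of $|U_h|_h^2$.

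By bilinearity the quantity $A_h(U_h,V_h) + S_h(U_h,V_h) + G_h(U_h,V_h)$ splits into four pieces. The $U_h$-piece is bounded below by $c_0 |U_h|_h^2$ via the coercivity Lemma~\ref{lem:coercivity_ah}. In the $(\bfzero,q_h)$-piece one has $A_h(U_h,(\bfzero,q_h)) = -b_h(q_h,\bfu_h)$ while the stabilization couplings collapse to $s_p(p_h,q_h)+g_p(p_h,q_h)$, and Lemma~\ref{lem:divergence-stability} with a $\delta$-Young inequality and~\eqref{eq:divergence-stability-qh_stab_bound} produces $\gtrsim \varepsilon_1\|\phi_u^{\onehalf}\nabla\cdot\bfu_h\|_\Omega^2$ up to a $\delta$-fraction of itself and terms bounded by $|U_h|_h^2$. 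In the $(\bfv_h,0)$-piece I use $b_h(p_h,\bfv_h) = (\nabla p_h,\bfv_h)_\Omega$ so that $A_h(U_h,(\bfv_h,0))$ equals the target term $(\bfbeta\cdot\nabla\bfu_h + \nabla p_h,\bfv_h)_\Omega$ plus reactive, viscous and Nitsche-boundary remainders; each of these, and likewise the couplings $s_\beta(\bfu_h,\bfv_h),s_u(\bfu_h,\bfv_h),g_\beta(\bfu_h,\bfv_h),\dots$, is controlled by Cauchy--Schwarz through quantities already present in $|U_h|_h^2$ multiplied by $\tn \bfv_h \tn_h^2$, and Lemma~\ref{lem:convect-pressure-stab} with~\eqref{eq:convect-pressure-stab-vh_stab_bound_2} then yields $\gtrsim \tfrac{\varepsilon_2}{1+\omega_h}\|\phi_{\beta}^{\onehalf}(\bfbeta\cdot\nabla\bfu_h + \nabla p_h)\|_\Omega^2$ modulo $C\varepsilon_2|U_h|_h^2$ and a $\delta$-fraction of itself. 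In the $(\bfw_h,0)$-piece, $A_h(U_h,(\bfw_h,0)) = a_h(\bfu_h,\bfw_h) + b_h(p_h,\bfw_h)$ with $b_h(p_h,\bfw_h) \gtrsim \Phi_p\|p_h\|_\Omega^2 - c_3 s_p(p_h,p_h)$ by Lemma~\ref{lem:pressure-stability}; the only awkward term of $a_h(\bfu_h,\bfw_h)$ is the advective one $(\bfbeta\cdot\nabla\bfu_h,\bfw_h)_\Omega$, which I integrate by parts using $\nabla\cdot\bfbeta = 0$ to get $-(\bfu_h,\bfbeta\cdot\nabla\bfw_h)_\Omega$ plus a boundary term controlled by the $|\bfbeta\cdot\bfn|^{\onehalf}$-weighted boundary norms in $\tn\cdot\tn$, and then estimate $|(\bfu_h,\bfbeta\cdot\nabla\bfw_h)_\Omega| \lesssim \tn\bfu_h\tn_h\,\Phi_p^{-\onehalf}\|\nabla\bfw_h\|_\Omega$ by~\eqref{eq:Phi_p_bounds_2}; combined with the internal bound $\|\nabla\bfw_h\|_{\mcT_h} \lesssim \Phi_p\|p_h\|_\Omega + \Phi_p^{\onehalf}g_p(p_h,p_h)^{\onehalf}$ from the proof of Lemma~\ref{lem:pressure-stability} and a $\delta$-Young step, this piece contributes $\gtrsim \varepsilon_3\Phi_p\|p_h\|_\Omega^2$ up to absorbable remainders.

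Adding the four pieces gives $A_h(U_h,V_h)+S_h(U_h,V_h)+G_h(U_h,V_h) \geqslant c_0|U_h|_h^2 + \varepsilon_1\|\phi_u^{\onehalf}\nabla\cdot\bfu_h\|_\Omega^2 + \tfrac{\varepsilon_2}{1+\omega_h}\|\phi_\beta^{\onehalf}(\bfbeta\cdot\nabla\bfu_h + \nabla p_h)\|_\Omega^2 + \varepsilon_3\Phi_p\|p_h\|_\Omega^2$ minus terms which are reabsorbed by first fixing $\delta$ small enough that every term proportional to $\delta$ times one of these four positive quantities is swallowed, and then choosing $\varepsilon_1,\varepsilon_2,\varepsilon_3$ small enough that $C(\varepsilon_1+\varepsilon_2+\varepsilon_3)(1+\delta^{-1}) \leqslant c_0/2$; this yields $A_h(U_h,V_h)+S_h(U_h,V_h)+G_h(U_h,V_h) \gtrsim \tn U_h \tn_h^2$. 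The companion bound $\tn V_h \tn_h \lesssim \tn U_h \tn_h$ follows from the triangle inequality together with the stability estimates~\eqref{eq:divergence-stability-qh_stab_bound}, \eqref{eq:convect-pressure-stab-vh_stab_bound_2} and~\eqref{eq:pressure-stability-vh_stab_bound}, using $1/(1+\omega_h)\leqslant 1$ together with the comparison $\phi_u \lesssim \Phi_p^{-1}$ (valid since $h\lesssim C_P$) to control the auxiliary divergence and streamline contributions of $q_h$ and $\bfw_h$ to the norm~\eqref{eq:oseen-norm-up}. I expect the main obstacle to be the advective term throughout: the exact choice of the weight $1/(1+\omega_h)$ so that the $\omega_h$-dependent remainders of Lemma~\ref{lem:convect-pressure-stab} cancel, the integration-by-parts treatment of $(\bfbeta\cdot\nabla\bfu_h,\bfw_h)_\Omega$ leaning on the sharp $\Phi_p$-weighted estimate~\eqref{eq:Phi_p_bounds_2}, and keeping the fixed parameter $\delta$ and the three small parameters $\varepsilon_1,\varepsilon_2,\varepsilon_3$ mutually consistent across all three auxiliary constructions.
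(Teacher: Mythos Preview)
Your proposal is correct and follows essentially the same route as the paper's proof: the paper builds the composite test function $V_h^5 = \eta\bigl(V_h^1 + (1+\omega_h)^{-1}V_h^2 + V_h^3\bigr) + U_h$, treats each piece via the corresponding lemma exactly as you outline (including the integration by parts of the advective term in the $\bfw_h$-piece followed by~\eqref{eq:Phi_p_bounds_2}), and closes with the same stability bounds. Your use of three independent parameters $\varepsilon_1,\varepsilon_2,\varepsilon_3$ in place of a single $\eta$ is cosmetic, and you are even slightly more careful than the paper in flagging that the streamline and divergence contributions of $q_h$ and $\bfw_h$ to the full norm~\eqref{eq:oseen-norm-up} need the comparison $\phi_u \lesssim \Phi_p^{-1}$ and $\phi_\beta\|\bfbeta\|_{0,\infty}^2 \lesssim \Phi_p^{-1}$ to be absorbed.
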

\begin{proof}
Given $U_h\in\mcW_h$ we construct a suitable test function $V_h\in\mcW_h$
based on the Lemma~\ref{lem:coercivity_ah}, \ref{lem:divergence-stability}, \ref{lem:convect-pressure-stab}
and~\ref{lem:pressure-stability}.
\\
\noindent{\bf Step 1.}
To gain control over the weakly scaled divergence $\nabla \cdot\bfu_h$,
define the test function $V_h^1 := (0,q_h^1)$ with $q_h^1$ chosen as in
Lemma~\ref{lem:divergence-stability}. Then 
\begin{align}
  (A_h
  + S_h
  + G_h)(U_h,V_h^1)
  &= -b_h(q_h^1, \bfu_h) + s_p(p_h,q_h^1) + g_p(p_h, q_h^1)
  \\
    \nonumber
  &\gtrsim
  \| \phi_u^{\onehalf} \nabla \cdot \bfu_h \|_{\Omega}^2
   - 
   c_1 \bigl(
   s_u(\bfu_h,\bfu_h)
   + g_u(\bfu_h,\bfu_h)
   + \| h^{-\onehalf}\phi_u^{\onehalf} \bfu_h\cdot \bfn \|_{\Gamma}^2
   \bigr)
  \\
  &\phantom{\gtrsim}\quad -\delta^{-1}
  \bigl(
  s_p(p_h,p_h) + g_p(p_h, p_h)
  \bigr)
  -\delta
    \bigl(
  s_p(q_h^1,q_h^1) + g_p(q_h^1, q_h^1)
  \bigr)
  \\
  &\gtrsim
  (1-\delta)\| \phi_u^{\onehalf} \nabla \cdot \bfu_h \|_{\Omega}^2
   - C_1(\delta) |U_h|_h^2.
\end{align}
\noindent{\bf Step 2.}
Next, we set $V_h^2 := (\bfv_h^2,0)$ with $\bfv_h^2$ taken from Lemma~\ref{lem:convect-pressure-stab}.
Inserting $V_h^2$ into $A_h + S_h + G_h$ and integrating $b_h$ by parts leads us to
\begin{align}
A_h(U_h, V_h^2) 
&= a_h(\bfu_h, \bfv_h^2) + b_h(p_h, \bfv_h^2)
\\
&\gtrsim 
- \tn \bfu_h \tn_h
\tn \bfv_h^2 \tn_h
  + (\bfbeta\cdot\nabla \bfu_h + \nabla p_h, \bfv_h^2)_{\Omega}
  \label{eq:use-conv-pressure-stab-step-1}
\\
&\gtrsim 
- \delta^{-1}\tn \bfu_h \tn_h^2
- \delta \tn \bfv_h^2 \tn_h^2
+ \| \phi_{\beta}^{\onehalf}(\bfbeta \cdot \nabla \bfu_h + \nabla p_h)\|_{\Omega}^2
   - c_2(1 + \omega_h) | U_h |_h^2
\\
&\gtrsim 
(1-\delta)\| \phi_{\beta}^{\onehalf}(\bfbeta \cdot \nabla \bfu_h + \nabla p_h)\|_{\Omega}^2
  - C_2(\delta)(1 + \omega_h) | U_h |_h^2
\label{eq:A_h-step-2}
\end{align}
where we employed the stability bound~(\ref{eq:convect-pressure-stab-vh_stab_bound_2})
after an application of a $\delta$-scaled Young inequality.
Employing the same steps to the remaining stabilization terms shows that
\begin{align}
(S_h + G_h)(U_h, V_h^2) 
&\gtrsim
- \delta^{-1}\tn \bfu_h \tn_h^2
- \delta \tn \bfv_h^2 \tn_h^2
\\
&\gtrsim
-\delta\| \phi_{\beta}^{\onehalf}(\bfbeta \cdot \nabla \bfu_h + \nabla p_h)\|_{\Omega}^2
  - C_2(\delta)(1 + \omega_h) | U_h |_h^2.
\label{eq:S_h-G_h-step-2}
\end{align}
Thus after
combining~(\ref{eq:A_h-step-2}) and~(\ref{eq:S_h-G_h-step-2}),
we find that
\begin{align}
(A_h+G_h + S_h)(U_h, V_h^2) 
\gtrsim 
(1-2\delta)
\| \phi_{\beta}^{\onehalf}(\bfbeta \cdot \nabla \bfu_h + \nabla p_h)\|_{\Omega}^2
  - 2C_2(\delta)(1 + \omega_h) | U_h |_h^2.
\end{align}
\noindent{\bf Step 3.}
The $L^2$ pressure norm term can be constructed by testing with
$V_h^3 := (\bfv_h^3,0)$ where $\bfv_h^3$ is now chosen as in
Lemma~\ref{lem:pressure-stability}.
Integrating the advective term by parts and
making use of the estimates~\eqref{eq:Phi_p_bounds_1}, \eqref{eq:Phi_p_bounds_2}, and
the stability bound~\eqref{eq:pressure-stability-vh_stab_bound}
allows us to deduce that
\begin{align}
A_h(U_h, V_h^3) 
&= a_h(\bfu_h, \bfv_h^3) + b_h(p_h, \bfv_h^3)
\\
&\gtrsim 
- \tn \bfu_h \tn_h
\tn \bfv_h^3 \tn_h
- (\bfu_h, \bfbeta\cdot\nabla \bfv_h^3)_{\Omega}
+ 
  \Phi_p \| p_h \|_{\Omega}^2 - c_3 s_p(p_h, p_h)
  \\
  &\gtrsim
  - \delta^{-1}\tn \bfu_h \tn_h^2
  - \delta \Phi_p^{-1}(\|\nabla \bfv_h^3\|_{\mcT_h}^2 + \|h^{-\onehalf}
  \bfv_h^3\|_{\Gamma}^2)
  - \delta^{-1} \tn \bfu_h \tn_h^2 
  - \delta \Phi_p^{-1} \| \nabla \bfv_h^3 \|_{\Omega}^2
  + 
  \Phi_p \| p_h \|_{\Omega}^2 - c_3 s_p(p_h, p_h)
  \\
  &\gtrsim
  (1-2\delta) \Phi_p \| p_h \|_{\Omega}^2
  -2 \delta^{-1} \tn \bfu_h \tn_h^2 - c_3s_p(p_h,p_h) - 2\delta
  g_p(p_h,p_h)
  \\
  &\gtrsim
  (1-2\delta) \Phi_p \| p_h \|_{\Omega}^2
  - (c_3 + 2\delta + 2 \delta^{-1}) | U_h |_{h}^2.
  \label{eq:A_h-step-3-a}
\end{align}
Analogously to {\bf Step 2}, the stabilization terms can be estimated as
\begin{align}
 (S_h + G_h )(U_h, V_h^3)
&\gtrsim
- \tn \bfu_h \tn_h
\tn \bfv_h^3 \tn_h
\\
&\gtrsim
  - \delta \Phi_p\| p_h\|_{\Omega}^2
  - \delta g_p(p_h,p_h)
  - \delta^{-1} \tn \bfu_h \tn_h^2
\\
&\gtrsim
  - \delta \Phi_p\| p_h\|_{\Omega}^2 
  - (\delta+\delta^{-1}) | U_h |_{h}^2
\label{eq:S_h-G_h-step-3-a}
\end{align}
such that after combining \eqref{eq:A_h-step-3-a} and \eqref{eq:S_h-G_h-step-3-a}
\begin{align}
(A_h+S_h+G_h)(U_h, V_h^3) 
\gtrsim 
  \Phi_p \| p_h \|_{\Omega}^2  - C_3(\delta) | U_h |_{h}^2.
\end{align}
\noindent{\bf Step 4.}
To gain control over the last missing $|U_h|_h$ term,
set $V_h^4 := U_h$. By Lemma~\ref{lem:coercivity_ah}, it holds
\begin{align}
  A_h(U_h, V_h^4) + S_h(U_h, V_h^4) + G_h(U_h, V_h^4) \gtrsim |U_h|_h^2.
\end{align}
\noindent{\bf Step 5.}
Finally, for given $U_h\in\mcV_h$ we choose $\delta$ sufficiently small and define $V_h^5 := \eta \bigl(V_h^1 + (1+\omega_h)^{-1}V_h^2 + V_h^3\bigr) + V_h^4$
for some $2 \eta \sim (C_1(\delta) + C_2(\delta) + C_3(\delta))^{-1}$. 
Thanks to the stability estimate~\eqref{eq:convect-pressure-stab-vh_stab_bound_2}
and norm definition~\eqref{eq:Ah-norm}, we have
$\tn (1+\omega_h)^{-1} V_h^2 \tn_h 
\lesssim (1+\omega_h)^{-\onehalf} \tn U_h \tn_h
\lesssim  \tn U_h \tn_h$
and as a result $\tn V_h^2\tn_h \lesssim \tn U_h \tn_h$.
Similarly, the stability bounds~(\ref{eq:divergence-stability-qh_stab_bound})
and~(\ref{eq:pressure-stability-vh_stab_bound})
imply that
$\tn V_h^1 + V_h^3 \tn_h \lesssim \tn U_h \tn_h$
and thus $\tn V_h^5 \tn_h \lesssim \tn U_h \tn_h$.
Consequently,
\begin{align}
  (A_h+S_h + G_h)(U_h, V_h^5) 
  &\gtrsim
(1 - \eta(C_1(\delta) + C_2(\delta) + C_3(\delta)))| U_h |_{h}^2 \nonumber\\
  &\qquad\quad
  +
  \eta \left(\|\phi_u^{\onehalf}\nabla\cdot\bfu_h\|_{\Omega}^2
  +
  \dfrac{1}{1 + \omega_h} 
  \| \phi_{\beta}^{\onehalf}((\bfbeta \cdot \nabla) \bfu_h + \nabla p_h)\|_{\Omega}^2
  + \Phi_p \|p_h\|_{\Omega}^2 \right)
\\
&\gtrsim \tn U_h \tn_h^2
\gtrsim \tn U_h \tn_h \tn V_h^5 \tn_h,
\end{align}
 which concludes the proof
by choosing the supremum over $V_h\in\mcW_h\backslash \{0\}$.
\end{proof}

\begin{remark}
 Note that in the previous theorem, the inf-sup stability is proven with respect to
 an energy-norm \mbox{$\tn U_h \tnast$}, which is based on the underlying (active) background mesh~$\mcT_h$.
 Thereby, the different ghost-penalty operators \mbox{$g_\mu,g_\sigma,g_\beta,g_u,g_p$} ensure sufficient control over discrete polynomials
defined on the entire (active) computational mesh
and so significantly improve the system conditioning of the resulting linear matrix system - for all different flow regimes and
independent of how the boundary intersects the mesh.
For further details on the improvement of the system conditioning owing to the use of ghost-penalties, the reader is referred to 
works by \citet{BurmanHansbo2012} and \citet{MassingLarsonLoggEtAl2013}.
\end{remark}

\section{A Priori Error Estimates}
\label{sec:apriori-analysis}
The goal of this section is to prove the main a priori
estimates~\eqref{eq:apriori-estimate-cutfem}
for the error in the discrete velocity and pressure solution.
We proceed in three steps.
First, two lemmas are provided which are concerned with potential
consistency errors introduced by the stabilization forms $S_h$ and $G_h$.
Second, interpolation error estimates are derived.
Finally, the estimates for the interpolation and consistency error are
combined with the inf-sup stability
result~\eqref{eq:inf-sup_condition_Ah_weak_norm} from the previous section
to establish the final \apriori~estimate
in Theorem~\ref{thm:apriori-estimate}.

\subsection{Consistency Error Estimates}
We start with showing that the discrete formulation~\eqref{eq:oseen-discrete-unfitted}
satisfies a weakened form of the
Galerkin orthogonality.
\begin{lemma}
\label{lem:weak-orthogonality}
  Suppose that the solution $U = (\bfu,p)$ of the variational
  formulation~\eqref{eq:oseen_weak} is in
  $[H^2(\Omega)]^d \times H^1(\Omega)$
  and let
  $U_h = (\bfu_h, p_h) \in \mcV_h \times \mcQ_h$ 
  be the finite element solution to the
  discrete weak formulation~\eqref{eq:oseen-discrete-unfitted}.
  Then
  \begin{align}
    A_h(U - U_h, V_h) = S_h(U_h, V_h) + G_h(U_h, V_h).
    \label{eq:weak-orthogonality}
  \end{align}
\end{lemma}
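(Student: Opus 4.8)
The plan is to show that the scheme is strongly consistent, i.e. that the exact solution satisfies $A_h(U,V_h) = L_h(V_h)$ for all $V_h = (\bfv_h, q_h) \in \mcW_h$, and then simply to subtract the discrete equation~\eqref{eq:oseen-discrete-unfitted}. Since $S_h$ and $G_h$ enter~\eqref{eq:weak-orthogonality} only through the discrete solution $U_h$, the identity $A_h(U,V_h) = L_h(V_h)$ combined with $A_h(U_h,V_h) + S_h(U_h,V_h) + G_h(U_h,V_h) = L_h(V_h)$ immediately yields $A_h(U-U_h,V_h) = S_h(U_h,V_h) + G_h(U_h,V_h)$ by bilinearity of $A_h$.

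To establish consistency I would start from the strong momentum equation~\eqref{eq:oseen-problem-momentum}, which holds in $[L^2(\Omega)]^d$ because $\bfu \in [H^2(\Omega)]^d$ and $p \in H^1(\Omega)$, test it with $\bfv_h$ and integrate over $\Omega$. Integration by parts of the viscous term gives $-(\div(2\mu\bfepsilon(\bfu)), \bfv_h)_\Omega = (2\mu\bfepsilon(\bfu), \bfepsilon(\bfv_h))_\Omega - (2\mu\bfepsilon(\bfu)\bfn, \bfv_h)_\Gamma$, using the symmetry of $\bfepsilon(\cdot)$; here the boundary trace $\bfepsilon(\bfu)\bfn$ lies in $[L^2(\Gamma)]^d$ thanks to the $H^2$-regularity. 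Likewise $(\grad p, \bfv_h)_\Omega = -(p, \div\bfv_h)_\Omega + (p, \bfv_h\cdot\bfn)_\Gamma = b_h(p,\bfv_h)$ by definition~\eqref{eq:bh-form-def}. Collecting terms yields $a(\bfu,\bfv_h) - (2\mu\bfepsilon(\bfu)\bfn, \bfv_h)_\Gamma + b_h(p,\bfv_h) = l(\bfv_h)$.

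It then remains to reconcile this with the Nitsche-augmented forms $a_h$, $b_h$ from~\eqref{eq:ah-form-def}--\eqref{eq:bh-form-def}. The key observation is that every boundary term in $a_h(\bfu,\bfv_h)$ beyond $a(\bfu,\bfv_h) - (2\mu\bfepsilon(\bfu)\bfn,\bfv_h)_\Gamma$ — the inflow term $-((\bfbeta\cdot\bfn)\bfu,\bfv_h)_{\GammaIn}$, the adjoint-consistency term $-(\bfu, 2\mu\bfepsilon(\bfv_h)\bfn)_\Gamma$, and the two penalty terms scaled by $\gamma(\phi_u/h)$ and $\gamma(\mu/h)$ — involves $\bfu$ only through its boundary trace, so I would replace $\bfu|_\Gamma$ by $\bfg$ using~\eqref{eq:oseen-problem-boundary}; the resulting quantities are exactly those collected in $L_h$~\eqref{eq:Lh-form-def}. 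Likewise $-b_h(q_h,\bfu) = (q_h, \div\bfu)_\Omega - (q_h, \bfu\cdot\bfn)_\Gamma = -(\bfg\cdot\bfn, q_h)_\Gamma$ using $\div\bfu = 0$ and $\bfu|_\Gamma = \bfg$, which is the last term in $L_h$. Summing gives $A_h(U,V_h) = a_h(\bfu,\bfv_h) + b_h(p,\bfv_h) - b_h(q_h,\bfu) = L_h(V_h)$, and subtracting the discrete equation completes the proof. The argument involves no genuine difficulty beyond careful bookkeeping of signs and boundary contributions; the only structural ingredients are the $[H^2(\Omega)]^d \times H^1(\Omega)$-regularity — so that the strong equation holds in $[L^2(\Omega)]^d$ and Green's formula applies with well-defined $L^2(\Gamma)$ traces — and the Dirichlet condition $\bfu|_\Gamma = \bfg$, which is precisely what makes the non-symmetric and penalty Nitsche contributions reproduce the right-hand side $L_h$.
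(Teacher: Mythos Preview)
Your proposal is correct and follows exactly the route sketched in the paper: establish the consistency identity $A_h(U,V_h) = L_h(V_h)$ for the exact solution and subtract the discrete equation~\eqref{eq:oseen-discrete-unfitted}. The paper's proof is a one-sentence remark to this effect, whereas you have carefully spelled out the integration-by-parts bookkeeping and the role of the Dirichlet data $\bfu|_\Gamma = \bfg$ in matching the Nitsche boundary terms to $L_h$; nothing is missing.
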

\begin{proof}
  The proof follows immediately 
  the fact that the continuous
  solution satisfies $A_h(U,V_h) = L(V_h)$
  due to the definition of the weak
  problem~\eqref{eq:oseen_weak}.
\end{proof}

The next lemma ensures that the remainder term arising in the
weakened Galerkin orthogonality~\eqref{eq:weak-orthogonality}
is weakly consistent and thus
does not deteriorate the convergences rate of the proposed scheme.
\begin{lemma}
\label{lem:weak-consistency}
Assume that $(\bfu, p) \in [H^r(\Omega)]^d \times H^s(\Omega)$
and let $r_{u} := \min\{r, k+1\}$ and $s_p := \min\{s, k+1\}$
where $k$ is the polynomial degree of the approximation spaces
for the velocity and pressure. Then
\begin{align}
  S_h(\Piast U, \Piast U)
  +
  G_h(\Piast U, \Piast U)
  &\lesssim
  ( \mu + \|\bfbeta\|_{0,\infty,\Omega}h +  \sigma h^{2})
   h^{2r_{u} -2}\| \bfu \|_{r_{u}, \Omega}^2
     +
   \max_{T \in \mcT_h}
   \left\{
   \dfrac{1}{\mu + \|\bfbeta \|_{0,\infty, T} h + \sigma h^2}
   \right\}
   h^{2 s_p}
   \| p \|_{s_p, \Omega}^2.
\end{align}
\end{lemma}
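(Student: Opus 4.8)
The plan is to estimate each summand of $S_h(\Piast U,\Piast U)+G_h(\Piast U,\Piast U)$ individually. Every such summand is a facet jump penalty of a derivative of the piecewise polynomial interpolant $\Piast U=(\bfpiast\bfu,\piast p)$, and the point is that the jumps of the corresponding derivatives of the \emph{extended} exact solution vanish whenever the available regularity permits, so that one may replace $\Piast U$ by the interpolation error $\Piast U - U^{\ast}$ and invoke the Cl\'ement estimates of Section~\ref{ssec:interpolation}. Here $\bfu^{\ast}=E\bfu$ and $p^{\ast}=Ep$ are the Sobolev extensions from~\eqref{eq:extension-operator-definition}, so that $\bfu^{\ast}\in[H^{r_u}(\Oast)]^d$, $p^{\ast}\in H^{s_p}(\Oast)$ with $\Oast_h\subset\Oast$, and $\bfpiast\bfu$, $\piast p$ are the Cl\'ement interpolants of $\bfu^{\ast}$, $p^{\ast}$.

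The key building block is the following estimate: if $v\in H^{\rho}(\Oast)$, $\rho_u:=\min\{\rho,k+1\}$ and $v_h=\pi_h v\in\mcX_h$, then for every interior facet $F\in\mcF_i$ and every integer $1\leqslant\ell\leqslant k$
\begin{align}
  \| \jump{D^{\ell} v_h} \|_F^2 \lesssim h^{2\rho_u - 2\ell - 1}\, \| v \|_{\rho_u,\omega(F)}^2,
  \label{eq:weak-cons-building-block}
\end{align}
where $D^{\ell}$ stands for any partial differential operator of order $\ell$. To prove~\eqref{eq:weak-cons-building-block} I would distinguish two cases. If $\ell\leqslant\rho_u-1$, then $D^{\ell}v\in H^{\rho_u-\ell}(\Oast)$ has a single-valued trace on $F$, hence $\jump{D^{\ell}v}=0$ and $\jump{D^{\ell}v_h}=\jump{D^{\ell}(v_h-v)}$; applying the trace inequality~\eqref{eq:trace-inequality} to $D^{\ell}(v_h-v)$ on the two elements adjacent to $F$ and then the Cl\'ement estimate~\eqref{eq:interpest0} (with $s=\ell$ and $s=\ell+1$, $t=\rho_u$) gives $\| D^{\ell}(v_h-v) \|_{\partial T} \lesssim h^{\rho_u-\ell-1/2}\| v\|_{\rho_u,\omega(T)}$. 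If $\ell\geqslant\rho_u$ (which can occur only for $\rho_u\leqslant k$), one instead uses the inverse estimates~\eqref{eq:inverse-estimates_T}--\eqref{eq:inverse-estimates_F} with $i=\rho_u\leqslant\ell$, namely $\| D^{\ell}v_h\|_{\partial T}\lesssim h^{\rho_u-\ell-1/2}\| D^{\rho_u}v_h\|_T$, together with $\| D^{\rho_u}v_h\|_T\leqslant\| D^{\rho_u}(v_h-v)\|_T+\| D^{\rho_u}v\|_T\lesssim\| v\|_{\rho_u,\omega(T)}$, the last bound again by~\eqref{eq:interpest0} with $s=t=\rho_u$. In both cases squaring and summing over the two adjacent elements yields~\eqref{eq:weak-cons-building-block}.

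Equipped with~\eqref{eq:weak-cons-building-block} the rest is bookkeeping. For the velocity terms I would apply it componentwise to $\bfu^{\ast}$ with $\rho_u=r_u$, noting that $\nablan^j$, $\div\nablan^j$ and $\nabla\nablan^j$ are operators of order $j$, $j+1$, $j+1$, and that, since $\bfbeta$ is Lipschitz-continuous and hence single-valued on facets, $\| \jump{\bfbeta\cdot\nabla\nablan^j\bfpiast\bfu}\|_F\leqslant\|\bfbeta\|_{0,\infty,F}\| \jump{\nabla\nablan^j\bfpiast\bfu}\|_F$ (and similarly for $s_{\beta}$). A direct computation then shows that every summand of $g_{\mu},g_{\sigma},g_{\beta},g_u,s_{\beta},s_u$ evaluated at $\bfpiast\bfu$ is bounded by one of $\mu\, h^{2r_u-2}$, $\sigma h^2\, h^{2r_u-2}$, $\phi_{\beta,F}\|\bfbeta\|_{0,\infty,F}^2\, h^{2r_u-2}$ or $\phi_{u,F}\, h^{2r_u-2}$ times $\|\bfu\|_{r_u,\omega(F)}^2$, the powers of $h$ carried by the prefactors $h^{2j\pm1}$ cancelling exactly against those produced by~\eqref{eq:weak-cons-building-block}. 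Using the elementary bounds $\phi_{\beta,F}\|\bfbeta\|_{0,\infty,F}^2\leqslant\|\bfbeta\|_{0,\infty,F}\, h$ and $\phi_{u,F}\leqslant\mu+\|\bfbeta\|_{0,\infty,\Omega}h+\sigma h^2$ (immediate from~\eqref{eq:cip-s_scalings}), summing over facets with bounded overlap of the patches $\omega(F)$, and invoking the stability of the extension~\eqref{eq:extension-operator-boundedness}, all velocity contributions are controlled by $(\mu+\|\bfbeta\|_{0,\infty,\Omega}h+\sigma h^2)\, h^{2r_u-2}\|\bfu\|_{r_u,\Omega}^2$. For the pressure terms, inserting~\eqref{eq:weak-cons-building-block} for $p^{\ast}$ with $\rho_u=s_p$ into $s_p$ ($\ell=1$) and $g_p$ ($\ell=j$) produces a factor $\phi_{p,F}\, h^{2s_p-2}$, and since $\phi_{p,F}=h^2(\mu+\|\bfbeta\|_{0,\infty,F}h+\sigma h^2)^{-1}\leqslant h^2\max_{T\in\mcT_h}\{(\mu+\|\bfbeta\|_{0,\infty,T}h+\sigma h^2)^{-1}\}$, summation delivers the stated pressure bound. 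Adding the two contributions proves the lemma.

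The only genuinely delicate point is the case $\ell\geqslant\rho_u$ in~\eqref{eq:weak-cons-building-block}: when $r\leqslant k$ or $s\leqslant k$ the jumps of the exact solution no longer vanish for all derivative orders that appear in the ghost penalties, so the consistency cancellation is unavailable and one has to rely purely on the inverse estimates and the local $H^{\rho_u}$-stability of the Cl\'ement interpolant. The thing to verify carefully is that this fallback reproduces \emph{exactly} the same power of $h$ as the high-regularity case, which is what makes the final bound uniform in the polynomial degree $k$. (If $r$ or $s$ is non-integer one replaces $\rho_u$ by the largest integer not exceeding it and appeals to the fractional-order interpolation estimates; this is a routine technical adjustment.)
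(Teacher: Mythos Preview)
Your proposal is correct and follows essentially the same approach as the paper. The paper's proof treats $g_{\beta}$ explicitly---splitting the sum over $j$ into the range $0\leqslant j\leqslant r_u-2$ where $\jump{\bfbeta\cdot\nabla\nablan^j\bfu}=0$ so one may subtract $\bfu$ and apply the Cl\'ement interpolation estimate, and the range $r_u-1\leqslant j\leqslant k-1$ where instead an inverse inequality reduces matters to $\|D^{r_u}\bfpiast\bfu\|_T$ and the $H^{r_u}$-stability of $\bfpiast$ is used---and then states that the remaining ghost penalties and the $S_h$ contributions are handled analogously. Your building-block estimate~\eqref{eq:weak-cons-building-block} is precisely this two-case argument abstracted into a single lemma, and the subsequent bookkeeping with the stabilization coefficients $\phi_{\beta},\phi_u,\phi_p$ matches the paper's use of $\phi_{\beta,T}\|\bfbeta\|_{0,\infty,T}^2\lesssim\|\bfbeta\|_{0,\infty,T}h$ and the definition of $\phi_p$.
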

\begin{proof}
Recalling the definition of $S_h$ and $G_h$, it is enough to derive the desired estimate  
for $G_h$
  \begin{align}
  G_h(\Piast U, \Piast U) 
  &=  g_\sigma(\bfpiast \bfu, \bfpiast \bfu)
  +  g_\mu(\bfpiast \bfu, \bfpiast \bfu)
  +  g_\beta(\bfpiast \bfu, \bfpiast \bfu)
  +  g_u(\bfpiast \bfu, \bfpiast \bfu)
+  g_p(\piast p, \piast p),
  \end{align}
since the contributions to $S_h$ can be treated in the same way.
We start with considering~$g_{\beta}$.
 Since $\bfu \in [H^{r}(\Omega)]^d$, its traces $\nablan^j \bfu|_{F}$ are uniquely defined for
$0\leqslant j \leqslant r_u -1$ and therefore, $\jump{\bfbeta_h \cdot \nabla \nablan^j \bfu} = 0$
for $0 \leqslant j \leqslant r_u - 2$. Consequently,
\begin{align}
  g_{\bfbeta}(\bfpiast \bfu, \bfpiast \bfu)
&=
\sum_{j=0}^{r_u -2 } h^{2j-1} 
\sum_{F \in \mcF_{\Gamma}}
\phi_{\bfbeta,F}
\| 
 \jump{
\bfbeta
\cdot \nabla \nablan^j(\bfpiast \bfu - \bfu)} 
\|^2_{F}
+
\sum_{j=r_u-1}^{k-1} h^{2j-1} 
\sum_{F \in \mcF_{\Gamma}}
\phi_{\bfbeta,F}
\| 
 \jump{
\bfbeta
\cdot \nabla \nablan^j\bfpiast \bfu} 
\|^2_{F}
= I + II.
\end{align}
The interpolation estimate~(\ref{eq:interpest0_Clement_H_stability})
together with the fact
that by definition 
$\phi_{\beta,T} \|
\bfbeta
\|_{0,\infty,T}^2 \lesssim \|\bfbeta \|_{0,\infty, T}h$
implies now that 
\begin{align}
  I
\lesssim
\sum_{T \in \mcT_h}
\sum_{j=0}^{r_u - 2 } h^{2j-1} 
\phi_{\beta,T} \|
\bfbeta
\|_{0,\infty,T}^2
\| 
 \nabla \nablan^j(\bfpiast \bfu - \bfu)
\|^2_{\partial T}
\lesssim
\sum_{T \in \mcT_h}
\|\bfbeta\|_{0,\infty,T} h^{2r_u - 1}
 \| \bfu^\ast \|_{r_u, \omega(T)}^2
\lesssim
\|\bfbeta\|_{0,\infty,\Omega} 
h^{2r_u-1} 
\| \bfu \|_{r_u, \Omega}^2.
\label{eq:g_beta_consistent_part-I}
\end{align}
Turning to the second term $II$, a simple application of the inverse
estimate~(\ref{eq:inverse-estimates_F}) shows that
\begin{align}
  II
&\lesssim
\sum_{T \in \mcT_h}
h^{2r_u-2}
\phi_{\beta,T} \| \bfbeta \|_{0,\infty,T}^2 \| D^{r_u}\bfpiast \bfu \|^2_T
\lesssim
\|\bfbeta\|_{0,\infty,\Omega} 
h^{2r_u-1} 
\| \bfu \|_{r_u, \Omega}^2.
\label{eq:g_beta_inconsistent_part-II}
\end{align}
after observing that $\bfpiast$ is stable thanks to~(\ref{eq:interpest0-ast}).
Similarly, the remaining inconsistency terms can be bounded as follows:
\begin{align}
  g_{u}(\bfpiast \bfu, \bfpiast \bfu)
&\lesssim
(\mu + \|\bfbeta \|_{0,\infty,\Omega}h + \sigma h^2)h^{2r_u -2} \| \bfu \|_{r_u,\Omega}^2,
\\
  g_{p}(\piast p, \piast p)
&\lesssim
\sum_{T\in \mcT_h}
\dfrac{1
}
{
\mu + \|\bfbeta\|_{0,\infty,T}h + \sigma h^2
}
h^{2s_p} 
 \| p^{\ast} \|_{s_p, T}^2
\\
&\lesssim 
   \max_{T \in \mcT_h}
   \left\{
   \dfrac{1}{\mu + \|\bfbeta \|_{0,\infty, T} h + \sigma h^2}
   \right\}
h^{2s_p} 
 \| p \|_{s_p, \Omega}^2,
\\
  g_{\sigma}(\bfpiast \bfu, \bfpiast \bfu)
&\lesssim
\sigma h^{2r_u} \|\bfu\|_{r_u,\Omega}^2,
\\
  g_{\mu}(\piast p, \piast p)
&\lesssim
\mu h^{2r_u-2} \|\bfu\|_{r_u,\Omega}^2.
\end{align}
Applying the same arguments to $S_h(\Piast U, \Piast U)$ and collecting all estimates concludes the proof.
\end{proof}

\subsection{Interpolation Error Estimates}
The next lemma ensures that
the interpolation error between continuous solution and its Cl\'ement interpolation
converges with optimal rates.

\begin{lemma}
\label{lem:interpolation-estimate}
Assume that $(\bfu,p) \in [H^r(\Omega)]^d \times H^s(\Omega)$
and let $r_{u} := \min\{r, k+1\} \geqslant 2$, $s_p := \min\{s,k+1\}$
where $k$ is the polynomial degree of the approximation spaces
for the velocity and pressure. Then
\begin{align}
\tn \bfu  - \bfpi \bfu \tn 
&\lesssim
(\mu +  \|\bfbeta\|_{0,\infty,\Omega} h + \sigma h^2)^{\onehalf}
h^{r_u-1}\|\bfu \|_{r_u,\Omega},
\label{eq:interpolation-estimate-u}
\\
\|p^{\ast}  - \piast p \|_{\Omega} 
&\lesssim
h^{s_p}\|p \|_{s_p,\Omega}.
\label{eq:interpolation-estimate-p}
\end{align}
\end{lemma}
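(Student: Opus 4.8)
The plan is to estimate $\tn\bfu-\bfpiast\bfu\tn$ by bounding each of the seven contributions in the definition~\eqref{eq:oseen-norm-u} of the velocity triple norm separately (here $\bfpiast$ denotes the vector-valued fictitious-domain Cl\'ement interpolant, abbreviated $\bfpi$ in the statement). The two bulk terms are handled by a direct application of the interpolation estimate~\eqref{eq:interpest0-ast}: with $(s,t)=(0,r_u)$ it gives $\|\sigma^{\onehalf}(\bfu-\bfpiast\bfu)\|_{\Omega}\lesssim\sigma^{\onehalf}h^{r_u}\|\bfu\|_{r_u,\Omega}$, and with $(s,t)=(1,r_u)$ it gives $\|\mu^{\onehalf}\nabla(\bfu-\bfpiast\bfu)\|_{\Omega}\lesssim\mu^{\onehalf}h^{r_u-1}\|\bfu\|_{r_u,\Omega}$. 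Since $\sigma h^{2r_u}\leqslant(\mu+\|\bfbeta\|_{0,\infty,\Omega}h+\sigma h^2)h^{2r_u-2}$, the first of these is already of the claimed order.

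For the three boundary terms $\|(\gamma\mu/h)^{\onehalf}(\bfu-\bfpiast\bfu)\|_{\Gamma}$, $\||\bfbeta\cdot\bfn|^{\onehalf}(\bfu-\bfpiast\bfu)\|_{\Gamma}$ and $\|(\gamma\phi_u/h)^{\onehalf}(\bfu-\bfpiast\bfu)\cdot\bfn\|_{\Gamma}$, I would first invoke the cut trace inequality~\eqref{eq:trace-inequality-cut} and then~\eqref{eq:interpest0-ast} to obtain $\|\bfu-\bfpiast\bfu\|_{\Gamma}^2\lesssim h^{-1}\|\bfu-\bfpiast\bfu\|_{\mcT_h}^2+h\|\nabla(\bfu-\bfpiast\bfu)\|_{\mcT_h}^2\lesssim h^{2r_u-1}\|\bfu\|_{r_u,\Omega}^2$. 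Multiplying by $\gamma\mu/h$, by $\|\bfbeta\|_{0,\infty,\Omega}$, and by $\gamma\phi_u/h$ with $\phi_u=\mu+\|\bfbeta\|_{0,\infty,T}h+\sigma h^2$ respectively, each of these three terms is seen to be $\lesssim(\mu+\|\bfbeta\|_{0,\infty,\Omega}h+\sigma h^2)h^{2r_u-2}\|\bfu\|_{r_u,\Omega}^2$.

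The CIP terms $s_u$ and $s_\beta$ are the only place where the hypothesis $r_u\geqslant 2$ is used. Because $\bfu\in[H^{r_u}(\Omega)]^d$ has single-valued first-derivative traces on interior facets, $\jump{\div\bfu}=0$ and, by Lipschitz continuity of $\bfbeta$, $\jump{\bfbeta\cdot\nabla\bfu}=\bfbeta|_F\cdot\jump{\nabla\bfu}=0$; hence $\jump{\div(\bfu-\bfpiast\bfu)}=-\jump{\div\bfpiast\bfu}$ and $\jump{\bfbeta\cdot\nabla(\bfu-\bfpiast\bfu)}=-\bfbeta|_F\cdot\jump{\nabla\bfpiast\bfu}$, which I would bound facet-wise by elementwise traces of $\nabla(\bfu-\bfpiast\bfu)$ and then by~\eqref{eq:interpest1-ast} with $(s,t)=(1,r_u)$, giving $\|\jump{\div(\bfu-\bfpiast\bfu)}\|_{\mcF_i}^2+\|\jump{\nabla(\bfu-\bfpiast\bfu)}\|_{\mcF_i}^2\lesssim h^{2r_u-3}\|\bfu\|_{r_u,\Omega}^2$. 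Multiplying by the respective weights $\phi_u h$ and $\phi_{\beta}\|\bfbeta\|_{0,\infty,F}^2 h$ yields $s_u(\bfu-\bfpiast\bfu,\bfu-\bfpiast\bfu)\lesssim\phi_u h^{2r_u-2}\|\bfu\|_{r_u,\Omega}^2$ and $s_\beta(\bfu-\bfpiast\bfu,\bfu-\bfpiast\bfu)\lesssim\phi_\beta\|\bfbeta\|_{0,\infty,\Omega}^2 h^{2r_u-2}\|\bfu\|_{r_u,\Omega}^2$; the latter is again of the claimed order because $\phi_{\beta,T}\|\bfbeta\|_{0,\infty,T}^2\lesssim\|\bfbeta\|_{0,\infty,\Omega}h\leqslant\mu+\|\bfbeta\|_{0,\infty,\Omega}h+\sigma h^2$ by the definition~\eqref{eq:cip-s_scalings} of $\phi_\beta$. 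Summing all seven contributions and taking square roots yields~\eqref{eq:interpolation-estimate-u}; the pressure bound~\eqref{eq:interpolation-estimate-p} is immediate from~\eqref{eq:interpest0-ast} applied to $\piast p$ with $(s,t)=(0,s_p)$ together with the boundedness~\eqref{eq:extension-operator-boundedness} of the extension operator.

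The main obstacle is purely bookkeeping: one must verify that \emph{every} term fits under the single factor $(\mu+\|\bfbeta\|_{0,\infty,\Omega}h+\sigma h^2)^{\onehalf}h^{r_u-1}$, which hinges on the elementary inequalities $\phi_{\beta,T}\|\bfbeta\|_{0,\infty,T}^2\lesssim\|\bfbeta\|_{0,\infty,\Omega}h$ and $\sigma h^2\leqslant\mu+\|\bfbeta\|_{0,\infty,\Omega}h+\sigma h^2$, and on exploiting $r_u\geqslant 2$ to annihilate the jumps of the exact solution's first derivatives in $s_u$ and $s_\beta$. One should also be mindful that the Cl\'ement estimates~\eqref{eq:interpest0-ast}--\eqref{eq:interpest1-ast} already account for the finite overlap of the interpolation patches $\omega(T)$, so no further summation argument over neighbouring elements is required.
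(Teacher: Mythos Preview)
Your proof is correct and follows essentially the same approach as the paper's own proof: both handle the bulk and boundary contributions of $\tn\cdot\tn$ by combining the Cl\'ement interpolation estimate~\eqref{eq:interpest0-ast} with the cut trace inequality~\eqref{eq:trace-inequality-cut}, and both treat the CIP terms $s_\beta$ and $s_u$ by exploiting that the jumps of the exact solution's first derivatives vanish (the paper simply refers back to the argument in~\eqref{eq:g_beta_consistent_part-I} of the weak-consistency lemma, which is exactly the computation you spell out). Your presentation is slightly more explicit in tracking each of the seven norm contributions individually, but the underlying argument is the same.
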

\begin{proof}
  We only sketch the proof for~\eqref{eq:interpolation-estimate-u}
  since the second estimate~\eqref{eq:interpolation-estimate-p} follows directly from 
  the interpolation estimate~(\ref{eq:interpest0-ast}).
  Starting from the interpolation estimate~(\ref{eq:interpest0-ast}),
  an application of the
  trace inequality~(\ref{eq:trace-inequality-cut})
  together with the definition of $\phi_u$ 
  shows that the boundary terms can be estimated in terms of the element contributions:
\begin{align}
\|(\mu + \phi_u)^{\onehalf} h^{-\onehalf}(\bfu^{\ast} - \bfpiast \bfu) \|_{\Gamma}^2
&\lesssim
(\mu +  \|\bfbeta\|_{0,\infty,\Omega}h + \sigma h^2)
\bigl(
h^{-2}\| \bfu^{\ast} - \bfpiast \bfu \|_{\mcT_h}^2
+\|\nabla(\bfu^{\ast} - \bfpiast \bfu)\|_{\mcT_h}^2
\bigr)
\\
&\lesssim
(\mu +  \|\bfbeta\|_{0,\infty,\Omega}h + \sigma h^2) h^{2(r_u-1)}\|\bfu \|_{r_u,\Omega}^2,
\\
\| |\bfbeta\cdot\bfn|^{\onehalf} (\bfu^{\ast} - \bfpiast \bfu) \|_{\Gamma}^2
&\lesssim
\|\bfbeta\|_{0,\infty,\Omega}
(h^{-1}\| \bfu^{\ast} - \bfpiast \bfu \|_{\mcT_h}^2
+ h \|\nabla(\bfu^{\ast} - \bfpiast \bfu)\|_{\mcT_h}^2)
\\
&\lesssim
(\|\bfbeta\|_{0,\infty,\Omega}h) h^{2(r_u-1)}\|\bfu \|_{r_u,\Omega}^2.
\end{align}
Next, the viscous and reactive parts can be estimated analogously by
\begin{align}
 \| \mu^{\onehalf} \nabla(\bfu^{\ast} - \bfpiast \bfu) \|_{\Omega}^2 + \|\sigma^{\onehalf} (\bfu^{\ast} - \bfpiast \bfu) \|_{\Omega}^2
 \lesssim (\mu + \sigma h^2) h^{2(r_u-1)}\|\bfu \|_{r_u,\Omega}^2.
\end{align}
It only remains to bound $s_{\beta}$ and $s_u$ which can be done 
exactly in the same way as in the consistent part in the error estimate for $g_{\beta}$
and $g_u$, see~(\ref{eq:g_beta_consistent_part-I}).
\end{proof}

\subsection{A Priori Error Estimates}

The subsequent theorem states the main \apriori~error estimate for the velocity in a natural energy norm
and for the pressure in an $L^2$-norm.

\begin{theorem}
  \label{thm:apriori-estimate}
Assume that  $U = (\bfu, p) \in [H^r(\Omega)]^d \times H^s(\Omega)$
is the weak solution of the Oseen problem~\eqref{eq:oseen_weak} and let
$U_h = (\bfu_h,p_h) \in \mcV_h \times \mcQ_h$ be the discrete solution
of problem~\eqref{eq:oseen-discrete-unfitted}. Then
\begin{align}
    \tn \bfu -  \bfu_h \tn
  + \Phi_p^{\onehalf}\| p - p_h \|_{\Omega}
    &\lesssim
    (1+\omega_h)^{\onehalf}
    \bigl(
    \mu + \|\bfbeta \|_{0,\infty,\Omega} h + \sigma h^2
    \bigr)^{\onehalf}
    h^{r_{u} - 1}
    \| \bfu \|_{r_{u},\Omega}
      \nonumber
    \\
    &\phantom{\lesssim}\quad
      +
    \left(
    \Phi_p +
   \max_{T\in \mcT_h}
   \left\{\dfrac{1}{\mu + \|\bfbeta \|_{0,\infty, T} h + \sigma h^2}
   \right\}
   \right)^{\onehalf}
   h^{ s_p}
   \| p \|_{s_p, \Omega},
    \label{eq:apriori-estimate}
  \end{align} 
where $r_{u} := \min\{r, k+1\}$ and $s_p := \min\{s, k+1\}$ \color{black}and $\Phi_p$ and $\omega_h$ as in \eqref{eq:phi_p_definition}
Note that the hidden constants are independent of $h$ and particularly independent of how
the boundary intersects the mesh $\mcT_h$.
\end{theorem}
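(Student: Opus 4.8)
The plan is to run the classical inf-sup/consistency argument, now built on the ghost-penalty enhanced inf-sup stability of Theorem~\ref{theorem:inf-sup_condition_total}. I first split the error with the fictitious-domain Cl\'ement interpolant: writing $\boldsymbol{\eta} := \bfu - \bfpiast \bfu$, $\eta_p := p - \piast p$ for the interpolation error and $\bfe_h := \bfpiast \bfu - \bfu_h \in \mcV_h$, $e_h^p := \piast p - p_h \in \mcQ_h$ for the discrete error, set $\eta := (\boldsymbol{\eta},\eta_p)$ and $E_h := (\bfe_h, e_h^p) \in \mcW_h$. Since $\tn \bfu - \bfu_h \tn \lesssim \tn \boldsymbol{\eta} \tn + \tn \bfe_h \tn$ and $\Phi_p^{\onehalf} \| p - p_h \|_{\Omega} \lesssim \Phi_p^{\onehalf} \| \eta_p \|_{\Omega} + \tn E_h \tn_h$, and since the interpolation estimates of Lemma~\ref{lem:interpolation-estimate} already reproduce the right-hand side of~\eqref{eq:apriori-estimate} (abbreviated $\mathcal{E}(\bfu,p)$ below), it suffices to bound $\tn E_h \tn_h$ --- the full ghost-penalty energy norm of~\eqref{eq:Ah-norm} --- by $\mathcal{E}(\bfu,p)$; throughout I tacitly use the regularity $r\geqslant 2$, $s\geqslant 1$ needed for Lemmas~\ref{lem:interpolation-estimate} and~\ref{lem:weak-orthogonality}.

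Next I apply the inf-sup condition~\eqref{eq:inf-sup_condition_Ah_weak_norm} to $E_h$, producing $V_h=(\bfv_h,q_h)\in\mcW_h$ with $\tn E_h \tn_h \, \tn V_h \tn_h \lesssim (A_h+S_h+G_h)(E_h,V_h)$. Combining $A_h(\Piast U - U, V_h) = -A_h(\eta, V_h)$ with the weakened Galerkin orthogonality $A_h(U-U_h,V_h)=(S_h+G_h)(U_h,V_h)$ of Lemma~\ref{lem:weak-orthogonality} and bilinearity of $S_h+G_h$ gives
\begin{equation*}
  (A_h+S_h+G_h)(E_h,V_h) = -A_h(\eta,V_h) + (S_h+G_h)(\Piast U, V_h).
\end{equation*}
The stabilization term is handled by Cauchy--Schwarz for the symmetric positive semi-definite forms $S_h,G_h$ together with the consistency bound of Lemma~\ref{lem:weak-consistency}: $(S_h+G_h)(\Piast U, V_h) \leqslant (S_h+G_h)(\Piast U,\Piast U)^{\onehalf}\,(S_h+G_h)(V_h,V_h)^{\onehalf}\lesssim \mathcal{E}(\bfu,p)\,\tn V_h \tn_h$, where the last step uses that $\mathcal{E}(\bfu,p)$ dominates the square root of the bound in Lemma~\ref{lem:weak-consistency} and that every summand of $S_h$ and $G_h$ occurs, up to a constant, inside $\tn V_h \tn_h^2$. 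It therefore remains to prove the boundedness estimate $|A_h(\eta,V_h)|\lesssim \mathcal{E}(\bfu,p)\,\tn V_h\tn_h$; dividing by $\tn V_h\tn_h$ then yields the theorem.

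For this boundedness estimate I expand $A_h(\eta,V_h) = a_h(\boldsymbol{\eta},\bfv_h) + b_h(\eta_p,\bfv_h) - b_h(q_h,\boldsymbol{\eta})$ and treat the blocks in turn. The reactive term $(\sigma\boldsymbol{\eta},\bfv_h)_{\Omega}$, the viscous term, and the Nitsche viscous/penalty boundary terms are routine: Cauchy--Schwarz with the trace and inverse inequalities~\eqref{eq:trace-inequality-cut},~\eqref{eq:inverse-estimates_Gamma}, the interpolation bounds~\eqref{eq:interpest0-ast} and the definition~\eqref{eq:cip-s_scalings} of $\phi_u$ bound them by $(\mu + \|\bfbeta\|_{0,\infty,\Omega}h + \sigma h^2)^{\onehalf} h^{r_u-1}\|\bfu\|_{r_u,\Omega}\,\tn V_h\tn_h$. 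For the pressure interpolation error I keep $b_h(\eta_p,\bfv_h) = -(\eta_p,\nabla\cdot\bfv_h)_{\Omega} + \langle \eta_p, \bfv_h\cdot\bfn\rangle_{\Gamma}$ un-integrated and estimate it via $\|\phi_u^{-\onehalf}\eta_p\|$ (on $\Omega$ and, after~\eqref{eq:trace-inequality-cut}, on $\Gamma$) against the $\|\phi_u^{\onehalf}\nabla\cdot\bfv_h\|_{\Omega}$ and boundary parts of $\tn V_h\tn_h$; by~\eqref{eq:cip-s_scalings} this contributes $\bigl(\max_T\{(\mu+\|\bfbeta\|_{0,\infty,T}h+\sigma h^2)^{-1}\}\bigr)^{\onehalf}h^{s_p}\|p\|_{s_p,\Omega}$ (the $\Phi_p^{\onehalf}$ part of $\mathcal{E}$ comes from the interpolation-error triangle inequality of the first paragraph).

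The crux, and the expected main obstacle, is the convective block, for which the fitted-mesh analysis of~\cite{BurmanFernandezHansbo2006} invokes the $L^2$-orthogonality of the $L^2$-projection --- a tool whose robust CutFEM analogue fails for $k>1$. Instead I pair the convective interpolation error with $-b_h(q_h,\boldsymbol{\eta})$: integrating $(\bfbeta\cdot\nabla\boldsymbol{\eta},\bfv_h)_{\Omega}$ by parts, using $\nabla\cdot\bfbeta=0$ and the identity $b_h(q_h,\boldsymbol{\eta}) = (\nabla q_h,\boldsymbol{\eta})_{\Omega}$ (the surface term in $b_h$ cancels under integration by parts), the volume remainder collapses onto the single combination
\begin{equation*}
  (\bfbeta\cdot\nabla\boldsymbol{\eta},\bfv_h)_{\Omega} - \langle (\bfbeta\cdot\bfn)\boldsymbol{\eta},\bfv_h\rangle_{\GammaIn} - b_h(q_h,\boldsymbol{\eta}) = -\bigl(\boldsymbol{\eta}, \bfbeta\cdot\nabla\bfv_h + \nabla q_h\bigr)_{\Omega} + \langle |\bfbeta\cdot\bfn|\,\boldsymbol{\eta}, \bfv_h\rangle_{\Gamma\setminus\GammaIn}.
\end{equation*}
The volume term is then bounded by $\|\phi_{\beta}^{-\onehalf}\boldsymbol{\eta}\|_{\Omega}\,\|\phi_{\beta}^{\onehalf}(\bfbeta\cdot\nabla\bfv_h+\nabla q_h)\|_{\Omega}\leqslant \|\phi_{\beta}^{-\onehalf}\boldsymbol{\eta}\|_{\Omega}\,(1+\omega_h)^{\onehalf}\,\tn V_h\tn_h$, which is precisely where the residual-type term $\tfrac{1}{1+\omega_h}\|\phi_{\beta}^{\onehalf}(\bfbeta\cdot\nabla\bfu_h+\nabla p_h)\|^2_{\Omega}$ built into $\tn\cdot\tn_h$ is needed and why the factor $(1+\omega_h)^{\onehalf}$ surfaces in~\eqref{eq:apriori-estimate}; since $\phi_{\beta}^{-\onehalf}\sim(\mu+\|\bfbeta\|_{0,\infty}h+\sigma h^2)^{\onehalf}h^{-1}$ by~\eqref{eq:cip-s_scalings}, together with~\eqref{eq:interpest0-ast} this yields the remaining $(1+\omega_h)^{\onehalf}(\mu+\|\bfbeta\|_{0,\infty,\Omega}h+\sigma h^2)^{\onehalf}h^{r_u-1}\|\bfu\|_{r_u,\Omega}\,\tn V_h\tn_h$ contribution, while the boundary remainder is absorbed using $\||\bfbeta\cdot\bfn|^{\onehalf}\bfv_h\|_{\Gamma}\leqslant\tn V_h\tn_h$ and a trace inequality on $\boldsymbol{\eta}$. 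Collecting all blocks proves $|A_h(\eta,V_h)|\lesssim \mathcal{E}(\bfu,p)\,\tn V_h\tn_h$ and closes the argument.
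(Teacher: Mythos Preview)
Your proof is correct and follows essentially the same route as the paper: the error is split via the Cl\'ement interpolant, the discrete part is controlled by the inf-sup condition~\eqref{eq:inf-sup_condition_Ah_weak_norm} combined with the weakened Galerkin orthogonality~\eqref{eq:weak-orthogonality} and the consistency bound of Lemma~\ref{lem:weak-consistency}, and the crucial convective contribution is handled by integrating $(\bfbeta\cdot\nabla\boldsymbol{\eta},\bfv_h)_{\Omega}$ and $b_h(q_h,\boldsymbol{\eta})$ by parts so that the residual-type term $\|\phi_{\beta}^{\onehalf}(\bfbeta\cdot\nabla\bfv_h+\nabla q_h)\|_{\Omega}$ in $\tn V_h\tn_h$ can absorb it, producing the $(1+\omega_h)^{\onehalf}$ factor. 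The only cosmetic difference is that the paper bundles the non-convective parts of $a_h$ directly into a single $\tn\boldsymbol{\eta}\tn\,\tn\bfv_h\tn$ bound (its Term~$I$), whereas you treat them term by term and keep the outflow boundary remainder $\langle|\bfbeta\cdot\bfn|\boldsymbol{\eta},\bfv_h\rangle_{\Gamma\setminus\GammaIn}$ explicit.
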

\begin{proof}
Recalling the norm definitions from Section~\ref{ssec:cutfem-oseen}, we can
split the total discretization error into an interpolation and discrete error part,
\begin{align}
  \tn \bfu^{\ast} - \bfu_h \tn
+
  \Phi_p^{\onehalf}\| p^{\ast} - p_h \|_{\Omega}
  & \lesssim
  \tn \bfu^{\ast} - \bfpiast \bfu \tn
    + \Phi_p^{\onehalf} \| p^{\ast} - \piast p \|_{\Omega}
    + 2\tn \Piast U - U_h \tn_h.
  \label{eq:apriori_split_velocity}
\end{align}
Then thanks to the interpolation estimates~(\ref{eq:interpolation-estimate-u}) and (\ref{eq:interpolation-estimate-p}),
it is enough to consider the discrete error $ \tn \Piast U - U_h \tn_h$.
The inf-sup condition~\eqref{eq:inf-sup_condition_Ah_weak_norm}
and the weak Galerkin orthogonality~\eqref{eq:weak-orthogonality}
ensures there exists a $V_h$
with $\tn V_h \tn_h = 1$ such that
\begin{align}
  \tn \Piast U - U_h \tn_h
  &\lesssim
  A_h(\Piast U - U_h, V_h) 
  + S_h(\Piast U - U_h, V_h) 
  + G_h(\Piast U - U_h, V_h) 
  \\
  &= 
  A_h(\Piast U - U, V_h) 
  + S_h(\Piast U, V_h) 
  + G_h(\Piast U, V_h).
  \label{eq:discrete-error-est-I}
\end{align}
After combining a Cauchy-Schwarz inequality with 
Lemma~\ref{lem:weak-consistency}, 
the last two terms in \eqref{eq:discrete-error-est-I} can be bounded by
\begin{align}
\label{eq:consistency_error}
  S_h(\Piast U, V_h) + G_h(\Piast U, V_h)
  &\lesssim
  ( \mu + \|\bfbeta\|_{0,\infty,\Omega}h +  \sigma h^{2})^{\onehalf}
   h^{r_{u} -1}\| \bfu \|_{r_{u}, \Omega}
    +
   \max_{T \in \mcT_h}
   \left\{
   \dfrac{1}{\mu + \|\bfbeta \|_{0,\infty, T} h + \sigma h^2}
   \right\}^{\onehalf}
   h^{ s_p}
   \| p \|_{s_p, \Omega} 
\end{align}
and thus it remains to estimate 
$A_h(\Piast U - U, V_h)$.
After recalling definition~(\ref{eq:Ah-form-def}), 
integrating 
 $b_h(q_h, \bfpiast \bfu - \bfu)$ 
and
the convective part in $a_h$ by parts, and a final application of a
Cauchy-Schwarz inequality to the remaining terms in $a_h$,
we see that 
\begin{align}
  A_h(\Piast U - U, V_h)
 &= a_h( \bfpiast \bfu - \bfu, \bfv_h)
 + 
 b_h(\piast p - p, \bfv_h)
  - b_h(q_h, \bfpiast \bfu - \bfu)
\\
&\lesssim \tn  \bfpiast \bfu - \bfu \tn \tn \bfv_h \tn
- (\bfpiast \bfu - \bfu, \bfbeta \cdot \nabla \bfv_h + \nabla q_h)
+ b_h(\piast p - p, \bfv_h)
\\
  &=  I + II + III
 \end{align}
which we estimate next. 
\\
\noindent{\bf Term $I$.}
A simple application of
the interpolation estimate~(\ref{eq:interpolation-estimate-u})
together with the inequality $\tn \bfv_h\tn \lesssim \tn V_h\tn_h$
gives
\begin{align}
  I \lesssim (\mu + \|\bfbeta \|_{0,\infty,\Omega}h + \sigma h^2)^{\onehalf}
 h^{r_u - 1}\| \bfu \|_{r_u,\Omega} \tn V_h \tn_h.
\end{align}
\noindent{\bf Term $II$.}
Applying a Cauchy-Schwarz inequality, followed by the interpolation estimate \eqref{eq:interpest0}
together with the definition of~$\phi_\beta$ yields
\begin{align}
  II &\lesssim
(1+\omega_h)^{\onehalf} \| \phi^{-\onehalf}_{\beta}(\bfpiast \bfu_h - \bfu) \|_{\Omega}
\cdot
(1+\omega_h)^{-\onehalf} \| \phi_{\beta}^{\onehalf} (\bfbeta \cdot \nabla \bfv_h + \nabla q_h)\|_{\Omega}
\\
&\lesssim
(1+\omega_h)^{\onehalf} (\mu + \|\bfbeta \|_{0,\infty,\Omega}h + \sigma h^2)^{\onehalf}
 h^{r_u - 1}\| \bfu \|_{r_u,\Omega} 
\tn V_h \tn_h.
\end{align}
\noindent{\bf Term $III$.}
Similarly it holds
\begin{align}
  III &= -(\piast p - p, \nabla \cdot \bfv_h)_{\Omega}
  + (\piast p - p, \bfv_h\cdot \bfn)_{\Gamma}
\label{eq:apriori_estimate_III_a}
\\
&\lesssim
\bigl(
  \| \phi_u^{-\onehalf} (\piast p - p)\|_{\Omega} 
  + \|h^{\onehalf} \phi_u^{-\onehalf}  (\piast p - p)\|_{\Gamma}
\bigr)
\bigl(
\| \phi_u^{\onehalf}\nabla \cdot \bfv_h\|_{\Omega}
+
    \| (\phi_u/h)^{\onehalf} \bfv_h\cdot \bfn\|_{\Gamma}
\bigr)
\\
&\lesssim
  \| \phi_u^{-\onehalf} (\piast p - p)\|_{\mcT_h}  \tn V_h \tn_h
\\
&\lesssim
\left(
\sum_{T\in \mcT_h}
\dfrac{  h^{2s_p} 
 \| p^{\ast} \|_{s_p, \omega(T)}^2
}
{
\mu + \|\bfbeta\|_{0,\infty,T}h + \sigma h^2
}
\right)^{\onehalf}  \tn V_h \tn_h
\\
&\lesssim
\max_{T\in \mcT_h}
\left\{
\dfrac{1}{\mu + \|\bfbeta\|_{0,\infty,T}h + \sigma h^2}
\right\}^{\onehalf}
h^{s_p} 
 \| p\|_{s_p, \Omega}
\tn V_h \tn_h.
\end{align}
To conclude the proof of the \apriori~error estimate \eqref{eq:apriori-estimate},
we collect the estimates for $I$, $II$ and $III$, keeping in mind that $\tn V_h \tn_h = 1$,
and combine~\eqref{eq:apriori_split_velocity}, \eqref{eq:discrete-error-est-I}, \eqref{eq:consistency_error}
with the interpolation error estimates from Lemma~\ref{lem:interpolation-estimate}.
\end{proof}
\begin{remark}
  We like to point out that the final \apriori estimate derived in this work closely resemble
  the original estimate for the \emph{fitted} CIP method
  presented in \citet{BurmanFernandezHansbo2006}.
  This is on purpose as it demonstrates that the convergence properties of the original
  scheme for the fitted mesh can be carried over to the corresponding \emph{unfitted}
  domain discretization in a geometrically robust way by adding the proper
  ghost-penalty forms to the original formulation.
\end{remark}
\begin{remark}
  \label{rem:oseen:low-order-L2-velocity-optimality}
  Note that similar as shown in the work by
  \citet{BurmanFernandezHansbo2006}, for the low-Reynolds-number case,
  i.e., $\mu \geqslant \|\bfbeta\|_{0,\infty,\Omega} h$, an optimal
  error convergence with respect to the velocity $L^2$-norm $\|
  \bfu^{\ast} - \bfu_h \|_{\Omega} = \mcO(h^{r_u})$ might be derived. A
  proof of this uses the standard Aubin--Nitsche duality technique and
  the deduced energy-norm estimate.
\end{remark}

\section{Conclusions}
\label{sec:conclusions}
In this work, a stabilized cut finite element method for the Oseen
problem has been proposed and analyzed.  The main ingredients of our
formulation can be summarized as follows: Since the computational mesh
is not fitted to the domain, boundary conditions are imposed weakly by
a stabilized Nitsche-type method which accounts for the different flow
regimes.  To sufficiently control the weak formulation in the interior
of the domain for convective-dominant flow and to allow for equal
order interpolation spaces for velocity and pressure, the continuous
interior penalty method is employed. The jump-penalty terms for
velocity and pressure are evaluated at all inter-element faces.  In
the boundary zone of cut meshes, these are extended to the entire cut
faces.  To ensure inf-sup stability and guarantee optimal error bounds
for the different flow regimes, higher-order CIP-like ghost-penalty
terms are added to the formulation including a viscous and a reactive
ghost-penalty operator.  A stability and an \apriori~error analysis
for an energy-type norm is presented and two- and three-dimensional
numerical convergence studies corroborate the theoretical
findings. Optimality is proved for low and high Reynolds numbers and,
in particular, is thereby independent of how the boundary intersects
the computational mesh. As a consequence, the issue of matrix
conditioning is highly improved by the addition of these ghost
penalties.  Furthermore, the applicability of the proposed cut finite
element method for solving the non-linear incompressible Navier-Stokes
equations is verified by simulating a helical pipe flow.

The present work provides an important step in the development of cut
finite element methods for flow problems and, as major aspect,
addresses the need for different stabilization techniques in
convective-dominant flows.  With particular emphasis on the low and
high Reynolds number flow regime, the proposed theoretical analysis
mainly focuses on the numerical treatment of advective term, the
incompressibility constraint and on how to ensure inf-sup stability on
cut meshes.  The theoretical and numerical validations of our discrete
formulation have been proposed for single-phase flows in this work
and, moreover, are of great importance for further developments on
unfitted methods for coupled flow problems like, for instance,
composite-grid techniques, multiphase flows and fluid-structure
interaction.

\section*{Acknowledgements}
This work is supported by the Swedish Foundation for Strategic Research Grant No.\ AM13-0029 and the
Swedish Research Council Grant 2013-4708.
The support of the second author through
the International Graduate School of Science and Engineering (IGSSE)
of the Technical University of Munich, Germany, under project
6.02, is gratefully acknowledged.
This work was also partially supported by a Center of Excellence grant from the
Research Council of Norway to the Center for Biomedical Computing at
Simula Research Laboratory.
The first author wishes to express his gratitude to Prof. Erik Burman for
numerous interesting discussions on continuous interior penalty and cut finite
element methods in the past years.
Finally, the authors wish to thank the anonymous referees for the valuable comments and
suggestions which helped to improve the presentation of this work.

\bibliographystyle{model1-num-names}
\bibliography{bibliography}

\end{document}